\numberwithin{equation}{section}
\newtheorem{theorem}{Theorem}[section]
\newtheorem{lemma}{Lemma}[section]
\newtheorem{prop}[lemma]{Proposition}
\newtheorem{corollary}[lemma]{Corollary}
\theoremstyle{definition}
\newtheorem{definition}[lemma]{Definition}
\theoremstyle{remark}
\newtheorem{remark}[lemma]{Remark}
\makeatletter \@addtoreset{figure}{section} \makeatother
\let\div\relax
\DeclareMathOperator{\div}{div}
\DeclareMathOperator{\curl}{curl}
\DeclareMathOperator{\supp}{supp}
\let\emptyset\varnothing
\newcommand{\s}{\mathbf{s}}
\renewcommand{\r}{\mathbf{r}}
\newcommand{\e}{\mathbf{e}}
\newcommand{\young}{\mathscr{Y}}
\newcommand{\cri}{\mathrm{cr}}
\newcommand{\cav}{\mathrm{cav}}
\def \loc {\mathrm{loc}}
\def \Z {\mathbf{Z}}
\def \bF {\mathbf{F}}
\newcommand{\Q}{\mathbf{Q}}
\def \i {\mathrm{i}}
\def \x {\mathbf{x}}
\def \epsi {\varepsilon}
\def \obstacle {O}
\def \bigo {\mathcal{O}}
\newcommand{\laplace}{\Delta}
\let\div\relax
\DeclareMathOperator{\div}{div}
\def \cf {\emph{cf.} }
\renewcommand{\d}{\, \mathrm{d}}
\newcommand{\de}{\mathrm{d}}
\newcommand{\bu}{\mathbf{u}}
\begin{document}

\title[Supersonic flow with cavitation]{The Morawetz problem for supersonic flow \\ with cavitation}

\author[G.-Q.~Chen]{Gui-Qiang G.~Chen}
\address[G.-Q.~Chen]{Mathematical Institute, University of Oxford, Radcliffe Observatory Quarter, Woodstock Road, Oxford, OX2 6GG, UK}
\email{Gui-Qiang.Chen@maths.ox.ac.uk}

\author[T.~P.~Giron]{Tristan P.~Giron}
\address[T.~P.~Giron]{Mathematical Institute, University of Oxford, Radcliffe Observatory Quarter, Woodstock Road, Oxford, OX2 6GG, UK}
\email{tristan.giron@gmail.com}

\author[S.~M.~Schulz]{Simon M.~Schulz}
\address[S.~M.~Schulz]
{Scuola Normale Superiore, Centro di Ricerca Matematica Ennio De Giorgi, P.zza dei Cavalieri, 3,  56126 Pisa, Italy}\email{simon.schulz@sns.it}

\keywords{Mixed-type system, Euler equations, Morawetz problem, entropy analysis, singularity,
compensated compactness framework, two-dimensional transonic flow, Tricomi-Keldysh equation,
cavitation, supersonic flow.}

\subjclass[2020]{35L65, 35M30, 35Q31, 76J20, 76H05}

\begin{abstract}
We are concerned with the existence and compactness of entropy solutions
of the compressible Euler system for two-dimensional steady potential flow
around an obstacle for a polytropic gas with supersonic far-field velocity.
The existence problem, initially posed by Morawetz \cite{morawetz85} in 1985, 
has remained open since then.
In this paper, we establish the first complete existence theorem for the Morawetz problem
by developing a new entropy analysis, coupled with a vanishing viscosity method
and compensated compactness ideas.
The main challenge arises when the flow approaches cavitation, 
leading to a loss of strict hyperbolicity of the system
and a singularity of the entropy equation, particularly for the case of adiabatic exponent 
$\gamma=3$.
Our analysis provides a complete description of the entropy and entropy-flux pairs via
the Loewner--Morawetz relations, which, in turn, leads to the establishment of a
compensated compactness framework.
As direct applications of our entropy analysis and
the compensated compactness framework, we obtain the compactness of entropy solutions and the weak
continuity of the compressible Euler system in the supersonic regime.
\end{abstract}
\maketitle

\setcounter{tocdepth}{1}
\tableofcontents

\newpage

\section{Introduction}\label{sec:intro}
The Morawetz problem is concerned with the existence theory of global solutions of
the two-dimensional steady compressible potential flow equations:
\begin{equation}\label{eq:system in intro}
        \left\lbrace \begin{aligned}
       & \div(\varrho(\bu) \bu) = 0, \\
        & \curl \bu = 0,
    \end{aligned} \right.
\end{equation}
where $\bu:\mathbb{R}^2 \to \mathbb{R}^2$ is the velocity and $\varrho$
is the density, prescribed by the \emph{Bernoulli law}:
\begin{equation}\label{eq:bernoulli general gamma intro}
    \varrho(\mathbf{u}) = \big(1-\frac{\gamma-1}{2}|\bu|^2\big)^{\frac{1}{\gamma-1}}
\end{equation}
for $\gamma > 1$ (the adiabatic exponent).
In this paper, such a result is achieved by means of a vanishing viscosity method;
that is, such a solution $\bu$ of \eqref{eq:system in intro}--\eqref{eq:bernoulli general gamma intro}
is constructed as a limit of $\{\bu^\varepsilon\}_{\varepsilon>0}$ solving the approximate systems:
\begin{equation}\label{eq:potential approx}
    \left\lbrace \begin{aligned}
        &  \div(\varrho(\bu^\varepsilon) \bu^\varepsilon) = V_1^\varepsilon, \\
        &\curl \bu^\varepsilon = V_2^\varepsilon,
    \end{aligned} \right.
\end{equation}
where the viscous terms $V_1^\varepsilon$ and $V_2^\varepsilon$
are determined so that
they vanish in the sense of distributions in the limit $\varepsilon \to 0$
and some uniform estimates of the corresponding solutions $\bu^\varepsilon$ can be obtained.

System \eqref{eq:system in intro}--\eqref{eq:bernoulli general gamma intro} is
an archetypal system of conservation laws,
which is of fundamental importance to aerodynamics and fluid mechanics
({\it cf}. \cite{BersSonic,ChenFeldman,courantfried, dafermos-book, landau, morawetz58,morawetz85,morawetz95}).
Together with \eqref{eq:bernoulli general gamma intro}, $\,$\eqref{eq:system in intro}
forms a degenerate mixed-type system (\textit{cf.}~\cite{ChenFeldman}).
Define the \emph{local sonic speed} $c$ and the \emph{Mach number} $M$ to be
\begin{equation}\label{eq:local sound speed}
    c(\varrho) := \varrho^{\frac{\gamma-1}{2}}, \qquad M := \frac{|\bu|}{c},
\end{equation}
and introduce the velocity potential $\varphi$ so that $\bu = \nabla \varphi$.
Then the potential flow
system \eqref{eq:system in intro}--\eqref{eq:bernoulli general gamma intro} takes the form:
\begin{equation}\label{eq:pseudoanalytic}
    \div(\varrho(|\nabla \varphi|)\nabla\varphi)=0,
\end{equation}
which is of mixed elliptic-hyperbolic type:
elliptic when $|\nabla\varphi|<c$ (referred to as the \emph{subsonic region})
and hyperbolic when $|\nabla\varphi|>c$ (termed as \emph{supersonic region});
the curve $|\nabla\varphi|=c$ is identified as the \emph{sonic curve}.
This paper is devoted to the existence theory of entropy solutions
and the weak continuity of system \eqref{eq:system in intro}--\eqref{eq:bernoulli general gamma intro};
the relevance of this work stretches beyond the Morawetz problem,
as the inviscid Euler system serves as a paradigmatic
mixed-type system, sharing many common features with other mixed-type problems in fluid dynamics,
differential geometry, and related fields (see, for example,~\cites{ChenDafermosSlemrodWang, gq-isometric} and the references therein).

The early developments in the analysis of the potential flow system relied
on the methods from the theory of pseudo-analytic functions to solve the quasilinear second-order
equation \eqref{eq:pseudoanalytic} (\textit{cf.}~Berg \cite{berg} and Bers \cite{BersSonic}).
These methods mirrored the developments in the theory of minimal surfaces, where system \eqref{eq:system in intro}--\eqref{eq:bernoulli general gamma intro}
coincides with the minimal surface equations precisely in the case of  Chaplygin gas, $\gamma=-1$.
While this approach successfully addresses subsonic flows, corresponding to the elliptic equation \eqref{eq:pseudoanalytic}, 
it encounters limitations in describing the supersonic regime.
In fact, it was proved in \cite{morawetz58} that smooth transonic flows
along an obstacle are unstable, and shock waves are expected to arise generically.
To address this, an alternative approach was proposed Morawetz \cites{morawetz85,morawetz95},
employing a vanishing viscosity method as in \eqref{eq:potential approx}.
The key challenge lies in establishing the rigorous limit of problems \eqref{eq:potential approx} as $\epsilon\rightarrow 0$. 
This necessitates identifying uniform estimates of the approximate solutions and leveraging fine compactness properties of the system.
To circumvent the nonlinear nature of the potential flow system \eqref{eq:system in intro}--\eqref{eq:bernoulli general gamma intro},
Morawetz \cite{morawetz85} proposed utilizing compensated compactness, which had previously been applied successfully 
in addressing other nonlinear conservation laws (\cf. \cites{chen-lax-fried,dingchenluo1,diperna2,diperna,muratsmf,tartarnotes,tartar-nouvelle, tartar-consv-laws}).
The framework developed by Morawetz in \cites{morawetz85,morawetz95} 
requires the viscosity terms $V_1^\varepsilon$ and $V_2^\varepsilon$ to satisfy the two uniform bounds:
\begin{enumerate}
    \item[(i)] A uniform bound of the form: $|\bu^\varepsilon| \geq \underline{q}$ for some constant $\underline{q}>0$,
          \textit{i.e.}, \emph{the solutions stay uniformly away from stagnation};
    \item[(ii)] A uniform bound of the form: $\varrho(\bu^\varepsilon) \geq \underline{\varrho}$ for some constant $\underline{\varrho}>0$,
          \textit{i.e.}, \emph{the solutions stay uniformly away from cavitation}.
\end{enumerate}
However, these assumptions may not fully align with physical expectations, as it is generally anticipated 
that either cavitation or stagnation should occur. 
Additionally, apart from the condition away from stagnation, Morawetz's strategy proposed 
in \cites{morawetz85,morawetz95} relies on the utilization of a hodograph transformation. 
Yet, it remains unclear whether a rigorous justification for this transformation 
could be established in the presence of a vacuum.

Despite being formulated nearly forty years ago,
the Morawetz problem has remained open and largely unexplored since Morawetz's own work.
Notable exceptions include the contributions of Chen-Dafermos-Slemrod-Wang \cite{ChenDafermosSlemrodWang}
and Huang-Wang-Wang \cite{MorawetzFeimin} addressing the sonic-subsonic regime,
and Chen-Slemrod-Wang \cite{gq-transonic} for the transonic regime.
The former employs a compensated compactness approach in the sonic-subsonic regime, 
reflecting the earlier work of Bers \cite{BersSonic}.
The latter focuses on generating approximate problems admissible in Morawetz's framework, 
successfully excluding the possibility of vacuum occurrence when $\gamma\in (1, 3)$, 
without relying on Morawetz's problematic hodograph transformation. 
However, a complete existence proof remains elusive in the presence of stagnation, 
which cannot be \emph{a priori} excluded for this case.

The present work is concerned with system \eqref{eq:system in intro}--\eqref{eq:bernoulli general gamma intro}
for the case of a gas with $\gamma$-law
for the physically relevant adiabatic exponent $\gamma=3$,
which in fact corresponds to the case $\tilde{\gamma}=\frac{5}{3}$
in the isentropic Euler system \eqref{eq:euler intro} below; see \S \ref{subsec:connection euler}.
We establish a compactness framework that accommodates cavitation and
demonstrate that, for any incoming uniform supersonic flow,
stagnation does not occur in the fluid.
Consequently, we succeed to achieve the first complete existence
theorem for the Morawetz problem. The case $\gamma>3$ will be the subject of future investigations.

Compensated compactness was first introduced by Tartar and
Murat \cites{tartarnotes,tartar-nouvelle};
its first successful application to a system of hyperbolic conservation laws is
due to DiPerna  \cites{diperna2,diperna},
who established the global existence of an entropy solution of
the Cauchy problem for the one-dimensional isentropic Euler equations:
\begin{equation}\label{eq:euler intro}
    \left\lbrace \begin{aligned}
        & \varrho_t + (\varrho u)_x = 0, \\
        & (\varrho u)_t + \big(\varrho u^2 + \tilde{p}(\varrho) \big)_x = 0,
    \end{aligned}  \right.
\end{equation}
where $\tilde{p}(\varrho)= \kappa\varrho^{\tilde{\gamma}}$ with $\kappa>0$
for the cases: $\tilde{\gamma} = \frac{N+2}{N}$ for $N \geq 5$ integer.
For the interval $1<\gamma \leq \frac{5}{3}$,
the global existence problem was solved in
Ding-Chen-Luo  \cite{dingchenluo1} and Chen \cite{chen-lax-fried}.
The adiabatic exponent range $\gamma\ge 3$ was solved in \cite{lions-perthame-tadmor},
and the remaining case $\frac{5}{3}<\gamma<3$ was
closed in Lions-Perthame-Souganidis \cite{lions-perthame-souganidis}.

All these results rely on the cancellation properties of certain nonlinear functionals to obtain compactness
properties (see also \cite{chencompcomp}). More precisely, we consider a sequence of functions $\{\bu^\epsi\}_{\epsi>0}$,
bounded in $L^\infty$. Up to a subsequence, it converges weak-* to a limit $\bu$;
however, the weak-* convergence is not sufficient to pass to the limit
in nonlinear quantities, say $f(\bu^\epsi)$.
Under certain conditions on $f$ and sequence $\{\bu^\epsi\}_{\epsi>0}$,
compensated compactness ideas may be used to show that $\text{w*-}\lim f(\bu^\epsi)=f(\bu)$.
This method is typically applied to deal with the weak limits of approximate solutions
of systems of nonlinear conversation laws. In this context, sequence $\{\bu^\epsi\}_{\epsi>0}$
may be obtained for instance from a viscous system with vanishing viscosity.
It is not usually possible to show directly that the nonlinearities in the system satisfy
the hypotheses of compensated compactness,
even though this is possible sometimes (see \textit{e.g.} \cite{ChenDafermosSlemrodWang}).
Instead, with the help of Young measures, for any continuous function $f$,
the weak-* limit of $f(\bu^\epsi)$ may be characterized as
\begin{equation*}
    f(\bu^\epsi)\,\overset{*}{\rightharpoonup}\,
    \int f\d\young_\x,
\end{equation*}
where $\{\young_\x\}_{\x}$ is a family of probability measures on the phase-space.
If $f$ is shown to be affine on the support of the Young measure,
it then follows that $f(\bu^\varepsilon) \overset{*}{\rightharpoonup} f(\bu)$.
Moreover, if $f$ is assumed to be genuinely nonlinear, so that it is nowhere affine,
then it follows that the Young measure concentrates as a Dirac mass $\young_\x=\delta_{\bu(\x)}$
whence $\bu^\varepsilon \to \bu$ almost everywhere.
Then the convergence is sufficiently strong to pass to the limit in the equations.

Hyperbolic systems of nonlinear conservation laws are endowed with
families of entropy pairs ({\it i.e.}, the pairs of entropy and entropy-flux).
While the original nonlinearity does not typically satisfy the assumptions of compensated compactness,
the key strategy is that compensated compactness arguments can be used with the entropy pairs.
Realizing the above strategy therefore requires a fine understanding of the entropy structure
of the underlying system.

In this paper, we provide a detailed analysis of the entropy structure
for the potential flow system by generating two linearly independent families of entropy pairs.
These families of entropy pairs are subsequently utilized to establish our compensated compactness framework.
The description of the entropy pair families involves a delicate analysis of a Tricomi--Keldysh
equation of the form:
\begin{equation*}
    H_{\nu\nu} - \frac{M^2-1}{\varrho^2}H_{\vartheta\vartheta} = 0,
\end{equation*}
which is of mixed-type: hyperbolic for $M>1$ and elliptic when $M<1$ (exhibiting
Tricomi degeneracy at $M=1$).
Additionally, the equation is singular at cavitation $\varrho=0$, displaying Keldysh degeneracy. 
This analysis constitutes the primary focus of the present work.

\smallskip
The rest of the paper is organized as follows.
In \S \ref{sec:main results}, we introduce the key notions required for our analysis,
discuss formal links between the problem at hand and the isentropic Euler equations,
 provide the precise statements of our main results, and outline our strategy of proof.
 In \S \ref{sec:Section hyperbolicity and genuine nl}, we rewrite the equations
 as a genuinely nonlinear hyperbolic system and outline our method of entropy production.
 In \S \ref{sec:reg kernel} and \S \ref{sec:sing kernel}, we prove the existence and cancellation properties
 for the regular and singular entropy generator kernels, respectively.
Based on the entropy analysis in \S \ref{sec:Section hyperbolicity and genuine nl}--\S \ref{sec:sing kernel},
we first establish a compensated compactness framework in \S \ref{sec:comp comp framework and red}
by an argument of the Young measure reduction for our problem,
and we then proceed in \S\ref{sec:weak continuity}
 to prove the compactness of entropy solutions and the weak continuity of
 the system.
 In \S \ref{sec:viscous}, we present an example to show how a sequence of approximate solutions
 can be constructed to satisfy the conditions of the
compensated compactness framework established
in \S \ref{sec:comp comp framework and red} in any fixed bounded domain $\mathscr{D}$.
Finally, in \S \ref{sec:entropy sol}, we
first combine the compensated compactness framework in  \S \ref{sec:comp comp framework and red}
with the approximate solutions constructed in \S \ref{sec:viscous}
to show the existence of entropy solutions on the bounded domains, and then employ the compactness
theorem in \S\ref{sec:weak continuity} to establish
the existence of entropy solutions on the whole domain $\Omega$
by considering the exhaustion limit
in the half-plane.
Appendix \ref{appendix:Glambda} contains technical results
pertaining to the asymptotic descriptions of the generator kernels that
are the fundamental solutions of the Tricomi--Keldysh equation.

\smallskip
\noindent
We end this introductory section by highlighting some basic notation used throughout the rest of the paper.
For a vector field $\mathbf{w} = (w_1,w_2)$, we write $$\mathbf{w}^\perp = (-w_2,w_1),
\quad \curl \mathbf{w} = \partial_x w_2 -\partial_y w_1 = -\div \mathbf{w}^\perp,$$
and similarly $\nabla^\perp \varphi = (\nabla \varphi)^\perp$ for any scalar function $\varphi$.
Denote $\langle \cdot, \cdot \rangle$ as the duality bracket between test functions and distributions;
$\mathcal{S}'(\mathbb{R})$ and $\mathcal{E}'(\mathbb{R})$ are  the spaces of tempered distributions and
compactly supported distributions, respectively. The Fourier transform of a tempered distribution is denoted by $\widehat{\cdot}$,
or occasionally $\mathscr{F}$ when confusion might arise,
and is always taken with respect to variable $s$.
The test functions are denoted by the letters $\varphi$ or $\psi$.
The upper half-plane is denoted by $\mathbb{H}$.
For any ${D} \subset \mathbb{R}^2$, the one-dimensional Hausdorff measure on
boundary $\partial{D}$ is denoted by $\mathcal{H}$.
In \S \ref{sec:viscous}, the unit vector for the velocity field
is $\e(\vartheta)=(\cos\vartheta,\sin\vartheta)$,
and $\mathbf{n}$ denotes the inward unit normal to the boundary,
pointing into the region filled with fluid.
A property holds in the sense of distributions in $D$ if it holds in duality
with $C^\infty_{\rm c}({D})$.

\section{The Morawetz Problem and Main Theorems}\label{sec:main results}
In this section, we first set up the problem, introduce the key notions required for our analysis,
and then provide the precise statements of our main results and some outline
of our strategies of proof. Finally,
we discuss formal links between the Morawetz problem and the isentropic Euler equations.

\subsection{Problem setup}\label{sec:setup}
We are concerned with the existence of an entropy solution $\bu=(u,v)$ of the nonlinear
system \eqref{eq:system in intro}--\eqref{eq:bernoulli general gamma intro} for the case $\gamma=3$, \textit{i.e.},
\begin{equation*}
    \left\lbrace \begin{aligned}
        &\div(\varrho(\bu) \bu) = 0, \\
        & \curl \bu = 0,
    \end{aligned} \right.
\end{equation*}
where, denoting $q:= |\bu|$ to be the \emph{scalar speed}, the density $\varrho$ is prescribed by the Bernoulli law:
$$
\varrho(\bu) = \sqrt{1- q^2}.
$$

The problem is posed in the upper half-plane $\mathbb{H}$ in the exterior of an obstacle $O$,
a closed connected bounded subset $\Omega:= \mathbb{H} \setminus O$ as shown in Figure 2.1.
The obstacle boundary $\partial O$ is endowed with a unit normal
vector $\mathbf{n}$ pointing into $\Omega$ which is filled with the fluid.
A uniform supersonic flow $\bu_\infty=(q_\infty, 0)$ with constant speed $q_\infty>0$
is given at $x=-\infty$, and a Neumann-type condition is imposed on the obstacle boundary
$\partial O$; this is discussed in detail later (see also \S \ref{sec:viscous}).

\vspace{5pt}
\begin{figure}[ht]
\begin{center}
\begin{tikzpicture}
    \draw (0, 0) node[inner sep=0] {\includegraphics[width=6.5cm]{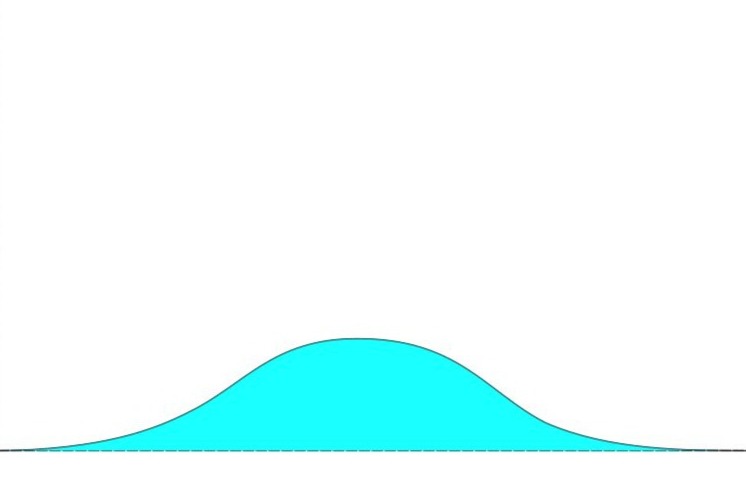}};
    \draw [-stealth](-4.8,1) -- (-3.8,1);
    \draw [-stealth](-4.8,0) -- (-3.8,0);
     \draw (-3.2,-0.7) node {$\bu_\infty$};
    \draw [-stealth](-4.8,-1) -- (-3.8,-1);
    \draw (0,-1.3) node {$\obstacle$};
    \draw (1.8,0.6) node {$\Omega$};
    \draw (-5,-1.74) -- (4,-1.74);
\end{tikzpicture} \hspace{1cm}
\end{center}
\caption{\, The Morawetz problem for two-dimensional steady supersonic flow past an obstacle $O$.}\label{fig1.1}
\end{figure}

Introduce the \emph{angle function} $\vartheta$ associated to the velocity field $\bu$ such that
\begin{equation}\label{2.1a}
    \bu = (q \cos\vartheta, q \sin\vartheta),
\end{equation}
and define the \emph{renormalized density} $\nu$ by the formula:
\begin{equation}\label{eq:nu def}
    \nu(\varrho) := \int_0^\varrho \frac{\tau^{2}}{1-\tau^{2}} \d \tau.
\end{equation}
Then we construct entropy pairs $\mathbf{Q}=(Q_1,Q_2)$ of
system \eqref{eq:system in intro}--\eqref{eq:bernoulli general gamma intro},
which are functions of the phase space variables $\bu$,
via the \emph{Loewner--Morawetz relations}:
\begin{align}
\begin{split}\label{eq:lowener mor}
   \left( \begin{matrix}
       Q_1 \\ Q_2
   \end{matrix} \right) = \left( \begin{matrix}
       \varrho q \cos \vartheta & -q \sin\vartheta \\
       \varrho q \sin\vartheta & q \cos\vartheta
   \end{matrix} \right) \left(  \begin{matrix}
       H_\nu \\ H_\vartheta
   \end{matrix}\right),
    \end{split}
\end{align}
where the \emph{entropy generators} $H$ are determined by a linear singular mixed-type equation ---
\emph{the Tricomi--Kelydish equation}:
\begin{equation}\label{eq:ent gen intro}
    H_{\nu\nu} - \frac{M^2-1}{\varrho^2}H_{\vartheta\vartheta} = 0;
\end{equation}
we refer the reader to \S \ref{sec:entropic struc intro} for more details on this construction.
We emphasize that the function pairs $\Q$, which are technically
computed in the $(\varrho,\vartheta)$-coordinates
as above, are entropy pairs for the conservative
system \eqref{eq:system in intro}--\eqref{eq:bernoulli general gamma intro}
so that $\div_\x \Q(\bu)$ is used throughout the paper.

A particular solution of \eqref{eq:ent gen intro} is provided by the \emph{special entropy generator}:
\begin{equation}\label{eq:special entropy generator}
H^*(\nu,\vartheta) = \frac{\vartheta^2}{2} - \frac{\nu}{\varrho(\bar{\nu})}
+ \int_{\bar{\nu}}^\nu \int_{\bar{\nu}}^\tau \frac{M(\nu')^2 - 1}{\varrho(\nu')^2} \d \nu' \d \tau,
\end{equation}
for suitably chosen constant $\bar{\nu}>0$, provided that
the integral is well-defined (\textit{cf.}~\cite{thesis}).
We define the \emph{special entropy pair} $\Q_*$ to be the entropy pair generated by $H^*$
via the Loewner--Morawetz relations.
When restricted to the supersonic region, this special generator constitutes
a uniformly convex solution of \eqref{eq:ent gen intro};
however, it is not periodic with respect to $\vartheta$.

The starting point of the compensated compactness method is to find
a uniformly convex entropy of the system,
with which some uniform gradient estimates are obtained.
It is not clear \emph{a priori} that \eqref{eq:ent gen intro} possesses uniformly convex solutions
defined up to and including the subsonic region. For this reason,
as well as the invariant regions associated with our setting (\textit{cf.}~\S \ref{sec:viscous}),
in this paper, we focus on the supersonic regime.
Note that the supersonic condition $M>1$  is equivalent to $q > q_\cri$, where the \emph{sonic speed},
or equivalently \emph{critical speed}, is given by
$$q_{\cri}:=\sqrt{\frac{2}{\gamma+1}}; $$
this constraint is equivalent to $\varrho < \varrho_{\cri}:=\varrho(q_{\cri})$.
We also define  $\nu_\cri := \nu(\varrho_\cri)$ and consider equation \eqref{eq:ent gen intro} only
for $\nu \in (0,\nu_*]$ for $\nu_* < \nu_\cri$,
where this equation is a singular linear wave equation.
Much of the current paper is concerned with generating sufficiently many solutions
of \eqref{eq:ent gen intro}
to provide the associated entropy pairs as many as possible for the compensated compactness method to be applied.
We note that the vacuum is achieved at the \emph{cavitation speed}:
\begin{equation*}
    q_\cav := \sqrt{\frac{2}{\gamma-1}},
\end{equation*}
where, by virtue of the Bernoulli relation, the density is null.

As already mentioned, equation \eqref{eq:ent gen intro} is a singular
linear second-order wave equation
in the domain of interest, so that two linearly independent fundamental solutions
are expected to generate all possible solutions, which are denoted by $H^\r$ and $H^\s$.
Furthermore, these fundamental solutions should satisfy the Huygens principle.
To quantify this finite-speed of propagation, we rewrite equation \eqref{eq:ent gen intro}
in the familiar form:
\begin{equation}\label{eq:ent gen intro with k'}
    H_{\nu\nu} - k'(\nu)^2 H_{\vartheta\vartheta} = 0.
\end{equation}
The Bernoulli relation \eqref{eq:bernoulli general gamma intro} implies that $\nu$ satisfies
\begin{equation}\label{eq:nu prime eqn}
    \left\lbrace\begin{aligned}
        & \nu'(\varrho) =\frac{1}{M^{2}(\rho)} \qquad \text{in } (0,\varrho_\cri), \\
        & \nu(0) = 0,
    \end{aligned}\right.
\end{equation}
whence, by defining the \emph{characteristic speed} $k$ via
\begin{equation}\label{eq:k def}
    k(\nu(\varrho)) := \int_0^\varrho \frac{\sqrt{M^2(\varrho')-1}}{\varrho'}
    \frac{1}{M^{2}(\varrho')} \d \varrho' \qquad \text{for all } \varrho \in (0,\varrho_\cri),
\end{equation}
we see that
\begin{equation}\label{eq:what is k' squared}
    \left\lbrace \begin{aligned}
   & k'(\nu)^2 = \frac{M^2 - 1}{\varrho^2} \qquad \text{in } (0,\nu_\cri), \\
   &k(0) = 0.
    \end{aligned}\right.
\end{equation}
It follows that the entropy generator equation \eqref{eq:ent gen intro}
can be recast in form \eqref{eq:ent gen intro with k'} in the supersonic regime.
Moreover, by defining the \emph{light cone}:
$$
\mathcal{K} := \{(\nu,s)\, :\, |s| \leq k(\nu)\},
$$
we expect that $\supp H^\r \cup \supp H^\s \subset \mathcal{K}$.

With these notions in hand, we are ready to state our main theorems.

\subsection{Main theorems}\label{sec:main theorems}
Our first main theorem is concerned with the existence of entropy generator kernels in the hyperbolic region,
up to and including cavitation (also see Propositions \ref{prop:reg kernel exist} and \ref{prop:sing kernel exist}).

\begin{theorem}\label{thm:kernels}
There exist distributional solutions $H^\r$ and $H^\s$ with
supported in $\mathcal{K}$
of the initial value problems in $(0,\nu_{\cri}) \times \mathbb{R}${\rm :}
    \begin{equation*}
    \left\lbrace\begin{aligned}
         &H^\r_{\nu\nu} - \frac{M^2-1}{\varrho^2} H^\r_{ss} = 0,\\
        &H^\r|_{\nu = 0} = 0, \\
        &H^\r_\nu|_{\nu=0} = \delta_0,
    \end{aligned}\right. \qquad \left\lbrace\begin{aligned}
         &H^\s_{\nu\nu} - \frac{M^2-1}{\varrho^2} H^\s_{ss} = 0,\\
        &H^\s|_{\nu = 0} = \delta_0, \\
        &\varrho H^\s_\nu|_{\nu=0} = -\delta_0'',
    \end{aligned} \right.
    \end{equation*}
for which  $H^\r$ and $H^\s$
are respectively called the \emph{regular kernel} and the \emph{singular kernel}
of \eqref{eq:ent gen intro}.
\end{theorem}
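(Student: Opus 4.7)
The plan is to solve each Cauchy problem by partial Fourier transform in $s$, and then to construct the two fundamental solutions of the resulting ODE family via a Liouville--Green (WKB) analysis at the Keldysh singularity $\nu=0$. To expose the underlying d'Alembertian, I would first change to the characteristic coordinate $k=k(\nu)$ from \eqref{eq:k def}; using \eqref{eq:what is k' squared}, the entropy generator equation becomes
\begin{equation*}
H_{kk} - H_{ss} = -\frac{k''(\nu)}{k'(\nu)^2}\, H_k,
\end{equation*}
a $1{+}1$ wave operator with a first-order perturbation that is smooth on $(0,\nu_{\cri})$ but diverges as $\nu\to 0^+$. The prescribed support property $\supp H^{\r,\s}\subset \mathcal{K}$ translates to the standard d'Alembert cone $\{|s|\leq k\}$, which is the natural arena for finite-speed-of-propagation arguments.

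Fourier transforming in $s$ reduces the PDE to the family of ODEs
\begin{equation*}
\hat H_{\nu\nu}(\nu,\xi) + k'(\nu)^2\,\xi^2\,\hat H(\nu,\xi) = 0, \qquad \nu\in (0,\nu_{\cri}),
\end{equation*}
with initial traces $(\hat H^\r,\hat H^\r_\nu)|_{\nu=0}=(0,1)$ and $(\hat H^\s,\varrho\hat H^\s_\nu)|_{\nu=0}=(1,\xi^2)$, using $\mathscr{F}(\delta_0)=1$ and $\mathscr{F}(-\delta_0'')=\xi^2$. For $\gamma=3$, one has $\varrho\sim(3\nu)^{1/3}$ and $k'(\nu)^2\sim c_0\nu^{-4/3}$ near cavitation, so $\nu=0$ is an irregular singular point of the ODE. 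A Liouville--Green ansatz $\hat H\sim A(\nu)\,e^{\pm i\xi k(\nu)}$ furnishes two linearly independent oscillatory asymptotic solutions: $H^\r$ corresponds to the real combination behaving like $\sin(\xi k(\nu))/\xi$ at $\nu=0$, and $H^\s$ to the one behaving like $\cos(\xi k(\nu))$, with the weight $\varrho\sim\nu^{1/3}$ in the second initial condition for $H^\s$ exactly cancelling a sub-leading $\nu^{-1/3}$ divergence of the WKB derivative and producing the prescribed $\xi^2$ normalisation. This is precisely the asymptotic content recorded in Appendix~\ref{appendix:Glambda}; standard ODE theory away from $\nu=0$ then extends these local germs uniquely to solutions on the full interval, and polynomial control in $\xi$ (interpolating between oscillatory and stationary-phase behaviour) ensures that each $\hat H^{\r,\s}(\nu,\cdot)$ lies in $\mathcal{S}'(\mathbb{R})$ uniformly in $\nu$, so that inverse Fourier transform produces distributional kernels $H^{\r,\s}$.

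The remaining checks are (i) the initial conditions in $\mathcal{S}'(\mathbb{R})$, and (ii) support in $\mathcal{K}$. For (i), pointwise-in-$\xi$ convergence of the Fourier-side traces together with dominated convergence against Schwartz test functions yields the four required distributional limits as $\nu\to 0^+$. For (ii), the standard characteristic-cone energy estimate applied to the transformed wave equation gives finite-speed-of-propagation on any subinterval $[\nu_0,\nu_{\cri}]$ with $\nu_0>0$, and a limit $\nu_0\to 0^+$ transfers the property up to the singular boundary. The principal obstacle is the coupling of the two singular features at cavitation: the coefficient $k'(\nu)^2$ blows up (Keldysh degeneracy) while the initial data for $H^\s$ is itself a second distributional derivative of a delta weighted by the vanishing factor $\varrho$, so matching this normalisation requires sub-leading WKB information rather than just the principal symbol. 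For this reason, the technical heart of the argument is the asymptotic analysis in Appendix~\ref{appendix:Glambda}, with the Fourier reduction and the support argument serving as its distributional packaging.
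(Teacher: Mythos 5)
Your Fourier reduction to the ODE family $\hat H_{\nu\nu}+k'(\nu)^2\xi^2\hat H=0$ with the transformed traces $(\hat H^\r,\hat H^\r_\nu)|_{\nu=0}=(0,1)$ and $(\hat H^\s,\varrho\hat H^\s_\nu)|_{\nu=0}=(1,\xi^2)$ is exactly the paper's starting point, and the overall programme --- construct two Fourier-side solutions, match the traces, prove support in $\mathcal{K}$ --- is the right one. But the Liouville--Green ansatz alone cannot carry the proof. WKB is a large-argument asymptotic valid for $|\xi k(\nu)|\gg 1$, whereas the initial data live at $\nu=0$, where $k\to 0$ and hence $\xi k\to 0$ for every fixed $\xi$; the matching at $\nu=0$ is a small-argument problem about which the WKB phase says nothing. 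Concretely, the naive profile $(k')^{-1/2}\sin(\xi k)/\xi$ has $\nu$-derivative whose two contributions each blow up like $\nu^{-1/3}$ as $\nu\to 0$, so producing the finite trace $\hat H^\r_\nu\to 1$ requires a cancellation that the principal symbol alone cannot see. More seriously, your assertion that ``standard ODE theory away from $\nu=0$ then extends these local germs uniquely'' is precisely the unjustified step: $k'(\nu)^2\sim\nu^{-4/3}$ blows up, there is no classical Cauchy problem at the degenerate endpoint, and a formal asymptotic germ is not a solution.

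The paper circumvents all of this by expanding $\hat H$ in the spherical Bessel kernels $\hat f_\lambda(\xi k(\nu))$ (Appendix~\ref{appendix:Glambda}), which solve a model equation exactly and interpolate between the oscillatory and stationary regimes. Two terms $(\hat f_1,\hat f_2)$ are used for $H^\r$ and three $(\hat f_{-2},\hat f_{-1},\hat f_0)$ for $H^\s$, with $\nu$-dependent coefficients tuned so that the asymptotic expansions exhibit cancellations --- \emph{e.g.}~$\alpha_1''+\tfrac54\alpha_0''=O(\nu^{1/3})$ (Lemma~\ref{lem:asymp exp alpha0 and alpha1}) and $\tfrac12\beta_0'+2\beta_1 kk'=O(\nu^{1/3})$. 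These cancellations give the remainder forcing enough decay in $(\nu,\xi)$ to close an energy estimate; the remainder itself is obtained by solving the translated problem on $[\varepsilon,\nu_*]$, proving $\varepsilon$-uniform bounds (Lemma~\ref{lemma:energy est 1}), and passing to the limit by Ascoli--Arzel\`a. None of the multi-term expansion, the cancellations, or the $\varepsilon$-regularised existence argument for the remainder appears in your sketch, yet these are what make the theorem true. Finally, Appendix~\ref{appendix:Glambda} is a catalogue of distributional and Fourier identities for $G_\lambda$ and $f_\lambda$, not the $\nu$-asymptotic analysis --- that is in \S\ref{sec:asymp analysis reg kernel} and \S\ref{sec:asymp sing} --- so your identification of ``the technical heart'' with the appendix is misplaced.
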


\begin{remark}[Generating Entropy Pairs]\label{rmk:generation}
We say that a generator $H$ is \emph{generated by kernels $H^\r$ and $H^\s$}
if there exist smooth functions $\varphi_1$ and $\varphi_2$ such that
\begin{equation*}
    H = H^\r * \varphi_1 + H^\s * \varphi_2,
\end{equation*}
where the convolution is taken only for the second variable $s$.

If an entropy pair $\Q$ is determined from the Loewner--Morawetz relations
via a generator $H$ generated by kernels $H^\r$ and $H^\s$, we say
that \emph{$\Q$ is generated by kernels $H^\r$ and $H^\s$}. A suitable generator $H$ can also be determined
from $\Q$ by inverting the matrix in \eqref{eq:lowener mor}.
\end{remark}

Our goal is then to employ Theorem \ref{thm:kernels} to produce entropy pairs
that are endowed with additional compactness properties.
More precisely, we require the approximate solution
sequence $\{\bu^\varepsilon\}_{\varepsilon>0}$ to satisfy
that the sequence of \emph{entropy dissipation measures}
$\{\div_\x \mathbf{Q}^\varepsilon\}_{\varepsilon>0}$ is pre-compact in $H^{-1}_\loc$.
This requirement is critical in applying the compensated compactness method.

To state our main existence result, we must first introduce our notion of solutions.
This is encapsulated in the following definition.

\begin{definition}[Entropy Solutions]\label{def:ent sol}
We say that $\bu \in L^\infty$ is an \emph{entropy solution}
of \eqref{eq:system in intro}--\eqref{eq:bernoulli general gamma intro}
if the following conditions are satisfied:
\begin{enumerate}
\item[(i)] $\bu$ is a solution of system \eqref{eq:system in intro}--\eqref{eq:bernoulli general gamma intro}
in the sense of distributions;
\smallskip
\item[(ii)] Let $\Q \in \{\mathbf{Q}_*,\Q_H\}$, where $\Q_*$ is the special entropy pair
generated by $H^*$, and $\Q_H$ is any entropy pair determined from the Loewner--Morawetz
relations via a generator $H$ generated by kernels $H^\r$ and $H^\s$,
satisfying the convexity conditions{\rm :}
\begin{equation}\label{eq:generator conditions ent ineq}
0 \leq  H_{\vartheta \vartheta} -\varrho H_{\nu \vartheta\vartheta},
\qquad | \varrho H_{\nu \vartheta} + H_{\vartheta\vartheta\vartheta}| \leq \varrho (  H_{\vartheta \vartheta} - \varrho H_{\nu \vartheta\vartheta}).
 \end{equation}
Then
\begin{equation}\label{eq:ent ineq distr}
\div_\x \mathbf{Q}(\mathbf{u}) \geq 0 \qquad\,\, \mbox{in the sense of distributions}.
\end{equation}
\end{enumerate}
\end{definition}

We now state the main existence theorem.

\begin{theorem}[Global Existence of Entropy Solutions]\label{thm:main existence theorem}
Given the incoming uniform supersonic flow $\bu_\infty = (q_\infty,0)$ at $x=-\infty$ with constant $q_\infty \in (q_\cri,q_\cav)$,
then there exists a globally-defined entropy solution $\bu \in L^\infty$ of the Morawetz problem
for system \eqref{eq:system in intro} with $\gamma=3$ such that
    \begin{equation*}
    0 \leq \varrho(\bu(\x)) \leq \varrho_*, \quad q_{\cri} < q_* \leq |\bu(\x)| \leq q_{\cav}
    \qquad \text{a.e.
    } \x\in\Omega,
\end{equation*}
for some $\varrho_*>0$ and $q_*>0$.
\end{theorem}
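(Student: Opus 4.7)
The plan is to realize the solution $\bu$ as the a.e.\ limit of a sequence of viscous approximations $\{\bu^\varepsilon\}_{\varepsilon>0}$ defined on bounded subdomains, using the entropy kernels of Theorem~\ref{thm:kernels} to drive the compensated compactness argument of Section~\ref{sec:comp comp framework and red}, and finally exhausting $\Omega$ by an increasing family of bounded domains. Concretely, for each bounded $\mathscr{D} \subset \Omega$ containing the obstacle $O$, I would construct solutions $\bu^\varepsilon$ of the regularized system \eqref{eq:potential approx} with inflow data $\bu_\infty$ at the artificial left boundary and an approximate Neumann-type condition on $\partial O$. The viscous terms $V_1^\varepsilon, V_2^\varepsilon$ must be chosen so that: (a) a standard quasilinear elliptic/parabolic theory produces $\bu^\varepsilon$; (b) there is an invariant region guaranteeing the uniform bounds $0 \leq \varrho(\bu^\varepsilon) \leq \varrho_*$ and $q_{\cri} < q_* \leq |\bu^\varepsilon| \leq q_{\cav}$; and (c) $V_i^\varepsilon \to 0$ in distributions. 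This construction is deferred to Section~\ref{sec:viscous}. The critical novelty here, relative to Morawetz's original framework, is that cavitation $\varrho = 0$ is allowed while stagnation $|\bu| \searrow q_{\cri}$ is uniformly excluded.

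With the uniform bounds in place, the states $\bu^\varepsilon(\x)$ lie in a fixed compact subset of the supersonic region. Applying the Loewner--Morawetz relations \eqref{eq:lowener mor} to the regular and singular kernels of Theorem~\ref{thm:kernels} generates two linearly independent families of entropy pairs $\Q$ satisfying the convexity conditions \eqref{eq:generator conditions ent ineq}. The construction of $V_1^\varepsilon, V_2^\varepsilon$ in Step~(a), combined with the entropy production identity yielded by the Loewner--Morawetz formulas, should deliver the pre-compactness of $\{\div_\x \Q(\bu^\varepsilon)\}$ in $H^{-1}_\loc$. Invoking the Murat--Tartar div--curl lemma on pairs drawn from these entropy families, and then running the Young-measure reduction argument of Section~\ref{sec:comp comp framework and red}, forces the associated Young measure to reduce to a Dirac mass at a.e.\ point of $\mathscr{D}$. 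Consequently $\bu^\varepsilon \to \bu^{\mathscr{D}}$ a.e.\ in $\mathscr{D}$, and one passes to the limit in both the conservation laws and the entropy inequalities to obtain a local entropy solution on $\mathscr{D}$ in the sense of Definition~\ref{def:ent sol}, obeying the same uniform bounds.

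The final stage is exhaustion. Fixing an increasing sequence $\mathscr{D}_R \nearrow \Omega$ produces local entropy solutions $\bu^R$ with uniform bounds independent of $R$. The weak continuity and compactness theorem from Section~\ref{sec:weak continuity} then applies: the entropy dissipation measures remain uniformly locally pre-compact in $H^{-1}$, so a diagonal subsequence converges a.e.\ to a limit $\bu \in L^\infty(\Omega)$. This limit inherits the bounds in the conclusion of the theorem, is an entropy solution on all of $\Omega$, and, since the inflow condition is propagated uniformly through each approximation, realizes the far-field velocity $\bu_\infty$ at $x=-\infty$ together with the Neumann condition on $\partial O$.

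The main obstacle will be the compatibility of Step~1 with the $H^{-1}_\loc$-compactness required in Step~2 in the presence of cavitation. The singular kernel $H^\s$ is genuinely unbounded near $\varrho=0$—this is the Keldysh degeneracy of \eqref{eq:ent gen intro}—so the viscous regularization must be engineered so that, even as $\bu^\varepsilon$ approaches the vacuum set, the entropy fluxes $\Q(\bu^\varepsilon)$ associated with generators built from $H^\s$ remain well-controlled and their distributional divergences remain compact in $H^{-1}_\loc$. Coupled with this is the necessity of ruling out stagnation uniformly in $\varepsilon$; unlike \cite{gq-transonic}, where $\gamma \in (1,3)$ allowed cavitation to be excluded a priori, the borderline case $\gamma=3$ forces us to accommodate cavitation and instead exclude stagnation through the invariant region, which is precisely where the supersonic far-field assumption $q_\infty > q_{\cri}$ is expected to enter.
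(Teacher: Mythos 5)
Your proposal follows essentially the same three-stage strategy as the paper: (1) construct viscous approximations on bounded domains with invariant-region bounds that permit cavitation but uniformly exclude stagnation (Section~\ref{sec:viscous}), (2) pass to the vanishing-viscosity limit on each bounded domain via the compensated compactness framework driven by the kernel-generated entropy pairs, and (3) exhaust $\Omega$ by an increasing family of bounded domains, applying the compactness/weak-continuity theorem to extract a globally-defined entropy solution. Your identification of the key structural points---in particular that the $\gamma=3$ invariant region yields a \emph{lower} bound on the speed rather than an upper bound, which is why cavitation must be accommodated while stagnation is ruled out---matches the paper's argument.
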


We show in \S \ref{sec:entropy sol} that the entropy solution obtained
in Theorem \ref{thm:main existence theorem}
satisfies the boundary condition around
the obstacle boundary $\partial O$: $\bu \cdot\mathbf{n}|_{\partial O} \geq 0$ in the sense
of weak normal traces (\textit{cf.}~Chen-Frid \cite{ChenFrid}). Obtaining the more precise boundary
condition $\bu\cdot\mathbf{n}|_{\partial O}=0$ may be possible
for specific domains, which are beyond the scope of this paper.

As mentioned before, the existence result is proved by combining a vanishing viscosity
method with the following compensated compactness framework:

\begin{theorem}[Compensated Compactness Framework]\label{thm:compensated compactness framework intro}
Let $\{\bu^\epsi
\}_{\varepsilon>0} \subset L^\infty$
be a sequence of functions satisfying$:$
\begin{enumerate}
    \item[(i)] There exists $\varrho_*\in (0, \varrho_{\rm cr})$, independent of $\epsi>0$, such that
\begin{equation*}
  0 \le \varrho(|\bu^\epsi(\x)|) \leq \varrho_* < \varrho_{\cri} \qquad \mbox{a.e.~$\x
  $};
\end{equation*}
or equivalently, there exists $q^*\in (q_{\cri}, q_{\cav}]$, independent of $\varepsilon$, such that
\begin{equation}\label{5.1a}
    q_{\cri} < q_* \leq |\bu^\varepsilon(\x)| \leq q_{\cav}
    \qquad \mbox{a.e.~$\x.$}
\end{equation}
\item[(ii)] There exist $\vartheta_*,\vartheta^*\in\mathbb{R}$,
independent of $\epsi>0$,
such that
\begin{equation}\label{5.2a}
    \vartheta_* \leq \vartheta^\varepsilon(\x) \leq \vartheta^*
    \qquad \mbox{a.e.~$\x,$}
\end{equation}
where $\vartheta^\varepsilon$ is the angle function associated to the velocity field $\bu^\varepsilon.$
\item[(iii)] For any entropy pair $\Q$ via
the Loewner--Morawetz relations
from a generator $H$ generated by kernels $H^\r$ and $H^\s$,
\begin{equation}\label{5.3a}
\{\div_\mathbf{x} \mathbf{Q}(\bu^\epsi)\}_{\varepsilon>0}
\quad\mbox{is pre-compact in $H^{-1}_\loc$}.
\end{equation}
\end{enumerate}
Then there exist both a subsequence $($still denoted$)$ $\{\bu^\epsi\}_{\varepsilon>0}$ and
a function $\bu\in L^\infty$ such that
\begin{equation}\label{eqn-convergence-approx}
\bu^\epsi \,\to \, \bu
\qquad \mbox{
{\it a.e.}~and strongly in $L^p_\loc$ for all $p\in [1,\infty)$}.
\end{equation}
\end{theorem}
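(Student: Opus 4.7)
The strategy follows the Tartar--Murat compensated compactness programme, specialized to the entropy family generated by the Loewner--Morawetz kernels of Theorem \ref{thm:kernels}. First, using the uniform $L^\infty$ bound from \eqref{5.1a}--\eqref{5.2a}, I would extract a weakly-$\ast$ convergent subsequence $\bu^\epsi \weakstar \bu$ and invoke the fundamental theorem on Young measures to produce a parametrized family $\{\young_\x\}_{\x \in \Omega}$ of probability measures, each supported in the compact set
\begin{equation*}
K := \{(q\cos\vartheta,q\sin\vartheta) : q_* \leq q \leq q_\cav,\ \vartheta_* \leq \vartheta \leq \vartheta^*\},
\end{equation*}
such that $f(\bu^\epsi) \weakstar \langle \young_\x, f\rangle$ in $L^\infty$ for every $f \in C(\R^2)$. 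The conclusion \eqref{eqn-convergence-approx} then reduces to showing that $\young_\x = \delta_{\bu(\x)}$ for a.e.~$\x\in\Omega$.

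Next, assumption (iii) feeds directly into the div-curl lemma. Given two entropy pairs $\Q = (Q_1,Q_2)$ and $\tilde\Q = (\tilde Q_1,\tilde Q_2)$ generated by the kernels $H^\r, H^\s$ in the sense of Remark \ref{rmk:generation}, the fields $\Q(\bu^\epsi)$ and $\tilde\Q(\bu^\epsi)^\perp$ have, respectively, divergence and curl pre-compact in $H^{-1}_\loc$ by \eqref{5.3a}. Passing to the limit yields the Tartar commutation identity
\begin{equation*}
\langle \young_\x, Q_2 \tilde Q_1 - Q_1\tilde Q_2 \rangle
= \langle \young_\x, Q_2\rangle\langle \young_\x, \tilde Q_1\rangle
- \langle \young_\x, Q_1\rangle\langle \young_\x, \tilde Q_2\rangle
\quad \text{for a.e. } \x \in \Omega,
\end{equation*}
which furnishes a large family of quadratic constraints on $\young_\x$ indexed by the test functions $\varphi_1,\varphi_2$ of Remark \ref{rmk:generation}.

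The decisive and hardest step is the Young-measure reduction. I would adapt the strategy of DiPerna, Ding--Chen--Luo, Chen, and Lions--Perthame--Souganidis to the present Tricomi--Keldysh setting. Convolving $H^\r$ and $H^\s$ against appropriate mollifier-type $\varphi_1,\varphi_2$ produces entropies that approximate Dirac-like test functionals localized at prescribed points $(\nu_0,\vartheta_0)$ on the boundary of $\supp\young_\x$ in the $(\nu,\vartheta)$-chart. The Huygens inclusion $\supp H^\r \cup \supp H^\s \subset \mathcal{K}$ provides finite speed of propagation, so that pairs of such localized entropies interact only on a narrow sliver of the light cone; substituting these pairs into the commutation identity, and exploiting the genuine nonlinearity of the system developed in \S\ref{sec:Section hyperbolicity and genuine nl}, should collapse the quadratic constraint to force $\young_\x$ to charge a single point. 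The principal obstacle is the Keldysh degeneracy at cavitation $\nu = 0$: there the singular kernel $H^\s$ is genuinely distributional, so the asymptotic descriptions in Appendix \ref{appendix:Glambda} must be used to justify the limiting cancellations and to rule out any residual spreading of $\young_\x$ along the cavitation locus $\varrho = 0$; the convexity conditions \eqref{eq:generator conditions ent ineq} applied to the special pair $\Q_*$ should supply the additional rigidity needed to close the argument. Once $\young_\x = \delta_{\bu(\x)}$ a.e., testing against $f(\bu) = |\bu - \bu(\x)|^p$ yields $\bu^\epsi \to \bu$ a.e., and the uniform $L^\infty$ bound upgrades this, via dominated convergence, to strong $L^p_\loc$ convergence for every $p \in [1,\infty)$, giving \eqref{eqn-convergence-approx}.
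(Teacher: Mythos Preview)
Your overall architecture (Young measure, Tartar commutation relation, reduction to a Dirac mass, then a.e.\ and $L^p_\loc$ convergence) matches the paper's. However, there are two substantive deviations from the paper's actual argument that are worth noting.

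First, and most importantly, you omit the passage to Riemann invariant coordinates. The paper does not carry out the reduction directly in the $(\nu,\vartheta)$-chart you describe; instead it invokes Proposition~\ref{lem:riemann invariants} to pass to the bijective coordinate system $\bu\mapsto(W_-(\bu),W_+(\bu))$, pushes the Young measure forward to $\young_\x'$ in these coordinates, and then cites DiPerna~\cite{diperna2} or Serre~\cite{SS0} as a black box for genuinely nonlinear $2\times2$ systems (Proposition~\ref{lem:hyperbolic and gn} supplying the genuine nonlinearity up to and including the vacuum). The reduction $\young_\x'=\delta_{(W_-(\x),W_+(\x))}$ is obtained in one stroke, and the inverse map $(W_-,W_+)\mapsto\bu$ (well-defined even at cavitation) carries this back to $\young_\x=\delta_{\bu(\x)}$. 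Your more hands-on description of localized entropies and light-cone interactions is in the spirit of what underlies those cited references, but the paper does not reproduce that machinery here.

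Second, your invocation of the special pair $\Q_*$ and the convexity conditions~\eqref{eq:generator conditions ent ineq} to handle the cavitation degeneracy is a red herring for this theorem. The paper's proof of Theorem~\ref{thm:compensated compactness framework intro} uses only the entropy pairs generated by the kernels $H^\r,H^\s$; the pair $\Q_*$ plays no role here (it enters later, in Theorem~\ref{big-thm-3} and in the dissipation estimate of \S\ref{sec:viscous}). The vacuum is handled not by extra entropy rigidity but by the structural facts that genuine nonlinearity persists at $\varrho=0$ (Proposition~\ref{lem:hyperbolic and gn}) and that the Riemann-invariant map remains bijective there (Proposition~\ref{lem:riemann invariants}). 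Your proposed route through asymptotics of $H^\s$ near cavitation is therefore more laborious than what the paper actually does.
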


As a direct corollary, we obtain the compactness of entropy solutions
of the Morawetz problem.

\begin{theorem}[Compactness and Weak Continuity]\label{big-thm-3}
Let $\{\bu^\epsi\}_{\epsi>0}\subset L^\infty$ be entropy solutions of
system \eqref{eq:system in intro}--\eqref{eq:bernoulli general gamma intro} which admit the following bounds$:$
\begin{itemize}
\item[(i)]  There exists $\varrho_*\in (0, \varrho_{\rm cr})$, independent of $\epsi>0$, such that
\begin{equation*}
  0 \le \varrho(\bu^\epsi(\x)) \leq \varrho_* < \varrho_{\cri} \qquad \mbox{
  a.e.~$\x$}.
\end{equation*}

\item[(ii)] There exist $q^*\in (q_{\cri}, q_{\cav}]$ and $\vartheta_*,\vartheta^* \in \mathbb{R}$,
independent of $\epsi>0$
such that
\begin{equation*}
    q_{\cri} < q_* \leq |\bu^\epsi(\x)| \leq q_{\cav}
    , \quad \vartheta_* \leq \vartheta^\varepsilon(\x) \leq \vartheta^*
    \qquad \mbox{
    a.e.~$\x$},
\end{equation*}
where $\vartheta^\varepsilon$ is the angle function associated to the velocity field $\bu^\varepsilon.$
\end{itemize}
Then there exist both a
subsequence $($still denoted$)$ $\{\bu^\epsi\}_{\varepsilon>0}$
and a function $\bu \in L^\infty$ such that
\begin{itemize}
\item[(i)]  $\bu^\epsi\,\to\, \bu$ a.e.~and strongly in $L^p_\loc$ for all $p \in [1,\infty).$

\smallskip
\item[(ii)]  The limit function $\bu$
is also an entropy solution of system \eqref{eq:system in intro}--\eqref{eq:bernoulli general gamma intro}.
\end{itemize}
\end{theorem}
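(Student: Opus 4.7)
The strategy is to reduce Theorem \ref{big-thm-3} to the compensated compactness framework of Theorem \ref{thm:compensated compactness framework intro} applied to the sequence $\{\bu^\epsi\}_{\epsi>0}$. Hypotheses (i) and (ii) of that framework are immediate from the uniform $L^\infty$ bounds on $\varrho(\bu^\epsi)$, $|\bu^\epsi|$, and $\vartheta^\epsi$ assumed in our statement, so the only substantive task is to verify hypothesis (iii): that for every entropy pair $\Q$ arising, via the Loewner--Morawetz relations \eqref{eq:lowener mor}, from a generator $H$ produced by kernels $H^\r$ and $H^\s$, the sequence $\{\div_\x \Q(\bu^\epsi)\}_{\epsi>0}$ is precompact in $H^{-1}_\loc$.

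\textbf{Step 1: $H^{-1}_\loc$ compactness of entropy dissipation measures.} Since $\bu^\epsi$ takes values in a fixed compact subset of the phase space (bounded density, bounded scalar speed away from sonic, bounded angle), the continuity of any entropy pair $\Q$ generated by the Tricomi--Keldysh kernels gives a uniform bound of $\Q(\bu^\epsi)$ in $L^\infty_\loc$, and hence of $\div_\x \Q(\bu^\epsi)$ in $W^{-1,\infty}_\loc$. For a pair $\Q_H$ whose generator $H$ satisfies the convexity conditions \eqref{eq:generator conditions ent ineq}, the entropy inequality \eqref{eq:ent ineq distr} asserts that $\div_\x \Q_H(\bu^\epsi) \geq 0$ in the sense of distributions, so it is a nonnegative Radon measure. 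Testing against a nonnegative cutoff $\psi \in C^\infty_{\rm c}(\R^2)$ with $\psi \equiv 1$ on a compact set $K$ and integrating by parts yields
\begin{equation*}
    \int_K \dd\!\bigl(\div_\x \Q_H(\bu^\epsi)\bigr) \leq -\int_{\R^2} \Q_H(\bu^\epsi)\cdot \nabla \psi \d \x \leq \|\Q_H(\bu^\epsi)\|_{L^\infty}\,\|\nabla\psi\|_{L^1},
\end{equation*}
which is uniformly bounded in $\epsi$. Hence $\{\div_\x \Q_H(\bu^\epsi)\}_{\epsi>0}$ is uniformly bounded in the space of Radon measures on compacts, and therefore in $W^{-1,q}_\loc$ for some $q\in(1,2)$. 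The Murat--Tartar interpolation lemma, combining the $W^{-1,q}_\loc$ bound with the $W^{-1,\infty}_\loc$ bound, furnishes precompactness in $H^{-1}_\loc$. For a general generator $H$ one writes $H = (H + \lambda H^*) - \lambda H^*$ with $\lambda$ sufficiently large that both summands satisfy \eqref{eq:generator conditions ent ineq}; linearity of the Loewner--Morawetz map then reduces the claim to the convex case just handled. The special pair $\Q_*$ is treated identically, exploiting the uniform convexity of $H^*$ in the supersonic regime.

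\textbf{Step 2: passing to the limit.} With (iii) established, Theorem \ref{thm:compensated compactness framework intro} produces a subsequence (still denoted $\{\bu^\epsi\}$) and a limit $\bu \in L^\infty$ such that $\bu^\epsi \to \bu$ almost everywhere and strongly in $L^p_\loc$ for all $p\in[1,\infty)$; this proves assertion (i) of the theorem. For assertion (ii), the continuity of $\varrho(\cdot)$ together with the uniform $L^\infty$ bounds yield $\varrho(\bu^\epsi)\bu^\epsi \to \varrho(\bu)\bu$ in $L^p_\loc$, so the distributional identities $\div(\varrho(\bu^\epsi)\bu^\epsi)=0$ and $\curl \bu^\epsi =0$ pass to the limit, showing that $\bu$ solves \eqref{eq:system in intro}--\eqref{eq:bernoulli general gamma intro}. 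For any admissible $\Q \in \{\Q_*, \Q_H\}$ as in Definition \ref{def:ent sol}, continuity of $\Q$ and the uniform bounds give $\Q(\bu^\epsi) \to \Q(\bu)$ in $L^p_\loc$, so $\div_\x \Q(\bu^\epsi) \to \div_\x \Q(\bu)$ in distributions; since each approximating term is nonnegative, so is the limit, and therefore $\bu$ is an entropy solution.

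\textbf{Main obstacle.} The delicate point is the verification of (iii) for the singular family built from $H^\s$: one must confirm both that $\Q(\bu^\epsi)$ remains uniformly bounded in $L^\infty_\loc$ even as $\bu^\epsi$ may approach the cavitation locus $\varrho=0$, and that the convex/concave splitting via $H^*$ is permissible in the presence of the Keldysh degeneracy. These rely precisely on the asymptotic description of the generator kernels carried out in the preceding sections and in Appendix \ref{appendix:Glambda}; once they are invoked, the rest of the argument is a standard Murat--Tartar interpolation combined with dominated-convergence-based passage to the limit.
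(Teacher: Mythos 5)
Your proposal is correct and follows essentially the same route as the paper: verify hypothesis (iii) of the compensated compactness framework by exploiting the sign of the dissipation measure for convex generators (Murat's cone lemma together with the uniform $W^{-1,\infty}_{\rm loc}$ bound), reduce general generators to the convex case by adding a large multiple of the special generator $H^*$, and then pass to the limit by dominated convergence. The only caveat is a minor imprecision in the phrase ``$H = (H+\lambda H^*) - \lambda H^*$ with $\lambda$ sufficiently large that both summands satisfy \eqref{eq:generator conditions ent ineq}'': the two generators to which the convex case is applied are $H+\lambda H^*$ and $\lambda H^*$ (not $-\lambda H^*$, which is concave), with $\div_\x\Q_H(\bu^\epsi)$ recovered as a difference by linearity of the Loewner--Morawetz map; this is what the paper does with $\widetilde{H}=H+C_0 H^*$ and $C_0\ge 2C_H$, and the feasibility of such a choice rests on the cancellation estimate $\sum_{j\le 2}|\partial_\vartheta^j(\varrho H_\nu + H_{\vartheta\vartheta})|\le C_H\varrho$ from Propositions \ref{prop:est for h-1 cpct reg} and \ref{prop:cpctness est sing}, which you correctly flag as the crux.
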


\smallskip
\subsection{Connections with the isentropic Euler equations}\label{subsec:connection euler}
As mentioned earlier, the Morawetz problem is deeply connected to the isentropic Euler equations \eqref{eq:euler intro}.
In particular, recall that an entropy pair $(E, F)$ for system \eqref{eq:euler intro} is a pair of functions
of $(\varrho,u)$ such that
\begin{equation*}
    E_t(\varrho,u) + F_x(\varrho,u) =0
\end{equation*}
for any smooth solution $(\varrho, u)$ of system \eqref{eq:euler intro}.
When the pressure law is given by a polytropic perfect gas, \textit{i.e.},
\begin{equation}
    p(\varrho)= \frac{(\tilde\gamma-1)^2}{4\tilde\gamma}\varrho^{\tilde{\gamma}},
\end{equation}
the entropy $E$ and entropy-flux $F$ may be generated by convolving test functions $\phi$ with kernels,
denoted for convenience $\chi(\varrho,u,s)$ and $\sigma(\varrho,u,s)$, respectively,
so that $(E(\varrho,u),F(\varrho,u))=(\chi(\varrho,u)*_s\phi,\,\sigma(\varrho,u)*_s\phi)$.
The entropy-flux generator itself is given
by $\sigma(\varrho,u,s)=(u+\frac{\tilde\gamma-1}{2}(s-u))\chi(\varrho,u,s)$,
while the entropy generator $\chi$ is determined by the Keldysh equation:
\begin{equation}
    \chi_{\varrho\varrho} - \frac{(\tilde\gamma-1)^2}{4}\varrho^{\tilde{\gamma}-3}\chi_{uu}=0.
\end{equation}
This equation admits two linearly independent fundamental solutions,
encoding the choices of admissible initial conditions.

More generally, when considering general pressure laws so that
$p(\varrho)$ is only \emph{asymptotically} polytropic, a similar Keldysh equation:
\begin{equation}
    \chi_{\varrho\varrho} - \tilde{k}'(\varrho)^2\chi_{uu}=0
\end{equation}
holds for the entropy generator $\chi(\varrho,u,s)$.
A general strategy for solving the equations of this form
was developed in \cite{ChenLeFloch}, and the first step in our approach is concerned with adapting this method.

Returning to the Morawetz problem, notice from \eqref{eq:nu def} that
$\nu(\varrho) = \tanh^{-1}(\varrho) - \varrho$ as a closed-form;
however, there is no hope in inverting this formula explicitly.
It follows that the only information available is an asymptotic description
of the characteristic speed $k(\nu)$ in the vicinity of the vacuum, a rough statement of which is
\begin{equation}\label{eq:asymp k intro rough}
k(\nu) = c_\sharp \nu^{\frac{1}{3}} + \bigo(\nu), \qquad k'(\nu) = \frac{1}{3}c_\sharp \nu^{-\frac{2}{3}} + \bigo(1)
\end{equation}
for some positive constant $c_\sharp$ (see Proposition \ref{prop:k expand}).

The leading term of the entropy generator equation \eqref{eq:ent gen intro with k'}
coincides with the coefficient of the entropy equation for the isentropic Euler equations
\eqref{eq:euler intro}
with a suitable $\gamma$-law. More precisely,
\begin{equation}
    k'^2_\text{Morawetz}(\nu
    )\sim (c_\sharp\theta/\gamma)^2 \nu^{-1-\frac{1}{\gamma}},
\end{equation}
which coincides with the coefficient of the entropy equation
for
system \eqref{eq:euler intro} when $\tilde\gamma=2-\frac{1}{\gamma}$.
We highlight that the case: $\gamma=3$ for the Morawetz problem corresponds to
the case: $\tilde{\gamma}=\frac{5}{3}$
for the isentropic Euler equations \eqref{eq:euler intro}.

We emphasize that our setting is different from that in \cite{ChenLeFloch}
for the isentropic Euler equations with general pressure laws.
While it is possible to construct an ad-hoc general pressure
law $\tilde{p}(\nu)$ in the sense of \cite{ChenLeFloch},
matching our $k(\nu)$ specified above, this pressure law fails to verify the genuine nonlinearity
requirement in \cite{ChenLeFloch}.
More precisely, coefficient $k$ is defined from a given pressure
law
$\tilde{p}(\varrho) := \kappa \varrho^{\tilde{\gamma}}(1+P(\varrho))$,
for $\kappa >0$ and $\tilde{\gamma}\in (1,3)$ with $P$ satisfying
$|P^{(j)}(\varrho)| \leq C\varrho^{\tilde{\gamma}-1-j}$ for $j=1,\dots,4$,
by setting $k'(\varrho) = \frac{\sqrt{\tilde{p}'(\varrho)}}{\varrho}$.
One is indeed able to reconstruct a pressure function $\tilde{p}(\nu)$ of this form
from our coefficient $k(\nu)$ defined in \eqref{eq:k def} by defining
\begin{equation*}
   \tilde{p}(\nu) := \int_0^\nu \tau^2 k'(\tau)^2 \d \tau = \frac{1}{9}c_\sharp^2 \int_0^\nu \big( \tau^{\frac{2}{3}}
   + \bigo(\tau^{\frac{4}{3}}) \big) \d \tau = \frac{1}{6}c_\sharp^2 \nu^{\frac{5}{3}}\big( 1 + \bigo(\nu^{\frac{2}{3}}) \big),
\end{equation*}
so that, by setting $\tilde{\gamma}=\frac{5}{3}$,
the perturbation $P(\nu) := 6 c_\sharp^{-2} \nu^{-\frac{5}{3}}p(\nu) - 1  = \bigo(\nu^{\frac{2}{3}})$
satisfies the boundedness assumptions in \cite{ChenLeFloch}.
However, the pressure functions admissible in the setting in \cite{ChenLeFloch} must also satisfy
the assumptions of strict hyperbolicity and genuine nonlinearity, \textit{i.e.},
$\tilde{p}'(\nu)>0$ and $2\tilde{p}'(\nu) + \nu \tilde{p}''(\nu)>0$ for $\nu>0$.
This second requirement is not true in our case since, in the vicinity of the vacuum,
\begin{equation*}
  \begin{aligned}  2\tilde{p}'(\nu) + \nu \tilde{p}''(\nu) = 4\nu^2 k'(\nu)^2
  + 2\nu^3 k''(\nu)k'(\nu) \sim \frac{8}{27}c_\sharp^2 \nu^{\frac{2}{3}}  \quad \text{as } \nu \to 0,
  \end{aligned}
\end{equation*}
which is positive; however,
along the sonic curve where $M=1$
(\textit{cf.}~Remark \ref{remark:k smooth on open}),
\begin{equation*}
   \begin{aligned}
   2\tilde{p}'(\nu_\cri) + \nu_\cri \tilde{p}''(\nu_\cri)
   = \Big(4\nu^2 \frac{(M^2-1)}{\varrho^2}
   - 2 \nu^3 \frac{M^2}{\varrho^3}\big( \frac{1}{\varrho^2} + (M^2-1) \big)\Big)\Big|_{\nu=\nu_\cri}
   \!\!\!\!=\! -2\frac{\nu_\cri^3}{\varrho_\cri^5},
   \end{aligned}
\end{equation*}
which is negative.
In turn, one cannot directly apply
\cite[Theorem 2.1]{ChenLeFloch},
concerned with the existence of the weak entropy kernel for the isentropic Euler equations,
to obtain the existence of the regular kernel $H^\r$.
Meanwhile, the existence of the strong entropy kernel has never been treated for the pressure
laws different from exactly
polytropic gas, so the analysis of the singular kernel $H^\s$ is different from what has been
covered thus far in the literature.

\section{Hyperbolicity and Entropy Structure}\label{sec:Section hyperbolicity and genuine nl}

This section aims to formulate system \eqref{eq:system in intro}--\eqref{eq:bernoulli general gamma intro}
as a genuinely nonlinear hyperbolic system in \S \ref{sec:hyp gen non}
and to contextualize the Loewner--Morawetz relations for entropy pairs
given in \eqref{eq:lowener mor}--\eqref{eq:ent gen intro} in \S \ref{sec:entropic struc intro}.
The main results of this section are the following:

\begin{prop}[Hyperbolicity and Genuine Nonlinearity]\label{lem:hyperbolic and gn}
For $\gamma=3$, system \eqref{eq:system in intro}--\eqref{eq:bernoulli general gamma intro}
is strictly hyperbolic in region $\{q_{\cri} < |\bu| < q_{\cav}\}$
and is genuinely nonlinear in region $\{q_{\cri} \leq |\bu| \leq q_{\cav}\}$.
\end{prop}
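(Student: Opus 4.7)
The plan is to recast the system in the $(q,\vartheta)$-coordinates from~\eqref{2.1a}, and then verify the two assertions by direct characteristic analysis. Substituting $\bu = q(\cos\vartheta, \sin\vartheta)$ and using the $\gamma=3$ Bernoulli law $\varrho = \sqrt{1-q^2}$ (so that $c = \varrho$ and $M^2 = q^2/(1-q^2)$), the divergence and curl equations expand into a quasilinear first-order system
\[
\mathbf{A}(q,\vartheta)\,\mathbf{W}_x + \mathbf{B}(q,\vartheta)\,\mathbf{W}_y = 0, \qquad \mathbf{W} = (q,\vartheta)^\top,
\]
with explicit $2\times 2$ matrices whose coefficients depend only on $q,\vartheta,M(q)$. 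A short computation of the characteristic symbol yields the factorization
\[
\det(\xi \mathbf{A} + \eta\mathbf{B}) = q\bigl[(1 - M^2)(\xi\cos\vartheta + \eta\sin\vartheta)^2 + (\xi\sin\vartheta - \eta\cos\vartheta)^2\bigr],
\]
so that in the supersonic regime $M > 1$ the characteristic directions make angles $\vartheta \pm \mu$ with the horizontal axis, where $\mu = \arcsin(1/M) \in (0,\pi/2)$ is the Mach angle. Choosing $x$ as the time-like direction, the characteristic speeds are $\lambda_\pm = \tan(\vartheta \pm \mu)$, and they are real and distinct precisely when $\mu \in (0,\pi/2)$, equivalently when $q \in (q_\cri, q_\cav)$, which yields strict hyperbolicity on the stated open region.

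For genuine nonlinearity, I would solve $(\mathbf{B} - \lambda_\pm \mathbf{A})\mathbf{r}_\pm = 0$ to obtain the right eigenvectors, which up to scaling read $\mathbf{r}_\pm = (\mp q\tan\mu,\, 1)^\top$. A direct computation then gives
\[
\nabla\lambda_\pm \cdot \mathbf{r}_\pm \,=\, \sec^2(\vartheta \pm \mu)\bigl[1 - q\tan\mu \cdot \mu'(q)\bigr].
\]
The $\gamma = 3$-specific identities $\sin\mu = \sqrt{1-q^2}/q$ and $\cos^2\mu = (2q^2-1)/q^2$ give $\mu'(q) = -1/(q^3\sin\mu\cos\mu)$, whence the bracket simplifies to the explicit expression $\tfrac{2q^2}{2q^2-1}$, which is strictly positive for all $q \geq q_\cri$. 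Consequently $\nabla\lambda_\pm \cdot \mathbf{r}_\pm$ has a definite sign throughout the closed region $\{q_\cri \leq q \leq q_\cav\}$, establishing genuine nonlinearity.

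I expect the main obstacle to be the careful handling of the two endpoints. At $q = q_\cri$ (the sonic curve), $\mu = \pi/2$, so strict hyperbolicity degenerates and both the unnormalized eigenvectors $\mathbf{r}_\pm$ and the bracket $\tfrac{2q^2}{2q^2-1}$ blow up; at $q = q_\cav$ (cavitation), $\varrho = 0$ and $M \to \infty$ force $\mu \to 0$, causing the two characteristic families to coalesce. In both cases, a suitable $q$-dependent rescaling of $\mathbf{r}_\pm$ (for instance by a power of $\cos\mu$) is needed to extract a well-defined limiting sign of $\nabla\lambda_\pm \cdot \mathbf{r}_\pm$. The saving grace is that the intrinsic characteristic invariant $1 - q\tan\mu\cdot\mu'(q) = \tfrac{2q^2}{2q^2-1}$ retains a definite sign on the closure, so that a consistent choice of rescaled eigenvectors delivers genuine nonlinearity up to and including both boundaries of the supersonic region.
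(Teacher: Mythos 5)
Your proposal is correct and takes essentially the same route as the paper: both compute the eigenvalues and eigenvectors of the first-order system in the $(q,\vartheta)$ framework and then test $\nabla\lambda_\pm\cdot\mathbf{r}_\pm$. Your $\lambda_\pm = \tan(\vartheta\pm\mu)$ is exactly the paper's $\Lambda_\pm$ rewritten via the Mach angle, your $\mathbf{r}_\pm = (\mp q\tan\mu,1)^\top$ is proportional to the paper's $\mathbf{R}_\pm$, and your bracket $1-q\tan\mu\,\mu'(q)=\frac{2q^2}{2q^2-1}$ agrees (up to the positive normalizing factor coming from the change of eigenvector scaling) with the paper's explicit formula $\pm\frac{\gamma+1}{2(\cdots)^3}\frac{q^3}{\sqrt{(1+\varrho^{3-\gamma}(q^2-c^2))(q^2-c^2)}}$ specialized to $\gamma=3$. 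The only genuine structural difference is that the paper deliberately routes the computation through the conservative unknown $\Z(\bu)=(\varrho u, v)$, conjugating $\mathbf{M}$ by the Jacobian $\mathbf{J}$; the paper motivates this by noting earlier that the $(q,\vartheta)$ system is \emph{a priori} inequivalent to the conservative one for weak solutions. For the statement of the proposition — which concerns characteristic structure, and hence is invariant under smooth changes of unknown — this distinction does not affect the conclusion, so your shortcut is legitimate; it just loses the explicit connection to the conservative variables that the paper later exploits for the compensated compactness argument.

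Two small imprecisions in your final paragraph. First, no rescaling is actually needed at $q=q_\cav$: with $\mu\to 0$ you have $\mathbf{r}_\pm\to(0,1)^\top$ and $\sec^2(\vartheta\pm\mu)\frac{2q^2}{2q^2-1}\to 2\sec^2\vartheta$, which is finite (this is precisely the paper's "including the vacuum states" observation — strict hyperbolicity is lost there because the two families coalesce, but the directional derivative of $\Lambda_\pm$ stays finite and nonzero). Second, at $q=q_\cri$ your suggested rescaling "by a power of $\cos\mu$" needs to be $\cos^2\mu$ (not $\cos\mu$) to tame the bracket, and even then there is a residual $\sec^2(\vartheta\pm\mu)$ factor that is an artefact of selecting $x$ as the time-like variable. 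Note, however, that the paper's own formula also diverges as $q\to q_\cri^+$; the paper simply argues that positivity (nonvanishing) of $\nabla_\Z\Lambda_\pm\cdot\mathbf{e}_\pm$ persists on the closed region, which is all that genuine nonlinearity requires.
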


\begin{prop}[Riemann Invariants]\label{lem:riemann invariants}
For $\gamma=3$, system \eqref{eq:system in intro}--\eqref{eq:bernoulli general gamma intro}
is endowed with the Riemann invariants{\rm :}
    \begin{equation}\label{eq:Wpm expression 1}W_\pm = \vartheta \pm k,\end{equation}
    and the mapping{\rm :} $\bu \mapsto (W_-(\bu),W_+(\bu))$ is bijective on $\{q_{\cri} \leq |\bu| \leq q_{\cav}\}$.
\end{prop}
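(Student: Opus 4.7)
The plan is to recast system \eqref{eq:system in intro} as a $2\times 2$ quasilinear hyperbolic system in the variables $(\nu,\vartheta)$, read off the Riemann invariants from its eigenstructure, and then verify bijectivity by explicit inversion. I would first decompose $\bu = q(\cos\vartheta,\sin\vartheta)$ and introduce the streamwise/normal derivatives $\partial_s := \cos\vartheta\,\partial_x + \sin\vartheta\,\partial_y$ and $\partial_n := -\sin\vartheta\,\partial_x + \cos\vartheta\,\partial_y$. Using $\varrho'(\nu) = M^2$ from \eqref{eq:nu prime eqn} together with $q_\varrho = -q/(M^2\varrho)$, which follows by differentiating the Bernoulli law \eqref{eq:bernoulli general gamma intro}, a direct calculation recasts \eqref{eq:system in intro} as
\begin{equation*}
\partial_s\nu + \frac{\varrho}{M^2-1}\,\partial_n\vartheta = 0,\qquad \partial_s\vartheta + \frac{1}{\varrho}\,\partial_n\nu = 0.
\end{equation*}
This is essentially the form underlying the proof of Proposition \ref{lem:hyperbolic and gn}; its coefficient matrix has eigenvalues $\lambda_\pm = \pm 1/\sqrt{M^2-1}$ and right eigenvectors $\mathbf{r}_\pm = (\lambda_\pm\varrho,\,1)^T$ in the $(\nu,\vartheta)$-coordinates.

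Next, I would verify the Riemann invariant conditions $\nabla_{(\nu,\vartheta)}W_\pm \cdot \mathbf{r}_\mp = 0$ directly. Since $\nabla W_\pm = (\pm k'(\nu),\,1)$, this reduces to $\pm k'(\nu)\,\lambda_\mp\,\varrho + 1 = 0$, which follows at once from the identity $k'(\nu)\,\varrho = \sqrt{M^2-1}$ in \eqref{eq:what is k' squared}. This establishes that $W_\pm = \vartheta \pm k$ are the Riemann invariants of the system.

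For the bijectivity claim, after fixing a continuous branch of $\vartheta$, I would factor the map $\bu \mapsto (W_-,W_+)$ as $\bu \mapsto (q,\vartheta) \mapsto (\nu,\vartheta) \mapsto (W_-,W_+)$. The first map is bijective on the chosen branch; the second is bijective on $[0,\varrho_\cri]$ because $\nu'(\varrho) = 1/M^2 > 0$ on this interval by \eqref{eq:nu prime eqn}; the third has Jacobian determinant $-2k'(\nu)$, strictly nonzero on $(0,\nu_\cri)$, and since $k' > 0$ on the interior while $k$ is continuous on the closure, $k$ is strictly increasing on $[0,\nu_\cri]$. Hence the third map can be inverted explicitly via $\vartheta = (W_+ + W_-)/2$ and $\nu = k^{-1}((W_+ - W_-)/2)$.

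The main technical concern is the degeneration of the coefficient matrix at the boundary of the supersonic region: the entries $\varrho/(M^2-1)$ and $1/\varrho$ blow up at the sonic curve ($M=1$) and at cavitation ($\varrho=0$), respectively. However, all identities used above extend continuously up to and including these endpoints, and set-theoretic bijectivity ultimately hinges only on the strict monotonicity of $k$ on $[0,\nu_\cri]$, which is unaffected by these degeneracies.
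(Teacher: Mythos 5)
Your proposal is correct, and every computation checks out, but the route is genuinely different from the paper's. The paper works with $(q,\vartheta)$ and Cartesian spatial derivatives, obtaining system \eqref{eq:system polar sec 2 before strict hyp and genuine nonlinear} with the less transparent eigenvectors $\mathbf{R}_\pm$ from \eqref{eq:eigenvalues and eigenvec q theta}, and then explicitly transfers the Riemann-invariant relation to the conservative variables $\Z$ via the Jacobian conjugation $\nabla_\Z W_\pm\cdot\mathbf{e}_\pm = |\mathbf{J}^{-1}\mathbf{R}_\pm|^{-1}\,\nabla_{(q,\vartheta)}W_\pm\cdot\mathbf{R}_\pm=0$. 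You instead rotate to a streamwise/normal frame and pass to $(\nu,\vartheta)$, which renders the coefficient matrix purely off-diagonal so that the eigenstructure is immediate and the verification collapses to the single identity $k'(\nu)\varrho=\sqrt{M^2-1}$ of \eqref{eq:what is k' squared} --- a shorter and cleaner computation. (Your parenthetical claim that this is the form ``underlying'' Proposition~\ref{lem:hyperbolic and gn} is not quite accurate: that proof keeps Cartesian derivatives, and its eigenvalues $\Lambda_\pm$ are physical-space characteristic slopes, not $\pm 1/\sqrt{M^2-1}$; nothing in your argument depends on it.) What your route leaves tacit is the transfer of the conclusion to the conservative form \eqref{eq:conservative form u v ii}, which the paper spells out because that is the form the Young-measure reduction of \S\ref{sec:comp comp framework and red} actually sees; you appeal implicitly to coordinate-invariance of the characteristic structure, which is acceptable since both proofs are doing smooth-coefficient calculus anyway. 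The bijectivity argument is the same in spirit, reducing to strict monotonicity of $k$ (your $k(\nu)$ increasing, the paper's $k(q)$ decreasing, the two related by the decreasing change of variable $q\mapsto\nu$) and explicit inversion, with $\varrho=0$ and $M=1$ handled by continuity. One small imprecision: $\nu'(\varrho)=1/M^2$ actually vanishes at $\varrho=0$ rather than being strictly positive there, though strict monotonicity of $\nu$ on $[0,\varrho_\cri]$ still follows from positivity on the open interval together with continuity, so your conclusion stands.
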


\subsection{Hyperbolicity and genuine nonlinearity}\label{sec:hyp gen non}
We first show Proposition \ref{lem:hyperbolic and gn}.

Following \cite{gq-transonic}, we first recast
system \eqref{eq:system in intro}--\eqref{eq:bernoulli general gamma intro} in polar coordinates.
By recalling the angle function $\vartheta$ associated with the velocity field $\bu$
as introduced in \eqref{2.1a},
system \eqref{eq:system in intro}--\eqref{eq:bernoulli general gamma intro}
can be formally rewritten
as
\begin{equation}\label{eq:system polar sec 2 before strict hyp and genuine nonlinear}
    \left( \begin{matrix}
        q \\
        \vartheta
    \end{matrix} \right)_x + \mathbf{M}  \left( \begin{matrix}
        q \\
        \vartheta
    \end{matrix} \right)_y = 0,
\end{equation}
where matrix $\mathbf{M}$ is uniquely characterized by its normalized right-eigenvectors $\mathbf{R}_\pm$
and corresponding eigenvalues $\Lambda_\pm$, given by the explicit formulas:
\begin{equation}\label{eq:eigenvalues and eigenvec q theta}
   \mathbf{R}_\pm = \frac{qc}{\sqrt{q^2 c^2 + q^2-c^2}}
   \left( \begin{matrix}
        1 \\
        \mp \frac{\sqrt{q^2-c^2}}{qc}
    \end{matrix} \right), \quad
    \Lambda_\pm = -\frac{\cos\vartheta \pm \frac{\sqrt{q^2-c^2}}{c}\sin\vartheta}
     {\sin\vartheta \mp \frac{\sqrt{q^2-c^2}}{c}\cos\vartheta},
\end{equation}
with the local sonic speed $c$ defined in \eqref{eq:local sound speed}: $c(\varrho)=\varrho$
for $\gamma=3$.
As per \cite[\S 3]{gq-transonic}, further linear transformations may be applied to formally
rewrite \eqref{eq:system polar sec 2 before strict hyp and genuine nonlinear} as a nonlinear system
for the unknown vector field $(\varrho,\vartheta)$,
the setting in which we compute entropy pairs (\textit{cf.}~\S \ref{sec:entropic struc intro}).

System \eqref{eq:system polar sec 2 before strict hyp and genuine nonlinear},
as well as the proposed variants in \cite{gq-transonic}, is \emph{a priori} not equivalent to
the conservative system \eqref{eq:system in intro}--\eqref{eq:bernoulli general gamma intro};
indeed we do not expect the solutions to be regular enough to apply the chain rule.
To this end, we define the vector field:
\begin{equation}\label{eq:unknown for conservative sys}
    \Z(\bu) = \left( \begin{matrix}
       Z_1(\bu) \\
        Z_2(\bu)
    \end{matrix} \right) :=  \left( \begin{matrix}
       \varrho(u,v) u \\
        v
    \end{matrix} \right),
\end{equation}
where $\varrho$ is prescribed by the Bernoulli law \eqref{eq:bernoulli general gamma intro}
so that \eqref{eq:system in intro} reads as
\begin{equation}\label{eq:conservative form u v ii}
      \Z(\bu)_x + \bF (\Z(\bu))_y = 0
\end{equation}
with
$$
\bF (\Z(\bu)) = \left(\begin{matrix}\varrho(u,v) v \\  -u \end{matrix}\right).
$$
We note in passing that, for $\gamma=3$, the mapping: $\bu \mapsto \Z(\bu)$
in \eqref{eq:unknown for conservative sys} is well-defined
in region $\{q_{\cri} \leq |\bu| \leq q_{\cav}\}$ and is invertible
in subregion $\{q_{\cri} < |\bu| < q_{\cav}\}$.
Indeed, if the system is constrained to this subregion,
we may reconstruct the velocity field $\bu$ from $\Z$ by the explicit formulas:
\begin{equation}\label{eq:reconstructing u from U}
   \varrho(\Z) = \frac{1}{\sqrt{2}}\sqrt{(1-Z_2^2) - \sqrt{(1-Z_2^2)^2 - 4Z_1^2}}, \quad
   u(\Z) = \frac{Z_1}{\varrho(\Z)}, \quad v(\Z) = Z_2.
\end{equation}
The relations above are amount to solving for $\varrho$ in terms of $\Z$ in the Bernoulli law:
\begin{equation}\label{eq:bernoulli for U1 U2}
    \varrho^{\gamma-1} = 1 - \frac{\gamma-1}{2}\Big( \big(\frac{Z_1}{\varrho}\big)^2 + Z_2^2 \Big),
\end{equation}
which may be inverted explicitly for $\gamma=3$,
provided that the condition: $0<\varrho < \varrho_{\cri}$ is imposed.

We are now ready to give the proof of Proposition \ref{lem:hyperbolic and gn}.

\begin{proof}[Proof of Proposition \ref{lem:hyperbolic and gn}]
The proof is divided into two steps.

\smallskip
\noindent 1. \textit{Hyperbolicity}: First, matrix $\nabla_\Z \bF$ may be computed by comparing
the change of coordinates in \eqref{eq:system polar sec 2 before strict hyp and genuine nonlinear}
and the chain rule in \eqref{eq:conservative form u v ii}:
\begin{equation*}
    \nabla_\Z \bF = \mathbf{J}^{-1} \mathbf{M} \mathbf{J}, \qquad \mathbf{J} := \left( \begin{matrix}
        q_{Z_1} & q_{Z_2} \\
        \vartheta_{Z_1} & \vartheta_{Z_2}
    \end{matrix} \right).
\end{equation*}
Matrix $\mathbf{J}$ is obtained by first calculating the Jacobian for the change of
variables: $(u,v) \mapsto (Z_1,Z_2)$ via implicitly differentiating \eqref{eq:bernoulli for U1 U2}, \textit{i.e.},
\begin{equation*}
    \mathbf{K} := \left( \begin{matrix}
       u_{Z_1}  & u_{Z_2} \\
      v_{Z_1} & v_{Z_2}
    \end{matrix} \right) = \left( \begin{matrix}
       -\frac{c^2 }{\varrho(u^2-c^2)}  & -\frac{uv}{u^2-c^2} \\
      0 & 1
    \end{matrix} \right),
\end{equation*}
and then multiplying by the corresponding matrix for the mapping: $(q,\vartheta)\mapsto (u,v)$, \textit{i.e.},
\begin{equation*}
  \mathbf{L} := \left( \begin{matrix}
       q_u  & q_v \\
       \vartheta_u  & \vartheta_v
    \end{matrix} \right) = \left( \begin{matrix}
       \cos\vartheta  & \sin\vartheta \\
      -\frac{1}{q}\sin\vartheta  & \frac{1}{q}\cos\vartheta
    \end{matrix} \right),
\end{equation*}
whence
\begin{equation*}
    \mathbf{J} = \mathbf{L}\mathbf{K}.
\end{equation*}
Notice that, while the matrices above are not well-defined in terms of the coordinates $(Z_1,Z_2)$
at the vacuum,
they are well-defined on the locus: $\{\varrho=0\}$
in terms of either coordinate system $(u,v)$ or $(q,\vartheta)$.

It follows from the previous computations that
matrix $\nabla_\Z \bF$ is uniquely characterized
by its normalized right-eigenvectors:
\begin{equation*}
    \mathbf{e}_\pm = \frac{\mathbf{J}^{-1}\mathbf{R}_\pm}{|\mathbf{J}^{-1}\mathbf{R}_\pm|}
    = \frac{1}{\sqrt{1+\varrho^2c^{-2} (q^2-c^2)} } \left( \begin{matrix}
        \varrho c^{-2}( uv + \Lambda_\pm (c^2-u^2)) \\
        -1
    \end{matrix} \right),
\end{equation*}
and corresponding eigenvalues are given again by $\Lambda_\pm$ that are distinct and real
when $\varrho>0$ and $q>c$,
and are equal on locus $\{\varrho=0\}$ when $c=0$.
Thus, the conservative system is strictly hyperbolic in the supersonic region away from the vacuum.

\smallskip
\noindent 2. \textit{Genuine nonlinearity}: A direct computation yields
\begin{equation*}
\partial_q \Lambda_\pm
=\mp\frac{\frac{qc}{\sqrt{q^2-c^2}}\big( 1+ \frac{\gamma-1}{2}M^2 \big)}
{(c\sin\vartheta \mp \sqrt{q^2-c^2} \cos \vartheta)^2},
\quad \partial_\vartheta \Lambda_\pm = \frac{q^2}{(c\sin\vartheta \mp \sqrt{q^2-c^2} \cos\vartheta)^2},
\end{equation*}
whence
\begin{equation*}
   \begin{aligned}
   \nabla_{\Z} \Lambda_\pm \!\cdot\! \mathbf{e}_\pm
   \! &= (\mathbf{J}^\top \nabla_{(q,\vartheta)}\Lambda_\pm) \cdot \frac{\mathbf{J}^{-1}\mathbf{R}_\pm}{|\mathbf{J}^{-1}\mathbf{R}_\pm|} \\
   &= \frac{1}{|\mathbf{J}^{-1}\mathbf{R}_\pm|} \big( \nabla_{(q,\vartheta)} \Lambda_\pm \cdot \mathbf{R}_\pm \big) \\
   &= \pm \frac{\gamma+1}{2(c \sin\vartheta \mp \sqrt{q^2-c^2} \cos\vartheta)^3}
   \frac{q^3}{\sqrt{(1+\varrho^{3-\gamma} (q^2-c^2))(q^2-c^2)}}.
   \end{aligned}
\end{equation*}
Since the inequality
\begin{equation*}
    \frac{q^3}{\sqrt{(1+\varrho^{3-\gamma} (q^2-c^2))(q^2-c^2)}} > 0
\end{equation*}
holds everywhere in the hyperbolic region including at the vacuum for $\gamma=3$, the conservative system \eqref{eq:conservative form u v ii}
is genuinely nonlinear in the supersonic region $\{q \geq c\}$
including the vacuum states $\{\varrho=0\}$.
\end{proof}

\subsection{Riemann invariants}\label{sec:riemann invariants} We now show Proposition \ref{lem:riemann invariants}.

As per \cite[\S 4]{gq-transonic}, system \eqref{eq:system polar sec 2 before strict hyp and genuine nonlinear}
is endowed with Riemann invariants $W_\pm$ that satisfy the conditions:
$\nabla_{(q,\vartheta)}W_\pm \cdot \mathbf{R}_\pm = 0$, \textit{i.e.},
\begin{equation}\label{eq:Riem pdes}
    \partial_\vartheta W_\pm = 1, \quad \partial_q W_\pm = \mp \frac{\sqrt{q^2 - c^2}}{qc}
\end{equation}
when
$q>c$.
Using the abuse of notation $k(q)$ to denote $k$ as a function of $q$ when composed with $\nu \circ \varrho$, we compute
\begin{equation*}
    k'(q) = \frac{\d k}{\d \nu}\frac{\d\nu}{\d\varrho}\frac{\d\varrho}{\d q} = -\frac{\sqrt{q^2 - c^2}}{qc}.
\end{equation*}
It follows from \eqref{eq:Riem pdes} that the closed-form expression \eqref{eq:Wpm expression 1}
for $W_\pm$ holds, where,  by comparing with \cite[\S 5]{gq-transonic} for $\gamma=3$,
    \begin{equation}\label{eq:Wpm expression 2}
    k(q) = (\sqrt{2}-1)\frac{\pi}{2}-\Big( \sqrt{2} \arcsin \sqrt{ 2q^2 - 1 } - \arcsin \sqrt{2-\frac{1}{q^2}} \Big);
\end{equation}
a general formula for $k$ for all $\gamma>1$ may be found in \textit{e.g.}~\cite[\S 117]{landau}.

The previous formula implies
\begin{equation}\label{eq:k' of q}
    k'(q) = -\frac{1}{q}\sqrt{\frac{2q^2-1}{{1-q^2}}} < 0
\end{equation}
for all speeds $q \in (q_{\cri},q_{\cav}) = (\frac{1}{\sqrt{2}},1)$.
Thus, $k:(q_{\cri},q_{\cav}) \to \mathbb{R}$ is strictly decreasing
and hence invertible on this subdomain. It follows that, for all $q \in [q_{\cri},q_{\cav}]$,
\begin{equation*}
    (\sqrt{2}-1)\frac{\pi}{2} = k(q_{\cri}) \geq k(q) \geq k(q_{\cav}) = 0.
\end{equation*}

We are now ready to prove Proposition \ref{lem:riemann invariants}.

\begin{proof}[Proof of Proposition \ref{lem:riemann invariants}]
The proof is divided into two steps.

\smallskip
\noindent 1. \textit{Invariant coordinates}:
A direct computation shows that
\begin{equation}\label{eq:Wpm change of basis}
\nabla_\Z W_\pm \cdot \mathbf{e}_\pm
= (\mathbf{J}^\top \nabla_{(q,\vartheta)}W_\pm) \cdot \frac{\mathbf{J}^{-1}\mathbf{R}_\pm}{|\mathbf{J}^{-1}\mathbf{R}_\pm|}
=\frac{1}{|\mathbf{J}^{-1}\mathbf{R}_\pm|} \big( \nabla_{(q,\vartheta)}W_\pm \cdot \mathbf{R}_\pm \big) =0,
\end{equation}
whence we deduce that functions $W_\pm$ of \eqref{eq:Wpm expression 1} are Riemann invariants for the conservative system \eqref{eq:conservative form u v ii}.

\smallskip
\noindent 2. \textit{Invertible coordinate transformation}: We consider the bijectivity of the curvilinear
coordinate transformation $\bu \mapsto (W_+(\bu),W_-(\bu))$.
This is manifestly well-defined via formulas \eqref{eq:Wpm expression 1} and \eqref{eq:Wpm expression 2};
it is then a matter of verifying whether they are invertible.

By recalling computation \eqref{eq:k' of q}, the velocity vector $\bu$ may be reconstructed
away from the vacuum and the sonic curve (where $k$ is invertible) as follows:
 \begin{equation*}
     \begin{aligned}
        & q(W_-,W_+) = k^{-1}\big( \frac{1}{2}(W_+ -W_-) \big) , \qquad \vartheta(W_-,W_+) = \frac{1}{2}\big( W_+ + W_- \big), \\
        & \bu(W_-,W_+) = q(W_-,W_+) \left(\begin{matrix} \cos\vartheta(W_-,W_+) \\
        \sin\vartheta(W_-,W_+) \end{matrix}\right).
     \end{aligned}
 \end{equation*}
 Meanwhile, at the vacuum and on the sonic curve, respectively, we write
 \begin{equation*}
     \bu(W_-,W_+)|_{\cav} = q_{\cav} \left(\begin{matrix} \cos\vartheta(W_-,W_+) \\
        \sin\vartheta(W_-,W_+) \end{matrix}\right), \quad \bu(W_-,W_+)|_{\cri} = q_{\cri} \left(\begin{matrix} \cos\vartheta(W_-,W_+) \\
        \sin\vartheta(W_-,W_+) \end{matrix}\right),
 \end{equation*}
 and the result follows.
\end{proof}

\subsection{Entropy structure}\label{sec:entropic struc intro}
The purpose of this
subsection is to provide a derivation of the entropy generator equation \eqref{eq:ent gen intro}.
We emphasize that the results of this
subsection were already contained in \cite{gq-transonic}; we recall them to make the present paper self-contained.
In what follows, to simplify the notation, we omit the $\varepsilon$-superscript.

We recast the approximate system \eqref{eq:potential approx}
in the matrix form, analogous to \eqref{eq:system polar sec 2 before strict hyp and genuine nonlinear},
and expand $q$ in terms of $\varrho$ by using the Bernoulli relation with $q'(\varrho) = - \frac{c^2}{\varrho q}$.
As per
\cite[\S 3]{gq-transonic}, we obtain
\begin{equation}\label{eq:polar system}
    \mathbf{A} \left(\begin{matrix}
    \varrho \\
    \vartheta
    \end{matrix}\right)_x + \mathbf{B} \left( \begin{matrix}
    \varrho \\
    \vartheta
    \end{matrix} \right)_y = \left( \begin{matrix}
    -V_2 \\
    -\frac{q^2}{q^2 - c^2} V_1
    \end{matrix} \right),
\end{equation}
where
\begin{equation}\label{eq:polar system matrix}
   \begin{aligned}
       \mathbf{A} =& \,  \left( \begin{matrix}
   \frac{c^2}{\varrho q}\sin \vartheta & -q \cos \vartheta \\
  -q\cos \vartheta & -\frac{\varrho q^3}{c^2-q^2}\sin \vartheta
    \end{matrix}\right) , \quad     \mathbf{B} =  \left( \begin{matrix}
  - \frac{c^2}{\varrho q}\cos \vartheta & -q \sin \vartheta \\
  -q\sin \vartheta & \frac{\varrho q^3}{c^2-q^2}\cos \vartheta
    \end{matrix}\right).
   \end{aligned}
\end{equation}

Computing the scalar product of system \eqref{eq:polar system} with
the vector field $(\Phi_\vartheta, \Phi_\varrho)^\top$ for a scalar function $\Phi$
of the phase-space variables, we have
\begin{equation}\label{eq:ent dissipation from polar sys}
    \begin{aligned}
        \div_\x \Q = - \Phi_\vartheta V_2 + \frac{q^2}{c^2 - q^2}\Phi_\varrho V_1,
    \end{aligned}
\end{equation}
where the pair of functions $\Q=(Q_1,Q_2)$ is required to satisfy
\begin{equation}\label{eq:what is needed from Q}
    \begin{aligned}
       & \partial_\varrho Q_1 = \frac{c^2}{\varrho q} \sin\vartheta \Phi_\vartheta - q \cos \vartheta \Phi_\varrho,
       \quad &&\partial_\vartheta Q_1 =  -q \cos\vartheta \Phi_\vartheta - \frac{\varrho q^3}{c^2-q^2}\sin\vartheta \Phi_\varrho , \\
        &\partial_\varrho Q_2 =  -\frac{c^2}{\varrho q}\cos\vartheta \Phi_\vartheta - q \sin \vartheta \Phi_\varrho,
        \quad &&\partial_\vartheta Q_2 = -q \sin\vartheta \Phi_\vartheta + \frac{\varrho q^3}{c^2-q^2}\cos\vartheta \Phi_\varrho.
    \end{aligned}
\end{equation}
We emphasize for clarity that the divergence in \eqref{eq:ent dissipation from polar sys}
is understood as
$$
\div_\x \Q
= \nabla_{(\varrho,\vartheta)}Q_1 \left(
\begin{matrix}
    \varrho \\
    \vartheta
\end{matrix}\right)_x
+ \nabla_{(\varrho,\vartheta)}Q_2 \left(\begin{matrix}
    \varrho \\
    \vartheta
\end{matrix}\right)_y.
$$
Along with \eqref{eq:what is needed from Q}, the requirement that
$\partial_\vartheta \partial_\varrho Q_j = \partial_\varrho \partial_\vartheta Q_j$, $j=1,2$,
implies
\begin{equation*}
    \sin \vartheta \Big(\frac{c^2}{\varrho q} \Phi_{\vartheta \vartheta} + q \Phi_\varrho
    + \Big(  \frac{\varrho q^3}{c^2 - q^2} \Phi_\varrho \Big)_\varrho \Big) = 0,
\end{equation*}
which is satisfied,
provided that $\Phi$ solves the Tricomi--Keldysh equation
\begin{equation}\label{eq:tricomi for D dissipation}
    \frac{c^2}{\varrho^2 q^2}\Phi_{\vartheta\vartheta} + \Big( \frac{q^2}{c^2-q^2}\Phi_\varrho \Big)_\varrho = 0.
\end{equation}
Define $H$ to be a solution of the system:
\begin{equation}\label{eq:sys H from D}
   \left\lbrace \begin{aligned}
    & \varrho H_{\nu \vartheta} - H_\vartheta = - \Phi_\vartheta, \\
    & H_\nu + \frac{1}{\varrho}H_{\vartheta\vartheta} = \frac{q^2}{c^2-q^2}\Phi_\varrho.
    \end{aligned}\right.
\end{equation}
We deduce from \eqref{eq:tricomi for D dissipation} that $H$ satisfies the entropy generator
equation \eqref{eq:ent gen intro};
note also for consistency that equality $\Phi_{\vartheta\nu}=\Phi_{\nu\vartheta}$ implies
\begin{equation*}
   \Big( H_{\nu\nu} - \frac{M^2-1}{\varrho^2}H_{\vartheta\vartheta} \Big)_\vartheta = 0,
\end{equation*}
which is manifestly satisfied, provided that $H$ is a solution of \eqref{eq:ent gen intro}.

Conversely, if $H$ is a solution of the entropy generator equation \eqref{eq:ent gen intro},
then, defining $\Phi$ such that \eqref{eq:sys H from D} is satisfied,
we see by a direct computation that $\Phi$ satisfies the Tricomi--Keldysh equation \eqref{eq:tricomi for D dissipation}.
Moreover, letting $\Q$ be the pair of functions generated from this $H$ via the Loewner--Morawetz
relations \eqref{eq:lowener mor}, it is direct
to check that relations \eqref{eq:what is needed from Q}
are verified and the dissipation equality \eqref{eq:ent dissipation from polar sys} holds in the form:
\begin{equation}\label{eq:ent dissipation from polar sys with H}
    \begin{aligned}
        \div_\x \Q =  \big(\varrho H_{\nu \vartheta} - H_\vartheta\big) V_2
 + \big( H_\nu + \frac{1}{\varrho}H_{\vartheta\vartheta} \big) V_1.
    \end{aligned}
\end{equation}

Finally, we emphasize that formula \eqref{eq:ent dissipation from polar sys with H} holds
for entropy pairs $\Q$ understood as functions of the conservative variables of \eqref{eq:conservative form u v ii}.
Indeed, by setting $\Q'(\Z) := \Q(\varrho(\Z),\vartheta(\Z))$ and
\begin{equation*}
    \mathbf{N} := \left( \begin{matrix}
        \varrho_{Z_1} & \varrho_{Z_2} \\
        \vartheta_{Z_1} & \vartheta_{Z_2}
    \end{matrix} \right),
\end{equation*}
a direct computation yields that, for entropy pairs generated via the Loewner--Morawetz
relations \eqref{eq:lowener mor},
\begin{equation*}
\begin{aligned}
       \div_\x \Q'(\Z)  &= \nabla_{(\varrho,\vartheta)}Q_1 \, \mathbf{N}\Z_x
       + \nabla_{(\varrho,\vartheta)}Q_2 \,\mathbf{N} \Z_y  \\
       &= \nabla_{(\varrho,\vartheta)}Q_1  \left(\begin{matrix}
    \varrho \\
    \vartheta
    \end{matrix}\right)_x + \nabla_{(\varrho,\vartheta)}Q_2 \left(\begin{matrix}
    \varrho \\
    \vartheta
    \end{matrix}\right)_y \\
    &= \big(\varrho H_{\nu \vartheta} - H_\vartheta\big) V_2
 + \big( H_\nu + \frac{1}{\varrho}H_{\vartheta\vartheta} \big) V_1,
  \end{aligned}
\end{equation*}
by using \eqref{eq:ent dissipation from polar sys}.

\section{Entropy Generator I: Regular Kernel}\label{sec:reg kernel}
In this and the next section, we prove the existence of two
linearly
independent fundamental solutions of
the entropy generator equation \eqref{eq:ent gen intro} and
identify
a cancellation property that is crucial for establishing
the $H^{-1}_\loc$-compactness of the
entropy dissipation
measures (\textit{cf.}~ \S \ref{section:what estimates}).
To achieve this, we exploit the linearity of the equation
and proceed by the Fourier methods, which were employed, for instance,
in \cite{ChenLeFloch}.
That is to say, for an appropriately
chosen $\lambda \in \mathbb{R}$,
we postulate an expansion of the form:
\begin{equation*}
H(\nu,s) = G_\lambda(\nu,s) + G_{\lambda+1}(\nu,s) + \text{remainder term},
\end{equation*}
where
\begin{equation}\label{eq:Glambda first def}
    G_\lambda (\nu,\vartheta) := [k(\nu)^2-\vartheta^2]_+^\lambda
    \qquad\,\, \mbox{for $\lambda > -1$},
\end{equation}
and the remainder term is a more regular function. It is well-known (\textit{cf.}~\cite{gelfandshilov}) that,

\begin{equation*}
\hat{G}_\lambda(\nu,\xi)
= c_{\lambda } k(\nu)^{\lambda+ \frac{1}{2}}|\xi|^{-\lambda -\frac{1}{2}}J_{\lambda +\frac{1}{2}}(|\xi|k(\nu))
\qquad\,\, \mbox{for $\lambda \in (-1,\infty)$},
\end{equation*}
with $c_{\lambda} := \sqrt{\pi} 2^{\lambda+\frac{1}{2}} \Gamma(\lambda+1)$
and the Bessel function $J_{\lambda+\frac{1}{2}}$ of the first kind.
In the case when $\lambda$ is integer-valued, these expressions
simplify to the weighted sums of trigonometric functions; see Appendix \ref{appendix:Glambda}.

The goal of this section is to prove the following two results concerning the regular kernel.
The proof of Proposition \ref{prop:reg kernel exist} readily generalizes
to a much wider range of $\gamma$,
while Proposition \ref{prop:est for h-1 cpct reg} is particular to the case: $\gamma=3$.

\begin{prop}[Existence of the Regular Kernel]\label{prop:reg kernel exist}
There exists a distributional solution $H^\r$, supported in $\mathcal{K}$, of the Cauchy problem{\rm :}
\begin{equation}\label{eq:reg kernel ivp}
    \left\lbrace\begin{aligned}
         &H^\r_{\nu\nu} - k'^2 H^\r_{ss} = 0 \qquad \text{in $(0,\nu_*] \times \mathbb{R}$},\\
        &H^\r|_{\nu = 0} = 0, \\
        &H^\r_\nu|_{\nu=0} = \delta_0,
    \end{aligned}\right.
\end{equation}
given by the formula:
\begin{equation}\label{eq:expansion reg kernel}
    \begin{aligned}
        H^\r(\nu,s) = a_{0}(\nu) G_1(\nu,s) + a_1(\nu) G_2(\nu,s) + g(\nu,s),
    \end{aligned}
\end{equation}
where $a_j(\nu), j=0,1$, satisfy that there exists $C>0$ independent of $\nu$ such that
\begin{equation}\label{eq:bounds on a0 a1}
    \begin{aligned}
       &|a_j(\nu)| + \varrho(\nu)|a_j'(\nu)| \leq C \qquad\mbox{for $j=0,1$},
    \end{aligned}
\end{equation}
and the remainder term $g$ is twice continuously differentiable,
with all of its first and second derivatives being $\alpha$-H\"older continuous
in
variable $s$ for all $\alpha \in [0,1)$.
\end{prop}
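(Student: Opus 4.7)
The plan is to construct $H^\r$ via the Fourier-analytic method of \cite{ChenLeFloch}. Taking the partial Fourier transform of \eqref{eq:reg kernel ivp} in $s$, the problem reduces to finding $\widehat{H^\r}(\nu, \xi)$ solving the one-parameter family of ODEs
\begin{equation*}
\widehat{H^\r}_{\nu\nu} + k'(\nu)^2 \xi^2 \widehat{H^\r} = 0, \qquad \widehat{H^\r}|_{\nu = 0} = 0, \quad \widehat{H^\r}_\nu|_{\nu = 0} = 1,
\end{equation*}
which is a Liouville/Bessel-type equation with a Keldysh singularity at $\nu = 0$ where $k'$ diverges. Since $\widehat{G_\lambda}$ is (up to an explicit constant) the Bessel function $J_{\lambda + 1/2}(|\xi| k(\nu))$, a Liouville--Green/WKB expansion in the phase $\tau = k(\nu)$ identifies the leading contributions as multiples of $\widehat{G_1}$ and $\widehat{G_2}$; inverting the Fourier transform then produces the ansatz \eqref{eq:expansion reg kernel}.

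Equivalently, and more transparently, I would work directly in physical space by substituting the ansatz into $L H^\r = 0$ with $L := \partial_\nu^2 - k'^2 \partial_s^2$, using the identities
\begin{equation*}
(G_\lambda)_\nu = 2 \lambda k k' G_{\lambda - 1}, \qquad L G_\lambda = 2 \lambda (2 \lambda k'^2 + k k'') G_{\lambda - 1}.
\end{equation*}
Grouping the resulting terms by powers of $G$, annihilation of the coefficient of $G_0$ yields the transport ODE $2 k k' a_0' + (2 k'^2 + k k'') a_0 = 0$, whose explicit solution is $a_0 = C_0/(k \sqrt{k'})$. From the asymptotic expansion $k(\nu) = c_\sharp \nu^{1/3} + \bigo(\nu)$ (Proposition \ref{prop:k expand}), the quantity $k \sqrt{k'}$ tends to the positive constant $c_\sharp^{3/2}/\sqrt{3}$ as $\nu \to 0$, so $a_0$ is bounded with a nonzero limit at the vacuum. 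Annihilating the coefficient of $G_1$ next gives a linear first-order ODE for $a_1$ with source $-a_0''$; its integrating factor is $k^2 \sqrt{k'}$, and the particular solution obtained by integration from $\nu = 0$ cancels the divergence of the homogeneous solution $1/(k^2 \sqrt{k'}) \sim \nu^{-1/3}$, yielding a bounded $a_1$. The bounds $|a_j| + \varrho |a_j'| \leq C$ for $j = 0, 1$ then follow by direct differentiation of these closed-form expressions together with Proposition \ref{prop:k expand}.

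With $a_0, a_1$ fixed, the remainder $g$ must solve the inhomogeneous equation $L g = -a_1'' G_2$ on $(0, \nu_*] \times \R$ with homogeneous initial data and support in $\mathcal{K}$. The source is continuous and vanishes quadratically on $\partial \mathcal{K}$ thanks to the $G_2$ factor, which compensates for the unboundedness of $a_1''$ near $\nu = 0$. Under the change of variables $\tau = k(\nu)$ the operator becomes $\partial_\tau^2 - \partial_s^2 + (k''/k'^2) \partial_\tau$, and the asymptotics of $k$ show that $k''/k'^2 \sim -2/\tau$ near $\tau = 0$, so that the transformed equation is of Euler--Poisson--Darboux type, with a well-known Riemann--Green function. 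Applying Duhamel's principle on the characteristic cone then produces a $C^2$ solution $g$ whose first and second derivatives are $\alpha$-Hölder in $s$ for every $\alpha \in [0, 1)$, as claimed.

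The main obstacle is the Keldysh singularity at $\nu = 0$: because $k'$ diverges there, no classical wave-equation Cauchy theory applies directly, and this is precisely what forces the singular leading terms $a_0 G_1 + a_1 G_2$ in the ansatz before the remainder $g$ can be treated by EPD theory. A second subtle point is the verification of the distributional initial datum $H^\r_\nu|_{\nu = 0} = \delta_0$: since $G_\lambda(0, s) = 0$ for $\lambda > 0$, only $a_0 (G_1)_\nu$, $a_1 (G_2)_\nu$, and $g_\nu$ contribute at $\nu = 0$, and since $(G_1)_\nu = 2 k k' \mathbf{1}_{\{|s| \leq k\}}$ has total mass $4 k^2 k' \to 4 c_\sharp^3/3$ concentrating at the origin, while $(G_2)_\nu$ and $g_\nu$ vanish there, the constant $C_0$ is pinned by the normalization $a_0(0) = 3/(4 c_\sharp^3)$.
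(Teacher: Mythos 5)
Your coefficient derivation is correct and, after unwinding the parameterization, coincides with the paper's: your physical-space ODEs for $a_0$ and $a_1$ (obtained by applying the identities $(G_\lambda)_\nu = 2\lambda kk' G_{\lambda-1}$ and $L G_\lambda = 2\lambda(2\lambda k'^2 + kk'')G_{\lambda-1}$) are exactly what one gets from the paper's Fourier-space recursion \eqref{eq:eqn for alpha1} for $\alpha_j = a_j k^{2j+3}$, once one uses the $a_0$-equation to cancel the lower-order terms in $(a_0 k^3)''$. The integrating factor $k^2\sqrt{k'}$ is right, and your physical-space argument for the initial datum (the concentrating mass $4k^2 k' \to 4c_\sharp^3/3$ of $a_0(G_1)_\nu$) is a clean and correct alternative to the paper's Fourier computation in Lemma \ref{lem:initial data reg}. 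The boundedness claim for $a_1$ and the estimate $|a_j| + \varrho|a_j'|\le C$ follow as you say from Proposition \ref{prop:k expand}.

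Where your proposal genuinely diverges from the paper — and where it has a gap — is the remainder term $g$. The paper solves $\hat{g}_{\nu\nu} + k'^2\xi^2\hat{g} = \hat{R}^\r$ by first posing the translated problems on $[\varepsilon,\nu_*]$ (where Cauchy–Lipschitz applies), deriving $\varepsilon$-uniform Gr\"onwall/energy estimates (Lemma \ref{lemma:energy est 1}) that exploit the fixed sign $k'k''<0$, passing to the limit by Arzel\`a–Ascoli, and finally converting the Fourier decay \eqref{eq:fourier bounds remainder} into the $\alpha$-H\"older bounds of Lemma \ref{lem:holder remainder reg}. You instead propose a change of variables $\tau=k(\nu)$ to obtain an operator ``of Euler--Poisson--Darboux type'' and then invoke a Riemann--Green function and Duhamel. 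This is not yet a proof: the coefficient $k''/k'^2$ equals $-2/\tau$ only to leading order, so the equation is a singular perturbation of EPD, not EPD itself, and the correction is $O(1)$ near $\tau=0$; the parameter $-2$ is in the negative-parameter regime, for which the classical Riemann function is not directly available and one needs a regularizing substitution (e.g. $g=\tau^3 v$ carries EPD$_{-2}$ to EPD$_{4}$) that you do not carry out; and, most importantly, the claimed $C^2$ regularity and $\alpha$-H\"older continuity in $s$ for every $\alpha\in[0,1)$ have to be read off from the resulting representation formula together with the source being $\sim a_1'' G_2$, where $a_1''$ blows up like $\nu^{-4/3}$ — this blow-up is offset by the $G_2\sim k^4$ factor, but verifying that the cancellation survives the Duhamel integral and two derivatives is exactly the content of the paper's energy/Fourier estimates and cannot be asserted by analogy. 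Similarly, the Huygens support statement $\supp H^\r\subset\mathcal{K}$, which the paper proves in Lemma \ref{lem:huygens reg} via finite propagation speed for the regularized problems, should be argued rather than stated. If you want to pursue the EPD route, you would need to make the substitution explicit, treat the perturbative coefficient via a fixed-point argument, and then extract the H\"older estimates quantitatively; as written, the remainder step is the weak link.
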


Our second main result of this section is concerned with the estimates
required to show the $H^{-1}_\loc$--compactness of the entropy dissipation measures.

\begin{prop}\label{prop:est for h-1 cpct reg}
For any $\varphi \in C^\infty(\mathbb{R})$, there exists a constant $C_\varphi>0$ independent of $(\nu,s)$
such that, for all $(\nu,s)\in [0,\nu_*]\times\mathbb{R}$ and $j=0,1,2$,
\begin{align}
&\big| \partial^j_s \big(\varrho(\nu)H^\r_{\nu }(\nu,\cdot) + H^\r_{ss}(\nu,\cdot)\big)*\varphi(s)\big|
\leq C_\varphi \varrho(\nu), \label{eq:reg cpct est 1}\\[1mm]
&\big| \varrho(\nu)\partial^j_sH^\r_{\nu}(\nu,\cdot)*\varphi(s) \big|
+ \big| \partial^j_s H^\r_{s}(\nu,\cdot)*\varphi(s)\big| \leq C_\varphi. \label{eq:reg cpct est 2}
\end{align}
\end{prop}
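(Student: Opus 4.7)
The strategy is to substitute the expansion \eqref{eq:expansion reg kernel} into each quantity and bound the resulting pieces directly, relying on the explicit form of $G_\lambda = [k^2-s^2]_+^\lambda$, the coefficient bound \eqref{eq:bounds on a0 a1}, the compact $s$-support of $H^\r$ in $\mathcal{K}$, and the sharp asymptotic identity $\varrho(\nu)k(\nu)k'(\nu) = 1 + O(\varrho^2)$ near cavitation (specific to $\gamma = 3$ through $c_\sharp = 3^{1/3}$, to be extracted from the asymptotic description of $k$ in Appendix \ref{appendix:Glambda}). As a preliminary simplification, the identity $\partial_s^j(f \ast \varphi) = f \ast \partial_s^j \varphi$ lets one move every $\partial_s^j$ onto $\varphi$, reducing both statements to $j = 0$ against a generic $C^\infty$ test function and letting $C_\varphi$ absorb finitely many seminorms of $\varphi$.

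For estimate \eqref{eq:reg cpct est 2}, I would compute explicitly the bounded representations
\begin{equation*}
G_{\lambda,\nu} = 2\lambda k k' (k^2 - s^2)_+^{\lambda - 1}, \qquad G_{\lambda,s} = -2\lambda s (k^2 - s^2)_+^{\lambda - 1}.
\end{equation*}
Combined with \eqref{eq:bounds on a0 a1}, the boundedness of $\varrho k k'$ near cavitation, and the $C^2$-regularity of $g$ with $s$-support in $[-k(\nu), k(\nu)]$, both $\varrho H^\r_\nu$ and $H^\r_s$ are uniformly bounded on $[0, \nu_*] \times \mathbb{R}$, which suffices after convolution with a bounded test function.

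For estimate \eqref{eq:reg cpct est 1}, I would decompose
\begin{equation*}
\varrho H^\r_\nu + H^\r_{ss} = \sum_{\ell = 0, 1} \Bigl( (\varrho a_\ell') G_{\ell + 1} + a_\ell (\varrho G_{\ell + 1, \nu} + G_{\ell + 1, ss}) \Bigr) + (\varrho g_\nu + g_{ss})
\end{equation*}
and identify the key algebraic combinations
\begin{align*}
\varrho G_{1, \nu} + G_{1, ss} &= 2(\varrho k k' - 1)\mathbbm{1}_{|s| < k} + 2k \bigl( \delta(s - k) + \delta(s + k) \bigr), \\
\varrho G_{2, \nu} + G_{2, ss} &= 4\bigl( (\varrho k k' - 1) k^2 + (3 - \varrho k k') s^2 \bigr) \mathbbm{1}_{|s| < k}.
\end{align*}
Together with $\varrho k k' - 1 = O(\varrho^2)$, $G_{\ell + 1} = O(\varrho^{2(\ell + 1)})$, $k = O(\varrho)$, and $\varrho a_\ell' = O(1)$, every term in the sum contributes pointwise at most $O(\varrho^2)$ after convolution with $\varphi$; the boundary delta terms in $G_{1, ss}$ similarly give $O(k) \cdot \|\varphi\|_\infty = O(\varrho)$. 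The remainder $(\varrho g_\nu + g_{ss}) \ast \varphi$ is also $O(\varrho)$, since $g$ inherits the compact $s$-support of $H^\r$ and $g_\nu, g_{ss}$ are bounded on that support, so the convolution integrates $\varphi$ against an interval of width $2k(\nu) = O(\varrho)$.

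The main obstacle is confirming the structural identity $\varrho k k' = 1 + O(\varrho^2)$ at cavitation, which hinges on the precise value $c_\sharp = 3^{1/3}$ in the asymptotic expansion of $k(\nu)$ and is special to $\gamma = 3$. Without this cancellation, the terms $a_\ell (\varrho G_{\ell + 1, \nu} + G_{\ell + 1, ss})$ would be merely $O(1)$, breaking \eqref{eq:reg cpct est 1}. Once this identity is in place, the remaining pieces follow from the compact support of $H^\r$ in the light cone $\mathcal{K}$ together with \eqref{eq:bounds on a0 a1}, the required factor $\varrho$ arising either from the $O(\varrho^2)$ cancellation or from the $O(\varrho)$-width of the support.
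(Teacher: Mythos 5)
Your proposal is correct and takes a genuinely different route from the paper. The paper proves Proposition \ref{prop:est for h-1 cpct reg} entirely on the Fourier side, via Lemma \ref{lem:cpct est reg}: the bound $|\varrho(\nu)\hat{H}^\r_\nu(\nu,\xi) - \xi^2 \hat{H}^\r(\nu,\xi)| \leq C(1+\xi^2)\nu^{1/3}$ is obtained from the expansion $\hat{H}^\r = \alpha_0\hat f_1 + \alpha_1\hat f_2 + \hat g$, the uniform boundedness of $\hat f_1,\hat f_2$, and the cancellation built into $\hat H^\r_\nu = 2\alpha_0\frac{k'}{k}\hat f_0 + O(\nu^{2/3})\hat f_1 + O(\nu^{2/3})\hat f_2 + \hat g_\nu$ (equation \eqref{eq:weak kernel nu deriv}); the physical-space statement then follows by multiplying by $\hat\varphi$ and inverting. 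You instead differentiate the polynomial truncated powers $G_1,G_2$ directly in physical space, track the boundary delta in $G_{1,ss}$ explicitly, and combine the pointwise size of each piece with the $O(\varrho)$-width of the light-cone cross-section. Both routes exploit the same underlying asymptotic structure; your physical-space argument has the advantage of avoiding any delicacy about $\varphi\in C^\infty$ not being Schwartz (an issue the paper mentions but handles only in passing), while the paper's Fourier route would generalize without change if the exponent in $G_\lambda$ were not an integer.

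One point in your commentary is overstated: you claim the identity $\varrho kk' = 1 + O(\varrho^2)$ is indispensable and that without it the terms $a_\ell(\varrho G_{\ell+1,\nu}+G_{\ell+1,ss})$ would be "merely $O(1)$, breaking \eqref{eq:reg cpct est 1}." In fact an $O(1)$ pointwise coefficient on the characteristic function $\mathbbm{1}_{|s|<k(\nu)}$, once convolved with $\varphi$, still yields $O(k(\nu)) = O(\varrho)$ because the support has width $2k(\nu)$; and the boundary delta term $2a_0 k(\nu)\bigl(\delta(s-k)+\delta(s+k)\bigr)$ is already the dominant contribution at $O(\varrho)$, independently of the cancellation. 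So \eqref{eq:reg cpct est 1} survives either way; the cancellation merely pushes the absolutely continuous part from $O(\varrho)$ down to $O(\varrho^3)$. The identity $\varrho kk' \to \frac{2}{\gamma-1}=1$ at cavitation is indeed special to $\gamma = 3$ and is genuinely essential elsewhere in the paper (notably for the singular-kernel estimates in Lemma \ref{lemma:kernel compactness estimates fourier}), but it is not the bottleneck for the regular-kernel estimate you are proving here. Aside from that caveat, the proposal stands.
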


We now provide the proofs of Propositions \ref{prop:reg kernel exist}--\ref{prop:est for h-1 cpct reg}.
We start with the analysis of regular kernel coefficients.

\smallskip
\subsection{Computing the regular kernel coefficients}\label{sec:computing reg kernel coeff}
We postulate an expansion of the Fourier transform of the regular kernel
with respect to
variable $s\in \mathbb{R}$ in the form:
\begin{equation*}
    \hat{H}^\r(\nu,\xi) = \sum_{j=0}^1 \hat{H}^{\r,j}(\nu,\xi)+\hat{g}(\nu,\xi),
\end{equation*}
where
$\hat{H}^{\r,j}(\nu,\xi) = \alpha_j(\nu) \hat{f}_{j+1}(\xi k(\nu))$ for $j=0,1$.

In what follows, for simplicity of notation, we write $\hat{f}$ for
$\hat{f}(\xi k(\nu))$.
We use the formulas provided in Lemma \ref{lemma:flambda relations} to compute
\begin{equation*}
\hat{H}^{\r,0}_{\nu\nu} + \xi^2 k'^2 \hat{H}^{\r,0}
=  \,\alpha_0'' \hat{f}_1 +\Big( \alpha_0 k'^2 - \frac{1}{4}\big( \alpha_0 k'' k
  + 2 \alpha_0' k' k\big) \Big) \xi^2 \hat{f}_2,
\end{equation*}
and impose that
\begin{equation*}
        \alpha_0 k'^2 - \frac{1}{4}\big( \alpha_0 k'' k + 2 \alpha_0' k' k \big) = 0,
\end{equation*}
that is,
\begin{equation*}
    \begin{aligned}
        \frac{\alpha_0'}{\alpha_0} = 2 \frac{k'}{k}-\frac{1}{2}\frac{k''}{k'},
    \end{aligned}
\end{equation*}
whose closed-form solution is given by
\begin{equation}\label{eq:alpha0}
\alpha_0(\nu) = c_0 k(\nu)^{2} k'(\nu)^{-\frac{1}{2}}
\end{equation}
with $c_0$ chosen to satisfy the condition: $\hat{H}^\r_\nu(0,\xi) = 1$,
\textit{i.e.}, $H_\nu^\r|_{\nu=0}=\delta_0$; see \S \ref{sec:initial datum reg}.

For the second term, using again the formulas in Lemma \ref{lemma:flambda relations}, we have
\begin{equation*}
    \begin{aligned}
        \hat{H}^{\r,1}_{\nu\nu} + \xi^2 k'^2 \hat{H}^{\r,1} &= \Big( \alpha_1'' - 5 \big( 2\alpha_1' \frac{k'}{k}
        + \alpha_1 \frac{k''}{k} - 6 \alpha_1 \frac{k'^2}{k^2} \big) \Big) \hat{f}_2 \\
        &\quad\, + 4 \Big( 2\alpha_1' \frac{k'}{k} + \alpha_1 \frac{k''}{k} - 6 \alpha_1 \frac{k'^2}{k^2} \Big) \hat{f}_1,
    \end{aligned}
\end{equation*}
whence we impose
\begin{equation}\label{eq:eqn for alpha1}
        4 \Big( 2\alpha_1' \frac{k'}{k} + \alpha_1 \frac{k''}{k} - 6 \alpha_1 \frac{k'^2}{k^2} \Big) + \alpha_0'' = 0,
\end{equation}
that is,
\begin{equation*}
\big( k^{-3} k'^{\frac{1}{2}} \alpha_1 \big)' = -\frac{1}{8}k^{-2}k'^{-\frac{1}{2}}\alpha_0'',
\end{equation*}
whose solution is given by
\begin{equation}\label{eq:alpha1}
\alpha_1(\nu) = -\frac{1}{8}k(\nu)^3 k'(\nu)^{-\frac{1}{2}}
  \int_0^\nu k(\tau)^{-2}k'(\tau)^{-\frac{1}{2}}\alpha_0''(\tau) \d \tau.
\end{equation}
We therefore have
\begin{equation*}
        \hat{H}^{\r,1}_{\nu\nu} + \xi^2 k'^2 \hat{H}^{\r,1}
        = \,  \big( \alpha_1'' + \frac{5}{4} \alpha_0'' \big) \hat{f}_2 -\alpha_0'' \hat{f}_1.
\end{equation*}

It follows that
the remainder term $\hat{g}$ must satisfy the following equation:
\begin{equation}\label{eq:remainder eqn}
        \hat{g}_{\nu\nu} + \xi^2 k'^2 \hat{g} = \ell\hat{f}_2,
\end{equation}
where
\begin{equation}\label{eq:ell def}
    \ell(\nu) = -\big( \alpha_1''(\nu) + \frac{5}{4} \alpha_0''(\nu) \big).
\end{equation}
Therefore, we define
\begin{equation}\label{eq:big Rr def}
    R^\r(\nu,s) := \ell(\nu)k(\nu)^{-5}G_2(\nu,s)
\end{equation}
such that $\hat{R}^\r(\nu,\xi) = \ell(\nu)f_2(\xi k(\nu))$, whence we impose that the remainder term satisfies
\begin{equation*}
    \left\lbrace\begin{aligned}
        & g_{\nu\nu} - k'^2 g_{ss}  = R^\r \qquad \text{in $(0,\nu_*]\times\mathbb{R}$}, \\
        &g|_{\nu = 0} = 0, \\
        &g_\nu|_{\nu=0} = 0.
    \end{aligned}\right.
\end{equation*}

In turn, the expansion for $H^\r$ reads as
\begin{equation}
 H^\r(\nu,s) = a_0(\nu) G_1(\nu,s) + a_1(\nu)G_2(\nu,s) + g(\nu,s)
\end{equation}
with
\begin{equation}\label{eq:a0 and a1 def from alphas}
    a_0(\nu) := \alpha_0(\nu) k(\nu)^{-3}, \qquad a_1(\nu) := \alpha_1(\nu) k(\nu)^{-5}.
\end{equation}

\smallskip
\subsection{Asymptotic analysis of the regular kernel}\label{sec:asymp analysis reg kernel}
The purpose of this section is to collect the results
concerning the asymptotic behavior of the characteristic speed function $k(\nu)$ in the vicinity of the vacuum.

\subsubsection{Properties of the characteristic speed}

For the sake of being self-contained, we provide a brief outline of
the proof for the case:
$\gamma = 3$;
further details may be found in \cite[Appendix C]{thesis}.

\begin{prop}[Asymptotic Behavior of the Characteristic Speed]\label{prop:k expand}
The characteristic speed function $k(\nu)$ admits the decomposition{\rm :}
\begin{equation}
    k(\nu) = c_\sharp \nu^{\frac{1}{3}} + c_\flat \nu + c_l\nu^{\frac{5}{3}} + L(\nu)
    \qquad \text{for $\nu \in [0,\nu_*]$},
\end{equation}
where $c_\sharp = 3^{\frac{1}{3}}$, $c_\flat$ and $c_l$ are the constants that can be determined.
Moreover, there exists a constant $C=C(\nu_*)>0$ such that
\begin{equation}
    |L^{(j)}(\nu)| \leq C\nu^{\frac{7}{3}-j}
    \qquad \text{for $\nu \in (0,\nu_*]$ and $j=0,\cdots,4$}.
\end{equation}
\end{prop}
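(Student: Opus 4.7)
The plan is to reduce the claimed Puiseux-type expansion in $\nu$ to a standard analytic expansion in the auxiliary variable $t := \nu^{1/3}$, exploiting a parity observation. For $\gamma = 3$, the Bernoulli relation gives $c(\varrho) = \varrho$ and $q^2 = 1 - \varrho^2$, whence $M^2 - 1 = (1 - 2\varrho^2)/\varrho^2$; combining with \eqref{eq:nu def} and \eqref{eq:k def}, and using $\nu'(\varrho) = 1/M^2$, yields
\begin{equation*}
\nu(\varrho) = \int_0^\varrho \frac{\tau^2}{1-\tau^2}\d\tau, \qquad k(\varrho) = \int_0^\varrho \frac{\sqrt{1-2\tau^2}}{1-\tau^2}\d\tau,
\end{equation*}
both real-analytic and odd in $\varrho$ on $(-\varrho_\cri, \varrho_\cri)$. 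In particular $\nu(\varrho) = \varrho^3 \Phi(\varrho^2)$ with $\Phi$ real-analytic near $0$ and $\Phi(0) = 1/3$, while $k(\varrho) = \varrho\, \Psi(\varrho^2)$ with $\Psi$ real-analytic near $0$ and $\Psi(0) = 1$.

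The first step is to invert the relation $\nu = \nu(\varrho)$ despite the degeneracy $\nu'(0)=0$. Setting $u := \varrho\, \Phi(\varrho^2)^{1/3}$, the map $\varrho \mapsto u$ is real-analytic near $0$ with $u'(0) = 3^{-1/3} \ne 0$, so the analytic inverse function theorem supplies an analytic $\chi$ with $\chi(0) = 0$ and $\chi'(0) = 3^{1/3}$ such that $\varrho = \chi(u)$; by construction $u^3 = \varrho^3 \Phi(\varrho^2) = \nu$, hence $u = \nu^{1/3} = t$. Composing gives
\begin{equation*}
k(\nu) = \chi(t)\, \Psi(\chi(t)^2) =: K(t),
\end{equation*}
an analytic function of $t$ near the origin. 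Since $\chi$ is odd (as the inverse of the odd map $\varrho \mapsto \nu(\varrho)^{1/3}$) and $\Psi(\chi(t)^2)$ is even in $t$, the function $K$ is odd; Taylor's theorem then furnishes
\begin{equation*}
K(t) = c_\sharp t + c_\flat t^3 + c_l t^5 + t^7 \widetilde h(t),
\end{equation*}
with $\widetilde h \in C^\infty([0,\nu_*^{1/3}])$ and $c_\sharp = \chi'(0)\Psi(0) = 3^{1/3}$; the constants $c_\flat$ and $c_l$ are obtained by matching the series $3\nu = \varrho^3 + 3\varrho^5/5 + \cdots$ and $k = \varrho - \varrho^5/10 + \bigo(\varrho^7)$ order by order in $t$.

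It remains to establish the weighted bounds on $L(\nu) := \nu^{7/3}\, \widetilde h(\nu^{1/3})$. The plan is to argue by induction on $j$ using the identity $\tfrac{\dd}{\dd \nu} = \tfrac{1}{3 t^{2}}\tfrac{\dd}{\dd t}$: if $L^{(j)}(\nu) = \nu^{7/3 - j}\, h_j(\nu^{1/3})$ for some $h_j \in C^{4-j}([0, \nu_*^{1/3}])$, then the same structure propagates to $L^{(j+1)}$ with $h_{j+1}$ an explicit algebraic combination of $h_j$ and $h_j'$. Because $\widetilde h$ is smooth on the compact interval $[0, \nu_*^{1/3}]$, each $h_j$ for $j \in \{0,1,2,3,4\}$ is uniformly bounded there, giving $|L^{(j)}(\nu)| \le C\nu^{7/3 - j}$ as claimed.

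The main technical obstacle is the inversion step that produces the analytic change of variable $t = \nu^{1/3}$ in the presence of the degeneracy $\nu'(0) = 0$; once analyticity of $\chi$ is secured, the parity observation is what promotes the generic $\bigo(t^6)$ tail from Taylor's theorem to the sharper $\bigo(t^7)$ needed to reach the exponent $7/3$ in the remainder estimate, and the derivative bounds then reduce to elementary chain-rule arithmetic on the compact interval $[0,\nu_*]$.
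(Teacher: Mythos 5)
Your proof is correct, and it takes a genuinely different route from the paper's. The paper argues by direct power-series bookkeeping: it expands $\nu(\varrho)$ in $\varrho$, inverts term-by-term to obtain $\varrho(\nu)=3^{1/3}\nu^{1/3}+c\nu+\bigo(\nu^{5/3})$, substitutes into $k'(\nu)=\varrho^{-2}\sqrt{1-2\varrho^2}$, expands, and integrates; for the remainder bounds it merely remarks that "by refining the present strategy" the estimates on $L$ follow. Your proof instead packages the whole computation into a single analytic change of variables $t=\nu^{1/3}$, justified by applying the analytic inverse function theorem to $\varrho\mapsto\varrho\,\Phi(\varrho^2)^{1/3}$ (which has nonvanishing derivative at the origin despite $\nu'(0)=0$), and then reads off both the expansion and the tail structure from the observation that $K(t)=\chi(t)\Psi(\chi(t)^2)$ is an \emph{odd} analytic function of $t$. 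The parity argument is the key gain: it explains structurally why the error after $c_l\nu^{5/3}$ starts at $\nu^{7/3}$ rather than the generic $\nu^2$ one would read off from a blind Taylor expansion, and the induction $L^{(j)}(\nu)=\nu^{7/3-j}h_j(\nu^{1/3})$ via $\tfrac{\dd}{\dd\nu}=\tfrac{1}{3t^2}\tfrac{\dd}{\dd t}$ gives a clean, uniform justification of the weighted derivative bounds for $j=0,\dots,4$ — a part that the paper leaves implicit. The trade-off is that your argument does not produce the numerical values of $c_\flat$ and $c_l$ as a by-product (you rightly note they can be recovered by matching coefficients), whereas the paper's direct expansion does; since the proposition only requires that these constants be determinable, this is not a defect.

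One small remark: you do not need to stipulate $h_j\in C^{4-j}$ — because $\tilde h$ is real-analytic on a neighborhood of $[0,\nu_*^{1/3}]$, every $h_j$ is automatically $C^\infty$, and the bounds follow from compactness alone. Also, a word of warning should you try to reconcile constants with the paper: the displayed series $\sum(-1)^{j-1}\int_0^\varrho\tau^{2j}\,\dd\tau$ in the paper's proof carries a spurious alternation of sign (the integrand $\tau^2/(1-\tau^2)$ has nonnegative Taylor coefficients), so the signs of $c_\flat$ and $c_l$ there should not be taken at face value; your derivation via $\nu=\varrho^3/3+\varrho^5/5+\cdots$ is the correct one.
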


\begin{proof}
We write an asymptotic description of $\nu$, defined in \eqref{eq:nu def}, near the vacuum.
Using the Binomial Theorem and the Weierstra{\ss} M-test for $\varrho$ sufficiently small, we have
\begin{equation*}
\begin{aligned}
\nu(\varrho) = \sum_{j=1}^\infty (-1)^{j-1}\int_0^{\varrho} \tau^{2j} \d \tau
&= \frac{1}{3}\varrho^3 \Big( 1 + \varrho^2 \sum_{j=0}^\infty \frac{3}{2j+5} (-1)^{j+1} \varrho^{2j} \Big) \\
&= \frac{1}{3}\varrho^3 \big( 1 - \frac{3}{5} \varrho^2 + \bigo(\varrho^4) \big).
    \end{aligned}
\end{equation*}
In turn, it is direct
to verify that $\nu(\varrho) = \bigo(\varrho^3)$, whence
$\varrho(\nu) = \bigo(\nu^{\frac{1}{3}})$.
More precisely, using the Binomial Theorem again, we write
\begin{equation*}
    \begin{aligned}
        3^{\frac{1}{3}}\nu(\varrho)^{\frac{1}{3}} = \varrho \big( 1 - \frac{3}{5} \varrho^2
        + \bigo(\varrho^4)\big)^{\frac{1}{3}} = \varrho - \frac{1}{5} \varrho^3 + \bigo(\varrho^5)
        = \varrho - \frac{3}{5}\nu(\varrho) + \bigo(\varrho^5),
    \end{aligned}
\end{equation*}
which implies
\begin{equation}\label{eq:rho expansion precise}
    \varrho(\nu) = 3^{\frac{1}{3}}\nu^{\frac{1}{3}} + \frac{3}{5} \nu + \bigo(\nu^{\frac{5}{3}}).
\end{equation}
It follows from
\eqref{eq:what is k' squared}, the Binomial Theorem, and
the expression for the Mach number $M(\varrho)=\varrho^{-1}\sqrt{1-\varrho^2}$ that,
for sufficiently small density,
\begin{equation*}
    \begin{aligned}
        k'(\nu) =& \,  \varrho(\nu)^{-2} \big(1-2\varrho(\nu)^2\big)^{\frac{1}{2}}
         = 3^{-\frac{2}{3}} \nu^{-\frac{2}{3}}\big(1 +  3^{\frac{2}{3}}c_\flat \nu^{\frac{2}{3}} + \bigo(\nu^{\frac{4}{3}})\big),
    \end{aligned}
\end{equation*}
whence, by integrating the above,
the result with $c_\sharp$ explicitly determined
can directly be obtained.
By refining the present strategy, it is direct to compute $c_\flat$ and $c_l$ explicitly
and to obtain the precise bounds on the remainder $L$.
\end{proof}

We remark in passing the following regularity and monotonicity properties of $k$.

\begin{remark}[Regularity and Monotonicity of the Characteristic Speed]\label{remark:k smooth on open}
Using representation \eqref{eq:k def},
$k \in C([0,\nu_{\cri}]) \cap C^1((0,\nu_{\cri}]) \cap C^\infty((0,\nu_{\cri}))$
and
\begin{equation*}
    0 < k(\nu) \leq k(\nu_{\cri}) \quad \text{for } \nu \in (0,\nu_{\cri}], \qquad k'(\nu) > 0 \quad \text{for } \nu \in (0,\nu_{\cri}),
\end{equation*}
while $k(0)=0=k'(\nu_{\cri})$.
Moreover, a direct calculation (\textit{cf.}~\cite[Lemma C.4]{thesis}) shows
\begin{equation*}
    k''(\nu) = -\frac{M^2}{\varrho^2\sqrt{M^2-1}}\big( \varrho^{-2} + (M^2-1) \big) < 0
    \qquad \text{for } \nu \in (0,\nu_{\cri}).
\end{equation*}
Note that the above explodes at the extremities $\nu=0$ and $\nu = \nu_{\cri}$, where the hyperbolicity of the system is lost.
\end{remark}

From Proposition \ref{prop:k expand} and Remark \ref{remark:k smooth on open}, we derive the following corollary.

\begin{corollary}\label{cor:above and below}
There exists $C=C(\nu_*)>0$ such that, for all $\nu \in (0,\nu_*]$,
\begin{equation}\label{eq:above and below}
    \begin{aligned}
        & C^{-1} \nu^{\frac{1}{3}} \leq \varrho(\nu) \leq C\nu^{\frac{1}{3}}, \quad && C^{-1} \nu^{\frac{1}{3}} \leq k(\nu) \leq C\nu^{\frac{1}{3}} , \\
        & C^{-1} \nu^{-\frac{2}{3}} \leq k'(\nu) \leq C\nu^{-\frac{2}{3}}, \quad && C^{-1} \nu^{-\frac{5}{3}} \leq -k''(\nu) \leq C\nu^{-\frac{5}{3}}.
    \end{aligned}
\end{equation}
\end{corollary}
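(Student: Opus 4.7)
The plan is to read off every bound directly from the leading-order asymptotic expansion in Proposition \ref{prop:k expand}, with the remainder controls providing the required uniformity on $(0,\nu_*]$. The common mechanism is: if $f(\nu)=c\nu^p+ R(\nu)$ with $c>0$ and $|R(\nu)|=o(\nu^p)$ as $\nu\to 0^+$, then $\nu^{-p}f(\nu)$ extends continuously and positively to $[0,\nu_*]$, hence is bounded above and below by positive constants.

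Applying this to $k$ itself, dividing the identity $k(\nu)=c_\sharp\nu^{1/3}+c_\flat\nu+c_l\nu^{5/3}+L(\nu)$ by $\nu^{1/3}$ gives $\nu^{-1/3}k(\nu)=c_\sharp+c_\flat\nu^{2/3}+c_l\nu^{4/3}+\nu^{-1/3}L(\nu)$, and the bound $|L(\nu)|\leq C\nu^{7/3}$ shows $\nu^{-1/3}L(\nu)\to 0$, so $\nu^{-1/3}k(\nu)\to c_\sharp>0$. The inequalities on $k$ follow. For $k'$ and $k''$, I would differentiate the expansion termwise,
\begin{equation*}
k'(\nu)=\tfrac{c_\sharp}{3}\nu^{-2/3}+c_\flat+\tfrac{5c_l}{3}\nu^{2/3}+L'(\nu),\qquad k''(\nu)=-\tfrac{2c_\sharp}{9}\nu^{-5/3}+\tfrac{10c_l}{9}\nu^{-1/3}+L''(\nu),
\end{equation*}
and use $|L'(\nu)|\leq C\nu^{4/3}$ and $|L''(\nu)|\leq C\nu^{1/3}$ (the $j=1,2$ cases of Proposition \ref{prop:k expand}). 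Then $\nu^{2/3}k'(\nu)\to\frac{c_\sharp}{3}>0$ and $-\nu^{5/3}k''(\nu)\to\frac{2c_\sharp}{9}>0$, so each is bounded above and below by positive constants on $(0,\nu_*]$; for $-k''$ the global positivity on the whole interval is moreover guaranteed by the closed-form in Remark \ref{remark:k smooth on open}.

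For the density bound, I would invoke the intermediate identity $\varrho(\nu)=3^{1/3}\nu^{1/3}+\tfrac{3}{5}\nu+\bigo(\nu^{5/3})$ established in the course of proving Proposition \ref{prop:k expand}. The same scheme gives $\nu^{-1/3}\varrho(\nu)\to 3^{1/3}>0$ as $\nu\to 0^+$, and since $\varrho$ is continuous and strictly positive on the compact interval $[\epsilon,\nu_*]$ for any $\epsilon>0$, the two-sided inequality follows on $(0,\nu_*]$ after choosing $C$ large enough to absorb the values on the compact piece away from the origin.

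There is no genuine obstacle: the only technical point is ensuring the differentiated expansions retain remainders of lower order than the extracted leading terms, which is precisely what the bounds $|L^{(j)}(\nu)|\leq C\nu^{7/3-j}$ for $j=0,1,2$ are designed to guarantee. All four estimates then follow from the same compactness-plus-asymptotics template, with the constant $C=C(\nu_*)$ absorbing both the contribution of the remainder near $\nu=0$ and the continuous positive values of the underlying quantities on $[\epsilon,\nu_*]$.
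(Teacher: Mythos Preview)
Your proposal is correct and follows precisely the approach the paper indicates: the paper simply states that the corollary is derived from Proposition~\ref{prop:k expand} and Remark~\ref{remark:k smooth on open}, and you have supplied exactly the standard details this derivation entails. The mechanism you describe---dividing by the leading power, using the remainder bounds to get continuity of the quotient at $\nu=0$, and invoking positivity on compact subintervals away from the origin via Remark~\ref{remark:k smooth on open}---is what the paper's one-line attribution is implicitly relying on.
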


The following cancellation result is a direct corollary of Proposition \ref{prop:k expand},
which is a refinement of \cite[Lemma 4.16]{thesis} for the case: $\gamma = 3$:

\begin{corollary}\label{cor:OG cancellation derivs}
There exists $C=C(\nu_*)>0$ such that,  for all $\nu \in (0,\nu_*]$,
\begin{equation}
\Big|\big((2k'^2 + kk'')-\frac{10}{9}c_\sharp c_\flat \nu^{-\frac{2}{3}}-\tilde{c} \big)^{(j)}(\nu)\Big|
    \leq C\nu^{\frac{2}{3}-j} \qquad\mbox{for $j=0,1,2$},
\end{equation}
where $\tilde{c} = \frac{28}{9}c_\sharp c_l + 2c_\flat^2$.
\end{corollary}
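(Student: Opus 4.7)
The plan is to substitute the expansion
$k(\nu) = c_\sharp \nu^{1/3} + c_\flat \nu + c_l \nu^{5/3} + L(\nu)$
from Proposition~\ref{prop:k expand} directly into the quantity $2k'^2 + k k''$ and to track the cancellations order by order, with the remainder controlled by the bounds on $L^{(j)}$ for $j=0,\dots,4$.

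First I would differentiate the expansion term by term to obtain
\begin{equation*}
k'(\nu) = \tfrac{c_\sharp}{3}\nu^{-2/3} + c_\flat + \tfrac{5 c_l}{3}\nu^{2/3} + L'(\nu),
\qquad
k''(\nu) = -\tfrac{2c_\sharp}{9}\nu^{-5/3} + \tfrac{10 c_l}{9}\nu^{-1/3} + L''(\nu),
\end{equation*}
and then compute $2k'^2$ and $k k''$ by multiplying out. Collecting the terms of order $\nu^{-4/3}$, $\nu^{-2/3}$, and $\nu^{0}$, one finds that the $\nu^{-4/3}$ contributions $\tfrac{2c_\sharp^{2}}{9}\nu^{-4/3}$ from $2k'^2$ and $-\tfrac{2c_\sharp^{2}}{9}\nu^{-4/3}$ from $k k''$ cancel exactly, the $\nu^{-2/3}$ coefficient sums to $(\tfrac{4}{3}-\tfrac{2}{9})c_\sharp c_\flat = \tfrac{10}{9}c_\sharp c_\flat$, and the constant term is $\tfrac{20}{9}c_\sharp c_l + 2c_\flat^2 + \tfrac{8}{9}c_\sharp c_l = \tfrac{28}{9}c_\sharp c_l + 2 c_\flat^2 = \tilde{c}$. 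This matches precisely the claimed leading-order expression and handles the case $j=0$.

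Next I would bound the remainder. All remaining cross terms are either among the polynomial tail $c_\flat \nu,\, c_l \nu^{5/3}$ and $-\tfrac{2c_\sharp}{9}\nu^{-5/3},\, \tfrac{10 c_l}{9}\nu^{-1/3}$ (the smallest being $c_l\nu^{5/3}\cdot(-\tfrac{2c_\sharp}{9}\nu^{-5/3})$, contributing a constant already accounted for above), or else involve $L, L', L''$. Using $|L^{(j)}(\nu)| \leq C\nu^{7/3-j}$, one checks directly that each such cross term, together with $(L')^2$ and $L\cdot L''$, is $O(\nu^{2/3})$. This yields the $j=0$ estimate.

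For $j=1,2$ I would differentiate the identity
$2k'^2 + k k'' = \tfrac{10}{9}c_\sharp c_\flat \nu^{-2/3} + \tilde c + R(\nu)$
termwise. The leading polynomial part produces derivatives that land exactly on the right scale ($\nu^{-2/3-j}$), so they combine with the subtracted principal part to give zero, while the $O(\nu^{2/3})$ remainder $R(\nu)$ has derivatives of order $O(\nu^{2/3-j})$. This is where the full range $j=0,\dots,4$ in Proposition~\ref{prop:k expand} is needed: differentiating $L$ twice requires control of $L^{(2)}, L^{(3)}, L^{(4)}$ through terms like $L''\cdot k$ differentiated up to two times. Assembling these bounds yields the claimed $\nu^{2/3-j}$ estimate.

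The only mildly delicate point, which I expect to be the main bookkeeping obstacle, is to confirm that every cross term between the polynomial part of $k$ and $L^{(j)}$-factors remains at worst of order $\nu^{2/3-j}$ after two differentiations; this reduces to a finite enumeration of products of the form $\nu^a \cdot L^{(b)}(\nu)$ with $a + (7/3-b) \geq 2/3 - j$, which is easily verified case by case from the bounds in Proposition~\ref{prop:k expand}.
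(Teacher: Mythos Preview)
Your proposal is correct and is exactly the direct computation the paper has in mind: the corollary is stated without proof as an immediate consequence of Proposition~\ref{prop:k expand}, and your substitution of the expansion $k = c_\sharp\nu^{1/3} + c_\flat\nu + c_l\nu^{5/3} + L$ into $2k'^2 + kk''$, followed by the Leibniz-type bookkeeping using $|L^{(j)}|\leq C\nu^{7/3-j}$ for $j\le 4$, is precisely what is intended. Your verification of the coefficients $\tfrac{10}{9}c_\sharp c_\flat$ and $\tilde c = \tfrac{28}{9}c_\sharp c_l + 2c_\flat^2$ is accurate, and your observation that the second derivative of the remainder requires $L^{(4)}$ (via the term $kL''$) correctly explains why Proposition~\ref{prop:k expand} records bounds up to $j=4$.
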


\smallskip
\subsubsection{Coefficients of the regular kernel in the vicinity of the vacuum}\label{sec:asymptotics reg}

Throughout this subsection, we use, with a slight abuse of notation, the shorthand $\bigo(\nu^d)$
to denote a function $\phi(\nu)$ for which $|\phi^{(j)}(\nu)|\leq C\nu^{d-j}$ hold for integer $j$, for some constant $C>0$.
We adopt this convention to simplify the calculations contained in the proofs that follow.

\begin{lemma}\label{lem:asymp exp alpha0 and alpha1}
The following asymptotic expansions hold{\rm :}
    \begin{equation*}
        \begin{aligned}
            &\alpha_0(\nu) = C_{\alpha_0,0} \nu
 + C_{\alpha_0,1} \nu^{\frac{5}{3}} + C_{\alpha_0,2} \nu^{\frac{7}{3}} + r_{\alpha_0}(\nu), \\
 &\alpha_1(\nu) = C_{\alpha_1,0} \nu^{\frac{5}{3}} + C_{\alpha_1,1} \nu^{\frac{7}{3}} + r_{\alpha_1}(\nu),
        \end{aligned}
    \end{equation*}
with
\begin{equation*}
\begin{aligned}
& C_{\alpha_0,0} = c_0 3^{\frac{1}{2}} c_\sharp^{\frac{3}{2}}\neq 0,
\quad C_{\alpha_0,1} = C_{\alpha_0,0} \frac{c_\flat}{2c_\sharp}\neq 0, \quad
C_{\alpha_0,2}
= C_{\alpha_0,0}\Big( \frac{11}{8}(\frac{c_\flat}{c_\sharp})^2 - \frac{c_l}{2c_\sharp} \Big), \\
& C_{\alpha_1,0} = - \frac{5}{4}C_{\alpha_0,1}\neq 0, \quad C_{\alpha_1,1}
  = C_{\alpha_1,0} \Big( \frac{7}{3}\big(\frac{2C_{\alpha_0,2}}{5C_{\alpha_0,1}}- \frac{ c_\flat}{2 c_\sharp}\big) + \frac{3c_\flat}{2c_\sharp}\Big),
        \end{aligned}
    \end{equation*}
where
$r_{\alpha_0}$ and $r_{\alpha_1}$ satisfy the estimates{\rm :}
    \begin{equation*}
        \begin{aligned}
            |r^{(j)}_{\alpha_0}(\nu)| + |r^{(j)}_{\alpha_1}(\nu)| \leq C\nu^{3-j}
            \qquad \mbox{for $j=0,1,2,3$}.
        \end{aligned}
    \end{equation*}
In addition,
$\ell(\nu)$ satisfies the bounds{\rm :}
\begin{equation}\label{eq:ell bounds from asymp}
    |\ell(\nu)| + \nu|\ell'(\nu)| \leq C\nu^{\frac{1}{3}}.
\end{equation}
\end{lemma}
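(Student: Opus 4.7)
The plan is to substitute the asymptotic expansion of $k$ from Proposition \ref{prop:k expand} into the closed-form expressions \eqref{eq:alpha0} and \eqref{eq:alpha1}, expand each factor up to the required order, and carefully track the remainders in the sense of the convention $\bigo(\nu^d)$. First, differentiating the expansion of $k$ termwise yields
\begin{equation*}
k'(\nu) = \tfrac{c_\sharp}{3}\nu^{-\frac{2}{3}} + c_\flat + \tfrac{5 c_l}{3}\nu^{\frac{2}{3}} + L'(\nu), \qquad |L'^{(j)}(\nu)| \leq C\nu^{\frac{4}{3}-j},
\end{equation*}
and then applying the binomial theorem to $k'^{-\frac{1}{2}}$ (which is bounded below by a positive multiple of $\nu^{\frac{1}{3}}$ by Corollary \ref{cor:above and below}) gives an expansion of the form $k'^{-\frac{1}{2}} = \sqrt{3/c_\sharp}\,\nu^{\frac{1}{3}}(1 + b_1\nu^{\frac{2}{3}} + b_2\nu^{\frac{4}{3}} + \bigo(\nu^2))$. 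Multiplying by $k^2 = c_\sharp^2 \nu^{\frac{2}{3}} + 2 c_\sharp c_\flat \nu^{\frac{4}{3}} + (2 c_\sharp c_l + c_\flat^2)\nu^2 + \bigo(\nu^{\frac{8}{3}})$ and collecting powers yields the expansion of $\alpha_0 = c_0 k^2 k'^{-\frac{1}{2}}$ to order $\nu^{\frac{7}{3}}$, with the stated constants $C_{\alpha_0,j}$, and a remainder $r_{\alpha_0}$ satisfying the claimed derivative bounds. The three constants $C_{\alpha_0,j}$ are read off directly from this product; the coefficient $c_0$ is fixed later by the normalisation in \S \ref{sec:initial datum reg}.

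Next, to handle $\alpha_1$ via \eqref{eq:alpha1}, differentiate the expansion of $\alpha_0$ twice. Since the leading term $C_{\alpha_0,0}\nu$ vanishes under $\partial^2$, one gets
\begin{equation*}
\alpha_0''(\nu) = \tfrac{10}{9} C_{\alpha_0,1} \nu^{-\frac{1}{3}} + \tfrac{28}{9}C_{\alpha_0,2} \nu^{\frac{1}{3}} + \bigo(\nu).
\end{equation*}
Combined with the expansions of $k^{-2}$ and $k'^{-\frac{1}{2}}$, the integrand in \eqref{eq:alpha1} behaves like $\nu^{-\frac{2}{3}}$ near zero, which is integrable; one then computes
$\int_0^\nu k^{-2}k'^{-\frac{1}{2}}\alpha_0'' \d \tau$ as an expansion in powers $\nu^{\frac{1}{3}}, \nu$, and multiplies by the expansion of $k^3 k'^{-\frac{1}{2}}$ to obtain $\alpha_1 = C_{\alpha_1,0}\nu^{\frac{5}{3}} + C_{\alpha_1,1}\nu^{\frac{7}{3}} + r_{\alpha_1}$. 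A direct bookkeeping of the leading coefficients produces $C_{\alpha_1,0} = -\tfrac{5}{4}C_{\alpha_0,1}$ and the value of $C_{\alpha_1,1}$ stated in the lemma, while the remainder bounds on $r_{\alpha_1}$ follow from the Leibniz rule applied to the factorised form of $\alpha_1$ together with the explicit remainder estimates on $k, k', L$.

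For the bound on $\ell$, the crucial point is an exact cancellation of the worst singular power. Differentiating the expansions above twice gives
\begin{equation*}
\alpha_0''(\nu) = \tfrac{10}{9}C_{\alpha_0,1}\nu^{-\frac{1}{3}} + \bigo(\nu^{\frac{1}{3}}),\qquad \alpha_1''(\nu) = \tfrac{10}{9}C_{\alpha_1,0}\nu^{-\frac{1}{3}} + \bigo(\nu^{\frac{1}{3}}),
\end{equation*}
and the identity $C_{\alpha_1,0} = -\tfrac{5}{4}C_{\alpha_0,1}$ forces the $\nu^{-\frac{1}{3}}$ contribution to $\ell = -(\alpha_1'' + \tfrac{5}{4}\alpha_0'')$ to vanish. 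This cancellation is not coincidental: it is the near-vacuum manifestation of the identity \eqref{eq:eqn for alpha1} satisfied by $\alpha_1$, which is itself equivalent at the leading order to the combination $2k'^2 + k k''$ treated in Corollary \ref{cor:OG cancellation derivs}. Thus $\ell$ starts at order $\nu^{\frac{1}{3}}$, and the same expansion argument applied to $\ell'$ yields $|\ell(\nu)| + \nu|\ell'(\nu)| \leq C\nu^{\frac{1}{3}}$.

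The principal technical obstacle is the careful control of the derivative bounds on the remainder terms $r_{\alpha_0}, r_{\alpha_1}$ through the composition of binomial expansions, the integration step in \eqref{eq:alpha1}, and the final double differentiation defining $\ell$. One needs the expansion of $k$ to be known to order $\nu^{\frac{7}{3}}$ with error bounds on four derivatives (exactly the content of Proposition \ref{prop:k expand}) in order for the cancellation argument producing the $\nu^{\frac{1}{3}}$ estimate on $\ell$ to survive differentiation once more. Beyond this bookkeeping, the argument is a mechanical asymptotic expansion, and no further analytic input is required.
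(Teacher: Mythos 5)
Your proof is correct and follows the same route as the paper: substitute the expansion of $k$ from Proposition~\ref{prop:k expand} into the closed forms \eqref{eq:alpha0}--\eqref{eq:alpha1}, expand via the binomial theorem, extract $C_{\alpha_0'',0}=\tfrac{10}{9}C_{\alpha_0,1}$ and $C_{\alpha_1'',0}=\tfrac{10}{9}C_{\alpha_1,0}$, and read the cancellation in $\ell=-(\alpha_1''+\tfrac{5}{4}\alpha_0'')$ directly from $C_{\alpha_1,0}=-\tfrac{5}{4}C_{\alpha_0,1}$, tracking remainders in the $\bigo(\nu^d)$ convention. One cosmetic quibble: your aside attributing the cancellation to Corollary~\ref{cor:OG cancellation derivs} is imprecise, since that corollary governs the combination $2k'^2+kk''$, whereas the ODE \eqref{eq:eqn for alpha1} defining $\alpha_1$ involves $k''k-6k'^2$; the paper simply exhibits the cancellation by direct computation without invoking that corollary, and your argument actually does the same.
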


\begin{proof}
Using the previous formula for $\alpha_0$ and the asymptotic expansion for $k(\nu)$ near the vacuum, we obtain
\begin{equation*}
    \alpha_0(\nu) = c_0 c_\sharp^2 \nu^{\frac{2}{3}}\big( 1 + \frac{c_\flat}{c_\sharp}\nu^{\frac{2}{3}}
    + \frac{c_l}{c_\sharp}\nu^{\frac{4}{3}}
    + \bigo(\nu^2) \big)(\frac{c_\sharp}{3}\nu^{-\frac{2}{3}})^{-\frac{1}{2}}
    \big( 1 + \frac{3c_\flat}{c_\sharp}\nu^{\frac{2}{3}} + \frac{5 c_l}{c_\sharp}\nu^{\frac{4}{3}} + \bigo(\nu^2) \big)^{-\frac{1}{2}}.
\end{equation*}
Using the Binomial Theorem,
we expand the terms in the brackets and obtain the result for $\alpha_0$, provided that
$\nu$ is sufficiently small.
Furthermore, we have
\begin{equation*}
    \alpha_0''(\nu) = C_{\alpha_0'',0} \nu^{-\frac{1}{3}} + C_{\alpha_0'',1}\nu^{\frac{1}{3}} + \bigo(\nu)
\end{equation*}
with
\begin{equation*}
    C_{\alpha_0'',0} := \frac{10}{9}C_{\alpha_0,1} , \quad C_{\alpha_0'',1} = \frac{28}{9}C_{\alpha_0,2}.
\end{equation*}
Similarly, we obtain
\begin{equation*}
    \begin{aligned}
        \alpha_1(\nu) =& \!  -\! \frac{3}{8}C_{\alpha_0'',0}  \nu^{\frac{4}{3}} \big( 1 + \frac{3c_\flat}{2 c_\sharp} \nu^{\frac{2}{3}}
        + \bigo(\nu^{\frac{4}{3}}) \big) \! \int_0^\nu \!  \tau^{-\frac{2}{3}}
        \Big(1 + \big(\frac{C_{\alpha_0'',1}}{C_{\alpha_0'',0}}- \frac{7 c_\flat}{2 c_\sharp}\big)\tau^{\frac{2}{3}} + \bigo(\tau^{\frac{4}{3}})\Big) \d \tau \\
        =& \!  -\! \frac{9}{8}C_{\alpha_0'',0}  \nu^{\frac{5}{3}} \big( 1 + \frac{3c_\flat}{2 c_\sharp} \nu^{\frac{2}{3}} + \bigo(\nu^{\frac{4}{3}}) \big)
        \Big( 1 + \frac{1}{3}\big(\frac{C_{\alpha_0'',1}}{C_{\alpha_0'',0}}- \frac{7 c_\flat}{2 c_\sharp}\big) \nu^{\frac{2}{3}} + \bigo(\nu^{\frac{4}{3}})  \Big),
    \end{aligned}
\end{equation*}
so that
\begin{equation*}
    \alpha_1''(\nu) = C_{\alpha_1'',0}\nu^{-\frac{1}{3}} + \bigo(\nu^{\frac{1}{3}})
\end{equation*}
with $C_{\alpha_1'',0} := \frac{10}{9}C_{\alpha_1,0}$. It follows that
\begin{equation*}
    \alpha_1'' + \frac{5}{4}\alpha_0''
    = \big( \underbrace{C_{\alpha_1'',0} + \frac{5}{4}C_{\alpha_0'',0} }_{=0}\big) \nu^{-\frac{1}{3}} + \bigo(\nu^{\frac{1}{3}}),
\end{equation*}
and then the bounds of $\ell(\nu)$ are directly obtained.
\end{proof}

Then estimates \eqref{eq:bounds on a0 a1} on
coefficients $a_0$ and $a_1$ are directly deduced from formulas \eqref{eq:a0 and a1 def from alphas}
and Lemma \ref{lem:asymp exp alpha0 and alpha1}.

\subsection{Analysis of the remainder term for the regular kernel}\label{sec:remainder for reg}

It remains to show that there exists a classical solution $g$ of equation \eqref{eq:remainder eqn}.
This is the focus of this section, which also provides some estimates on the H\"older regularity
of the remainder term.

Note that the singularity at the vacuum precludes us directly from appealing to
the Cauchy--Lipschitz Theorem to show the existence of a solution $g$ of \eqref{eq:remainder eqn}.
Therefore, we study the family of ordinary differential equations posed away from the vacuum for $\varepsilon>0$:
\begin{equation}\label{eq:g eps eqn}
    \left\lbrace\begin{aligned}
    & \hat{g}^\varepsilon_{\nu\nu} + k'^2 \xi^2 \hat{g}^\varepsilon = \hat{R}^\r
    \quad\,\, &&\text{for } \nu \in [\varepsilon,\nu_*], \\
    &\hat{g}^\varepsilon|_{\nu = \varepsilon} = 0, \\
    &\hat{g}^\varepsilon_\nu |_{\nu=\varepsilon}= 0.
    \end{aligned}
    \right.
\end{equation}
Our strategy is as follows: the Cauchy--Lipschitz Theorem yields the existence
of a unique solution $\hat{g}^\varepsilon(\cdot,\xi) \in C^2([\varepsilon,\nu_*])$ for each fixed $\varepsilon>0$
and each fixed $\xi \in \mathbb{R}$. We then prove the uniform estimates
of sequence $\{g^\varepsilon\}_{\varepsilon>0}$ to pass to the limit strongly
by using the Ascoli--Arzel\`a Theorem, thereby obtaining a classical solution of \eqref{eq:remainder eqn}.

\subsubsection{Fourier estimates for the remainder term}\label{subsection:fourier rem}

This subsection is independent from the specific choice of kernel $H^\r$.
We also emphasize that, throughout this subsection, the $\varepsilon$-superscript
bears no relation to the sequence of approximate problems.

For each $\varepsilon>0$,
$h^\varepsilon(\cdot,\xi) \in C^2([\varepsilon,\nu_*])$
is the unique solution of the Cauchy problem:
\begin{equation}\label{eq:away from vacuum remainder}
    \left\lbrace\begin{aligned}
    & h^\varepsilon_{\nu\nu}(\nu,\xi) + k'(\nu)^2 \xi^2 h^\varepsilon(\nu,\xi) = r^\varepsilon(\nu,\xi) \quad
    &&\text{for } \nu \in [\varepsilon,\nu_*], \\
    &h^\varepsilon(\varepsilon,\xi) = 0, \\
    &h^\varepsilon_\nu (\varepsilon,\xi) = 0,
    \end{aligned}
    \right.
\end{equation}
provided by the Cauchy--Lipschitz Theorem,
where $r^\varepsilon(\cdot,\xi)$  is assumed to be continuously differentiable.
The Fourier variable $\xi$ is interpreted as a parameter,
and the right-hand side of the above is assumed to take values in $\mathbb{R}$ for all $\xi \in\mathbb{R}$,
so that $h^\varepsilon$ is also real-valued for all such $\xi$.

The following is a version of \cite[Lemma 3.2]{ChenLeFloch} applicable to our setting.

\begin{lemma}[Energy Estimate]\label{lemma:energy est 1}
Fix $\varepsilon > 0$. Let $h^\varepsilon(\cdot,\xi) \in C^2([\varepsilon,\nu_*])$
be the unique solution of \eqref{eq:away from vacuum remainder}.
Then there exists $C>0$ independent of $(\varepsilon, \xi, r^\varepsilon)$ such that
\begin{equation}\label{eq:first energy est}
    h^\varepsilon_\nu(\nu,\xi)^2 + k'(\nu)^2 \xi^2 h^\varepsilon(\nu,\xi)^2
    \leq C \sum_{j=1}^3 I^\varepsilon_j(\nu,\xi) \qquad
    \text{for } (\nu,\xi) \in [\varepsilon,\nu_*]\times (\mathbb{R}\setminus\{0\}),
\end{equation}
where
\begin{equation}\label{eq:I 1 to 3 def}
    \left.\begin{aligned}
    I^\varepsilon_1(\nu,\xi) &:= k'(\nu)^{-2}\xi^{-2} r^\varepsilon(\nu,\xi)^2,\\
    I^\varepsilon_2(\nu,\xi) &:= \xi^{-2} \int_\varepsilon^\nu k'(y)^{-2} r^\varepsilon(\tau,\xi)^2 \d \tau, \\
    I^\varepsilon_3(\nu,\xi) &:= \xi^{-2} \int_\varepsilon^\nu \frac{r_\nu^\varepsilon(\tau,\xi)^2}{|k'(\tau)k''(y)| + k'(\tau)^2} \d \tau.
    \end{aligned}\right.
\end{equation}
In addition,
the following estimate holds{\rm :}
\begin{equation}\label{eq:no xi dep}
    h^\varepsilon_\nu(\nu,\xi)^2 + k'(\nu)^2 \xi^2 h^\varepsilon(\nu,\xi)^2
    \leq C \nu \int_\varepsilon^\nu r^\varepsilon(\tau,\xi)^2 \d \tau \qquad \text{for } \nu \in [\varepsilon,\nu_*]
\end{equation}
for any $\xi \in \mathbb{R}$ with constant $C>0$ independent of $(\varepsilon, \xi, r^\varepsilon)$.
\end{lemma}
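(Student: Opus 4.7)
The plan is to use the standard energy method on the second-order linear ODE \eqref{eq:away from vacuum remainder}, exploiting the signs $k'>0$, $k''<0$ from Remark \ref{remark:k smooth on open}. Define
$$E(\nu) := h^\varepsilon_\nu(\nu,\xi)^2 + k'(\nu)^2 \xi^2 h^\varepsilon(\nu,\xi)^2.$$
Differentiating in $\nu$ and substituting the ODE for $h^\varepsilon_{\nu\nu}$ produces the fundamental identity
$$E'(\nu) = 2\, r^\varepsilon(\nu,\xi)\, h^\varepsilon_\nu(\nu,\xi) + 2\, k'(\nu) k''(\nu)\, \xi^2\, h^\varepsilon(\nu,\xi)^2,$$
whose second term is non-positive.

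First I would prove the coarse bound \eqref{eq:no xi dep}: dropping the non-positive term and using $|h^\varepsilon_\nu|\le E^{1/2}$ gives $E'\le 2|r^\varepsilon| E^{1/2}$, hence $(E^{1/2})' \le |r^\varepsilon|$. Integrating from $\nu=\varepsilon$, where $E$ vanishes by the initial conditions, and applying Cauchy--Schwarz in $\tau$ produces \eqref{eq:no xi dep}; no use is made of $\xi$.

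For the sharper estimate \eqref{eq:first energy est}, I would integrate the energy identity from $\varepsilon$ to $\nu$ and integrate the source term by parts to obtain
$$E(\nu) + 2\int_\varepsilon^\nu \xi^2 |k'k''|(h^\varepsilon)^2\, \de\tau = 2\, r^\varepsilon(\nu) h^\varepsilon(\nu) - 2\int_\varepsilon^\nu r^\varepsilon_\nu\, h^\varepsilon\, \de\tau.$$
A Young inequality with weight $k'(\nu)\xi$ controls the boundary term by $\tfrac14 E(\nu) + C\, I^\varepsilon_1(\nu,\xi)$, and a Young inequality with weight $(|k'k''|+k'^2)^{1/2}\xi$ controls the integrand $|2 r^\varepsilon_\nu h^\varepsilon|$ by the density of $I^\varepsilon_3$ plus $\xi^2 (|k'k''|+k'^2)(h^\varepsilon)^2$. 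The $|k'k''|$ piece is absorbed into the dissipation integral on the left, the $k'^2$ piece is bounded pointwise by $E$, and what survives is
$$E(\nu) \le C\, I^\varepsilon_1(\nu,\xi) + C\, I^\varepsilon_3(\nu,\xi) + C \int_\varepsilon^\nu E(\tau)\, \de\tau.$$

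The main obstacle, and simultaneously the mechanism by which $I^\varepsilon_2$ enters, is the closing Gronwall step. The integral form of Gronwall applied to the preceding inequality gives $E(\nu) \le C(I^\varepsilon_1+I^\varepsilon_3)(\nu) + C\int_\varepsilon^\nu (I^\varepsilon_1+I^\varepsilon_3)(\tau)\, e^{C(\nu-\tau)}\, \de\tau$; but by the definitions in \eqref{eq:I 1 to 3 def}, the integral $\int_\varepsilon^\nu I^\varepsilon_1(\tau,\xi)\, \de\tau$ is precisely $I^\varepsilon_2(\nu,\xi)$, while the monotonicity of $\tau\mapsto I^\varepsilon_3(\tau,\xi)$ yields $\int_\varepsilon^\nu I^\varepsilon_3(\tau,\xi)\, \de\tau \le \nu_* I^\varepsilon_3(\nu,\xi)$, completing \eqref{eq:first energy est}. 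The most delicate technical point is the choice of Young weight on $|r^\varepsilon_\nu h^\varepsilon|$, which must incorporate both $|k'k''|$ (to cash in the dissipation from $k''<0$, a term which blows up both as $\nu\to 0^+$ and at the sonic curve by Remark \ref{remark:k smooth on open}) and $k'^2$ (so that the weight remains coercive near $\nu=\nu_{\cri}$, where $k'\to 0$); any weight missing either ingredient fails to close the argument uniformly in $\varepsilon$.
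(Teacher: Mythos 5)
Your proposal is correct and takes essentially the same route as the paper: the same energy identity obtained by multiplying the ODE by $2h^\varepsilon_\nu$, the integration by parts on the source term, the weighted Cauchy--Young inequality with weight $(|k'k''|+k'^2)^{1/2}\xi$ (chosen precisely to absorb the dissipation from $k'k''\le 0$ while retaining coercivity near $\nu_{\cri}$), and the Gr\"onwall closure exploiting $I_2^\varepsilon(\nu,\xi)=\int_\varepsilon^\nu I_1^\varepsilon(\tau,\xi)\,\mathrm{d}\tau$ together with the monotonicity of $\tau\mapsto I_3^\varepsilon(\tau,\xi)$. Your derivation of \eqref{eq:no xi dep} via $(E^{1/2})'\le|r^\varepsilon|$ and Cauchy--Schwarz is a minor variant of the paper's, which instead uses Cauchy--Young and the integrating factor $\nu^{-1}$; both are equally direct.
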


\begin{proof} Throughout this proof, we omit the $\varepsilon$-superscript for clarity of exposition.
We divide the proof into two steps:

\smallskip
\noindent 1.  Multiplying \eqref{eq:away from vacuum remainder} with $2h^\varepsilon_\nu$ and using the product rule,
we have
\begin{equation}\label{eq:prelim mult 2 mu nu}
    \partial_\nu \left( h_\nu^2 + k'^2 \xi^2 h^2 \right)
    = 2 r h_\nu + 2 \xi^2 k' k'' h^2 \qquad\,\, \text{for } \nu \in [\varepsilon,\nu_*].
\end{equation}
Note that the second term on the right-hand side is negative, since $k'(\nu)>0$ and $k''(\nu) < 0$ for all $\nu \in (0,\nu_*]$.
Using the Cauchy--Young inequality, we have
\begin{equation*}
    \partial_\nu \left( h_\nu^2 + k'^2 \xi^2 h^2 \right) \leq \nu^{-1}\left( h_\nu^2 + k'^2 \xi^2 h^2 \right) + \nu r^2.
\end{equation*}
Multiplying by the integrating factor $\nu^{-1}$, we obtain
\begin{equation*}
    \partial_\nu \left( \nu^{-1} \left( h_\nu^2 + k'^2 \xi^2 h^2 \right) \right) \leq r^2,
\end{equation*}
which yields \eqref{eq:no xi dep} upon integrating over interval $[\varepsilon,\nu]$.

\smallskip
\noindent
2. We now go through a refined estimate starting from \eqref{eq:prelim mult 2 mu nu}
by employing
the signed term on the right-hand side.
An integration by parts shows that
\begin{equation*}
    \int_\varepsilon^\nu r h_\nu \d \tau= r(\nu,\xi) h(\nu,\xi) - \int^\nu_\varepsilon r_\nu h \d \tau.
\end{equation*}
Then we have
\begin{equation}
    \frac{1}{2}\!\left( h_\nu(\nu,\xi)^2 + k'(\nu)^2 \xi^2 h(\nu,\xi)^2 \right)
     \! = r(\nu,\xi) h(\nu,\xi) - \!\int^\nu_\varepsilon \! r_\nu h \d \tau + \xi^2 \int_\varepsilon^\nu \! k' k'' h^2 \d \tau.
\end{equation}
Multiplying and dividing the integrand in second term on the right-hand side
by $\xi \! \left( c_1|k' k''|\! + \! k'^2 \right)^{\frac{1}{2}}$ for any $c_1 \geq 0$ and using the Cauchy--Young inequality,
we find the right-hand side is bounded above by
\begin{equation*}
    \begin{aligned}
    &\frac{1}{k'(\nu)^2 \xi^2}r(\nu,\xi)^2 + \frac{1}{4}k'(\nu)^2 \xi^2 h(\nu,\xi)^2
    + \frac{1}{2 c_2}\xi^{-2}\int_\varepsilon^\nu \left( c_1|k' k''| + k'^2 \right)^{-1} r_\nu^2 \d\tau\\
    &+ \int_\varepsilon^\nu \left( k' k'' + \frac{c_2}{2}\left( c_1|k' k''| + k'^2 \right) \right) \xi^2 h^2 \d\tau
    \end{aligned}
\end{equation*}
for any choices of $c_1 \geq 0$ and $c_2 > 0$.
Fixing $c_1 = 1$ and $c_2 = 2$, and observing that $k' k'' + |k' k''|=0$
due to the opposite signs of $k'$ and $k''$ (\textit{cf.}~Remark \ref{remark:k smooth on open}), we obtain
\begin{equation*}
    \begin{aligned}
    &\frac{1}{4}\left( h_\nu(\nu,\xi)^2 + k'(\nu)^2 \xi^2 h(\nu,\xi)^2 \right) \\
    &\leq I_1(\nu,\xi) + \frac{1}{4}\xi^{-2}\int_\varepsilon^\nu \left( |k' k''| + k'^2 \right)^{-1} r_\nu^2 \d \tau
    + \int_\varepsilon^\nu k'^2\xi^2 h^2 \d \tau.
    \end{aligned}
\end{equation*}
Define $\phi(\nu) := \frac{1}{4}\big( h_\nu(\nu,\xi)^2 + k'(\nu)^2 \xi^2 h(\nu,\xi)^2 \big)$.
Then
\begin{equation}\label{eq:take a break}
    \begin{aligned}
    \phi(\nu) \leq  I_1(\nu,\xi) + \frac{1}{4} I_3(\nu,\xi) +  4\int_\varepsilon^\nu \phi(\tau) \d \tau.
    \end{aligned}
\end{equation}
An application of Gr\"{o}nwall's inequality
yields
\begin{equation*}
    \phi(\nu) \leq \big(I_1(\nu,\xi) + \frac{1}{4}I_3(\nu,\xi)\big)
    + \int_\varepsilon^\nu \big( I_1(\tau,\xi) + \frac{1}{4}I_3(\tau,\xi) \big) e^{4\tau} \d \tau.
\end{equation*}
It follows that there exists $C_0>0$, independent of $\varepsilon$ and $\xi$,
such that
\begin{equation*}
    \phi(\nu) \leq I_1(\nu,\xi) + I_3(\nu,\xi)
    + C_0 \int_\varepsilon^\nu \big( I_1(\tau,\xi) + I_3(\tau,\xi) \big) \d \tau
    \qquad \text{for } \nu \in [\varepsilon,\nu_*].
\end{equation*}
Finally, we observe that $I_2(\nu,\xi) = \int_\varepsilon^\nu I_1(\tau,\xi) \d \tau$ and
\begin{equation*}
    \begin{aligned}
    \int_\varepsilon^\nu I_3(y,\xi) \d y &= \int_\varepsilon^\nu \xi^{-2}
    \Big( \int_\varepsilon^{\tau'} \frac{r_\nu(\tau,\xi)^2}{|k'(\tau)k''(\tau)| + k'(\tau)^2} \d \tau \Big) \d \tau' \\
    &\leq (\nu-\varepsilon) \xi^{-2}  \int_\varepsilon^\nu \frac{r_\nu(\tau,\xi)^2}{|k'(\tau)k''(\tau)| + k'(\tau)^2} \d \tau \\
    &\leq \nu_* I_3(\nu,\xi).
    \end{aligned}
\end{equation*}
Then the result follows.
\end{proof}

\subsubsection{Existence and boundedness of the remainder term for the regular kernel}\label{sec:exist rem reg}

\begin{lemma}
There exists $\hat{g}(\cdot,\xi) \in C^2([0,\nu_*])$ that is the solution of the Cauchy problem{\rm :}
\begin{equation}\label{eq:limit pb g reg}
    \left\lbrace\begin{aligned}
    & \hat{g}_{\nu\nu} + k'^2 \xi^2 \hat{g} = \hat{R}^\r \qquad &&\text{for } \nu \in [\varepsilon,\nu_*], \\
    &\hat{g}|_{\nu = 0} = 0, \\
    &\hat{g}_\nu |_{\nu=0}= 0,
    \end{aligned}
    \right.
\end{equation}
such that
the uniform estimates hold{\rm :}
    \begin{equation}\label{eq:fourier bounds remainder}
    |\partial^j_\nu \hat{g}(\nu,\xi)| \leq C\frac{\nu^{\frac{7}{3}-j}}{(1+|\xi k(\nu)|)^{4-j}}
    \qquad \mbox{for $j=0,1,2$},
    \end{equation}
where $C>0$ is some constant independent of $(\nu,\xi)$.
\end{lemma}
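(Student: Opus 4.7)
The plan is to construct $\hat{g}$ as the limit of the family $\{\hat{g}^\varepsilon\}_{\varepsilon>0}$ produced by the Cauchy--Lipschitz theorem applied to \eqref{eq:g eps eqn}, where each $\hat{g}^\varepsilon(\cdot,\xi) \in C^2([\varepsilon,\nu_*])$ is well-defined for every fixed $\xi \in \mathbb{R}$. The key task is to establish the uniform pointwise bounds \eqref{eq:fourier bounds remainder} on $\{\hat{g}^\varepsilon\}_{\varepsilon>0}$, after which an Ascoli--Arzel\`a extraction yields a subsequential $C^2$-limit on $[0,\nu_*]$, and the vanishing initial data at $\nu=0$ are an immediate consequence of the pointwise bounds at $j=0,1$.

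The first step is to turn the functional form of the source into quantitative pointwise estimates. From $\hat{R}^\r(\nu,\xi) = \ell(\nu)\hat{f}_2(\xi k(\nu))$, combined with the bound \eqref{eq:ell bounds from asymp} on $\ell$, the two-sided asymptotics for $k$, $k'$, $k''$ in Corollary \ref{cor:above and below}, and the Bessel asymptotics $|J_{5/2}(y)| \leq C\min\{y^{5/2},y^{-1/2}\}$ (with analogous bounds on $J'_{5/2}$), I would deduce the pointwise estimate $|\hat{R}^\r(\nu,\xi)| \leq C\nu^{1/3}/(1+|\xi k(\nu)|)^3$ together with a comparable bound on $|\hat{R}^\r_\nu|$.

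The heart of the proof is then to feed these into Lemma \ref{lemma:energy est 1} and split according to two frequency regimes. In the low-frequency regime $|\xi k(\nu)| \leq 1$, I would apply the $\xi$-independent estimate \eqref{eq:no xi dep}: the uniform bound $|\hat{R}^\r| \leq C\nu^{1/3}$ gives $|\hat{g}^\varepsilon_\nu(\nu,\xi)| \leq C\nu^{4/3}$, and integration from $\varepsilon$ to $\nu$ yields $|\hat{g}^\varepsilon(\nu,\xi)| \leq C\nu^{7/3}$. In the high-frequency regime $|\xi k(\nu)| \geq 1$, I would use the refined estimate \eqref{eq:first energy est}--\eqref{eq:I 1 to 3 def}, exploiting the decay $|\hat{R}^\r(\nu,\xi)|^2 \leq C\nu^{-4/3}|\xi|^{-6}$ and the sharp bounds on $k'$ and $k''$ to control each of $I_1^\varepsilon, I_2^\varepsilon, I_3^\varepsilon$, thereby upgrading the bound to $|\hat{g}^\varepsilon(\nu,\xi)| \leq C\nu^{7/3}|\xi k(\nu)|^{-4}$ and $|\hat{g}^\varepsilon_\nu(\nu,\xi)| \leq C\nu^{4/3}|\xi k(\nu)|^{-3}$. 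Patching the two regimes gives \eqref{eq:fourier bounds remainder} for $j=0,1$, and the case $j=2$ is then recovered by rewriting the equation as $\hat{g}^\varepsilon_{\nu\nu} = \hat{R}^\r - k'^2\xi^2\hat{g}^\varepsilon$ and invoking the bounds already in hand together with Corollary \ref{cor:above and below}.

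The main obstacle I foresee is the careful bookkeeping in the high-frequency estimate: the transition locus $|\xi k(\nu)| = 1$ moves with $\nu$, so for a given $\xi$ the interval of integration $[\varepsilon,\nu]$ typically straddles it, and the integrals $I_2^\varepsilon$, $I_3^\varepsilon$ must be split into their low- and high-frequency portions with the low-frequency part ultimately absorbed into the final weight $1/(1+|\xi k(\nu)|)^4$. Once these uniform bounds are secured, the limit $\varepsilon \to 0$ is routine: for each fixed $\xi$, the family $\{\hat{g}^\varepsilon(\cdot,\xi)\}$ is uniformly $C^2$-bounded on compact subintervals of $(0,\nu_*]$, so a diagonal Ascoli--Arzel\`a extraction produces $\hat{g}(\cdot,\xi) \in C^2((0,\nu_*]) \cap C([0,\nu_*])$ solving the equation pointwise in $(0,\nu_*)$; the bounds \eqref{eq:fourier bounds remainder} pass to the limit and enforce the initial conditions $\hat{g}|_{\nu=0} = \hat{g}_\nu|_{\nu=0} = 0$, completing the construction.
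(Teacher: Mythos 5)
Your plan for the uniform estimates (Step 1 of the paper's proof) is essentially identical to the paper's: split into the low-frequency regime $|\xi k(\nu)|\leq 1$ using \eqref{eq:no xi dep}, the high-frequency regime $|\xi k(\nu)|\geq 1$ using \eqref{eq:first energy est}--\eqref{eq:I 1 to 3 def}, patch them together, and recover $j=2$ from the equation. The derived bounds on $\hat{R}^\r$ and the resulting powers of $\nu$ and $|\xi k|$ match the paper. So far, so good.

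The gap is in the endgame. You extract on compact subintervals of $(0,\nu_*]$ and explicitly conclude only that $\hat{g}(\cdot,\xi)\in C^2((0,\nu_*])\cap C([0,\nu_*])$. That falls short of the lemma, which asserts $\hat{g}(\cdot,\xi)\in C^2([0,\nu_*])$ --- i.e.\ twice continuously differentiable \emph{up to and including} the degenerate point $\nu=0$. The paper closes this by inserting an extra step (its Step 2): a uniform $\frac{1}{5}$-H\"older equicontinuity estimate for $\hat{g}^\varepsilon_{\nu\nu}$ on $[\varepsilon,\nu_*]$, derived from the equation via an FTC-plus-H\"older argument applied to $|\hat{R}^\r(\nu_1,\xi)-\hat{R}^\r(\nu_2,\xi)|$ and $\xi^2|k'(\nu_1)^2\hat{g}^\varepsilon(\nu_1,\xi)-k'(\nu_2)^2\hat{g}^\varepsilon(\nu_2,\xi)|$. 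This equicontinuity is exactly what makes Ascoli--Arzel\`a deliver convergence in $C^2$ of the full closed interval, not merely locally on $(0,\nu_*]$. You omit this step entirely. Your route can in fact be repaired without it --- the pointwise bounds you do establish force $\hat{g}_\nu(\nu)/\nu\to 0$ and $\hat{g}_{\nu\nu}(\nu)\to 0$ as $\nu\to 0^+$, so $\hat{g}_\nu$ is differentiable at $0$ with vanishing derivative and $\hat{g}_{\nu\nu}$ is continuous there --- but you need to say this, because as written your conclusion does not match the statement being proved.
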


\begin{proof}  The proof consists of three steps.

\smallskip
\noindent
1. \textit{Uniform estimates}:
Using the bounds in \eqref{eq:ell bounds from asymp} and
the asymptotic expansions for the Bessel function contained in $\hat{f}_2$,
we obtain that the inhomogeneity on the right-hand side of \eqref{eq:remainder eqn},
$\hat{R}^\r(\nu,\xi) = A(\nu) \hat{f}_2(\xi k(\nu))$, satisfies
\begin{equation}\label{eq:big R reg est}
    |\partial^j_\nu \hat{R}^\r(\nu,\xi)| \leq \frac{C\nu^{\frac{1}{3}-j}}{(1+|\xi k(\nu)|)^{3-j}}
    \qquad \text{for } j=0,1.
\end{equation}
Recall terms $\{I_j\}_{j=1}^3$ in Lemma \ref{lemma:energy est 1},
and substitute $r^\varepsilon  = \hat{R}^\r$ for all $\varepsilon>0$ into the lemmas
in \S \ref{subsection:fourier rem}.
Then, for $\xi \neq 0$, we have
\begin{equation*}
    \begin{aligned}
        I_1 \leq C k'^{-2}\xi^{-2}\nu^{\frac{2}{3}}(1+|\xi k|)^{-6}
           \leq C\nu^2 k^2|\xi k|^{-8} \leq C\nu^{\frac{8}{3}}|\xi k|^{-8},
    \end{aligned}
\end{equation*}
while
\begin{equation*}
    I_2 \leq C\xi^{-2}\int_0^\nu \tau^2 |\xi k(\tau)|^{-6} \d \tau \leq C\xi^{-8} \nu
    \leq C\nu^{\frac{11}{3}}|\xi k|^{-8}.
\end{equation*}
Finally, we have
\begin{equation*}
    I_3 \leq C \xi^{-2} \int_0^\nu \tau |\xi k(\tau)|^{-4} \d \tau
    \leq C\xi^{-6} \int_0^\nu \tau^{-\frac{1}{3}} \d\tau \leq C\nu^{\frac{8}{3}}|\xi k|^{-6}.
\end{equation*}
It follows that there exists $C>0$ independent of $(\nu,\xi,\varepsilon)$ such that
\begin{equation}\label{eq:better est xi rem}
    |\hat{g}^\varepsilon_\nu|^2 + \xi^2 k'^2 |\hat{g}^\varepsilon|^2 \leq C\nu^{\frac{8}{3}}|\xi k|^{-6},
\end{equation}
which is accurate for
$|\xi k| \geq 1$.
Meanwhile, for $|\xi k| < 1$, the second estimate of Lemma \ref{lemma:energy est 1} implies
\begin{equation}\label{eq:worse est xi rem}
    |\hat{g}_\nu^\varepsilon|^2 + \xi^2 k'^2 |\hat{g}^\varepsilon|^2
    \leq C\nu \int_0^\nu \tau^{\frac{2}{3}}(1+|\xi k(\tau)|)^{-6} \d \tau \leq C\nu^{\frac{8}{3}},
\end{equation}
so that
$|\hat{g}^\varepsilon_\nu|\leq C\nu^{\frac{4}{3}}$ and,
by the Fundamental Theorem of Calculus, $|g^\varepsilon| \leq \nu^{\frac{7}{3}}$.
Combining \eqref{eq:better est xi rem} with \eqref{eq:worse est xi rem}, we obtain
\begin{equation*}
    |\hat{g}^\varepsilon_\nu|^2 + \xi^2 k'^2 |\hat{g}^\varepsilon|^2 \leq C\nu^{\frac{8}{3}}(1+|\xi k|)^{-6},
\end{equation*}
from which we deduce
\begin{equation*}
    |\partial^j_\nu \hat{g}^\varepsilon(\nu,\xi)| \leq \frac{C\nu^{\frac{7}{3}-j}}{(1+|\xi k(\nu)|)^{4-j}}
 \qquad \mbox{for $j=0,1$}.
\end{equation*}
In turn, using equation \eqref{eq:remainder eqn} to estimate the second derivative,
we have
\begin{equation}\label{eq:fourier bounds remainder eps}
    |\partial^j_\nu \hat{g}^\varepsilon(\nu,\xi)| \leq \frac{C\nu^{\frac{7}{3}-j}}{(1+|\xi k(\nu)|)^{4-j}}
    \qquad\mbox{for $j=0,1,2$}.
\end{equation}

\smallskip

\noindent
2. \textit{Equicontinuity of second derivatives}:
We now prove the uniform equicontinuity of the second derivatives
by employing a strategy akin to the one in \cite[Lemma 4.35]{thesis}.
For $\nu_1,\nu_2 \in [\varepsilon,\nu_*]$ with $\nu_1 \leq \nu_2$,
using equation \eqref{eq:g eps eqn} yields
\begin{align}\label{eq:expand g eps double}
 &|\hat{g}^\varepsilon_{\nu\nu}(\nu_1,\xi) - \hat{g}^\varepsilon_{\nu\nu}(\nu_2,\xi)| \nonumber\\
 &\leq \xi^2 |k'(\nu_1)^2 \hat{g}^\varepsilon(\nu_1,\xi) - k'(\nu_2)^2 \hat{g}(\nu_2,\xi)|
 + |\hat{R}^\r(\nu_1,\xi) - \hat{R}^\r(\nu_2,\xi)|.
 \end{align}
We bound both terms on the right-hand side of the previous inequality.
The first term may be rewritten and estimated as
\begin{equation*}
    \begin{aligned}
    \xi^2 \left| \int_{\nu_1}^{\nu_2} \partial_\nu\big( k'^2 \hat{g}^\varepsilon \big) \d \nu \right|
    &\leq  \xi^2 \left| \int_{\nu_1}^{\nu_2} 2 k' k'' \hat{g}^\varepsilon \d \nu \right|
       + \xi^2 \left| \int_{\nu_1}^{\nu_2} k'^2 \hat{g}^\varepsilon_\nu \d \nu \right| \\
    &\leq  C\xi^2 \int_{\nu_1}^{\nu_2} \d \nu = C\xi^2 |\nu_1 - \nu_2|,
    \end{aligned}
\end{equation*}
where we have used the uniform bounds \eqref{eq:fourier bounds remainder eps}.

Meanwhile, using estimates \eqref{eq:big R reg est} and the H\"older inequality, we have
\begin{equation*}
    \begin{aligned}
        |\hat{R}^\r(\nu_1,\xi) - \hat{R}^\r(\nu_2,\xi)|
        &\leq \int_{\nu_1}^{\nu_2}|\partial_\nu \hat{R}^\r| \d \nu \leq  C\int_{\nu_1}^{\nu_2} \nu^{-\frac{2}{3}} \d \nu \\
        &\leq  C\left( \int_0^{\nu_\cri} \nu^{-\frac{5}{6}} \d \nu \right)^{\frac{4}{5}}
         \left( \int_{\nu_1}^{\nu_2} \d \nu \right)^{1-\frac{4}{5}}
 \leq  C|\nu_1 - \nu_2|^{\frac{1}{5}}.
    \end{aligned}
\end{equation*}
Then,
returning to \eqref{eq:expand g eps double}, we deduce
\begin{equation}\label{eq:unif equi double reg}
    |\hat{g}^\varepsilon_{\nu\nu}(\nu_1,\xi) - \hat{g}^\varepsilon_{\nu\nu}(\nu_2,\xi)| \leq C_\xi |\nu_1 - \nu_2|^{\frac{1}{5}}
\end{equation}
for some constant $C_\xi>0$ depending on $\xi$, but independent of $\varepsilon$.

\smallskip
\noindent
3. \textit{Passage to the limit:}
Using the Ascoli--Arzel\`a Theorem with the uniform derivative bounds
of $\hat{g}^\varepsilon$ and $\hat{g}_\nu^\varepsilon$
and
the uniform equicontinuity estimate \eqref{eq:unif equi double reg},
we find that, for each fixed $\xi \in \mathbb{R}$, there exists
a subsequence $\{ \hat{g}^{\varepsilon'}\}_{\varepsilon'>0}$
such that $\hat{g}^{\varepsilon'}(\cdot,\xi) \to \hat{g}(\cdot,\xi)$ strongly in $C^2([0,\nu_*])$.
The limit function $\hat{g}$ is manifestly a classical solution of \eqref{eq:limit pb g reg}
and satisfies estimates \eqref{eq:fourier bounds remainder}.
\end{proof}

\subsubsection{H\"older continuity of the remainder term for the regular kernel}

\begin{lemma}\label{lem:holder remainder reg}
For any $\alpha \in [0,1)$, there exists $C_\alpha>0$ such that the remainder term satisfies
\begin{equation*}
\Vert \partial_s^{j}g(\nu,\cdot) \Vert_{C^{0,\alpha}(\mathbb{R})}
\leq C_\alpha \nu^{2-\frac{j+\alpha}{3}}, \quad \Vert \partial^{j}_\nu g(\nu,\cdot) \Vert_{C^{0,\alpha}(\mathbb{R})}
\leq C_\alpha\nu^{2-j-\frac{\alpha}{3}}
\qquad\,\, \mbox{for $j=0,1,2$},
\end{equation*}
\end{lemma}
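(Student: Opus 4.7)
The plan is to derive both bounds from the Fourier pointwise estimates \eqref{eq:fourier bounds remainder} via Fourier inversion, controlling $L^\infty$ and the Hölder seminorm simultaneously through a single weighted integral in $\xi$, and then rescaling by $k(\nu)\sim\nu^{1/3}$ to recover the claimed powers of $\nu$.

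First, I would write
\begin{equation*}
\partial_s^{j}g(\nu,s)=\frac{1}{2\pi}\int_{\mathbb{R}}(i\xi)^{j}\hat{g}(\nu,\xi)\,e^{is\xi}\,\dd\xi,
\end{equation*}
and use the elementary bound $|e^{i\xi(s_1-s_2)}-1|\leq 2^{1-\alpha}|\xi|^{\alpha}|s_1-s_2|^{\alpha}$ for every $\alpha\in[0,1]$. This gives
\begin{equation*}
\|\partial_s^{j}g(\nu,\cdot)\|_{L^\infty}+[\partial_s^{j}g(\nu,\cdot)]_{C^{0,\alpha}}
\leq C\int_{\mathbb{R}}(|\xi|^{j}+|\xi|^{j+\alpha})|\hat{g}(\nu,\xi)|\,\dd\xi.
\end{equation*}
Inserting the bound $|\hat{g}(\nu,\xi)|\leq C\nu^{7/3}(1+|\xi k(\nu)|)^{-4}$ from \eqref{eq:fourier bounds remainder} with $j=0$, and substituting $\eta=\xi k(\nu)$, the right-hand side becomes
\begin{equation*}
C\nu^{7/3}\bigl(k(\nu)^{-j-1}+k(\nu)^{-j-\alpha-1}\bigr)\int_{\mathbb{R}}\frac{|\eta|^{j}+|\eta|^{j+\alpha}}{(1+|\eta|)^{4}}\,\dd\eta.
\end{equation*}
The $\eta$-integral converges whenever $j+\alpha<3$, which holds for $j\leq 2$ and $\alpha\in[0,1)$. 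Applying $k(\nu)\sim\nu^{1/3}$ from Corollary \ref{cor:above and below} yields $\|\partial_s^{j}g(\nu,\cdot)\|_{C^{0,\alpha}(\mathbb{R})}\leq C_\alpha\,\nu^{2-(j+\alpha)/3}$.

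For the $\nu$-derivatives, the same scheme applies with $|\xi|^{j}$ in the integrand replaced by $1$ and $|\hat{g}|$ replaced by $|\partial^{j}_\nu\hat{g}(\nu,\xi)|\leq C\nu^{7/3-j}(1+|\xi k(\nu)|)^{-(4-j)}$. The analogous computation gives
\begin{equation*}
\|\partial_\nu^{j}g(\nu,\cdot)\|_{L^\infty}+[\partial_\nu^{j}g(\nu,\cdot)]_{C^{0,\alpha}}
\leq C\nu^{7/3-j}\bigl(k(\nu)^{-1}+k(\nu)^{-\alpha-1}\bigr)\int_{\mathbb{R}}\frac{1+|\eta|^{\alpha}}{(1+|\eta|)^{4-j}}\,\dd\eta,
\end{equation*}
and the $\eta$-integral is finite precisely when $\alpha<3-j$, which for $j\leq 2$ is implied by $\alpha<1$. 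Again invoking $k(\nu)\sim\nu^{1/3}$ produces the bound $C_\alpha\,\nu^{2-j-\alpha/3}$.

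The main subtlety I expect is the borderline integrability at $\infty$ when $j=2$: the weight $(1+|\eta|)^{-2}$ is only just sufficient to integrate $|\eta|^{\alpha}$ for $\alpha<1$, so the constant $C_\alpha$ will blow up as $\alpha\uparrow 1$. This is consistent with the stated range $\alpha\in[0,1)$. A small additional care is needed at $\eta=0$ for the $\nu$-derivative estimates when one might naïvely want to use only $|\eta|^{\alpha}$: the split into the $L^\infty$ piece (corresponding to the constant $1$ in $1+|\eta|^\alpha$) handles the low-frequency contribution cleanly, so no further dyadic decomposition is required.
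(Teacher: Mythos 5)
Your proposal is correct and follows essentially the same route as the paper's proof: bound the Hölder quantities by a weighted Fourier integral, insert the pointwise estimate \eqref{eq:fourier bounds remainder}, substitute $\eta=\xi k(\nu)$, and invoke $k(\nu)\sim\nu^{1/3}$ from Corollary \ref{cor:above and below}. The only cosmetic difference is that the paper compresses the $L^\infty$ piece and the seminorm piece into the single inequality $\Vert g(\nu,\cdot)\Vert_{C^{0,\alpha}}\leq\int|\xi|^\alpha|\hat{g}(\nu,\xi)|\,\dd\xi$ (implicitly using that $g(\nu,\cdot)$ is compactly supported in $[-k(\nu),k(\nu)]$, so the sup-norm is absorbed by the seminorm), whereas you keep the two pieces separate and observe that the larger power of $\nu$ dominates since $\nu\leq\nu_*$; both give the stated $C_\alpha\nu^{2-(j+\alpha)/3}$ and $C_\alpha\nu^{2-j-\alpha/3}$.
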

\begin{proof}
We write
\begin{equation*}
     \Vert g(\nu,\cdot) \Vert_{C^{0,\alpha}(\mathbb{R})} \leq \int_\mathbb{R} |\xi|^\alpha |\hat{g}(\nu,\xi)| \d \xi.
\end{equation*}
It then follows from the Fourier estimates provided by \eqref{eq:fourier bounds remainder} that,
for any fixed $\alpha \in [0,1)$,
\begin{equation*}
    \Vert g(\nu,\cdot) \Vert_{C^{0,\alpha}(\mathbb{R})}
    \leq C\nu^{\frac{7}{3}}k(\nu)^{-1-\alpha} \int_\mathbb{R} \frac{|\xi|^\alpha}{(1+|\xi|)^4} \d \xi \leq C_\alpha \nu^{2-\frac{\alpha}{3}}.
\end{equation*}
An analogous strategy leads to the results for the higher derivatives.
\end{proof}

We remark from Lemma \ref{lem:holder remainder reg} that the second derivatives of $g$ are well-defined away
from the vacuum in the classical sense. The following corollary is then an immediate consequence of \eqref{eq:limit pb g reg}.

\begin{corollary}
The remainder term $g$ is the unique classical solution of the Cauchy problem:
\begin{equation}\label{eq:eqn for remainder g reg phys}
    \left\lbrace\begin{aligned}
    & g_{\nu\nu} - k'^2 g_{ss} = R^\r \qquad &&\text{for } \nu \in (0,\nu_*], \\
    &g|_{\nu = 0} = 0, \\
    &g_\nu |_{\nu=0}= 0.
    \end{aligned}
    \right.
\end{equation}
\end{corollary}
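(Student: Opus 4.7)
The plan is to combine the Fourier-space analysis of \S\ref{sec:exist rem reg} with the Hölder estimates of Lemma \ref{lem:holder remainder reg} to establish existence of a classical solution in physical variables, and then to derive uniqueness from an energy estimate.

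First, I would argue existence. The previous lemma produces, for each $\xi\in\mathbb{R}$, a function $\hat{g}(\cdot,\xi)\in C^2([0,\nu_*])$ satisfying the Fourier-side equation $\hat{g}_{\nu\nu}+k'^2\xi^2\hat{g}=\hat{R}^\r$ with zero data at $\nu=0$, together with the pointwise bounds \eqref{eq:fourier bounds remainder}. These bounds are integrable in $\xi$ so that the Fourier inversion produces a function $g(\nu,s)$; moreover, the bounds on $\partial_\nu^j\hat g$ transfer to bounds on $\partial_\nu^j g$ and allow us to differentiate in $\nu$ under the integral sign. Lemma \ref{lem:holder remainder reg} then provides the Hölder regularity of $\partial_s^j g$ for $j=0,1,2$, so that $g\in C^2((0,\nu_*]\times\mathbb{R})$. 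Undoing the Fourier transform converts $\hat g_{\nu\nu}+k'^2\xi^2\hat g=\hat R^\r$ into $g_{\nu\nu}-k'^2 g_{ss}=R^\r$ pointwise on $(0,\nu_*]\times\mathbb{R}$, so $g$ is a classical solution of the PDE.

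Second, I would verify the initial conditions. By Lemma \ref{lem:holder remainder reg}, $\|g(\nu,\cdot)\|_{C^{0,\alpha}(\mathbb{R})}\le C_\alpha\nu^{2-\alpha/3}$ and $\|g_\nu(\nu,\cdot)\|_{C^{0,\alpha}(\mathbb{R})}\le C_\alpha\nu^{1-\alpha/3}$, both of which vanish as $\nu\to 0^+$; hence $g$ and $g_\nu$ extend continuously to $\{\nu=0\}$ with values identically zero. This gives $g|_{\nu=0}=0$ and $g_\nu|_{\nu=0}=0$ in the classical sense.

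Third, I would establish uniqueness. Suppose $g_1,g_2$ are two classical solutions of \eqref{eq:eqn for remainder g reg phys} with the same data, with $\partial_\nu^jg_i$ bounded in a suitable sense so that the Fourier transform in $s$ is defined (one may further restrict to temperate solutions, which is the natural class dictated by the Fourier construction). Then $\tilde g:=g_1-g_2$ is a classical solution of the homogeneous equation with zero Cauchy data. For each $\varepsilon>0$, $\widehat{\tilde g}(\cdot,\xi)$ solves the homogeneous ODE on $[\varepsilon,\nu_*]$ with data $\widehat{\tilde g}(\varepsilon,\xi),\widehat{\tilde g}_\nu(\varepsilon,\xi)$ tending to zero as $\varepsilon\to 0$. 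The energy identity in the proof of Lemma \ref{lemma:energy est 1} (equation \eqref{eq:no xi dep} specialised to $r^\varepsilon\equiv 0$, applied to $h^\varepsilon(\nu,\xi):=\widehat{\tilde g}(\nu,\xi)-\widehat{\tilde g}(\varepsilon,\xi)-(\nu-\varepsilon)\widehat{\tilde g}_\nu(\varepsilon,\xi)$, or more directly Gronwall's inequality applied to $h_\nu^2+k'^2\xi^2 h^2$) yields $\widehat{\tilde g}(\nu,\xi)=0$ for each $(\nu,\xi)$, whence $\tilde g\equiv 0$.

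The main obstacle is the uniqueness statement: the coefficient $k'(\nu)^2$ blows up like $\nu^{-4/3}$ at $\nu=0$, so a standard energy argument on $[0,\nu_*]$ does not close directly. The resolution is to run the estimate on $[\varepsilon,\nu_*]$, exploit the sign information $k'k''\le 0$ from Remark \ref{remark:k smooth on open} to discard an unfavourable boundary term (as in Step 1 of the proof of Lemma \ref{lemma:energy est 1}), and let $\varepsilon\to 0$ using the vanishing of the Cauchy data. The existence and Fourier-side regularity have already been carried out; extraction of a \emph{classical} physical-space solution is essentially a matter of invoking the integrability in $\xi$ granted by \eqref{eq:fourier bounds remainder} together with the Hölder estimates of Lemma \ref{lem:holder remainder reg}.
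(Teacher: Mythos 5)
Your existence argument and verification of the initial conditions follow the same route as the paper (Fourier inversion using the estimates \eqref{eq:fourier bounds remainder} together with the H\"older bounds of Lemma \ref{lem:holder remainder reg}); the paper treats the corollary as an immediate consequence of \eqref{eq:limit pb g reg} and does not actually spell out uniqueness, so the substantive content of your proposal is the uniqueness step.

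On uniqueness, the specific implementation you propose has a gap. Applying \eqref{eq:no xi dep} to the affine-shifted $h^\varepsilon$ fails because the resulting forcing $r^\varepsilon = -k'^2\xi^2\bigl(\widehat{\tilde g}(\varepsilon,\xi) + (\nu-\varepsilon)\widehat{\tilde g}_\nu(\varepsilon,\xi)\bigr)$ carries a factor $k'(\tau)^2 \sim \tau^{-4/3}$, so that $\int_\varepsilon^\nu r^\varepsilon(\tau)^2\d\tau \sim \varepsilon^{-5/3}$ blows up as $\varepsilon\to 0$ and \eqref{eq:no xi dep} does not close. Similarly, the Gronwall/integrating-factor route applied to $\widehat{\tilde g}$ directly yields only $E(\nu) \le (\nu/\varepsilon)E(\varepsilon)$, and the amplification factor $\nu/\varepsilon$ overpowers the vanishing of $E(\varepsilon)$. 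The argument that does close skips the Cauchy--Young step entirely: from \eqref{eq:prelim mult 2 mu nu} with $r\equiv 0$ one has $\partial_\nu E = 2\xi^2 k'k''\,\widehat{\tilde g}^{\,2} \le 0$ since $k'k''\le 0$, so $E(\nu) := \widehat{\tilde g}_\nu^{\,2} + k'^2\xi^2\widehat{\tilde g}^{\,2}$ is non-increasing and $E(\nu)\le E(\varepsilon)$. Continuity of $\widehat{\tilde g}_\nu$ up to $\nu=0$ with $\widehat{\tilde g}_\nu(0,\xi)=0$ gives $\widehat{\tilde g}(\varepsilon,\xi)=o(\varepsilon)$, whence $k'(\varepsilon)^2\widehat{\tilde g}(\varepsilon,\xi)^2 = O(\varepsilon^{-4/3})\cdot o(\varepsilon^2)=o(\varepsilon^{2/3})\to 0$; thus $E(\varepsilon)\to 0$ and $E\equiv 0$. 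You should also state explicitly the class of solutions in which uniqueness holds (temperate in $s$, or compactly supported by finite propagation speed), since the Fourier transform is not available for an arbitrary classical solution.
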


\subsection{Proof of Proposition \ref{prop:reg kernel exist}}\label{sec:initial datum reg}
With the understanding of the regular kernel obtained in \S \ref{sec:computing reg kernel coeff}--\S \ref{sec:remainder for reg},
we now complete the proof
of Proposition \ref{prop:reg kernel exist}.
First, we have

\begin{lemma}[Initial Data for the Regular Kernel]\label{lem:initial data reg} Let $H^\r$ be the regular kernel constructed
in {\rm \S\ref{sec:computing reg kernel coeff}--\S \ref{sec:asymp analysis reg kernel}}. Then
$$
  H^\r|_{\nu=0}=0, \quad H^\r_\nu|_{\nu=0}=\delta_0.
$$
\end{lemma}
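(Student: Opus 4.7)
The plan is to establish both limits in the distributional sense, working primarily on the Fourier side in the $s$-variable and exploiting the decomposition \eqref{eq:expansion reg kernel}. The first identity $H^\r|_{\nu=0}=0$ is the soft part: since $k(0)=0$, one has $G_1(0,s)=G_2(0,s)\equiv 0$, and by Lemma \ref{lem:asymp exp alpha0 and alpha1} the coefficients $a_0,a_1$ remain bounded as $\nu\to 0^+$ (in fact $a_0$ has a nonzero limit while $a_1\to 0$). The remainder tends to zero uniformly by Lemma \ref{lem:holder remainder reg}, which yields $\|g(\nu,\cdot)\|_{C^0}\lesssim \nu^2$. Combining these observations gives $H^\r(\nu,\cdot)\to 0$ uniformly, hence in $\mathcal{D}'(\R)$.

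For $H^\r_\nu|_{\nu=0}=\delta_0$, I would argue that $\widehat{H^\r_\nu}(\nu,\xi)\to 1$ pointwise in $\xi$ and then pass to distributions by Parseval. Writing $\widehat{H}^\r(\nu,\xi)=\alpha_0(\nu)\phi_0(\xi k(\nu))+\alpha_1(\nu)\phi_1(\xi k(\nu))+\widehat{g}(\nu,\xi)$, where $\phi_j(u)\propto u^{-j-3/2}J_{j+3/2}(u)$ are smooth even functions on $\R$, differentiating in $\nu$ produces
\[
\widehat{H^\r_\nu}(\nu,\xi)=\alpha_0'(\nu)\phi_0(\xi k(\nu))+\alpha_0(\nu)\xi k'(\nu)\phi_0'(\xi k(\nu))+(\text{analogue for }\alpha_1)+\widehat{g}_\nu(\nu,\xi).
\]
By Lemma \ref{lem:asymp exp alpha0 and alpha1} one has $\alpha_0'(0^+)=C_{\alpha_0,0}\neq 0$ and $\alpha_1'(0^+)=0$; by evenness of $\phi_j$ and the fact that $\xi k(\nu)\to 0$ at fixed $\xi$, one has $\phi_j(\xi k(\nu))\to \phi_j(0)$ while $\phi_j'(\xi k(\nu))=O(\xi k(\nu))$. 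The cross terms then vanish because $\alpha_0 k'\sim \nu^{1/3}$ and $\alpha_1 k'\sim \nu$, while $\widehat{g}_\nu(\nu,\xi)\to 0$ by \eqref{eq:fourier bounds remainder}. Only the first summand survives, giving the pointwise limit $C_{\alpha_0,0}\phi_0(0)$, and the constant $c_0$ in \eqref{eq:alpha0} is precisely the normalisation that makes this product equal $1=\widehat{\delta_0}(\xi)$.

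The main technical obstacle is upgrading this pointwise Fourier convergence to a genuine distributional statement, because the cross term $\alpha_0(\nu)\xi k'(\nu)\phi_0'(\xi k(\nu))$ grows in $|\xi|$ before decaying. To handle this, for a Schwartz test function $\phi$ I would split the spectral integral at $|\xi k(\nu)|=1$: on the region $|\xi k(\nu)|\leq 1$ the Taylor expansion $\phi_0'(u)=O(u)$ gives a pointwise bound $C\xi^2\nu^{2/3}$, which is controlled by $\|\widehat{\phi}\|_{L^1(\xi^2 d\xi)}$ uniformly in $\nu$; on the complementary region the Bessel asymptotics yield $|\phi_0'(u)|\lesssim |u|^{-2}$, producing an integrable tail that shrinks to zero against the Schwartz decay of $\widehat{\phi}$ since the cutoff $|\xi|\geq 1/k(\nu)\sim \nu^{-1/3}$ diverges. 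Together with the uniform $\xi$-decay of $\widehat{g}_\nu$, dominated convergence delivers $\langle H^\r_\nu(\nu,\cdot),\phi\rangle\to \phi(0)$, completing the identification of the initial datum.
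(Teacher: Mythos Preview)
Your argument is correct, and for $H^\r|_{\nu=0}=0$ it essentially coincides with the paper's (the paper simply bounds $|\hat H^\r(\nu,\xi)|\leq C(|\alpha_0|+k^2|\alpha_1|+|\hat g|)\leq C\nu$ on the Fourier side, which is the same observation you make in physical space).

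For $H^\r_\nu|_{\nu=0}=\delta_0$ your route differs from the paper's in one technically meaningful respect. You differentiate directly and keep the cross term $\alpha_0(\nu)\,\xi k'(\nu)\,\phi_0'(\xi k(\nu))$, then control it by splitting at $|\xi k(\nu)|=1$ and invoking Taylor/Bessel behaviour. The paper instead uses the Bessel recurrence (Lemma~\ref{lemma:flambda relations}) $\hat f_1'(u)=-\tfrac{3}{u}\hat f_1(u)+\tfrac{2}{u}\hat f_0(u)$ to rewrite this cross term back in terms of the uniformly bounded functions $\hat f_0,\hat f_1$, obtaining the closed expression
\[
\hat H^\r_\nu(\nu,\xi)=2\alpha_0\tfrac{k'}{k}\hat f_0
+\big(\alpha_0'-3\alpha_0\tfrac{k'}{k}+4\alpha_1 k'k\big)\hat f_1
+\big(\alpha_1'k^2-3\alpha_1 k'k\big)\hat f_2+\hat g_\nu,
\]
in which every coefficient is bounded and the $\hat f_1,\hat f_2$ coefficients are $O(\nu^{2/3})$ by the asymptotics of Lemma~\ref{lem:asymp exp alpha0 and alpha1}. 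This yields the \emph{uniform} bound $|\hat H^\r_\nu(\nu,\xi)|\leq C$ immediately, so the distributional limit is a one-line dominated-convergence argument with no splitting needed. (The surviving limit in the paper is identified as $\tfrac{2}{3}C_{\alpha_0,0}\hat f_0(0)=\tfrac{4}{3}C_{\alpha_0,0}$, which of course agrees with your $\alpha_0'(0)\phi_0(0)=C_{\alpha_0,0}\cdot\tfrac{4}{3}$.) The payoff of the paper's reorganisation is that the same uniform bound on $\hat H^\r_\nu$ is reused verbatim in the proof of Proposition~\ref{prop:est for h-1 cpct reg}; your splitting argument would have to be repeated there.
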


\begin{proof}
Using that $\hat{f}_1,\hat{f}_2 \in L^\infty$, we estimate
\begin{equation*}
    \begin{aligned}
        \big| \hat{H}^{\r}(\nu,\xi)\big| \leq  C\big( |\alpha_0| + k^2|\alpha_1| + |\hat{g}|\big) \leq C\nu \to 0
    \end{aligned}
\end{equation*}
in the limit as $\nu \to 0^+$, which implies that $H^\r|_{\nu=0}=0$.

Similarly, using the formula: $\hat{f}_1'(\xi) = -\frac{3}{\xi}f_1(\xi) + \frac{2}{\xi}f_0(\xi)$,
abiding again by $\hat{f}$ representing for $\hat{f}(\xi k(\nu))$,
    \begin{equation}\label{eq:weak kernel nu deriv}
    \begin{aligned}
        \hat{H}^\r_\nu(\nu,\xi) = & \, 2\alpha_0 \frac{k'}{k} \hat{f}_0
        + \Big( \underbrace{\alpha_0' - 3 \alpha_0 \frac{k'}{k} + 4 \alpha_1 k' k}_{= \bigo(\nu^{\frac{2}{3}})} \Big) \hat{f}_1
        + \Big( \underbrace{\alpha_1' k^2 - 3 \alpha_1 k' k}_{=\bigo(\nu^{\frac{2}{3}})} \Big) \hat{f}_2 + \hat{g}_\nu,
    \end{aligned}
\end{equation}
where we have used
the cancellation in the asymptotic expansions of $\alpha_0$ and $\alpha_1$ to obtain the bound
on the factor premultiplying $\hat{f}_1$.
The precise value of $c_0$ can be computed from the previous line, since
\begin{equation}\label{eq:Hreg nu deriv expression}
    \hat{H}^\r_\nu(\nu,\xi) = \frac{2C_{\alpha_0} }{3} \hat{f}_0(\xi k(\nu)) + \bigo(\nu^{\frac{2}{3}}),
\end{equation}
whence, using the asymptotic relation for $\hat{f}_0$ near the origin, we have
\begin{equation*}
    \lim_{\nu \to 0^+}\hat{H}^\r_\nu(\nu,\xi) = \frac{4}{3}C_{\alpha_0}.
\end{equation*}
Thus, by letting $c_0 = \frac{3^{\frac{1}{2}}}{4}c_\sharp^{-\frac{3}{2}}$,
$\lim_{\nu \to 0^+} \hat{H}^\r_\nu(\nu,\xi) = 1$.
\end{proof}

\begin{lemma}[Huygens Principle for the Regular Kernel]\label{lem:huygens reg}
For the regular kernel $H^\r$ constructed in {\rm \S3.1--\S 3.2},
 $\, \supp H^\r \subset \mathcal{K}$.
\end{lemma}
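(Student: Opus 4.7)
The plan is to exploit the decomposition \eqref{eq:expansion reg kernel},
\[
H^\r(\nu,s) = a_0(\nu) G_1(\nu,s) + a_1(\nu) G_2(\nu,s) + g(\nu,s).
\]
By the very definition \eqref{eq:Glambda first def}, both $G_1$ and $G_2$ vanish outside $\mathcal{K}$, so it suffices to show $\supp g \subset \mathcal{K}$.

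The key observation is that the characteristics of the hyperbolic operator $\partial_\nu^2 - k'(\nu)^2\partial_s^2$ appearing in \eqref{eq:eqn for remainder g reg phys} satisfy $\dd s/\dd \nu = \pm k'(\nu)$, integrating to $s - s_1 = \pm(k(\nu) - k(\nu_1))$. In particular, the two boundary curves $s = \pm k(\nu)$ of $\mathcal{K}$ are themselves characteristic curves emanating from the origin. Since the source $R^\r = \ell(\nu) k(\nu)^{-5} G_2(\nu,s)$ is supported in $\mathcal{K}$ and the Cauchy data vanish at $\nu=0$, finite propagation speed should force $\supp g \subset \mathcal{K}$.

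To carry this out rigorously, I would work with the regularized sequence $\{g^\varepsilon\}_{\varepsilon>0}$ constructed in \S\ref{sec:exist rem reg}. For any $(\nu_0,s_0)$ with $s_0 > k(\nu_0)$ (the case $s_0 < -k(\nu_0)$ is symmetric), consider the backward dependence cone
\[
\mathcal{C}^\varepsilon := \{(\nu,s) : \varepsilon \le \nu \le \nu_0, \; |s-s_0| \le k(\nu_0) - k(\nu)\},
\]
whose $\nu$-slice $I(\nu)$ satisfies $\min_{I(\nu)} s = s_0 - (k(\nu_0)-k(\nu)) > k(\nu)$, so that $\mathcal{C}^\varepsilon \cap \mathcal{K} = \emptyset$ and $R^\r \equiv 0$ on $\mathcal{C}^\varepsilon$. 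A standard energy computation with multiplier $g^\varepsilon_\nu$ yields, for $E^\varepsilon(\nu) := \tfrac{1}{2}\int_{I(\nu)}\bigl((g^\varepsilon_\nu)^2 + k'^2 (g^\varepsilon_s)^2\bigr) \dd s$,
\[
(E^\varepsilon)'(\nu) = -\tfrac{k'(\nu)}{2}\bigl[(g^\varepsilon_\nu - k' g^\varepsilon_s)^2\big|_{s_+(\nu)} + (g^\varepsilon_\nu + k' g^\varepsilon_s)^2\big|_{s_-(\nu)}\bigr] + k'(\nu) k''(\nu)\int_{I(\nu)}(g^\varepsilon_s)^2 \dd s \le 0,
\]
where $s_\pm(\nu) := s_0 \pm (k(\nu_0)-k(\nu))$; the bracketed boundary terms are squares multiplied by $-k'/2 \le 0$, and the bulk term is nonpositive because $k' > 0$, $k'' < 0$ by Remark~\ref{remark:k smooth on open}. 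Combined with $E^\varepsilon(\varepsilon) = 0$, this forces $E^\varepsilon \equiv 0$, hence $g^\varepsilon \equiv 0$ on $\mathcal{C}^\varepsilon$; passing to the limit $\varepsilon \to 0^+$ via the convergence in \S\ref{sec:exist rem reg} yields $g(\nu_0,s_0) = 0$.

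The main technical point is that the lateral faces of $\mathcal{C}^\varepsilon$ are characteristic (null) for the wave operator, which is precisely what makes the boundary flux integrand a perfect square. The concavity of $k$ contributes an extra favourable sign rather than an obstruction, and the singularities of $k', k''$ at $\nu=0$ are avoided by keeping $\varepsilon > 0$ until the very last step.
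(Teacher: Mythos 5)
Your proof is correct and follows essentially the same route as the paper: decompose $H^\r$ via \eqref{eq:expansion reg kernel}, observe that $G_1, G_2$ are supported in $\mathcal{K}$ by definition, and invoke finite propagation speed for the inhomogeneous wave equation \eqref{eq:eqn for remainder g reg phys} satisfied by $g$. The paper dispatches the last step by appealing to ``the standard theory of linear wave equations,'' whereas you make that step explicit via the backward-cone energy identity regularized at $\nu=\varepsilon$, which is a legitimate and careful way to spell out the same idea.
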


\begin{proof}
For fixed $\nu \in (0,\nu_*]$, we see that
$\supp G_n(\nu,\cdot) = [-k(\nu),k(\nu)]$ for $n=1,2$,
while the standard theory of linear wave equations applied to \eqref{eq:eqn for remainder g reg phys}
implies that $\supp g(\nu,\cdot) \subset [-k(\nu),k(\nu)]$.
In turn, all the terms contained in expansion \eqref{eq:expansion reg kernel}
are supported inside $\mathcal{K}$.
\end{proof}

By combining the results of \S \ref{sec:computing reg kernel coeff} and \S\ref{sec:remainder for reg}
with Lemmas \ref{lem:initial data reg}--\ref{lem:huygens reg}, the proof
of Proposition \ref{prop:reg kernel exist} is complete.

\subsection{Proof of Proposition \ref{prop:est for h-1 cpct reg}}
We now prove
the following result.

\begin{lemma}\label{lem:cpct est reg}
There exists $C>0$ independent of $(\nu,\xi)$ such that,
for all $(\nu,\xi) \in [0,\nu_*]\times\mathbb{R}$,
\begin{equation*}
\big| \varrho(\nu)\hat{H}^\r_\nu(\nu,\xi) - \xi^2 \hat{H}^\r(\nu,\xi) \big| \leq C(1+\xi^2)\nu^{\frac{1}{3}}.
\end{equation*}
\end{lemma}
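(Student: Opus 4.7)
The plan is to combine the Fourier-side expansion
$$\hat{H}^\r(\nu,\xi) = \alpha_0(\nu)\hat{f}_1(\xi k(\nu)) + \alpha_1(\nu)\hat{f}_2(\xi k(\nu)) + \hat{g}(\nu,\xi)$$
with the representation \eqref{eq:weak kernel nu deriv} of $\hat{H}^\r_\nu$, to decompose $\varrho\hat{H}^\r_\nu - \xi^2\hat{H}^\r$ into seven elementary contributions, and to bound each directly. All ingredients are already in place: the asymptotic expansions of $\alpha_0$ and $\alpha_1$ from Lemma \ref{lem:asymp exp alpha0 and alpha1}, the two-sided bounds on $k, k', \varrho$ from Corollary \ref{cor:above and below}, the Fourier estimates on the remainder $\hat{g}$ in \eqref{eq:fourier bounds remainder}, and the Bessel-type pointwise bound $|\hat{f}_\lambda(t)| \leq C(1+|t|)^{-\lambda-1}$ coming from the representation $\hat{f}_\lambda(t) = c_\lambda t^{-\lambda-1/2}J_{\lambda+1/2}(t)$.

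For the four contributions coming from $\varrho\hat{H}^\r_\nu$, the claim is that each is at most $C\nu^{1/3}$ uniformly in $\xi$. The $\hat{f}_0$-prefactor $2\varrho\alpha_0(k'/k)$ scales as $\nu^{1/3}\cdot\nu\cdot\nu^{-1} = \nu^{1/3}$; the $\hat{f}_1$- and $\hat{f}_2$-prefactors, already identified in \eqref{eq:weak kernel nu deriv} as $\bigo(\nu^{2/3})$ through cancellations in the leading asymptotic coefficients of $\alpha_0$ and $\alpha_1$ provided by Lemma \ref{lem:asymp exp alpha0 and alpha1}, combine with $\varrho$ to give $\bigo(\nu)$; and the estimate \eqref{eq:fourier bounds remainder} yields $\varrho|\hat{g}_\nu|\leq C\nu^{5/3}$. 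Since $\hat{f}_0, \hat{f}_1, \hat{f}_2$ are globally bounded and $\nu\leq C\nu^{1/3}$ on $[0,\nu_*]$, these four terms are jointly controlled by $C\nu^{1/3}$.

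For the three contributions from $-\xi^2\hat{H}^\r$, I would split into the low-frequency regime $|\xi k(\nu)|\leq 1$ and the high-frequency regime $|\xi k(\nu)|>1$. When $|\xi k|\leq 1$, the crude bound $|\hat{f}_\lambda|\leq C$ combined with $\alpha_0 \sim \nu$, $\alpha_1 \sim \nu^{5/3}$, and $|\hat{g}|\lesssim\nu^{7/3}$ yields, for instance, $|\xi^2\alpha_0\hat{f}_1(\xi k)|\leq C\xi^2\nu \leq C\xi^2\nu^{1/3}$, with analogous estimates for the two remaining terms. When $|\xi k|>1$, the Bessel decay $|\hat{f}_\lambda(\xi k)| \leq C(|\xi| k)^{-\lambda-1}$ and the refined estimate on $\hat{g}$ convert each factor $|\xi k|^{-1}$ into a factor $|\xi|^{-1}\nu^{-1/3}$ via $k\sim\nu^{1/3}$: for example, $|\xi^2\alpha_0\hat{f}_1(\xi k)| \leq C\xi^2\nu(|\xi| k)^{-2} \leq C\nu k^{-2} \leq C\nu^{1/3}$, and similar computations dispose of $\xi^2\alpha_1\hat{f}_2$ and $\xi^2\hat{g}$. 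Summing the seven contributions yields the claimed bound $|\varrho\hat{H}^\r_\nu - \xi^2\hat{H}^\r| \leq C(1+\xi^2)\nu^{1/3}$.

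The main difficulty is bookkeeping rather than anything conceptually deep: with seven terms, three carrying $\xi^2$ factors, one must match up powers of $\nu$ across the Bessel decay rates and the asymptotic expansions of $\alpha_0, \alpha_1$ carefully, relying in particular on the leading-order cancellations in Lemma \ref{lem:asymp exp alpha0 and alpha1}. No deeper cancellation between $\varrho\hat{H}^\r_\nu$ and $\xi^2\hat{H}^\r$ — of the sort that the wave equation $\hat{H}^\r_{\nu\nu} + k'^2\xi^2\hat{H}^\r = 0$ might suggest — is required, and the $(1+\xi^2)$ factor in the statement is a conservative pointwise bound arising from the low-frequency regime of the $-\xi^2\hat{H}^\r$ terms.
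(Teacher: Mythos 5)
Your proof is correct and takes essentially the same route as the paper: bound $|\varrho\hat{H}^\r_\nu|\leq C\nu^{1/3}$ (via the cancellations recorded in \eqref{eq:weak kernel nu deriv}) and $|\xi^2\hat{H}^\r|\leq C\nu^{1/3}\xi^2$ separately, then apply the triangle inequality, with — as you correctly note — no deeper cancellation between the two terms needed. The only difference is that your low/high-frequency split for the $\xi^2\hat{H}^\r$ contribution is superfluous: the paper simply uses $\hat{f}_1,\hat{f}_2\in L^\infty$ together with $|\alpha_0|+k^2|\alpha_1|+|\hat{g}|\leq C\nu$ to get $C\nu^{1/3}\xi^2$ directly, since the $(1+\xi^2)$ factor is unavoidable from low frequencies anyway.
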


\begin{proof}
Using that $\hat{f}_1,\hat{f}_2 \in L^\infty$ and omitting the remainder term,
we first have
\begin{equation}\label{eq:est H xi squared reg}
    \begin{aligned}
        \big|\xi^2 \hat{H}^{\r}(\nu,\xi)\big| \leq  C\xi^2\big( |\alpha_0| + k^2|\alpha_1| + |\hat{g}|\big)
        \leq C\nu\xi^2 \leq C\nu^{\frac{1}{3}} \xi^2
    \end{aligned}
\end{equation}
for
$\nu \in [0,\nu_*]$.
Recall equation \eqref{eq:weak kernel nu deriv}, \textit{i.e.},
\begin{equation*}
    \begin{aligned}
        \hat{H}^\r_\nu(\nu,\xi) = & \, 2\alpha_0 \frac{k'}{k} \hat{f}_0
        + \Big( \underbrace{\alpha_0' - 3 \alpha_0 \frac{k'}{k} + 4 \alpha_1 k' k}_{= \bigo(\nu^{\frac{2}{3}})} \Big) \hat{f}_1
        + \Big( \underbrace{\alpha_1' k^2 - 3 \alpha_1 k' k}_{=\bigo(\nu^{\frac{2}{3}})} \Big) \hat{f}_2 + \hat{g}_\nu.
    \end{aligned}
\end{equation*}
It follows that $\big|\hat{H}^\r_\nu(\nu,\xi)\big| \leq  C$ so that
\begin{equation}\label{eq:est rho Hnu reg}
    \begin{aligned}
        \big|\varrho(\nu)\hat{H}^\r_\nu(\nu,\xi)\big| \leq  C\varrho(\nu) \leq C\nu^{\frac{1}{3}}
    \end{aligned}
\end{equation}
and
\begin{equation*}
    \big| \varrho(\nu)\hat{H}^\r_\nu(\nu,\xi) - \xi^2 \hat{H}^\r(\nu,\xi) \big| \leq C(1+\xi^2)\nu^{\frac{1}{3}},
\end{equation*}
as required.
\end{proof}

Estimate \eqref{eq:reg cpct est 1} in Proposition \ref{prop:est for h-1 cpct reg}
follows directly from Lemma \ref{lem:cpct est reg},
the asymptotic description of $\varrho$ near the vacuum, the compact support of $H^\r(\nu,\cdot)$,
and the fact that $H^\r_\nu(\nu,\cdot) \in L^1(\mathbb{R})$ for all $\nu$.
This final fact allows us to relax the assumption on the smoothness of $\varphi$,
though we do not write this explicitly.

Furthermore, estimate \eqref{eq:reg cpct est 2} in Proposition \ref{prop:est for h-1 cpct reg}
follows immediately from the estimates derived in the previous proof.
Indeed, estimate \eqref{eq:est rho Hnu reg} implies
$$
|\partial^j_s \varrho(\nu)H^\r_{\nu}(\nu,\cdot)*\varphi(s)| \leq C_\varphi
\qquad\mbox{for $j=0,1,2$},
$$
while \eqref{eq:est H xi squared reg} implies that
$|\partial^j_s H^\r_{s}(\nu,\cdot)*\varphi(s)| \leq C_\varphi$.
Then the proof of Proposition \ref{prop:est for h-1 cpct reg} is complete.

\section{Entropy Generator II: Singular Kernel}\label{sec:sing kernel}
Similarly, in this section,
we establish the existence of the singular kernel and obtain the Fourier estimates
required for the $H^{-1}_\loc$--compactness of the entropy dissipation measures.
This singular kernel requires the definition of $G_{\lambda}$ for negative integer $\lambda$; see Appendix \ref{appendix:Glambda}.

\begin{prop}[Existence of the Singular Kernel]\label{prop:sing kernel exist}
There exists a distributional solution $H^\s$, supported in $\mathcal{K}$, of the initial value problem
\begin{equation*}
    \left\lbrace\begin{aligned}
         &H^\s_{\nu\nu} - k'^2 H^\s_{ss} = 0 \qquad \text{in } (0,\nu_*] \times \mathbb{R},\\
        &H^\s|_{\nu = 0} = \delta_0, \\
        & \varrho H^\s_\nu|_{\nu=0} = -\delta_0'',
    \end{aligned}\right.
\end{equation*}
given by the formula:
\begin{equation}\label{eq:sing expansion}
    \begin{aligned}
        H^\s(\nu,s) = b_0(\nu) G_{-2}(\nu,s) + b_1(\nu)G_{-1}(\nu,s) + b_2(\nu) G_0(\nu,s) + h(\nu,s),
    \end{aligned}
\end{equation}
where there exists $C>0$ independent of $\nu$ such that
\begin{equation*}
        \nu^{-1}|b_j(\nu)| + |b_j'(\nu)| \leq C  \qquad\mbox{for $j= 0,1,2$},
\end{equation*}
and the remainder term $h(\nu,s)$ is $\alpha$-H\"older continuous with respect
to $s$ for all $\alpha \in [0,1)$.
\end{prop}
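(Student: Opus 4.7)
\medskip
\textbf{Proposal.} My plan is to mirror the strategy developed for the regular kernel in Section \ref{sec:reg kernel} but with an ansatz whose leading term is more singular in $s$, tailored to carry the distributional initial data. Taking the Fourier transform in $s$ converts the equation into the parametrized ODE
\begin{equation*}
\hat H^\s_{\nu\nu} + k'(\nu)^2 \xi^2 \hat H^\s = 0,
\end{equation*}
and I would postulate an expansion of the form
\begin{equation*}
\hat H^\s(\nu,\xi) = \beta_0(\nu)\hat f_{-2}(\xi k(\nu)) + \beta_1(\nu)\hat f_{-1}(\xi k(\nu)) + \beta_2(\nu)\hat f_0(\xi k(\nu)) + \hat h(\nu,\xi),
\end{equation*}
with coefficients $b_j(\nu) = \beta_j(\nu)\, k(\nu)^{-(2j-3)}$ (or the appropriate negative power fixed by the Fourier conventions of Appendix \ref{appendix:Glambda}) so that the physical-space expansion \eqref{eq:sing expansion} is recovered. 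The choice of starting index $-2$ is dictated by the boundary data: the condition $\varrho H^\s_\nu|_{\nu=0} = -\delta_0''$ forces $\hat H^\s_\nu$ to grow like $\xi^2/\varrho(\nu)$ near $\nu=0$, and only the $G_{-2}$ term can supply this behaviour once the asymptotics $\varrho \sim c_\sharp\nu^{1/3}$ from Proposition \ref{prop:k expand} are factored in.

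The coefficient determination proceeds term by term, exactly as in \S \ref{sec:computing reg kernel coeff}. Applying the recursion relations of Lemma \ref{lemma:flambda relations} (extended to negative indices via the Bessel identities collected in Appendix \ref{appendix:Glambda}), I compute $\hat H^\s_{\nu\nu} + k'^2\xi^2 \hat H^\s$ and impose the vanishing of the coefficient of $\hat f_{-1}$, which yields a first-order linear ODE for $\beta_0$ of the form $\beta_0'/\beta_0 = a(\nu) k'/k + b(\nu) k''/k'$ with closed-form solution $\beta_0 = c_0'\, k^{p_0} k'^{q_0}$ for explicit exponents $p_0, q_0$. Vanishing of the $\hat f_0$ coefficient then determines $\beta_1$ by quadrature in terms of $\beta_0''$, and vanishing of the $\hat f_1$ coefficient determines $\beta_2$ analogously. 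Using Corollary \ref{cor:above and below} and the refined asymptotic expansion of $k$, I obtain the bounds $\nu^{-1}|b_j| + |b_j'| \leq C$ as stated, by an expansion argument parallel to Lemma \ref{lem:asymp exp alpha0 and alpha1}, and in particular a key two-term cancellation analogous to the one producing $\ell(\nu)$ in \eqref{eq:ell def} leaves a source term $R^\s(\nu,s)$ that is bounded by a controlled power of $\nu$ times $G_{-1}$ (or $G_0$), small enough to drive the remainder analysis.

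For the remainder $\hat h$, I would set up the family of Cauchy problems on $[\varepsilon,\nu_*]$ with zero data at $\nu=\varepsilon$ (as in \eqref{eq:g eps eqn}), apply the Cauchy--Lipschitz theorem to get $\hat h^\varepsilon$, and then invoke the energy estimate Lemma \ref{lemma:energy est 1}, \emph{verbatim}, with $r^\varepsilon = \hat R^\s$. The three integrals $I_1^\varepsilon, I_2^\varepsilon, I_3^\varepsilon$ must be bounded using the Fourier estimates on $\hat R^\s$ obtained from the bounds on $b_j$ and the asymptotics of the Bessel factors; this yields Fourier estimates of the shape
\begin{equation*}
|\partial_\nu^j \hat h(\nu,\xi)| \leq \frac{C\,\nu^{\alpha-j}}{(1+|\xi k(\nu)|)^{\beta-j}}
\end{equation*}
for suitable $\alpha,\beta>0$. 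Equicontinuity of $\hat h^\varepsilon_{\nu\nu}$ (as in step 2 of the regular kernel proof) plus Ascoli--Arzel\`a will produce the classical solution $\hat h$, and integrating $|\xi|^\alpha|\hat h|$ against the decay kernel delivers the $\alpha$-H\"older regularity of $h$ in $s$ for any $\alpha \in [0,1)$.

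Finally, I would verify the boundary conditions by taking $\nu\to 0^+$ in the expansion of $\hat H^\s$ and $\varrho\hat H^\s_\nu$, using $\hat f_\lambda(\xi k)\to \hat f_\lambda(0)$ (with the appropriate interpretation at negative index) together with the precise constants $C_{\beta_j,0}$. The normalization constant $c_0'$ is then fixed, just as was $c_0$ in Lemma \ref{lem:initial data reg}, to realize $\hat H^\s|_{\nu=0}=1$ and $\varrho \hat H^\s_\nu|_{\nu=0} = \xi^2$. The support statement $\supp H^\s \subset \mathcal{K}$ follows because each $G_\lambda(\nu,\cdot)$ is supported in $[-k(\nu),k(\nu)]$ and the wave equation propagates finite-speed data in $h$ with speed $k'$. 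The main difficulty, and the place where the analysis departs most seriously from the regular case, is matching the derivative data: the factor $\varrho(\nu)$ vanishes like $\nu^{1/3}$ while the leading singular term $b_0 G_{-2}$ contributes a blow-up in $\hat H^\s_\nu$ of exactly compensating order, so the limit identifying the constant $c_0'$ rests on cancellations in the asymptotics of $\beta_0'$ against $\beta_0 k'/k$ and requires the precise value of $c_\sharp$ from Proposition \ref{prop:k expand}; this is the delicate step that must be executed carefully.
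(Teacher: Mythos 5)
Your overall blueprint matches the structure of the paper's proof: a Fourier ansatz whose leading term is $\hat f_{-2}$, determination of the $\beta_j$ by recursion, energy estimates via Lemma~\ref{lemma:energy est 1} on the $\varepsilon$-regularized remainder, Ascoli--Arzel\`a to pass $\varepsilon\to 0$, H\"older regularity of $h$ by integrating $|\xi|^\alpha|\hat h|$, the Huygens principle from compact support of each $G_\lambda(\nu,\cdot)$ and finite propagation speed for $h$, and verification of the boundary data by a pointwise limit $\nu\to 0^+$. All of that is correct and is indeed what the paper does.

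The genuine gap is in the coefficient cascade. You claim: $\beta_0$ from the vanishing of the $\hat f_{-1}$ coefficient (correct); $\beta_1$ from the vanishing of the $\hat f_0$ coefficient ``in terms of $\beta_0''$''; $\beta_2$ from the vanishing of the $\hat f_1$ coefficient. The last two steps do not work as stated. Once $\beta_0 = d_0 k^{-1}k'^{-1/2}$ is chosen, the residual of $\hat H^{\s,0}$ is $\beta_0''\hat f_{-2}$ --- it lives on $\hat f_{-2}$, not $\hat f_0$. Your $\hat f_0$-scheme implicitly uses the upward recursion $\hat f_{-1}'(z) = -\tfrac{z}{2}\hat f_0(z)$, under which the wave operator applied to $\beta_1 \hat f_{-1}$ lands in the span of $\hat f_{-1}$ and $\hat f_0$. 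The $\hat f_0$-vanishing condition then reads $2\beta_1' k' + \beta_1 k''=0$, which neither involves $\beta_0''$ nor cancels the residual $\beta_0''\hat f_{-2}$. Since $\beta_0''\sim \nu^{-4/3}$ is not small, leaving it uncancelled would destroy the decay of $\hat R^\s$ that the energy estimate needs. The paper instead uses the downward recursion $\hat f_{-1}'(z) = \tfrac{1}{z}\hat f_{-1}(z)-\tfrac{2}{z}\hat f_{-2}(z)$, so that $\hat H^{\s,1}$ produces an $\hat f_{-2}$ component which is forced to cancel $\beta_0''\hat f_{-2}$ (this determines $\beta_1$ by quadrature). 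The leftover is $\ell_1\hat f_{-1}$, and $\beta_2$ is then chosen so that the $\hat f_{-1}$ component of $\hat H^{\s,2}$ cancels $\ell_1\hat f_{-1}$, leaving $\ell_2\hat f_0$ as the source for $h$. No $\hat f_1$ term ever enters. In short, the cascade for the singular kernel goes up once and then down twice, and is genuinely asymmetric from the regular kernel's cascade; your plan transplants the regular-kernel pattern in a way that would not close.

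A secondary, subtler inaccuracy: you present the two boundary constraints, $\hat H^\s|_{\nu=0}=1$ and $\varrho\hat H^\s_\nu|_{\nu=0}=\xi^2$, as simultaneously fixing a single constant $c_0'$. In fact the normalization is fixed solely by the first condition, and the second is \emph{verified} afterward as a consequence of the cancellation estimate $|\varrho\hat H^\s_\nu - \xi^2\hat H^\s|\leq C(1+\xi^4)\nu^{2/3}$ (Lemma~\ref{lemma:kernel compactness estimates fourier} and Corollary~\ref{cor:initial data rho Hnu is achieved}). Recognizing that a one-parameter family must automatically match two distributional constraints at $\nu=0$, and pinning down why, is exactly the delicate asymptotic cancellation you allude to at the end --- but it belongs to the verification of the derivative datum, not to the choice of the normalization constant.
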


We remark that Propositions \ref{prop:reg kernel exist} and \ref{prop:sing kernel exist}
imply Theorem \ref{thm:kernels}.

\begin{prop}\label{prop:cpctness est sing}
For any $\varphi \in C^\infty(\mathbb{R})$,
there exists $C_\varphi>0$ independent of $(\nu,s)$ such that,
for all $(\nu,s)\in [0,\nu_*]\times\mathbb{R}$ and $j=0,1,2$,
\begin{align}
&\big| \partial^j_s\big(\varrho(\nu)H^\s_{\nu }(\nu,\cdot) + H^\s_{ss}(\nu,\cdot)\big)*\varphi(s)\big|
\leq C_\varphi \varrho(\nu), \label{eq:sing cpct est 1}\\[1mm]
&\big| \varrho(\nu)\partial^j_s H^\s_{\nu}(\nu,\cdot)*\varphi(s)\big| + \big| \partial^j_s H^\s_{s}(\nu,\cdot)*\varphi(s)\big|
\leq C_\varphi. \label{eq:sing cpct est 2}
\end{align}
\end{prop}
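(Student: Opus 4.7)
The plan is to mirror the Fourier-domain strategy that proved Proposition \ref{prop:est for h-1 cpct reg}, adapted to accommodate the distributional nature of $H^\s$. Passing to the Fourier side in $s$, the bounds \eqref{eq:sing cpct est 1}--\eqref{eq:sing cpct est 2} are equivalent to pointwise estimates of the form
$$
|\varrho(\nu)\hat{H}^\s_\nu(\nu,\xi) - \xi^2 \hat{H}^\s(\nu,\xi)| \leq C\varrho(\nu)(1+|\xi|)^N, \qquad |\varrho(\nu)\hat{H}^\s_\nu(\nu,\xi)| + |\xi \hat{H}^\s(\nu,\xi)| \leq C(1+|\xi|)^N
$$
for some fixed $N\in\mathbb{N}$. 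Once these are in hand, convolution with $\varphi \in C^\infty(\mathbb{R})$ introduces a factor $\hat{\varphi}(\xi)$ decaying faster than any polynomial, which absorbs the extra $(1+|\xi|)^{N+j}$ uniformly in $(\nu,s)$ via Fourier inversion. This reduces the proof to pointwise estimates of the three quantities above.

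To obtain them, I would insert the expansion $\hat{H}^\s(\nu,\xi) = \sum_{j=0}^2 b_j(\nu)\hat{f}_{-2+j}(\xi k(\nu)) + \hat{h}(\nu,\xi)$, differentiate in $\nu$ using the recurrence and differentiation formulas recorded in Appendix \ref{appendix:Glambda} (as in Lemma \ref{lemma:flambda relations}), and carry out the same cancellation bookkeeping that underlies Proposition \ref{prop:est for h-1 cpct reg}. The construction of $b_0,b_1,b_2$ is rigged, exactly as $\alpha_0,\alpha_1$ were in \S \ref{sec:computing reg kernel coeff}, so that the most singular $\hat f_{-2}$ and $\hat f_{-1}$ contributions in $\varrho \hat H^\s_\nu - \xi^2 \hat H^\s$ cancel at leading order in $\nu$, leaving only lower-order pieces in $\hat f_0, \hat f_1$ premultiplied by coefficients of size $\varrho(\nu)$. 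Combined with the coefficient bounds $\nu^{-1}|b_j| + |b_j'| \leq C$, the asymptotics $\varrho(\nu), k(\nu) = \mathcal{O}(\nu^{1/3})$ of Corollary \ref{cor:above and below}, and the Bessel asymptotics of $\hat f_\lambda$ in Appendix \ref{appendix:Glambda}, this yields the needed bound on $\varrho \hat H^\s_\nu - \xi^2 \hat H^\s$. The remainder $\hat h$ is controlled by an energy estimate analogous to Lemma \ref{lemma:energy est 1}, contributing terms of equal or better order. The separate bounds on $\varrho \hat H^\s_\nu$ and $\xi \hat H^\s$ follow from the same substitutions without requiring any cancellation.

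The main obstacle will be the leading-order cancellation analysis in the first step. Because $\hat f_{-2}, \hat f_{-1}$ are genuinely singular in $\xi$, any mismatch in the $\nu$-expansions of $b_0, b_1, b_2$ would immediately destroy the $\varrho(\nu)$ prefactor on the right-hand side of \eqref{eq:sing cpct est 1}. Verifying these cancellations requires a precise asymptotic expansion of each $b_j(\nu)$ to several orders in $\nu$, consistent with the expansions of $k$ and $\varrho$ near the vacuum, and is analogous to, but more delicate than, the identity $C_{\alpha_1'',0} + \tfrac{5}{4}C_{\alpha_0'',0} = 0$ exploited in Lemma \ref{lem:asymp exp alpha0 and alpha1}. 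Once this is in place, the Fourier-inversion step is routine: $\partial_s^j$ brings down $(i\xi)^j$, which is absorbed by the rapid decay of $\hat\varphi$, and the resulting bounds are uniform in $(\nu,s) \in [0,\nu_*]\times\mathbb{R}$.
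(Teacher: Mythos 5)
Your proposal tracks the paper's own argument closely: the paper also reduces to the Fourier-side pointwise bound $|\varrho\hat H^\s_\nu-\xi^2\hat H^\s|\leq C(1+\xi^4)\nu^{2/3}$ (Lemma \ref{lemma:kernel compactness estimates fourier}), obtains it by inserting the $\beta_j$-expansion and exploiting exactly the two cancellations you anticipate (namely $\varrho k'k=1+\mathcal{O}(\nu^{2/3})$ and $\tfrac12\beta_0'+2\beta_1kk'=\mathcal{O}(\nu^{1/3})$, the latter being the analogue of the $C_{\alpha_1'',0}+\tfrac54 C_{\alpha_0'',0}=0$ identity you cite), and then concludes via convolution with $\hat\varphi$. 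The separate bounds in \eqref{eq:sing cpct est 2} are indeed obtained without further cancellation, just as you say, via the recurrence $\hat f_{-2}'=\tfrac\xi2\hat f_{-1}$.
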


We now provide the proofs of Propositions \ref{prop:sing kernel exist}--\ref{prop:cpctness est sing}
in this section. We start with the analysis of the singular kernel coefficients.

\subsection{Computing the singular kernel coefficients}\label{sec:computing sing kernel coeff}

Similarly, we postulate an expansion of the form:
\begin{equation*}
    \hat{H}^\s(\nu,\xi) = \sum_{j=0}^2 \hat{H}^{\s,j}(\nu,\xi) + \hat{h}(\nu,\xi),
\end{equation*}
where $\hat{H}^{\s,j}(\nu,\xi) = \beta_j(\nu) k(\nu)^{2j} \hat{f}_{j-2}(\xi k(\nu))$
for $j =0,1,2$.

As in \S 3.1, we write $\hat{f}$ for $\hat{f}(\xi k(\nu))$ for simplicity.
We use the formulas provided in Lemma \ref{lem:f hat relations} to compute
\begin{equation*}
\begin{aligned}
\hat{H}^{\s,0}_{\nu\nu}& + k'^2 \xi^2 \hat{H}^{\s,0} = \beta_0'' \hat{f}_{-2}
+ \Big( \frac{1 }{2}k\big(\beta_0 k'' + 2\beta_0' k'\big) + \beta_0 k'^2 \Big)\xi^2 \hat{f}_{-1},
    \end{aligned}
\end{equation*}
and impose that
\begin{equation*}
\frac{1}{2}k\big(\beta_0 k'' + 2\beta_0' k'\big) + \beta_0 k'^2 = 0,
\end{equation*}
that is,
\begin{equation}\label{eq:beta0 def}
    \beta_0(\nu) = d_0 k(\nu)^{-1} k'(\nu)^{-\frac{1}{2}}
\end{equation}
with $d_0$ chosen such that $\lim_{\nu \to 0^+}\hat{H}^\s(\nu,\xi) = 1$; see \S \ref{sec:initial datum sing}. Then we obtain
\begin{equation}\label{eq:H0 once alpha0 is picked}
\begin{aligned}
    \hat{H}^{\s,0}_{\nu\nu}& + k'^2 \xi^2 \hat{H}^{\s,0} = \beta_0'' \hat{f}_{-2}.
    \end{aligned}
\end{equation}

Meanwhile, using the formulas provided in Lemma \ref{lem:f hat relations},
\begin{equation}\label{eq:general Hj}
\begin{aligned}
    \hat{H}^{\s,1}_{\nu\nu} + k'^2 \xi^2  \hat{H}^{\s,1} =& \,  \Big( \beta_1'' k^2 + 6 \beta_1' k' k
      + 6 \beta_1 k'^2 + 3\beta_1 k'' k \Big) \hat{f}_{-1} \\
    &-2 \Big( 4\beta_1 k'^2 + 2\beta_1' k' k +\beta_1 k'' k\Big) \hat{f}_{-2}.
    \end{aligned}
\end{equation}
We therefore set $\beta_1$ to be the solution of
\begin{equation*}
    2\beta_1' k' k +\beta_1 \big(4k'^2 + k'' k\big) = \frac{1}{2}\beta_0'',
\end{equation*}
that is,
\begin{equation*}
    \frac{\de}{\de\nu}\big(\beta_1 k^2 k'^{\frac{1}{2}}\big) = \frac{1}{4}\beta_0'' k k'^{-\frac{1}{2}},
\end{equation*}
whose closed-form
solution is
\begin{equation}\label{eq:beta1 def}
    \begin{aligned}
        \beta_1(\nu) =& \,  \frac{1}{4}k(\nu)^{-2} k'(\nu)^{-\frac{1}{2}} \int_0^\nu \beta_0''(\tau) k(\tau) k'(\tau)^{-\frac{1}{2}} \d \tau.
    \end{aligned}
\end{equation}
Then
\begin{equation*}
    \begin{aligned}
    \big(\hat{H}^{\s,0} + \hat{H}^{\s,1}\big)_{\nu\nu}+ k'^2 \xi^2 \big( \hat{H}^{\s,0} + \hat{H}^{\s,1} \big) = \ell_1 \hat{f}_{-1},
    \end{aligned}
\end{equation*}
where the coefficient is given by the exact formula:
\begin{equation}\label{eq:nasty coeff def}
\ell_1(\nu)
= \beta_1''(\nu)k(\nu)^2 + 6 \beta_1'(\nu) k'(\nu) k(\nu) + 6 \beta_1(\nu) k'(\nu)^2 + 3\beta_1(\nu) k''(\nu) k(\nu).
\end{equation}

Using again the formulas provided in Lemma \ref{lem:f hat relations}, we have
\begin{equation}\label{eq:f0 term}
\begin{aligned}
    \hat{H}^{\s,2}_{\nu\nu} + k'^2 \xi^2  \hat{H}^{\s,2}
    =& \,  \big( \beta_2'' k^4 + 6 \beta_2' k' k^3 + 6 \beta_2 k'^2 k^2 + 3\beta_2 k'' k^3 \big)\hat{f}_{0} \\
    &+ 2\big( 6\beta_2 k'^2 k^2 + 2\beta_2' k' k^3 +\beta_2 k'' k^3 \big) \hat{f}_{-1}.
    \end{aligned}
\end{equation}
Thus, we choose $\beta_2$ to be the solution of the equation:
\begin{equation*}
    2\big( 6\beta_2 k'^2 k^2 + 2\beta_2' k' k^3 +\beta_2 k'' k^3 \big) = -\ell_1,
\end{equation*}
which can be rewritten as
\begin{equation*}
    \frac{\de}{\de\nu}\big( \alpha_2 k^3 k'^{\frac{1}{2}} \big) = -\frac{1}{4}\beta_1 k'^{-\frac{1}{2}},
\end{equation*}
whose solution is given by the explicit formula
\begin{equation}\label{eq:beta2}
\beta_2(\nu)=-\frac{1}{4} k(\nu)^{-3} k'(\nu)^{-\frac{1}{2}} \int_0^\nu \ell_1(\tau) k'(\tau)^{-\frac{1}{2}} \d \tau.
\end{equation}

We define
\begin{align}
&R^\s(\nu,s) := \ell_2(\nu)k(\nu)^{-1}G_0(\nu,s), \label{eq:big Rs def}\\
&\ell_2(\nu):= -k(\nu)^2\big( \beta_2''(\nu)k(\nu)^2
      + 6 \beta_2'(\nu) k'(\nu) k(\nu) + 6 \beta_2(\nu) k'(\nu)^2+ 3\beta_2(\nu) k''(\nu) k(\nu) \big),
      \label{eq:ell2 def}
\end{align}
such that $\hat{R}^\s(\nu,\xi) = \ell_2(\nu)f_0(\xi k(\nu))$.

We therefore define the remainder term $\hat{h}$ to be the solution of the Cauchy problem:
\begin{equation}\label{eq:sing rem eqn}
    \left\lbrace\begin{aligned}
        & h_{\nu\nu} - k'^2 h_{ss}  = R^\s \qquad \text{in $(0,\nu_*]\times\mathbb{R}$}, \\
        &h|_{\nu = 0} = 0, \\
        &h_\nu|_{\nu=0} = 0.
    \end{aligned}\right.
\end{equation}

In turn, the expansion for $H^\s$ reads as
\begin{equation}
    H^\s(\nu,s) = \beta_0(\nu) k(\nu)^{3} G_{-2}(\nu,s) + \beta_1(\nu) k(\nu)^{3} G_{-1}(\nu,s) + \beta_2(\nu)^{3} G_0(\nu,s) + h(\nu,s)
\end{equation}
with
\begin{equation}\label{eq:b0 and b1 def from betas}
    b_0(\nu) := \beta_0(\nu) k(\nu)^{3}, \quad b_1(\nu) := \beta_1(\nu) k(\nu)^{3}, \quad b_2(\nu) := \beta_2(\nu) k(\nu)^3.
\end{equation}

\subsection{Asymptotic analysis of the singular kernel}\label{sec:asymp sing}

As per \S \ref{sec:asymptotics reg}, to simplify the calculations contained in the proofs that follow,
with slight abuse of notation, the shorthand $\bigo(\nu^d)$ is understood
to denote a function $\phi(\nu)$ for which the bounds $|\phi^{(j)}(\nu)|\leq C\nu^{d-j}$ hold for integer $j$,
for some positive constant $C$.

\begin{lemma}\label{lem:asymp beta0 and 1}
The coefficient functions $\beta_0(\nu)$ and $\beta_1(\nu)$ admit the following
decompositions{\rm :}
\begin{equation*}
\begin{aligned}
\beta_0(\nu) &= C_{\beta_0,0} + C_{\beta_0,1} \nu^{\frac{2}{3}} + C_{\beta_0,2} \nu^{\frac{4}{3}}
  + r_{\beta_0}(\nu), \\
\beta_1(\nu) &= C_{\beta_1,0} + C_{\beta_1,1}\nu^{\frac{2}{3}} + C_{\beta_1,2}\nu^{\frac{4}{3}}
   + r_{\beta_1}(\nu),
\end{aligned}
\end{equation*}
with
\begin{equation*}
    \begin{aligned}
        &C_{\beta_0,0} = d_0 c_\sharp^{-\frac{3}{2}} 3^{\frac{1}{2}}, \quad C_{\beta_0,1} = -\frac{5 c_\flat}{2 c_\sharp}C_{\beta_0,0}, \\
        &C_{\beta_1,0} = -\frac{C_{\beta_0,1}}{2}c_\sharp^{-2}, \quad C_{\beta_1,1}
        = -C_{\beta_1,0}\big( \frac{11 c_\flat}{3 c_\sharp} + \frac{2 C_{\beta_0,2}}{3 C_{\beta_0,1}} \big),
    \end{aligned}
\end{equation*}
where constants
$C_{\beta_0,0},C_{\beta_0,1},C_{\beta_1,0} \neq 0$,
and functions $r_{\beta_0}(\nu)$ and $r_{\beta_1}(\nu)$ satisfy that there exists some
constant $C>0$ such that
\begin{equation*}
    |r_{\beta_0}^{(j)}(\nu)| + |r_{\beta_1}^{(j)}(\nu)| \leq C\nu^{2-j}
    \qquad \text{for $j=0,1,2,3$}.
\end{equation*}
\end{lemma}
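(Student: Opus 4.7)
The plan is to mimic the proof of Lemma 4.3 (\textit{i.e.} Lemma~\ref{lem:asymp exp alpha0 and alpha1}), substituting the asymptotic expansion of the characteristic speed $k(\nu)$ from Proposition~\ref{prop:k expand} into the closed-form formulas \eqref{eq:beta0 def} and \eqref{eq:beta1 def} and expanding via the Binomial Theorem. As a preliminary step, I would use the expansion $k(\nu) = c_\sharp \nu^{1/3} + c_\flat \nu + c_l \nu^{5/3} + L(\nu)$, together with the derived $k'(\nu) = \tfrac{c_\sharp}{3}\nu^{-2/3}\bigl(1 + \tfrac{3c_\flat}{c_\sharp}\nu^{2/3} + \tfrac{5c_l}{c_\sharp}\nu^{4/3} + O(\nu^2)\bigr)$ from the proof of Proposition~\ref{prop:k expand}, to record three-term expansions of $k(\nu)^{-1}$, $k(\nu)^{-2}$, and $k'(\nu)^{-1/2}$ in the variable $\nu^{2/3}$, with a remainder whose first three derivatives are controlled using the $L^{(j)}$ bounds.

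For $\beta_0(\nu) = d_0 k(\nu)^{-1} k'(\nu)^{-1/2}$, I would multiply the expansions of $k^{-1}$ and $k'^{-1/2}$ and collect powers of $\nu^{2/3}$. The leading term produces $C_{\beta_0,0} = d_0 c_\sharp^{-3/2} 3^{1/2}$, which is nonzero since $d_0 \neq 0$, and the coefficients $C_{\beta_0,1}$ and $C_{\beta_0,2}$ fall out by matching the next two orders (the sign and factor in $C_{\beta_0,1}$ come from combining $-c_\flat/c_\sharp$ with $-\tfrac{3}{2}c_\flat/c_\sharp$ from the Binomial expansion of $k'^{-1/2}$). The bounds on $r_{\beta_0}$ follow from the derivative bounds on $L$ in Proposition~\ref{prop:k expand}, using the Leibniz rule.

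For $\beta_1(\nu) = \tfrac{1}{4}k(\nu)^{-2} k'(\nu)^{-1/2} \int_0^\nu \beta_0''(\tau) k(\tau) k'(\tau)^{-1/2} \d\tau$, I would first differentiate the expansion of $\beta_0$ twice to obtain $\beta_0''(\nu) = -\tfrac{2}{9}C_{\beta_0,1}\nu^{-4/3} + \tfrac{4}{9}C_{\beta_0,2}\nu^{-2/3} + O(\nu^0)$; multiplying by the expansion of $k(\tau)k'(\tau)^{-1/2} = \sqrt{3}\,c_\sharp^{1/2}\nu^{2/3}\bigl(1+O(\nu^{2/3})\bigr)$ yields an integrand of the form $B_0 \tau^{-2/3} + B_1 + B_2 \tau^{2/3} + \tilde{R}(\tau)$, which is integrable near $\tau=0$. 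Integration produces a power series starting at $\nu^{1/3}$, and the prefactor $\tfrac{1}{4}k^{-2}k'^{-1/2}\sim \nu^{-1/3}$ then produces the claimed expansion for $\beta_1$ starting at a nonzero constant $C_{\beta_1,0} = -\tfrac{1}{2}c_\sharp^{-2} C_{\beta_0,1}$. The coefficient $C_{\beta_1,1}$ results from carefully assembling the next order contributions from the prefactor and from the integrand.

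The main obstacle will not be identifying the leading coefficients (this is essentially algebraic bookkeeping, made slightly tedious by the three-term Binomial expansions) but rather verifying the derivative bounds $|r_{\beta_0}^{(j)}(\nu)| + |r_{\beta_1}^{(j)}(\nu)| \leq C\nu^{2-j}$ for $j=0,1,2,3$. For $r_{\beta_1}$ this requires differentiating an integral whose integrand has a mild singularity at the endpoint $\tau=0$. I would handle this by writing $r_{\beta_1}$ explicitly as a sum of the ``tail'' of the Binomial expansions applied in $\beta_0''$, $k$, $k'^{-1/2}$, $k^{-2}$, and then estimating each term through Leibniz's rule together with the derivative bounds on $L$ from Proposition~\ref{prop:k expand} and Corollary~\ref{cor:above and below}. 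The four-derivative bounds on $L$ provided by Proposition~\ref{prop:k expand} are precisely what is needed to close the estimate up to $j=3$.
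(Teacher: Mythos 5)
Your proposal follows the same route as the paper: substitute the asymptotic expansion of $k$ from Proposition~\ref{prop:k expand} into the closed-form formulas \eqref{eq:beta0 def} and \eqref{eq:beta1 def}, expand via the Binomial Theorem, and (for $\beta_1$) integrate a $\tau^{-2/3}$ leading term before multiplying by the $\nu^{-1/3}$ prefactor. Your intermediate checks all match the paper's calculation, including the leading coefficient $C_{\beta_1,0}=-\tfrac{1}{2}c_\sharp^{-2}C_{\beta_0,1}$ (the paper writes the prefactor as $-\tfrac{C_{\beta_0,1}}{6}c_\sharp^{-2}\nu^{-1/3}$ before the $\int_0^\nu\tau^{-2/3}\,d\tau=3\nu^{1/3}$ integration, which yields the same constant), and your observation that the derivative bounds on $L$ are what controls $r_{\beta_0}^{(j)},r_{\beta_1}^{(j)}$ is exactly how the paper's implicit $\mathcal{O}$-convention (introduced at the start of \S\ref{sec:asymptotics reg}) closes the estimate up to $j=3$.
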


\begin{proof}
Using the asymptotic expansion for $k$, we have
\begin{equation*}
\begin{aligned}
\beta_0(\nu) =& \,  \big(c_\sharp \nu^{\frac{1}{3}} + c_\flat \nu + c_l \nu^{\frac{5}{3}}
+\bigo(\nu^{\frac{7}{3}})\big)^{-1} \big(\frac{1}{3}c_\sharp \nu^{-\frac{2}{3}} + c_\flat
+ \frac{5}{3}c_l \nu^{\frac{2}{3}}+ \bigo(\nu^{\frac{4}{3}})\big)^{-\frac{1}{2}}.
\end{aligned}
\end{equation*}
By using the Binomial Theorem, for sufficiently small $\nu$,
this can be rewritten as
\begin{equation*}
\begin{aligned}
\beta_0(\nu) &=\sqrt{3} d_0 c_\sharp^{-\frac{3}{2}}
\big( 1 - \frac{c_\flat}{c_\sharp}\nu^{\frac{2}{3}}
- \frac{c_l}{c_\sharp}\nu^{\frac{4}{3}} + \frac{c_\flat^2}{c_\sharp^2}\nu^{\frac{4}{3}} + \bigo(\nu^2) \big) \\
&\quad\, \times \big( 1 - \frac{3 c_\flat}{2 c_\sharp}\nu^{\frac{2}{3}} - \frac{5 c_l}{2 c_\sharp}\nu^{\frac{4}{3}}
+ \frac{27 c_\flat^2}{8c_\sharp^2}\nu^{\frac{4}{3}} + \bigo(\nu^2) \big),
\end{aligned}
\end{equation*}
and then the result for $\beta_0$ follows. Analogously, by writing
\begin{equation*}
        \beta_1(\nu) = -\frac{C_{\beta_0,1}}{6}c_\sharp^{-2} \nu^{-\frac{1}{3}} \Big( 1 - \frac{7 c_\flat}{2 c_\sharp} \nu^{\frac{2}{3}} + \bigo(\nu^{\frac{4}{3}}) \Big)
        \!\int_0^\nu \!\!\tau^{-\frac{2}{3}} \Big( 1 - \big(\frac{2C_{\beta_0,2}}{C_{\beta_0,1}}
          +\frac{c_\flat}{2c_\sharp}\big)\tau^{\frac{2}{3}} + \bigo(\tau^{\frac{4}{3}}) \Big) \d \tau,
\end{equation*}
the result for $\beta_1$ follows.
\end{proof}

\begin{lemma}
The following asymptotic expansion holds for $\ell_1${\rm :}
\begin{equation*}
    \ell_1(\nu) = C_{\ell_1,0} \nu^{-\frac{2}{3}} + C_{\ell_1,1} + r_{\ell_1}(\nu),
\end{equation*}
where $C_{\ell_1,0} = \frac{10}{3}C_{\beta_1,0}c_\sharp c_\flat + \frac{10}{9}C_{\beta_1,1}c_\sharp^2$,
$C_{\ell_1,1}\in\mathbb{R}$ is some constant that can be determined explicitly,
and
$r_{\ell_1}$ admits the bounds{\rm :}
\begin{equation*}
    |r^{(j)}_{\ell_1}(\nu)| \leq C\nu^{\frac{2}{3}-j} \qquad \text{for $j=0,1,2,3$},
\end{equation*}
for some constant $C>0$.
\end{lemma}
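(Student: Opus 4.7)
The plan is to obtain the expansion for $\ell_1(\nu)$ by directly substituting the asymptotic descriptions of $k, k', k''$ from Proposition \ref{prop:k expand} and of $\beta_1, \beta_1', \beta_1''$ from Lemma \ref{lem:asymp beta0 and 1} into the defining formula \eqref{eq:nasty coeff def}, and collecting terms according to the powers of $\nu^{1/3}$. Concretely, differentiating the expansion of Proposition \ref{prop:k expand} term by term yields
\begin{equation*}
k'(\nu) = \tfrac{c_\sharp}{3}\nu^{-2/3} + c_\flat + \tfrac{5 c_l}{3}\nu^{2/3} + L'(\nu), \qquad k''(\nu) = -\tfrac{2 c_\sharp}{9}\nu^{-5/3} + \tfrac{10 c_l}{9}\nu^{-1/3} + L''(\nu),
\end{equation*}
whence the products $k^2, k'k, k'^2, kk''$ each admit an expansion in powers of $\nu^{2/3}$ starting respectively at $\nu^{2/3}, \nu^{-1/3}, \nu^{-4/3}, \nu^{-4/3}$, with remainders controlled by $L$ and its derivatives. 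Differentiating the expansion of $\beta_1$ similarly gives $\beta_1'(\nu) = \frac{2}{3}C_{\beta_1,1}\nu^{-1/3} + \frac{4}{3}C_{\beta_1,2}\nu^{1/3} + r_{\beta_1}'(\nu)$ and $\beta_1''(\nu) = -\frac{2}{9}C_{\beta_1,1}\nu^{-4/3} + \frac{4}{9}C_{\beta_1,2}\nu^{-2/3} + r_{\beta_1}''(\nu)$ with the inherited bounds $|r_{\beta_1}^{(j)}(\nu)| \leq C\nu^{2-j}$.

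Substituting these expansions in $\ell_1 = \beta_1''k^2 + 6\beta_1' k' k + 6\beta_1 k'^2 + 3\beta_1 k'' k$, the worst \emph{a priori} power of $\nu$ is $\nu^{-4/3}$, produced only by $6\beta_1 k'^2$ and $3\beta_1 k'' k$, and with respective coefficients $\frac{2c_\sharp^2}{3}C_{\beta_1,0}$ and $-\frac{2c_\sharp^2}{3}C_{\beta_1,0}$; these cancel because $6k'^2 + 3k k''$ vanishes at leading order in $\nu$ (a property of $k$ alone). Collecting the remaining $\nu^{-2/3}$ contributions, namely $-\frac{2c_\sharp^2}{9}C_{\beta_1,1}$ from $\beta_1'' k^2$, $\frac{4c_\sharp^2}{3}C_{\beta_1,1}$ from $6\beta_1' k' k$, $4c_\sharp c_\flat C_{\beta_1,0} + \frac{2c_\sharp^2}{3}C_{\beta_1,1}$ from $6\beta_1 k'^2$, and $-\frac{2c_\sharp c_\flat}{3}C_{\beta_1,0} - \frac{2c_\sharp^2}{3}C_{\beta_1,1}$ from $3\beta_1 k''k$, gives exactly $C_{\ell_1,0} = \frac{10}{3}C_{\beta_1,0}c_\sharp c_\flat + \frac{10}{9}C_{\beta_1,1}c_\sharp^2$ as announced. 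Gathering the next-order terms (at $\nu^0$) produces the constant $C_{\ell_1,1}$, whose explicit expression is a finite linear combination of $c_\sharp,c_\flat,c_l,C_{\beta_1,0},C_{\beta_1,1},C_{\beta_1,2}$ obtained by the same bookkeeping.

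For the remainder $r_{\ell_1}$, I would show that every term generated when multiplying the expansions lies in the class $\bigo(\nu^{2/3})$ in the sense of the shorthand of Lemma \ref{lem:asymp beta0 and 1}; this is preserved under the Leibniz rule, under multiplication by expansions of the form $\nu^{d}(1+\bigo(\nu^{2/3}))$, and under differentiation up to three times as required. In particular, using the derivative bounds $|L^{(j)}(\nu)|\leq C\nu^{7/3-j}$ for $j\leq 4$ of Proposition \ref{prop:k expand} and the bounds on $r_{\beta_1}^{(j)}$ for $j\leq 3$ from Lemma \ref{lem:asymp beta0 and 1}, one verifies that each product contributes a remainder that, together with all of its derivatives up to order $3$, satisfies the stated $|r_{\ell_1}^{(j)}(\nu)|\leq C\nu^{2/3-j}$ bound. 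The main practical obstacle is purely bookkeeping: the cancellation at order $\nu^{-4/3}$ is delicate and one must retain enough terms in every factor (including the second-order corrections to $k$, $k'$, $k''$, $\beta_1$, $\beta_1'$, and $\beta_1''$) to correctly identify $C_{\ell_1,0}$ and $C_{\ell_1,1}$, while being careful that the differentiation counts in the Leibniz rule do not exceed the available smoothness of the remainders $L$ and $r_{\beta_1}$.
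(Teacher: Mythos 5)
Your proof is correct and takes essentially the same route as the paper: direct substitution of the asymptotic expansions into \eqref{eq:nasty coeff def}, observing the key $\nu^{-4/3}$ cancellation and collecting the $\nu^{-2/3}$ coefficients. The paper presents this more compactly by grouping $6\beta_1 k'^2 + 3\beta_1 k''k = 3\beta_1(2k'^2 + kk'')$ and appealing to Corollary \ref{cor:OG cancellation derivs}, which already packages the leading-order cancellation in $2k'^2 + kk''$ that you verify inline.
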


\begin{proof}
To begin with,
in view of Corollary \ref{cor:OG cancellation derivs}, we observe
\begin{equation*}
    \begin{aligned}
        6 \beta_1(\nu) k'(\nu)^2 + 3\beta_1(\nu) k''(\nu) k(\nu)
        &= 3 \beta_1(\nu) \big( 2k'(\nu)^2 + k''(\nu)k(\nu) \big) \\
        &= \frac{10}{3}C_{\beta_1,0}c_\sharp c_\flat \nu^{-\frac{2}{3}} + C + \bigo(\nu^\frac{2}{3})
    \end{aligned}
\end{equation*}
for some constant $C$.
Meanwhile,
\begin{equation*}
        \beta_1''(\nu)k(\nu)^2 + 6 \beta_1'(\nu) k'(\nu) k(\nu)
        = \frac{10}{9}C_{\beta_1,1}c_\sharp^2 \nu^{-\frac{2}{3}} + C + \bigo(\nu^{\frac{2}{3}})
\end{equation*}
for some constant $C$.
The result follows.
\end{proof}

\begin{lemma}\label{lem:alpha2 asymp}
The following asymptotic expansion holds for $\beta_2${\rm :}
\begin{equation*}
    \beta_2(\nu) = C_{\beta_2,0} + C_{\beta_2,1}\nu^{\frac{2}{3}}+ r_{\beta_2}(\nu),
\end{equation*}
where $C_{\beta_2,0} = -\frac{9}{8}c_\sharp^{-4}C_{\ell_1,0}$,
$C_{\beta_2,1} \in \mathbb{R}$ is some constant that
can be determined explicitly,
and $r_{\beta_2}$ admits the bounds{\rm :}
\begin{equation*}
    |r_{\beta_2}^{(j)}(\nu)| \leq C\nu^{\frac{4}{3}-j} \qquad \text{for $j =0,1,2,3$},
\end{equation*}
for some constant $C>0$.
\end{lemma}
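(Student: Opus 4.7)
The plan is to follow the same template used in the proofs of Lemmas \ref{lem:asymp exp alpha0 and alpha1} and \ref{lem:asymp beta0 and 1}: substitute the asymptotic expansions of $k,k'$ (Proposition \ref{prop:k expand}) and of $\ell_1$ (the preceding lemma) into the closed-form formula \eqref{eq:beta2}, expand every factor via the Binomial Theorem in powers of $\nu^{2/3}$, integrate termwise from $0$ to $\nu$, and then collect.

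First I would expand, for sufficiently small $\tau$,
\[
k'(\tau)^{-1/2} = \sqrt{3}\,c_\sharp^{-1/2}\tau^{1/3}\bigl(1 + \tfrac{3c_\flat}{c_\sharp}\tau^{2/3} + \bigo(\tau^{4/3})\bigr)^{-1/2},
\]
and multiply it by the expansion of $\ell_1$ to obtain, for an explicit constant $A=A(c_\sharp,c_\flat,C_{\ell_1,0},C_{\ell_1,1})$,
\[
\ell_1(\tau)\,k'(\tau)^{-1/2} = \sqrt{3}\,C_{\ell_1,0}\,c_\sharp^{-1/2}\,\tau^{-1/3}\bigl(1 + A\tau^{2/3} + \bigo(\tau^{4/3})\bigr).
\]
Integrating termwise over $[0,\nu]$ yields
\[
\int_0^\nu \ell_1(\tau)\,k'(\tau)^{-1/2}\d\tau = \tfrac{3\sqrt{3}}{2}\,C_{\ell_1,0}\,c_\sharp^{-1/2}\,\nu^{2/3}\bigl(1 + \tfrac{3A}{5}\nu^{2/3} + \bigo(\nu^{4/3})\bigr).
\]
Multiplying by the prefactor $-\tfrac{1}{4}k(\nu)^{-3}k'(\nu)^{-1/2}$, whose leading-order expansion is $-\tfrac{\sqrt{3}}{4}\,c_\sharp^{-7/2}\nu^{-2/3}(1 + \bigo(\nu^{2/3}))$, the $\nu^0$-coefficient of the product comes out to be exactly $-\tfrac{9}{8}c_\sharp^{-4}C_{\ell_1,0} = C_{\beta_2,0}$, and retaining one additional order in every factor produces an explicit closed-form expression for $C_{\beta_2,1}$.

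The main obstacle will be verifying the remainder estimate $|r_{\beta_2}^{(j)}(\nu)|\leq C\nu^{4/3-j}$ for $j$ up to $3$, since differentiating the defining integral via the Fundamental Theorem of Calculus brings down the integrand, which blows up like $\tau^{-1/3}$ as $\tau\to 0$. I would address this by writing $\beta_2(\nu) = -\tfrac{1}{4}F(\nu)\,I(\nu)$ with $F:=k^{-3}k'^{-1/2}$ and $I(\nu):=\int_0^\nu \ell_1(\tau)\,k'(\tau)^{-1/2}\d\tau$. Corollary \ref{cor:above and below} and the smoothness of $k$ on $(0,\nu_*]$ give $|F^{(j)}(\nu)|\leq C\nu^{-2/3-j}$, while the estimates on $r_{\ell_1}$ together with the expansion of $k'^{-1/2}$ yield $|(\ell_1k'^{-1/2})^{(j)}(\tau)|\leq C\tau^{-1/3-j}$, hence $|I(\nu)|\leq C\nu^{2/3}$ and $|I^{(j)}(\nu)|\leq C\nu^{2/3-j}$ for $j\geq 1$, with the same type of bounds persisting for the remainder after the explicit polynomial part is subtracted off. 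A Leibniz expansion of $(FI)^{(j)}$ for $j=0,1,2,3$, combined with subtraction of the explicit polynomial $C_{\beta_2,0} + C_{\beta_2,1}\nu^{2/3}$ and its derivatives, then yields the required pointwise control on $r_{\beta_2}^{(j)}$, completing the proof.
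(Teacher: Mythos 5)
Your proposal matches the paper's proof: both expand $k^{-3}k'^{-1/2}$, $\ell_1$, and $k'^{-1/2}$ in powers of $\nu^{2/3}$ via Proposition \ref{prop:k expand}, integrate termwise, and collect, arriving at the same leading coefficient $C_{\beta_2,0}=-\frac{9}{8}c_\sharp^{-4}C_{\ell_1,0}$; the Leibniz argument you sketch for the remainder bounds is what the paper's $\bigo(\nu^d)$ convention packages implicitly. One small arithmetic slip: $\int_0^\nu\tau^{-1/3}(1+A\tau^{2/3})\d\tau=\tfrac32\nu^{2/3}\bigl(1+\tfrac{A}{2}\nu^{2/3}\bigr)$, so the inner coefficient should be $\tfrac{A}{2}$ rather than $\tfrac{3A}{5}$, but since the lemma only asserts that $C_{\beta_2,1}$ is some explicitly computable constant this does not affect the argument.
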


\begin{proof}
Using the previous asymptotic relations, we obtain
\begin{equation*}
    \begin{aligned}
\beta_2(\nu)
        &= -\frac{3}{4}c_\sharp^{-4}C_{\ell_1,0}\nu^{-\frac{2}{3}}\big( 1 - \frac{3c_\flat}{c_\sharp}\nu^{\frac{2}{3}}
         + \bigo(\nu^{\frac{4}{3}}) \big) \big(  1 - \frac{3c_\flat}{2c_\sharp}\nu^{\frac{2}{3}} + \bigo(\nu^{\frac{4}{3}}) \big)\\
        &\quad\,\, \times\int_0^\nu \tau^{-\frac{1}{3}} \big( 1 +C_1\tau^{\frac{2}{3}} + \bigo(\tau^{\frac{4}{3}}) \big) \d \tau \\
        &=-\frac{9}{8}c_\sharp^{-4}C_{\ell_1,0} \big(  1 + C_2\nu^{\frac{2}{3}} + \bigo(\nu^{\frac{4}{3}}) \big)
    \end{aligned}
\end{equation*}
for some constants $C_1$ and $C_2$ that can be determined explicitly. Then the result follows.
\end{proof}

\begin{corollary}\label{cor:beta2 bounds}
There exist constants $C_{\ell_2,0}$ and $C>0$,
and a function $r_{\ell_2}$ such that
\begin{equation*}
    \ell_2(\nu) = C_{\ell_2,0} + r_{\ell_2}(\nu)
\end{equation*}
with
\begin{equation*}
    |r_{\ell_2}^{(j)}(\nu)| \leq C\nu^{\frac{2}{3}-j} \qquad \text{for $j = 0,1$}.
\end{equation*}
In turn, $\ell_2$ satisfies the bound{\rm :}
\begin{equation*}
    |\ell_2(\nu)| + \nu^{\frac{1}{3}}|\ell_2'(\nu)| \leq C
\end{equation*}
for some constant $C>0$.
\end{corollary}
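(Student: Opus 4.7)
The plan is to substitute the asymptotic expansions from Lemma \ref{lem:alpha2 asymp} for $\beta_2, \beta_2', \beta_2''$ and from Proposition \ref{prop:k expand} for $k, k', k''$ into the exact formula \eqref{eq:ell2 def} for $\ell_2$, and then exhibit a cancellation that produces a bounded leading-order term. Writing
\begin{equation*}
\ell_2(\nu) = -k(\nu)^2 \Bigl[ \beta_2''\, k^2 + 6\beta_2'\, k'\, k + 3\beta_2\, (2k'^2 + k''\, k) \Bigr],
\end{equation*}
the key observation is that the bracketed expression has leading singular order $\nu^{-2/3}$, which is exactly tamed by the prefactor $k^2 \sim c_\sharp^2 \nu^{2/3}$ to yield a bounded constant plus $O(\nu^{2/3})$.

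First I would handle the third term in the bracket by invoking Corollary \ref{cor:OG cancellation derivs}, which gives $2k'^2 + k''\, k = \tfrac{10}{9} c_\sharp c_\flat\, \nu^{-2/3} + \tilde c + O(\nu^{2/3})$; multiplying by $3\beta_2 = 3 C_{\beta_2,0} + O(\nu^{2/3})$ produces a contribution $\tfrac{10}{3} C_{\beta_2,0}\, c_\sharp c_\flat\, \nu^{-2/3} + O(\nu^0)$. For the remaining pair $\beta_2''\, k^2 + 6\beta_2'\, k'\, k$, direct substitution of the expansions gives the two leading terms $-\tfrac{2}{9} c_\sharp^2 C_{\beta_2,1}\, \nu^{-2/3}$ and $\tfrac{4}{3} c_\sharp^2 C_{\beta_2,1}\, \nu^{-2/3}$, whose sum equals $\tfrac{10}{9} c_\sharp^2 C_{\beta_2,1}\, \nu^{-2/3} + O(\nu^0)$. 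Collecting both contributions and multiplying by $-k^2 = -c_\sharp^2 \nu^{2/3} + O(\nu^{4/3})$ produces
\begin{equation*}
\ell_2(\nu) = -\tfrac{10}{9} c_\sharp^2 \bigl[ c_\sharp^2 C_{\beta_2,1} + 3 C_{\beta_2,0}\, c_\sharp c_\flat \bigr] + O(\nu^{2/3}),
\end{equation*}
so one reads off $C_{\ell_2,0}$ explicitly.

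The derivative bound $|r_{\ell_2}'| \leq C\nu^{-1/3}$ is obtained by differentiating each step above: Lemma \ref{lem:alpha2 asymp} controls $\beta_2^{(j)}$ up to $j=3$ and Proposition \ref{prop:k expand} controls $k^{(j)}$ up to $j=4$, so taking one extra derivative only loses a single power of $\nu$, which is exactly what is needed. The bound $|\ell_2(\nu)| + \nu^{1/3}|\ell_2'(\nu)| \leq C$ in the second assertion is then an immediate consequence of the decomposition $\ell_2 = C_{\ell_2,0} + r_{\ell_2}$. The main bookkeeping challenge will be verifying that the $O(\nu^{2/3})$ remainder of $\beta_2$, when multiplied against the $\nu^{-2/3}$ singular part of the bracket, contributes only $O(\nu^0)$, so that the subsequent multiplication by $-k^2 \sim \nu^{2/3}$ leaves a remainder $r_{\ell_2}$ of the claimed order $O(\nu^{2/3})$ with the stated derivative estimate; all cross-terms must be organized carefully to avoid inadvertently producing contributions of orders $\nu^{-1/3}$ or $\nu^0$ surviving the final multiplication.
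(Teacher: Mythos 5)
Your proposal is correct and follows essentially the same route as the paper: split the bracket in $\ell_2 = -k^2\big[\beta_2''k^2 + 6\beta_2'k'k + 3\beta_2(2k'^2 + kk'')\big]$ into the $\beta_2''k^2 + 6\beta_2'k'k$ part and the $3\beta_2(2k'^2+kk'')$ part, apply Corollary~\ref{cor:OG cancellation derivs} to the latter and the expansions of Lemma~\ref{lem:alpha2 asymp} and Proposition~\ref{prop:k expand} to both, and observe that the $\nu^{-2/3}$-singular parts are exactly tamed by the prefactor $-k^2\sim -c_\sharp^2\nu^{2/3}$, leaving a constant plus an $\bigo(\nu^{2/3})$ remainder whose derivative bound is inherited from the available bounds on $r_{\beta_2}^{(j)}$ ($j\le 3$) and $L^{(j)}$ ($j\le 4$). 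This is the same decomposition and the same key cancellation the paper exhibits.
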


\begin{proof}
In view of Corollary \ref{cor:OG cancellation derivs} and Lemma \ref{lem:alpha2 asymp}, we have
\begin{equation*}
    \begin{aligned}
        &6 \beta_2(\nu) k'(\nu)^2 k(\nu)^2 + 3\beta_2(\nu) k''(\nu) k(\nu)^3 \\
        &= 3 \beta_2(\nu) k(\nu)^2 \big( 2 k'(\nu)^2 + k''(\nu) k(\nu) \big) \\
        &= \frac{4}{3}C_{\beta_2,0} c_\sharp^3 \big(c_\flat + (2\frac{c_\flat}{c_\sharp}+\frac{C_{\beta_2,1}}{C_{\beta_2,0}})c_\flat
        + \frac{9}{2}c_l)\nu^{\frac{2}{3}} + \bigo(\nu^{\frac{4}{3}}) \big).
    \end{aligned}
\end{equation*}
Meanwhile,
\begin{equation*}
    \begin{aligned}
        &\beta_2''(\nu)k(\nu)^4 + 6 \beta_2'(\nu) k'(\nu) k(\nu)^3 \\
        &=-\frac{2}{9}C_{\beta_2,1}c_\sharp^4 \big(1 + \nu^{\frac{4}{3}}r''_{\beta_2}(\nu) \big) \big( 1 + \frac{4c_\flat}{c_\sharp}\nu^{\frac{2}{3}}
         + \bigo(\nu^{\frac{4}{3}})\big) \\
        &\quad\,\, + \frac{4}{3}C_{\beta_2,1}c_\sharp^4 \big( 1 + \nu^{\frac{1}{3}}r'_{\beta_2}(\nu) \big)
         \big( 1 + \frac{6c_\flat}{c_\sharp}\nu^{\frac{2}{3}} + \bigo(\nu^{\frac{4}{3}}) \big),
    \end{aligned}
\end{equation*}
so that
\begin{equation*}
    \begin{aligned}
    &\beta_2''(\nu)k(\nu)^4 + 6 \beta_2'(\nu) k'(\nu) k(\nu)^3 \\
    &= \frac{10}{9}C_{\beta_2,1}c_\sharp^4 + \frac{2}{9}C_{\beta_2,1}c_\sharp^4 \nu^{\frac{1}{3}} \big( r_{\beta_2}''(\nu) \nu + 6 r_{\beta_2}'(\nu)  \big)
    +\frac{8}{9}C_{\beta_2,1}c_\sharp^3c_\flat \nu\big( r_{\beta_2}''(\nu) \nu + 9 r_{\beta_2}'(\nu)  \big) \\
        &\quad\,\, + \bigo(\nu^{\frac{5}{3}}) \big(  r_{\beta_2}''(\nu) \nu +  r_{\beta_2}'(\nu) \big).
    \end{aligned}
\end{equation*}
It follows from Lemma \ref{lem:alpha2 asymp} that
\begin{equation*}
    \begin{aligned}
       &\beta_2''(\nu)k(\nu)^4 + 6 \beta_2'(\nu) k'(\nu) k(\nu)^3 \\
       &= \frac{10}{9}C_{\beta_2,1}c_\sharp^4 + \frac{2}{9}C_{\beta_2,1}c_\sharp^4 \nu^{\frac{1}{3}} \big( r_{\beta_2}''(\nu) \nu + 6 r_{\beta_2}'(\nu)  \big)
       +\frac{8}{9}C_{\beta_2,1}c_\sharp^3c_\flat \nu\big( r_{\beta_2}''(\nu) \nu + 9 r_{\beta_2}'(\nu)  \big) \\
        &\quad\,\, + \bigo(\nu^{\frac{5}{3}}) \big(  r_{\beta_2}''(\nu) \nu +  r_{\beta_2}'(\nu) \big),
    \end{aligned}
\end{equation*}
whence the result follows.
\end{proof}

\subsection{Analysis of the remainder term for the singular kernel}
We now analyze the remainder term $h(\nu,s)$ for the singular kernel.

\subsubsection{Existence and boundedness of the remainder term for the singular kernel}\label{sec:remainder for sing}

As in \S \ref{sec:exist rem reg}, we show the existence of $h(\nu,s)$ solving the Cauchy problem \eqref{eq:sing rem eqn}
by studying the limit of solutions $\{h^\varepsilon\}_{\varepsilon>0}$ of the translated problems:
\begin{equation}\label{eq:approx rem reg eqn}
 \left\lbrace \begin{aligned}
 & \hat{h}^\varepsilon + k'^2 \xi^2 \hat{h}^\varepsilon = \hat{R}^\s \qquad\,\, \text{for $\nu\in [\varepsilon,\nu_*]$}, \\
 &\hat{h}^\varepsilon|_{\nu=\varepsilon} = 0, \\
 &\hat{h}^\varepsilon_\nu|_{\nu=\varepsilon} = 0.
 \end{aligned}  \right.
\end{equation}

Once again, we stress that the $\varepsilon$-superscript
bears no relation to the sequence
of approximate problems throughout this subsection.

\begin{lemma}
 There exists $\hat{h}(\cdot,\xi) \in C^2([0,\nu_*])$ satisfying the Cauchy problem:
\begin{equation}\label{eq:limit pb rem sing}
    \left\lbrace\begin{aligned}
    & \hat{h}_{\nu\nu} + k'^2 \xi^2 \hat{h} = \hat{R}^\s \qquad\,\, \text{for } \nu \in [\varepsilon,\nu_*], \\
    &\hat{h}|_{\nu = 0} = 0, \\
    &\hat{h}_\nu |_{\nu=0}= 0,
    \end{aligned}
    \right.
\end{equation}
such that
the uniform estimates hold{\rm :}
\begin{equation}\label{eq:fourier bounds remainder sing}
    |\partial^j_\nu \hat{h}(\nu,\xi)| \leq C \frac{\nu^{2-j}}{(1+|\xi k(\nu)|)^{2-j}}, \qquad j=0,1,2,
    \end{equation}
where $C>0$ is some constant independent of $(\nu,\xi)$.
\end{lemma}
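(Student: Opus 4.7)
The plan is to mirror the strategy carried out for the regular kernel's remainder in \S\ref{sec:exist rem reg}: apply the energy estimates of Lemma \ref{lemma:energy est 1} to the family of approximate solutions $\{\hat{h}^\varepsilon\}_{\varepsilon>0}$ of \eqref{eq:approx rem reg eqn}, derive uniform (in $\varepsilon$) Fourier-side bounds, establish equicontinuity of second derivatives, and invoke the Ascoli--Arzelà Theorem to pass to the limit $\varepsilon\to 0^+$.

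First, I would fix $\xi\in\mathbb{R}$ and $\varepsilon>0$, and invoke the Cauchy--Lipschitz Theorem to obtain a unique $\hat{h}^\varepsilon(\cdot,\xi)\in C^2([\varepsilon,\nu_*])$ solving \eqref{eq:approx rem reg eqn}. Using the definition \eqref{eq:big Rs def} together with Corollary \ref{cor:beta2 bounds} for $\ell_2$ and the standard Bessel function asymptotics contained in $\hat{f}_0$, one derives the pointwise bounds on $\hat{R}^\s$ and its first $\nu$-derivative of the form
$$
|\hat{R}^\s(\nu,\xi)|\leq \frac{C}{1+|\xi k(\nu)|},\qquad |\partial_\nu \hat{R}^\s(\nu,\xi)|\leq C\nu^{-\frac{1}{3}}.
$$

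Next, I would substitute $r^\varepsilon=\hat{R}^\s$ into the integrals $\{I^\varepsilon_j\}_{j=1}^3$ of \eqref{eq:I 1 to 3 def} and estimate each one using the near-vacuum scaling $k\sim \nu^{\frac{1}{3}}$, $k'\sim\nu^{-\frac{2}{3}}$, $|k''|\sim\nu^{-\frac{5}{3}}$ provided by Corollary \ref{cor:above and below}. In the regime $|\xi k(\nu)|\geq 1$, direct computation combined with the first estimate of Lemma \ref{lemma:energy est 1} produces
$$
|\hat{h}^\varepsilon_\nu|^2 + \xi^2 k'^2|\hat{h}^\varepsilon|^2\leq C\nu^{\frac{4}{3}}|\xi k|^{-2};
$$
in the complementary regime $|\xi k(\nu)|<1$, the unweighted estimate \eqref{eq:no xi dep} together with $|\hat{R}^\s|\leq C$ yields the non-singular bound $|\hat{h}^\varepsilon_\nu|^2+\xi^2k'^2|\hat{h}^\varepsilon|^2\leq C\nu^2$. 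Merging both regimes and integrating $\hat{h}^\varepsilon_\nu$ from $\varepsilon$ to $\nu$, one obtains
$$
|\partial_\nu^j \hat{h}^\varepsilon(\nu,\xi)|\leq \frac{C\nu^{2-j}}{(1+|\xi k(\nu)|)^{2-j}}\qquad\text{for } j=0,1,
$$
uniformly in $\varepsilon$, and the $j=2$ bound is then read off directly from the ODE \eqref{eq:approx rem reg eqn}.

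Finally, uniform equicontinuity of $\{\hat{h}^\varepsilon_{\nu\nu}(\cdot,\xi)\}_{\varepsilon>0}$ is obtained by estimating $|\hat{h}^\varepsilon_{\nu\nu}(\nu_1,\xi)-\hat{h}^\varepsilon_{\nu\nu}(\nu_2,\xi)|$ exactly as in \eqref{eq:expand g eps double}, with the two contributions controlled by the uniform derivative bounds above and by a H\"older-type control of $\hat{R}^\s$ coming from the bound on $\ell_2'$ in Corollary \ref{cor:beta2 bounds}; this yields an estimate of the form $C_\xi|\nu_1-\nu_2|^\alpha$ for some $\alpha\in(0,1)$ and a constant $C_\xi$ independent of $\varepsilon$. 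An application of Ascoli--Arzelà then delivers, for each fixed $\xi$, a subsequence converging in $C^2([0,\nu_*])$ to a limit $\hat{h}(\cdot,\xi)$ which inherits the bounds \eqref{eq:fourier bounds remainder sing} and solves \eqref{eq:limit pb rem sing}. The principal difficulty compared with the regular case is the slower Fourier decay of $\hat{R}^\s$, which only buys a gain of one order of $(1+|\xi k|)^{-1}$ over the source and thus forces the weaker target decay $(1+|\xi k|)^{-(2-j)}$; the exponent bookkeeping near the vacuum, where $k\to 0$ and $k'$ blows up, is the most delicate point and must be coordinated carefully with the asymptotic expansion of $\ell_2$ to ensure that no borderline divergence appears in $I_1^\varepsilon, I_2^\varepsilon, I_3^\varepsilon$.
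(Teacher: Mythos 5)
Your strategy is exactly the paper's: use Lemma \ref{lemma:energy est 1} on the $\varepsilon$-translated problems, split into the regimes $|\xi k|\geq 1$ and $|\xi k|<1$, merge, then close with equicontinuity and Ascoli--Arzel\`a. The structure is right, and you correctly identify the main source of weaker decay (the source only gains $(1+|\xi k|)^{-1}$). However, two computational errors break the chain as written.

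First, your claim $|\partial_\nu\hat{R}^\s(\nu,\xi)|\leq C\nu^{-1/3}$ only accounts for the $\ell_2'(\nu)\hat{f}_0(\xi k)$ term. The chain-rule term $\ell_2(\nu)\,\xi k'(\nu)\,\hat{f}_0'(\xi k(\nu))$ is bounded, via $\hat{f}_0'(z)=-\tfrac{z}{2}\hat{f}_1(z)$ and $|\hat{f}_1(z)|\lesssim(1+|z|)^{-2}$, by $C\,\tfrac{k'}{k}\,\tfrac{|\xi k|^2}{(1+|\xi k|)^2}\lesssim C\nu^{-1}$; this term dominates, so the correct bound is $|\partial_\nu\hat{R}^\s|\leq C\nu^{-1}$ (with no gain in $\xi$). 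Second, your large-$|\xi k|$ bound $C\nu^{4/3}|\xi k|^{-2}$ is missing a factor $k^2\sim\nu^{2/3}$, most likely from replacing $\xi^{-2}$ by $|\xi k|^{-2}$ instead of $k^2|\xi k|^{-2}$. With the paper's $\hat{R}^\s$ bounds, the dominant contribution $I_3^\varepsilon\lesssim\xi^{-2}\int_\varepsilon^\nu\tau^{-2+7/3}\,\d\tau\lesssim\xi^{-2}\nu^{4/3}\lesssim\nu^2|\xi k|^{-2}$ gives the correct intermediate bound $C\nu^2|\xi k|^{-2}$. This matters: from $C\nu^{4/3}|\xi k|^{-2}$ (for $|\xi k|\geq 1$) together with $C\nu^2$ (for $|\xi k|<1$) you \emph{cannot} deduce $C\nu^2(1+|\xi k|)^{-2}$, since $\nu^{4/3}>\nu^2$ for $\nu<1$; the merging step only goes through once both pieces scale as $\nu^2$. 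So your conclusion \eqref{eq:fourier bounds remainder sing} does not actually follow from the bounds you wrote, even though it is the correct target and the underlying strategy is sound. Once these two constants are repaired, the rest (equicontinuity via the estimate mirroring \eqref{eq:expand g eps double}, using $|\ell_2'|\lesssim\nu^{-1/3}$ and $\hat{f}_1\in L^\infty$, then Ascoli--Arzel\`a) goes through as you describe.
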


\begin{proof}  The proof consists of three steps.

\smallskip
\noindent
1. \textit{Uniform estimates}:
 Using the explicit formulas of Lemmas \ref{lem:G hat in terms of f hat and f hat precise} and \ref{lem:f hat relations},
 and the estimate of $\ell_2$ provided by Corollary \ref{cor:beta2 bounds}, we deduce that the forcing term $\hat{R}^\s$ satisfies
    \begin{equation}
        |\partial^j_\nu \hat{R}^\s(\nu,\xi)| \leq C \frac{\nu^{-j}}{(1+|\xi k(\nu)|)^{1-j}} \qquad \mbox{for $j=0,1$},
    \end{equation}
for some constant $C>0$ independent of $(\nu,\xi,\varepsilon)$.

Recall terms $\{I_j^\varepsilon\}_{j=1}^3$ in Lemma \ref{lemma:energy est 1}
and substitute $r^\varepsilon = \hat{R}^\s$ for all $\varepsilon>0$.
Using the asymptotic description of $k(\nu)$, we obtain that, for $\xi \neq 0$,
    \begin{equation*}
    \begin{aligned}
    |I_1^\varepsilon(\nu,\xi)| \leq C \nu^{\frac{4}{3}} \xi^{-2}(1+|\xi k(\nu)|)^{-2} \leq C\nu^{2} |\xi k(\nu)|^{-4}.
    \end{aligned}
\end{equation*}
Similarly,
\begin{align*}
&|I_2^\varepsilon(\nu,\xi)| \leq C \xi^{-2} \int_\varepsilon^\nu \tau^{\frac{4}{3}}(1+|\xi k(\tau)|)^{-2} \d \tau
\leq C\nu^3 |\xi k(\nu)|^{-4}, \\
&|I_3^\varepsilon(\nu,\xi)| \leq C \xi^{-2} \int_\varepsilon^\nu \tau^{-2+\frac{7}{3}} \d \tau
\leq C\xi^{-2} \nu^{\frac{4}{3}} \leq C\nu^2 |\xi k(\nu)|^{-2}.
\end{align*}
By Lemma \ref{lemma:energy est 1}, we see that, for some constant $C>0$ independent of $(\nu,\xi,\varepsilon)$,
\begin{equation}\label{eq:checkpoint 1 energy est 2}
    |\hat{h}^\varepsilon_\nu|^2 + k'^2 \xi^2 |h^\varepsilon|^2 \leq C\nu^2 |\xi k|^{-2},
\end{equation}
which is accurate for $|\xi k| \geq 1$.
On the other hand, for $|\xi k|<1$, we use the second estimate in Lemma \ref{lemma:energy est 1} to obtain
\begin{equation}\label{eq:checkpoint 2 energy est 2}
    |\hat{h}^\varepsilon_\nu|^2 + k'^2 \xi^2 |h^\varepsilon|^2 \leq C\nu^2,
\end{equation}
so that $|\hat{h}^\varepsilon_\nu| \leq C\nu$,
which implies that
$|\hat{h}^\varepsilon| \leq C\nu^2$.
In turn, by combining \eqref{eq:checkpoint 1 energy est 2} with \eqref{eq:checkpoint 2 energy est 2},
we have
\begin{equation*}
    |\hat{h}^\varepsilon_\nu|^2 + k'^2 \xi^2 |h^\varepsilon|^2 \leq C\nu^2(1+|\xi k|)^{-2},
\end{equation*}
so that
\begin{equation*}
    |\partial^j_\nu \hat{h}^\varepsilon(\nu,\xi)| \leq C \frac{\nu^{2-j}}{(1+|\xi k(\nu)|)^{2-j}}
    \qquad\mbox{for $j=0,1$}.
\end{equation*}
Finally, using equation \eqref{eq:approx rem reg eqn} to estimate the second derivative,
we obtain
\begin{equation}\label{eq:unif eps indep deriv bounds h}
    |\partial^j_\nu \hat{h}^\varepsilon(\nu,\xi)| \leq C \frac{\nu^{2-j}}{(1+|\xi k(\nu)|)^{2-j}}
    \qquad \mbox{for $j=0,1,2$},
\end{equation}
where $C>0$ is independent of $(\nu,\xi,\varepsilon)$.

\smallskip
\noindent
2. \textit{Equicontinuity of second derivatives}:
For $\nu_1,\nu_2 \in [\varepsilon,\nu_*]$ with $\nu_1 \leq \nu_2$,
we use equation \eqref{eq:g eps eqn} to obtain
\begin{equation*}
    \begin{aligned}
        &|\hat{h}^\varepsilon_{\nu\nu}(\nu_1,\xi) - \hat{h}^\varepsilon_{\nu\nu}(\nu_2,\xi)| \\
        &\leq \xi^2 |k'(\nu_1)^2 \hat{h}^\varepsilon(\nu_1,\xi) - k'(\nu_2)^2 \hat{h}(\nu_2,\xi)|
        + |\hat{R}^\s(\nu_1,\xi) - \hat{R}^\s(\nu_2,\xi)|.
    \end{aligned}
\end{equation*}
We now bound both terms on the right-hand side.
The first term may be rewritten and estimated as
\begin{equation*}
    \begin{aligned}
    \xi^2 \bigg| \int_{\nu_1}^{\nu_2} \partial_\nu\big( k'^2 \hat{h}^\varepsilon \big) \d \nu \bigg|
    &\leq  \xi^2 \bigg| \int_{\nu_1}^{\nu_2} 2 k' k'' \hat{h}^\varepsilon \d \nu \bigg|
      + \xi^2 \bigg| \int_{\nu_1}^{\nu_2} k'^2 \hat{h}^\varepsilon_\nu \d \nu \bigg| \\
    &\leq  C\xi^2 \int_{\nu_1}^{\nu_2} \nu^{-\frac{1}{3}} \d \nu.
    \end{aligned}
\end{equation*}
For the second term, we use $\hat{R}^\r(\nu,\xi) = \ell_2(\nu)\hat{f}_0(\xi k(\nu))$,
the connection formula $\hat{f}_0'(\xi) = -\frac{\xi}{2} \hat{f}_1(\xi)$
(\textit{cf.}~Lemma \ref{lem:f hat relations}), and Corollary \ref{cor:beta2 bounds} to write
\begin{equation*}
    \begin{aligned}
        |\hat{R}^\s(\nu_1,\xi) - \hat{R}^\s(\nu_2,\xi)|
        &= \bigg| \int_{\nu_1}^{\nu_2} \frac{\de}{\de\nu}\big( \ell_2(\nu) \hat{f}_{0}(\xi k(\nu)) \big) \d \nu \bigg| \\
        &\leq  \bigg|\int_{\nu_1}^{\nu_2} \ell_2'(\nu) \hat{f}_{0}(\xi k(\nu)) \d \nu\bigg|
         + \bigg| \int_{\nu_1}^{\nu_2} \ell_2(\nu) \xi k'(\nu) \hat{f}_{0}'(\xi k(\nu)) \d \nu \bigg| \\
        &\leq  C\int_{\nu_1}^{\nu_2} \nu^{-\frac{1}{3}} \d \nu
        + C\xi^2\int_{\nu_1}^{\nu_2} k'(\nu)k(\nu) |\hat{f}_{1}(\xi k(\nu))| \d \nu.
    \end{aligned}
\end{equation*}
Employing $\hat{f}_1 \in L^\infty(\mathbb{R})$ and the H\"older inequality, we deduce
 \begin{align}
        |\hat{h}^\varepsilon_{\nu\nu}(\nu_1,\xi) - \hat{h}^\varepsilon_{\nu\nu}(\nu_2,\xi)|
        &\leq C(1+\xi^2)\int_{\nu_1}^{\nu_2} \nu^{-\frac{1}{3}} \d \nu \nonumber \\
        &\leq C(1+\xi^2)\bigg( \int_{\nu_1}^{\nu_2} \nu^{-\frac{2}{3}} \d \nu \bigg)^{\frac{1}{2}} \bigg( \int_{\nu_1}^{\nu_2} \d \nu \bigg)^{\frac{1}{2}} \nonumber\\
        &\leq C(1+\xi^2)|\nu_1 - \nu_2|^{\frac{1}{2}}, \label{eq:equi h double int form}
    \end{align}
where $C>0$ is independent of $(\nu,\xi, \varepsilon)$.

\smallskip
\noindent
3. \textit{Passage to the limit}:
Using the uniform equicontinuity estimate \eqref{eq:equi h double int form},
we invoke the Ascoli--Arzel\`a Theorem to obtain that,
for each fixed $\xi \in \mathbb{R}$, there exists a subsequence $\{ \hat{h}^{\varepsilon'}\}_{\varepsilon'>0}$
such that $\hat{h}^{\varepsilon'}(\cdot,\xi) \to \hat{h}(\cdot,\xi)$ strongly in $C^2([0,\nu_*])$.
The strong convergence ensures that the limit function $\hat{h}$ is a classical solution
of \eqref{eq:limit pb rem sing} and satisfies the estimates in \eqref{eq:fourier bounds remainder sing}.
\end{proof}

\subsubsection{H\"older continuity of remainder term for the singular kernel}
\begin{lemma}\label{lem:holder remainder sing}
For any $\alpha \in [0,1)$, there exists $C_\alpha>0$ such that the remainder term $h(\nu,s)$ satisfies
    \begin{equation*}
        \Vert h(\nu,\cdot) \Vert_{C^{0,\alpha}(\mathbb{R})} \leq C_\alpha \nu^{\frac{5-\alpha}{3}}.
    \end{equation*}
\end{lemma}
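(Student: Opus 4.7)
The plan is to deduce the Hölder seminorm bound directly from the Fourier-side estimate on $\hat{h}$ established in \eqref{eq:fourier bounds remainder sing}, in complete analogy with the argument used for the regular kernel in Lemma \ref{lem:holder remainder reg}. The route is via the elementary embedding
\begin{equation*}
\Vert h(\nu,\cdot)\Vert_{C^{0,\alpha}(\mathbb{R})}\,\lesssim\,\int_{\mathbb{R}} (1+|\xi|^\alpha)\,|\hat{h}(\nu,\xi)|\,\dd\xi,
\end{equation*}
so that the task reduces to checking that the right-hand side is finite and to tracking the $\nu$-dependence.

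Using the case $j=0$ of \eqref{eq:fourier bounds remainder sing}, namely $|\hat{h}(\nu,\xi)|\leq C\nu^{2}(1+|\xi k(\nu)|)^{-2}$, I would estimate
\begin{equation*}
\int_{\mathbb{R}} |\xi|^{\alpha}|\hat{h}(\nu,\xi)|\,\dd\xi\;\leq\; C\nu^{2}\int_{\mathbb{R}}\frac{|\xi|^{\alpha}}{(1+|\xi k(\nu)|)^{2}}\,\dd\xi,
\end{equation*}
then perform the change of variables $\eta=\xi k(\nu)$ to factor out the $\nu$-dependence as
\begin{equation*}
C\nu^{2}k(\nu)^{-1-\alpha}\int_{\mathbb{R}}\frac{|\eta|^{\alpha}}{(1+|\eta|)^{2}}\,\dd\eta.
\end{equation*}
The only delicate point is the convergence of this last integral: the integrand behaves like $|\eta|^{\alpha-2}$ at infinity, which is integrable precisely when $\alpha<1$, and this is exactly the range in the statement. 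Together with the uniform bound $k(\nu)\sim \nu^{1/3}$ from Corollary \ref{cor:above and below}, this yields
\begin{equation*}
C\nu^{2}k(\nu)^{-1-\alpha}\,\leq\, C_{\alpha}\,\nu^{2-\frac{1+\alpha}{3}}=C_{\alpha}\,\nu^{\frac{5-\alpha}{3}},
\end{equation*}
which is the announced estimate.

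There is no real obstacle here beyond bookkeeping; the main content has already been absorbed into the Fourier decay estimate \eqref{eq:fourier bounds remainder sing}. The mild subtlety, as noted, is that the singular-kernel Fourier decay is only $(1+|\xi k|)^{-2}$, as opposed to $(1+|\xi k|)^{-4}$ for the regular kernel, which is why the range of admissible Hölder exponents is restricted to $\alpha\in[0,1)$ and why the exponent of $\nu$ in the conclusion is $\tfrac{5-\alpha}{3}$ instead of $2-\tfrac{\alpha}{3}$ as in Lemma \ref{lem:holder remainder reg}.
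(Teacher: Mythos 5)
Your proposal is correct and follows essentially the same route as the paper: bound the Hölder seminorm by the weighted $L^1$-norm of $\hat{h}$, invoke the Fourier decay estimate \eqref{eq:fourier bounds remainder sing} with $j=0$, rescale $\eta=\xi k(\nu)$, and use $k(\nu)\sim\nu^{1/3}$. Your remark on why $\alpha<1$ is the correct range (integrability of $|\eta|^{\alpha-2}$ at infinity) and the contrast with the regular-kernel decay rate is accurate but not part of the paper's (very terse) proof.
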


\begin{proof}
Using estimates \eqref{eq:fourier bounds remainder sing}, we have
\begin{equation*}
     \Vert h(\nu,\cdot) \Vert_{C^{0,\alpha}(\mathbb{R})} \leq \int_\mathbb{R} |\xi|^\alpha |\hat{h}(\nu,\xi)| \d \xi
\leq C\nu^2 k(\nu)^{-1-\alpha} \int_\mathbb{R} \frac{|\xi|^\alpha}{(1+|\xi|)^2} \d \xi.
\end{equation*}
Then the result follows.
\end{proof}

It follows from the previous result that, since $\hat{h} \in L^1(\mathbb{R})$
satisfies \eqref{eq:limit pb rem sing}, the remainder term $h(\nu,s)$
is a distributional solution of the Cauchy problem \eqref{eq:sing rem eqn}.

\subsection{Proof of Proposition \ref{prop:sing kernel exist}}\label{sec:initial datum sing}

We first observe
the initial data of $H^\s$ at $\nu=0$,
the proof of which follows directly from the asymptotic description of
coefficients $\{\beta_j\}_{j=0}^2$ in Lemma \ref{lem:asymp beta0 and 1}.

\begin{lemma}[Initial Data for the Singular Kernel]\label{lem:initial data sing ker no deriv}
$\, H^\s|_{\nu=0} = \delta_0$.
\end{lemma}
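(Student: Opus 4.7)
The plan is to pass to Fourier space and verify that $\hat{H}^\s(\nu,\cdot) \to 1$ as $\nu \to 0^+$ in the sense of tempered distributions; since the inverse Fourier transform of the constant function $1$ is $\delta_0$, this yields the result. Throughout, the Fourier variable $\xi$ is conjugate to $s$, as in the rest of the paper.

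First I would use the decomposition constructed in \S \ref{sec:computing sing kernel coeff},
\begin{equation*}
\hat{H}^\s(\nu,\xi) = \beta_0(\nu)\hat{f}_{-2}(\xi k(\nu)) + \beta_1(\nu) k(\nu)^2 \hat{f}_{-1}(\xi k(\nu)) + \beta_2(\nu) k(\nu)^4 \hat{f}_0(\xi k(\nu)) + \hat{h}(\nu,\xi),
\end{equation*}
and split into the leading $j=0$ contribution and a tail. For the tail, Lemmas \ref{lem:asymp beta0 and 1} and \ref{lem:alpha2 asymp} ensure that $\beta_1(\nu)$ and $\beta_2(\nu)$ remain bounded as $\nu \to 0^+$; Corollary \ref{cor:above and below} gives $k(\nu)^{2j} = \mathcal{O}(\nu^{2j/3})$ for $j \geq 1$; and the local boundedness of $\hat{f}_{-1},\hat{f}_0$ near the origin (supplied by Appendix \ref{appendix:Glambda}) makes each of these terms vanish pointwise in $\xi$. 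The remainder $\hat{h}(\nu,\xi)$ tends to zero uniformly on compact sets in $\xi$ by the $\nu^2$ bound in \eqref{eq:fourier bounds remainder sing}.

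Next I would identify the leading term. For any fixed $\xi \in \mathbb{R}$ and $\nu \to 0^+$, we have $\xi k(\nu) \to 0$. By continuity of $\hat{f}_{-2}$ at the origin (Appendix \ref{appendix:Glambda}) and the expansion $\beta_0(\nu) \to C_{\beta_0,0} = d_0 c_\sharp^{-3/2}\sqrt{3}$ furnished by Lemma \ref{lem:asymp beta0 and 1},
\begin{equation*}
\beta_0(\nu)\hat{f}_{-2}(\xi k(\nu)) \to C_{\beta_0,0}\,\hat{f}_{-2}(0).
\end{equation*}
The free constant $d_0$ in \eqref{eq:beta0 def}, deliberately left unspecified at that stage, is now calibrated by imposing $C_{\beta_0,0}\hat{f}_{-2}(0) = 1$; combined with the previous step this gives $\hat{H}^\s(\nu,\xi) \to 1$ pointwise in $\xi$ as $\nu \to 0^+$.

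To upgrade pointwise convergence to distributional convergence, I would note that each constituent term of $\hat{H}^\s(\nu,\cdot)$ admits uniform polynomial bounds in $\xi$ for $\nu \in [0,\nu_*]$, so that dominated convergence applied against the inverse Fourier transform of any $\varphi \in C^\infty_{\rm c}(\mathbb{R})$, which is a Schwartz function, yields $\langle H^\s(\nu,\cdot),\varphi\rangle \to \varphi(0) = \langle \delta_0,\varphi\rangle$. The main delicate point is the well-definedness and evaluation of $\hat{f}_{-2}(0)$: since $G_{-2}(\nu,\cdot)$ is only defined distributionally through the regularization recalled in Appendix \ref{appendix:Glambda}, the continuity of $\hat{f}_{-2}$ at the origin together with the non-vanishing of $\hat{f}_{-2}(0)$ (needed to pin down $d_0$) must be extracted from that appendix; everything else in the proof is a bookkeeping exercise using the asymptotic estimates on $\beta_0,\beta_1,\beta_2$ and on $\hat{h}$.
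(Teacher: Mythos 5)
Your argument is correct and follows essentially the same route as the paper: you use the same decomposition of $\hat{H}^\s$, the same asymptotics of $\beta_0,\beta_1,\beta_2$ and $\hat{h}$, isolate the leading term $\beta_0(\nu)\hat f_{-2}(\xi k(\nu))\to C_{\beta_0,0}\hat f_{-2}(0)$, and calibrate $d_0$ to make the limit $1$. The one place you go slightly beyond the paper is the explicit dominated-convergence step upgrading pointwise convergence of $\hat H^\s(\nu,\xi)\to 1$ to convergence in $\mathcal{S}'$; the paper leaves this implicit (stating only the pointwise limit), and your bookkeeping of uniform polynomial bounds in $\xi$ — which are indeed available from the explicit formula $\hat f_{-2}(\xi)=\tfrac12(\xi\sin\xi+\cos\xi)$, the $L^\infty$ bounds on $\hat f_{-1},\hat f_0$, and \eqref{eq:fourier bounds remainder sing} — correctly closes that gap.
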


\begin{proof}
Notice that
\begin{equation*}
    \hat{H}^\s(\nu,\xi) = \beta_0 \hat{f}_{-2} + \underbrace{\beta_1 k^2}_{=\bigo(\nu^{\frac{2}{3}})} \hat{f}_{-1}
    + \underbrace{\beta_2 k^4}_{=\bigo(\nu^{\frac{4}{3}})} \hat{f}_0 + \hat{g}.
\end{equation*}
Since $\hat{f}_{-1},\hat{f}_0 \in L^\infty(\mathbb{R})$
by Lemma \ref{lem:G hat in terms of f hat and f hat precise},
\begin{equation*}
    \hat{H}^\s(\nu,\xi) = C_{\beta_0,0} \hat{f}_{-2}(\xi k(\nu)) + \bigo(\nu^{\frac{2}{3}}).
\end{equation*}
Then, using the precise form of $\hat{f}_{-2}$ provided by Lemma \ref{lem:G hat in terms of f hat and f hat precise},
we have
\begin{equation*}
    \big|\hat{H}^\s(\nu,\xi) -  \frac{1}{2}C_{\beta_0,0} \big| \leq C|\xi k(\nu)|^2 + \bigo(\nu^{\frac{2}{3}}).
\end{equation*}
Using the expression for $C_{\beta_0,0}$ in Lemma \ref{lem:asymp beta0 and 1} with
$d_0 = 3^{-\frac{1}{2}}c_\sharp^{\frac{3}{2}}$, we conclude
\begin{equation*}
    \lim_{\nu \to 0^+} \hat{H}^\s(\nu,\xi) = \frac{1}{2}C_{\beta_0,0} = 1,
\end{equation*}
where the limit is taken pointwise for each fixed $\xi \in \mathbb{R}$.
\end{proof}

The proof that $\varrho H^\s_\nu|_{\nu=0} = -\delta_0''$ is contained in \S \ref{sec:cpctness est sing}.

\smallskip
\begin{prop}[Huygens Principle for the Singular Kernel]\label{lem:huygens sing}
$\,\,\supp H^\s \subset \mathcal{K}$.
\end{prop}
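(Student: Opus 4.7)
The plan is to mirror the proof of Lemma \ref{lem:huygens reg} for the regular kernel: fix $\nu \in (0,\nu_*]$ and verify that each of the four summands in the expansion \eqref{eq:sing expansion} has support contained in $[-k(\nu),k(\nu)]$ as a distribution in the $s$-variable. Once this is shown, aggregating over $\nu$ yields $\supp H^\s \subset \mathcal{K}$.

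First, I would check that $\supp G_\lambda(\nu,\cdot) \subset [-k(\nu),k(\nu)]$ for $\lambda \in \{-2,-1,0\}$. For $\lambda = 0$, this is immediate from $G_0(\nu,s) = \mathbf{1}_{[-k(\nu),k(\nu)]}(s)$. For the negative indices, the distributions $G_{-1}$ and $G_{-2}$ are defined in Appendix \ref{appendix:Glambda} by analytic continuation in $\lambda$ starting from the representation \eqref{eq:Glambda first def} valid for $\lambda > -1$, or equivalently as suitable distributional $s$-derivatives of $G_\lambda$ at non-negative $\lambda$; both constructions preserve the support, so $\supp G_{-1}(\nu,\cdot), \supp G_{-2}(\nu,\cdot) \subset [-k(\nu),k(\nu)]$ (with in fact some concentration on the boundary $\{|s|=k(\nu)\}$ corresponding to the $\delta$-type singularities at the endpoints).

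Second, the remainder $h(\nu,s)$ is the distributional solution of the Cauchy problem \eqref{eq:sing rem eqn} with vanishing Cauchy data at $\nu = 0$ and source
$$R^\s(\nu,s) = \ell_2(\nu)k(\nu)^{-1} G_0(\nu,s),$$
so that $\supp R^\s(\nu,\cdot) \subset [-k(\nu),k(\nu)] \subset \mathcal{K}$. The classical finite speed of propagation for the linear wave equation $h_{\nu\nu} - k'(\nu)^2 h_{ss} = R^\s$, whose characteristic speed $k'(\nu)$ is precisely the one used to define $\mathcal{K}$ via \eqref{eq:what is k' squared}, then yields $\supp h(\nu,\cdot) \subset [-k(\nu),k(\nu)]$. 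Combining the two contributions exhausts all terms in \eqref{eq:sing expansion} and gives the claim.

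The only delicate point, compared with Lemma \ref{lem:huygens reg}, is the treatment of the negative-order distributions $G_{-1}$ and $G_{-2}$, whose pointwise meaning is less transparent; however, since Appendix \ref{appendix:Glambda} provides their rigorous definition as tempered distributions supported in the closed cone $\{|s| \le k(\nu)\}$, the remaining steps reduce to invoking the finite speed of propagation exactly as in the regular kernel case, and no new analytic difficulty arises.
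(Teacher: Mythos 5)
Your proposal is correct and follows essentially the same approach as the paper's proof: show that each of the four summands in \eqref{eq:sing expansion} is supported in $[-k(\nu),k(\nu)]$, handling $G_0$ directly, $G_{-1}$ and $G_{-2}$ via the support results established in Appendix \ref{appendix:Glambda} (the paper's Lemma \ref{lem:G0 to G negatives} does this via the recurrence formulas \eqref{eq:two derivs formulas}, which is the ``distributional $s$-derivatives'' route you also mention), and the remainder $h$ via finite speed of propagation for the Cauchy problem \eqref{eq:sing rem eqn} with compactly supported source and vanishing data. The only cosmetic difference is your invocation of analytic continuation as an alternative justification, whereas the paper works entirely through the explicit Fourier-side recurrences of Definition \ref{def:defining G negative integer} and Lemma \ref{lem:G0 to G negatives}.
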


\begin{proof}
It is clear that $\supp G_{n}(\nu,\cdot) = [-k(\nu),k(\nu)]$ when $n=0,1,2$.
In addition, Lemma \ref{lem:G0 to G negatives} shows that $\supp G_n(\nu,\cdot) \subset [-k(\nu),k(\nu)]$
for $n=-3,-2,-1$.
By applying the standard theory of the linear wave equation to the Cauchy problem \eqref{eq:sing rem eqn},
it follows that $\supp h(\nu,\cdot) \subset [-k(\nu),k(\nu)]$ for each fixed $\nu>0$.

It follows that all the terms in formula \eqref{eq:sing expansion} are supported
inside $[-k(\nu),k(\nu)]$, whence the result follows.
\end{proof}

By combining the results of \S\ref{sec:computing sing kernel coeff} and \S\ref{sec:remainder for sing}
with Lemmas \ref{lem:initial data sing ker no deriv}--\ref{lem:huygens sing},
the proof of Proposition \ref{prop:sing kernel exist} is complete.

\subsection{Proof of Proposition \ref{prop:cpctness est sing}}\label{sec:cpctness est sing}
As $H^\s(\nu,\cdot) \in \mathcal{E}'(\mathbb{R})$,
we see that the convolution:
$H^\s(\nu,\cdot)*\varphi(s) = \langle H^\s(\nu,\cdot), \varphi(s-\cdot) \rangle$
defines a smooth function of $s$ for each $\varphi \in C^\infty(\mathbb{R})$.
Moreover, this function satisfies the entropy generator equation \eqref{eq:ent gen intro}.
In view of $\mathscr{F}\big(H^\s(\nu,\cdot)*\varphi\big)(\xi) = \hat{H}^\s(\nu,\xi)\hat{\varphi}(\xi)$,
the result that follows confirms
the first statement \eqref{eq:sing cpct est 1}
in Proposition \ref{prop:cpctness est sing}.

\begin{lemma}\label{lemma:kernel compactness estimates fourier}
There exists $C>0$ independent of $(\nu,\xi)$ such that,
for all $(\nu,\xi) \in [0,\nu_*]\times\mathbb{R}$,
\begin{equation}
    |\varrho(\nu)\hat{H}^\s_\nu(\nu,\xi) - \xi^2 \hat{H}^\s(\nu,\xi)| \leq C(1+\xi^4)\nu^{\frac{2}{3}}.
\end{equation}
\end{lemma}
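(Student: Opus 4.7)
The plan is to imitate the strategy used in the proof of Lemma~\ref{lem:cpct est reg} for the regular kernel, but to track two more orders of cancellation. Starting from the decomposition
\begin{equation*}
\hat{H}^\s(\nu,\xi) = \beta_0(\nu)\hat{f}_{-2}(\xi k(\nu)) + \beta_1(\nu)k(\nu)^2\hat{f}_{-1}(\xi k(\nu)) + \beta_2(\nu)k(\nu)^4\hat{f}_0(\xi k(\nu)) + \hat{h}(\nu,\xi),
\end{equation*}
I would differentiate in $\nu$ term by term, using the connection formulas of Lemma~\ref{lem:f hat relations} to re-express the derivatives $\partial_\nu \hat{f}_n(\xi k(\nu)) = \xi k'(\nu)\hat{f}_n'(\xi k(\nu))$ in terms of $\hat{f}_{-2},\hat{f}_{-1},\hat{f}_0,\hat{f}_1$. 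This produces a representation of $\hat{H}^\s_\nu$ as a finite linear combination of the $\hat{f}_n(\xi k(\nu))$ with explicit coefficients depending on $\beta_j,\beta_j',k,k'$, plus $\hat{h}_\nu$.

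Next, I would form the combination $\varrho(\nu)\hat{H}^\s_\nu(\nu,\xi) - \xi^2 \hat{H}^\s(\nu,\xi)$ and sort the result according to the $\hat{f}_n(\xi k)$ it multiplies. The choice of $d_0$ fixed in the proof of Lemma~\ref{lem:initial data sing ker no deriv} was made precisely to force $\lim_{\nu\to 0^+}\hat{H}^\s = 1$, and in the same way the transport equations \eqref{eq:beta0 def}, \eqref{eq:beta1 def}, \eqref{eq:beta2} satisfied by $\beta_0,\beta_1,\beta_2$ are exactly what makes the leading contributions to the $\hat{f}_{-2}$-coefficient of $\varrho\hat{H}^\s_\nu$ match those coming from $\xi^2\hat{H}^\s$. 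Invoking the asymptotic expansions of $\beta_0,\beta_1,\beta_2$ from Lemmas~\ref{lem:asymp beta0 and 1} and \ref{lem:alpha2 asymp}, combined with Corollary~\ref{cor:above and below} for the size of $\varrho$ and $k$, one checks that every such coefficient is bounded by $C\nu^{2/3}$ times at most a quadratic polynomial in $\xi$. The $\xi^4$ factor in the statement arises because the $\xi^2$ coming from $-\xi^2\hat{H}^\s$ is multiplied by a further $\xi^2$ that appears when one Taylor-expands $\hat{f}_{-2}(\xi k)$ about the origin (see Lemma~\ref{lem:G hat in terms of f hat and f hat precise}) to extract the cancellation with the constant mode in $\varrho\hat{H}^\s_\nu$.

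The remainder term is handled by the Fourier estimates \eqref{eq:fourier bounds remainder sing}: since $|\hat{h}_\nu(\nu,\xi)|\leq C\nu(1+|\xi k(\nu)|)^{-1}$ and $|\hat{h}(\nu,\xi)|\leq C\nu^2(1+|\xi k(\nu)|)^{-2}$, multiplying the former by $\varrho(\nu)\leq C\nu^{1/3}$ and estimating the latter against $\xi^2$ (using that $\xi^2 \leq (1+|\xi k|)^2 k^{-2}$ together with $k^{-2}\leq C\nu^{-2/3}$) produces contributions bounded by $C(1+\xi^2)\nu^{2/3}$, which is absorbed in $C(1+\xi^4)\nu^{2/3}$.

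The main obstacle will be the bookkeeping in the middle step: one must verify that, after grouping terms by $\hat{f}_n$, the coefficient of each $\hat{f}_n$ is actually $O(\nu^{2/3})$. This is more delicate than in the regular case because the $\hat{f}_n$ with negative index are only $O(1)$ at the origin rather than $O(|\xi k|)$, so two genuine orders of cancellation must be extracted from the explicit constants $C_{\beta_0,0},C_{\beta_0,1},C_{\beta_1,0},C_{\beta_2,0}$ identified in \S\ref{sec:asymp sing}; the equality $C_{\beta_0,0}=2$ is what closes the leading-order cancellation, while the subleading structure produced by Corollary~\ref{cor:OG cancellation derivs} delivers the sharp $\nu^{2/3}$ rate. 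Once this computation is carried out, the estimate of the lemma follows by the triangle inequality applied to the coefficient-wise bounds, together with the $L^\infty$ bounds of the $\hat{f}_n$'s away from the origin and the remainder estimate above.
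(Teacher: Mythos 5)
Your overall strategy matches the paper's: split off the leading $\beta_0\hat{f}_{-2}$ contribution, feed the derivative through the connection formulas, exploit cancellation, and absorb the $\beta_1,\beta_2$ and remainder terms using the asymptotics of \S\ref{sec:asymp sing} and the Fourier bounds \eqref{eq:fourier bounds remainder sing}. Your remainder estimate is correct and in fact overshoots the needed rate (you get $\nu^{4/3}$).

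Where you go astray is in identifying \emph{which} cancellation does the work. You attribute the leading-order cancellation to $C_{\beta_0,0}=2$ and the $\nu^{2/3}$ rate to Corollary~\ref{cor:OG cancellation derivs}. Neither is correct. The value $C_{\beta_0,0}=2$ is a pure normalization (it is simply the choice of $d_0$ that makes $\hat{H}^\s(0^+,\xi)=1$); since $\beta_0$ factors out of the dominant piece, its constant cannot be the source of a cancellation in $\varrho\hat{H}^\s_\nu-\xi^2\hat{H}^\s$. Likewise, Corollary~\ref{cor:OG cancellation derivs} controls $(2k'^2+kk'')$ and is used in \S\ref{sec:asymp sing} to construct $\ell_1,\ell_2$ and hence $\beta_2$ and the remainder, not directly in this estimate. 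What actually makes the estimate close, after applying the connection formula $\hat{f}_{-2}'(\xi)=\frac{\xi}{2}\hat{f}_{-1}(\xi)$ and the exact trigonometric forms of $\hat{f}_{-2},\hat{f}_{-1}$, is the identity
\begin{equation*}
\varrho(\nu)\,k'(\nu)\,k(\nu) = 1 + \bigo(\nu^{\frac{2}{3}}),
\end{equation*}
which follows from Proposition~\ref{prop:k expand} and \eqref{eq:rho expansion precise} and is special to $\gamma=3$ (it is the coincidence $c_\sharp=3^{1/3}$ that makes the leading coefficients of $\varrho$, $k$, $k'$ multiply to $1$). That identity handles the $\hat{f}_{-2}$-vs-$\hat{f}_{-1}$ competition and produces the $(1+\xi^4)\nu^{2/3}$ bound. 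The second cancellation, for the remaining piece $\varrho(\beta_0'\hat{f}_{-2}+2\beta_1 kk'\hat{f}_{-1})$, is the transport-equation consequence
\begin{equation*}
\tfrac{1}{2}\beta_0' + 2\beta_1 kk' = \Big(\tfrac{1}{3}C_{\beta_0,1}+\tfrac{2}{3}C_{\beta_1,0}c_\sharp^2\Big)\nu^{-\frac{1}{3}} + \bigo(\nu^{\frac{1}{3}}) = \bigo(\nu^{\frac{1}{3}}),
\end{equation*}
where the bracketed constant vanishes because of $C_{\beta_1,0}=-\tfrac{1}{2}C_{\beta_0,1}c_\sharp^{-2}$ (Lemma~\ref{lem:asymp beta0 and 1}). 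So your intuition that the transport equations for the $\beta_j$ are responsible is right for the second cancellation, but the first (and leading) cancellation does not involve the $\beta_j$ at all. If you had carried out the grouping honestly you would have stumbled onto this, but the mechanism you wrote down would have led you to look in the wrong place.
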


\begin{proof}
To simplify notation, in some parts of this proof,
the functions that depend solely on $\nu$ are written without their argument stated explicitly.

Observe that, in view of Lemma \ref{lem:G hat in terms of f hat and f hat precise},
$\hat{f}_{-2} \in L^\infty_{\loc}(\mathbb{R})$
and $\hat{f}_{-1}, \hat{f}_0, \hat{f}_1 \in L^\infty(\mathbb{R})$.
Combining this with the asymptotic expansions for coefficients $\{\beta_j\}_{j=0}^2$
with \eqref{eq:fourier bounds remainder sing} shows that there exists $C>0$ independent
of $(\nu,\xi)$ such that
\begin{equation*}
\big| \beta_1 k^{2} \hat{f}_{-1} (\xi k(\nu)) + \beta_2 k^{4} \hat{f}_{0} (\xi k(\nu))
+ \hat{h}(\nu,\xi) \big| \leq {C\nu^{\frac{2}{3}}}
\end{equation*}
for all $(\nu,\xi)\in [0,\nu_*]\times\mathbb{R}$.
Similarly, using also the relation: $\hat{f}_{-1}'(\xi) = -\frac{\xi}{2}\hat{f}_0(\xi)$
(\textit{cf.}~Lemmas \ref{lem:G hat in terms of f hat and f hat precise} and \ref{lem:f hat relations}),
we have
\begin{equation*}
    \Big| \partial_\nu \Big( \beta_1 k^{2} \hat{f}_{-1} (\xi k(\nu)) + \beta_2 k^{4} \hat{f}_{0} (\xi k(\nu))
    + \hat{h}(\nu,\xi) \Big) - 2\beta_1kk'\hat{f}_{-1}(\xi k(\nu)) \Big|
    \leq C(1+\xi^2)\nu^{\frac{1}{3}}.
\end{equation*}
Therefore, it follows that there exists $C>0$ independent of $(\nu,\xi)$ such that,
for all $(\nu,\xi)\in [0,\nu_*]\times\mathbb{R}$,
\begin{align}
&|\hat{H}^\s(\nu,\xi) - \beta_0\hat{f}_{-2}(\xi k(\nu))| \leq {C\nu^{\frac{2}{3}}}, \label{eq:close approx Hhat}\\
&\big| \hat{H}^\s_\nu(\nu,\xi) - \partial_\nu\big( \beta_0 \hat{f}_{-2}(\xi k(\nu)) \big)
- 2\beta_1k k' \hat{f}_{-1}(\xi k(\nu)) \big| \leq {C(1+\xi^2)\nu^{\frac{1}{3}}}. \label{eq:close approx Hhat deriv}
\end{align}
Then we have
\begin{align}
       &|\varrho(\nu)\hat{H}^\s_\nu(\nu,\xi) - \xi^2 \hat{H}^\s(\nu,\xi)| \nonumber \\
        &\leq  \varrho(\nu)\big| \beta_0'(\nu)\hat{f}_{-2}(\xi k(\nu)) + 2\beta_1(\nu) k(\nu) k'(\nu) \hat{f}_{-1}(\xi k(\nu)) \big| \nonumber\\
        &\quad +\! C\big|\varrho(\nu) \partial_\nu (\hat{f}_{-2}(\xi k(\nu)))\! -\! \xi^2 \hat{f}_{-2}(\xi k(\nu))\big|
       \! + \! C(1+\xi^2)\nu^{\frac{2}{3}} \nonumber \\
        &=: |K_1(\nu,\xi)|+ C|K_2(\nu,\xi)| + C(1+\xi^2)\nu^{\frac{2}{3}}, \label{eq:some of the way there fourier compact kernel}
    \end{align}
where we have used the asymptotic expansions to bound $\beta_0$ by a constant
in the first line, as well as the estimate for $\varrho(\nu)$.
It remains to estimate terms $K_1$ and $K_2$.

We begin with $K_2$, which can be rewritten as
\begin{equation*}
\begin{aligned}
K_2(\nu,\xi)
&=\xi^2\Big( \frac{1}{2}\varrho(\nu)k'(\nu)k(\nu) \hat{f}_{-1}(\xi k(\nu)) - \hat{f}_{-2}(\xi k(\nu)) \Big) \\
&= \frac{1}{2}\xi^2 \Big( \big(\varrho(\nu)k'(\nu)k(\nu) - 1 \big) \cos(\xi k(\nu)) - \xi^2 k(\nu)^2\frac{\sin (\xi k(\nu))}{\xi k(\nu)} \Big).
\end{aligned}
\end{equation*}
Using the asymptotic expansions provided by Proposition \ref{prop:k expand}
for $k$ and $\varrho$ (\textit{cf.}~\eqref{eq:rho expansion precise}
and the exact value for $c_\sharp$), we obtain
\begin{equation*}
    \varrho(\nu) k'(\nu) k(\nu) = \frac{1}{3}\varrho(\nu) c_\sharp^2\nu^{-\frac{1}{3}} + \bigo(\nu^{\frac{2}{3}}) = 1 + \bigo(\nu^{\frac{2}{3}}),
\end{equation*}
whence we obtain that $|K_2(\nu,\xi)| \leq C\xi^2( 1+\xi^2) \nu^{\frac{2}{3}} \leq C(1+\xi^4)\nu^{\frac{2}{3}}$
for some constant $C>0$ independent of $(\nu,\xi)$.

The estimate for $K_1$ is similar. Notice that
\begin{equation*}
    K_1(\nu,\xi) = \varrho(\nu) \Big( \frac{1}{2}\beta_0'(\nu) \xi^2 k(\nu)^2 \frac{\sin (\xi k(\nu))}{\xi k(\nu)}
     + \big( \frac{1}{2}\beta_0'(\nu) + 2\beta_1(\nu)k(\nu)k'(\nu) \big) \cos(\xi k(\nu)) \Big),
\end{equation*}
and $|\varrho(\nu) \beta_0'(\nu) \xi^2 k(\nu)^2| \leq C\xi^2 \nu^{\frac{2}{3}}$.
On the other hand, using the asymptotic expansions for $\beta_0$ and $\beta_1$, we have
\begin{equation*}
    \frac{1}{2}\beta_0'(\nu) + 2\beta_1(\nu)k(\nu)k'(\nu) = \big( \underbrace{\frac{1}{3}C_{\beta_0,1}
    + \frac{2}{3}C_{\beta_1,0}c_\sharp^2}_{=0} \big)\nu^{-\frac{1}{3}} + \bigo(\nu^{\frac{1}{3}}) = \bigo(\nu^{\frac{1}{3}}),
\end{equation*}
whence $|K_2(\nu,\xi)| \leq C(1+\xi^2)\nu^{\frac{2}{3}}$ for some constant $C>0$ independent of $(\nu,\xi)$.
The proof is complete.
\end{proof}

\smallskip
Note that the second estimate \eqref{eq:sing cpct est 2} in Proposition \ref{prop:cpctness est sing}
is obtained directly from the previous calculations.
Indeed, it follows from the asymptotic expansions in \S \ref{sec:asymp sing},
equation \eqref{eq:close approx Hhat}, and
the bound: $|\hat{f}_{-2}(\xi)| \leq C(1+\xi^2)$
(\textit{cf.}~Lemma \ref{lem:G hat in terms of f hat and f hat precise})
that
$$
|\hat{H}^\s(\nu,\xi)| \leq C(1+\xi^2)
$$
for some constant $C>0$ independent of $(\nu,\xi)$.
Meanwhile, \eqref{eq:close approx Hhat deriv}
and $\hat{f}_{-2}'(\xi) = \frac{\xi}{2}\hat{f}_{-1}(\xi)$
imply that $|\varrho(\nu)\hat{H}^\s_\nu(\nu,\xi)| \leq C(1+\xi^2)$.
Estimate \eqref{eq:sing cpct est 2} follows directly from elementary properties
of the Fourier transform, which concludes the proof of Proposition \ref{prop:cpctness est sing}.

Finally, we show that the initial condition for $\varrho H^\s_\nu$ is achieved.
This is an immediate consequence of Lemma \ref{lemma:kernel compactness estimates fourier}.

\begin{corollary}\label{cor:initial data rho Hnu is achieved}
$\,\,\varrho H^\s_\nu|_{\nu=0} = -\delta_0''$.
\end{corollary}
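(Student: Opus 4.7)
The plan is to derive the claimed initial condition by passing to the distributional limit in Fourier space, using the two ingredients already established in the excerpt: the pointwise limit $\hat{H}^\s(\nu,\xi) \to 1$ from Lemma \ref{lem:initial data sing ker no deriv}, and the pointwise estimate from Lemma \ref{lemma:kernel compactness estimates fourier}. Combining these two immediately gives $\varrho(\nu)\hat{H}^\s_\nu(\nu,\xi) \to \xi^2$ pointwise in $\xi$ as $\nu \to 0^+$, since the error $C(1+\xi^4)\nu^{2/3}$ vanishes for each fixed $\xi$ while $\xi^2 \hat{H}^\s(\nu,\xi) \to \xi^2$.

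To upgrade this pointwise Fourier convergence into convergence in $\mathcal{S}'(\mathbb{R})$ of the physical-side distribution $\varrho(\nu) H^\s_\nu(\nu,\cdot)$, I would test against an arbitrary Schwartz function $\psi$ and apply Plancherel; dominated convergence then reduces matters to exhibiting a uniform (in $\nu$) integrable majorant for $\varrho(\nu)\hat{H}^\s_\nu(\nu,\xi)\overline{\hat{\psi}(\xi)}$. Such a majorant is already furnished by the bound $|\varrho(\nu)\hat{H}^\s_\nu(\nu,\xi)| \leq C(1+\xi^2)$, which is precisely the second half of the estimate derived just above in the proof of Proposition \ref{prop:cpctness est sing}; since $\hat\psi$ is Schwartz, $(1+\xi^2)|\hat\psi(\xi)|$ is integrable, so dominated convergence applies.

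The limit pairing then evaluates to $\frac{1}{2\pi}\int \xi^2 \overline{\hat\psi(\xi)}\,\de\xi = -\psi''(0)$, which is exactly $\langle -\delta_0'', \psi\rangle$ since $\mathscr{F}(-\delta_0'')(\xi) = \xi^2$ in the convention fixed in the introduction. Because $\psi \in \mathcal{S}(\mathbb{R})$ was arbitrary, this yields $\varrho(\nu) H^\s_\nu(\nu,\cdot) \to -\delta_0''$ in $\mathcal{S}'(\mathbb{R})$ as $\nu \to 0^+$, which is the claimed initial datum.

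The only subtlety is organizational rather than analytical: one must verify that the uniform Fourier majorant $C(1+\xi^2)$ for $\varrho(\nu)\hat{H}^\s_\nu(\nu,\xi)$ indeed holds with a constant independent of $\nu \in [0,\nu_*]$, so that dominated convergence is legitimate. The genuinely hard analytic work --- producing the precise cancellations between $\beta_0'$ and $2\beta_1 kk'$ that prevent blow-up of this quantity at the vacuum --- has already been carried out in Lemma \ref{lemma:kernel compactness estimates fourier}, so the present corollary is essentially a dividend of that analysis.
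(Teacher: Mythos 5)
Your proof is correct and follows the paper's approach exactly: the paper also derives $\varrho(\nu)\hat{H}^\s_\nu(\nu,\xi) \to \xi^2$ pointwise by combining Lemma \ref{lemma:kernel compactness estimates fourier} with the Fourier-side statement of Lemma \ref{lem:initial data sing ker no deriv}, and then simply states ``the result follows.'' You have merely spelled out the step the paper leaves implicit --- upgrading pointwise Fourier convergence to convergence in $\mathcal{S}'$ via dominated convergence against the uniform majorant $C(1+\xi^2)$ already established after the proof of Proposition \ref{prop:cpctness est sing}.
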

\begin{proof}
    Taking the pointwise limit as $\nu \to 0^+$
    in the estimate in Lemma \ref{lemma:kernel compactness estimates fourier}
    and using the initial data for $H^\s|_{\nu=0}$ proved
    in Lemma \ref{lem:initial data sing ker no deriv} in the Fourier coordinates, we obtain
    \begin{equation*}
        \lim_{\nu \to 0^+} \varrho(\nu) \hat{H}^\s_\nu(\nu,\xi) = \xi^2
        \qquad \mbox{for each fixed $\xi \in \mathbb{R}$}.
    \end{equation*}
    Then the result follows.
\end{proof}

\section{Compensated Compactness Framework}\label{sec:comp comp framework and red}

This section is devoted to the proof of
Theorem \ref{thm:compensated compactness framework intro},
based on our detailed entropy analysis in \S3--\S5.
In doing so, we establish a compactness framework
that leads to the strong convergence of the approximate or exact solutions
to an entropy solution of the limit problem
for system \eqref{eq:system in intro}--\eqref{eq:bernoulli general gamma intro};
see \S \ref{sec:entropy sol}.

\begin{proof}[Proof of Theorem \ref{thm:compensated compactness framework intro}]
The uniform bounds in \eqref{5.1a}--\eqref{5.2a} imply the existence of a subsequence (still denoted)
$\{\bu^\varepsilon\}_{\varepsilon>0}$ converging weakly-* to a function $\bu
\in L^\infty$ and the existence of a Young measure $\{\young_\x\}_{\x}$
characterizing this limit and its interactions with continuous functions.
The rest of the proof is concerned with upgrading the weak-* convergence
to almost everywhere convergence (taking a further subsequence when necessary)
by reducing the Young measure to the Dirac mass
concentrated at $\bu(\x)$ away from the vacuum. Once the almost everywhere convergence is known,
obtaining the strong convergence in $L^p_\loc$ for all $p \in [1,\infty)$ is an elementary argument
by using the uniform boundedness of $\{\bu^\varepsilon\}_{\varepsilon>0}$ and the required interpolation.

We divide the rest of the proof into three steps.

\smallskip
\noindent 1. \textit{Hyperbolicity, genuine nonlinearity, and Riemann invariant coordinates}:
Recall from Proposition \ref{lem:hyperbolic and gn} (\S \ref{sec:hyp gen non}) that,
since the sequence of approximate solutions $\bu^\epsi$ satisfies the bounds in \eqref{5.1a}--\eqref{5.2a},
the nonlinear system \eqref{eq:conservative form u v ii} written in terms of $\Z(\bu)$
is strictly hyperbolic away from the vacuum;
however, the strict hyperbolicity fails at the vacuum.
Additionally, again by Lemma \ref{lem:hyperbolic and gn},
the system is genuinely nonlinear,
including the states at the vacuum.

Recall from Proposition \ref{lem:riemann invariants} that
\begin{equation*}
    W_\pm(\bu) = \vartheta(\bu) \pm k(|\bu|),
\end{equation*}
provided by \eqref{eq:Wpm expression 1} and \eqref{eq:Wpm expression 2},
are Riemann invariants
for the conservative system \eqref{eq:conservative form u v ii} for $\Z(\bu)$.
Furthermore, the change of coordinates: $\bu \mapsto (W_-(\bu),W_+(\bu))$ is bijective
on region $\{q_{\cri}\le |\bu| \leq q_{\cav}\}$.
We henceforth employ the curvilinear coordinate system $(W_-,W_+)$ in the phase-space and
denote the image measure of $\young_\x$ for this invertible coordinate change by $\young'_\x$,
which is precisely the Young measure generated
by sequence $\{(W_-(\bu^\varepsilon),W^\varepsilon_+(\bu^\varepsilon))\}_{\varepsilon>0} \subset L^\infty$.

\smallskip
\noindent 2. \textit{Reduction of the Young measure}:
Assumption \eqref{5.3a} indicates that the entropy dissipation measures for entropy pairs
generated by $H^\r$ and $H^\s$ satisfy the $H^{-1}$-compactness including at the vacuum state.
In turn, the Young measure satisfies the usual Tartar--Murat commutation relation
for all entropy pairs generated by kernels $H^\r$ and $H^\s$,
while these kernels $H^\r$ and $H^\s$ provide two linearly independent families of entropy pairs.
Then we can employ the arguments in DiPerna \cite{diperna2} or Serre \cite{SS0} for
the genuinely nonlinear conservative system \eqref{eq:conservative form u v ii}
to show that
the support of the family of Young measures generated by the sequence of approximate solutions
is one point in the Riemann invariant coordinates on the phase-plane, \textit{i.e.},
\begin{equation*}
    \young_\x' = \delta_{(W_-(\x),W_+(\x))} \qquad \text{a.e.~}\x,
\end{equation*}
for some functions $(W_-,W_+)\in L^\infty$.

\smallskip
\noindent 3. \textit{Strong convergence of the velocity}:
In turn, using the coordinate transformation: $(W_-,W_+) \mapsto \bu(W_-,W_+)$
in Lemma \ref{lem:riemann invariants}, which is well-defined everywhere, including at the vacuum,
we deduce that the original measure $\young_\x$ is also a Dirac mass in the $\bu$-phase plane.
By the uniqueness of weak-* limits, we have
$$
\young_\x = \delta_{\bu(\x)} \quad \text{a.e.~}\x.
$$
It follows that
there exists a subsequence (still denoted by) $\{\bu^\epsi\}_{\varepsilon>0}$
such that \eqref{eqn-convergence-approx} holds. The proof is complete.
\end{proof}

\begin{remark}\label{rem:vacuum cannot go from U back to u}
Observe that the mapping: $\bu \mapsto \Z(\bu)$ provided by \eqref{eq:unknown for conservative sys}
is not invertible at the vacuum, since
$|u(\Z)| = \sqrt{q^2_{\cav}-Z^2_2}$.
However, the sign cannot be determined solely from the entries of $\Z$,
whence $\bu(\Z)$ is not well-defined.
We emphasize that this is not a limitation for us, as our compensated compactness framework
is devised
to obtain $\bu$ directly as a solution of \eqref{eq:conservative form u v ii};
that is, it is not the case that we first obtain $\Z$
solving \eqref{eq:conservative form u v ii}
and then claim that $\bu(\Z)$ solves \eqref{eq:system in intro}.
\end{remark}

\section{Compactness of Entropy Solutions and Weak Continuity of the Euler Equations
for Potential Flow}\label{sec:weak continuity}

In this section, as a direct application of the compensated compactness framework
in \S \ref{sec:comp comp framework and red},
we prove Theorem \ref{big-thm-3},
concerned with the compactness of entropy solutions
and the weak continuity of system \eqref{eq:system in intro}
for the velocity field $\bu$.

\begin{proof}[Proof of Theorem \ref{big-thm-3}]  The proof is divided into four steps.

\smallskip
\noindent 1. First, assumptions (i)--(ii)
indicate that
the solutions sequence
$\{\bu^\epsi\}_{\varepsilon>0}$
satisfies conditions (i)--(ii) in the compensated compactness
framework (Theorem \ref{thm:compensated compactness framework intro}).
The key is to prove
the compactness of the entropy dissipation measures
for generic entropy pairs generated by kernels $H^\r$ and $H^\s$,
after which a direct application of Theorem \ref{thm:compensated compactness framework intro}
implies the convergence almost everywhere $\bu^\varepsilon \to \bu$.

\smallskip
\noindent 2.  For the special entropy pair  $\Q_*$,
it follows from the definition of entropy solutions (Definition \ref{def:ent sol}) that
the entropy inequality holds:
$$
\mu^\epsi_*:=-{\div_\x}\, \Q_*(\bu^\epsi) \le 0
$$
distributionally.
Furthermore, since sequence $\{\bu^\epsi\}_{\varepsilon>0}$ is uniformly bounded as indicated in Step 1,
we conclude that
$$
\{\mu^\epsi_*\}_{\varepsilon>0} \quad \mbox{is a bounded subset in $W^{-1,\infty}_{\rm loc}$}.
$$
An application of Murat’s Lemma, which states that the embedding of the positive cone of $W^{-1,q}$
into $W^{-1,p}$ is compact for $p<q$ (\textit{cf.}~\cite{muratcone}),  indicates that
$$
\{\mu^\epsi_*\}_{\varepsilon>0} \quad \mbox{is pre-compact in $W^{-1,p}_{\rm loc}$
for $p\in (1,\infty)$}.
$$
The same argument shows that, for any entropy pair $\Q_H$ determined
from the Loewner--Morawetz relations
via a generator $H$ generated by kernels $H^\r$ and $H^\s$ and satisfying
the convexity conditions in \eqref{eq:generator conditions ent ineq},
$$
\{{\div_\x}\, \Q_H(\bu^\epsi)\}_{\varepsilon>0} \qquad
\mbox{is pre-compact in $W^{-1,p}_{\rm loc}$ for $p\in (1,\infty)$}.
$$

\smallskip
\noindent 3.  In this step, we apply an idea from Chen \cite{C4} to transfer the compactness of
the entropy dissipation measures $\{\mu^\varepsilon_*\}_{\varepsilon>0}$
to the dissipation measures for any regular entropy pair,
including those that do not satisfy the convexity conditions
in \eqref{eq:generator conditions ent ineq}.

Let $\Q_H$ be any entropy pair determined from the Loewner--Morawetz relations
via a generator $H$ generated by kernels $H^\r$ and $H^\s$.
It is immediate from Propositions \ref{prop:est for h-1 cpct reg} and \ref{prop:cpctness est sing}
that there exists a positive constant $C_H$ such that
\begin{equation*}
\sum_{j=0}^2 (|\varrho \partial^j_\vartheta H_\nu| + |\partial^j_\vartheta H|) \leq C_H ,
\qquad \sum_{j=0}^2 |\partial^j_\vartheta(\varrho H_{\nu } + H_{\vartheta\vartheta})| \leq C_H \varrho.
\end{equation*}
Meanwhile, the special entropy generator $H^*$ satisfies the strict convexity
condition $H^*_{\vartheta\vartheta} = 1$.
In turn, defining $\widetilde{H} := H + C_0 H^*$ with $C_0 \geq 2 C_H$ a positive constant, we find that
\begin{equation*}
 \begin{aligned}   &\widetilde{H}_{\vartheta\vartheta} - \varrho \widetilde{H}_{\nu\vartheta\vartheta} \geq C_0 - C_H > 0, \\
 &\varrho (\widetilde{H}_{\vartheta\vartheta} - \varrho \widetilde{H}_{\nu\vartheta\vartheta}) \geq \varrho(C_0 - C_H)
  \geq \varrho C_H \geq |\varrho \widetilde{H}_{\nu\vartheta} + \widetilde{H}_{\vartheta\vartheta\vartheta}|.
 \end{aligned}
\end{equation*}
Using the linearity of the Loewner--Morawetz relations,
it follows that the entropy pair $\Q_H + C_0 \Q_*$, which is determined from generator $\widetilde{H}$,
satisfies the convexity conditions \eqref{eq:generator conditions ent ineq}.
Therefore, by setting $\mu^\epsi := {\div_\x}\,\Q_H(\bu^\epsi)$,
it follows from the conclusion of Step 2 that, for all $p \in (1,\infty)$,
$$
\{\mu^\epsi +C_0 \mu^\epsi_*\}_{\varepsilon>0}
\qquad\mbox{is pre-compact in $W^{-1,p}_{\rm loc}$}.
$$
By linearity, we conclude that $\{\mu^\varepsilon\}_{\varepsilon>0}$ is pre-compact in $W^{-1,p}_{\rm loc}$ for $p\in (1,\infty)$.
Thus, we conclude that, given any entropy pair  $\Q_H$ determined from the Loewner--Morawetz relations
via a generator $H$ generated by kernels $H^\r$ and $H^\s$, we have
\begin{equation*}
    \{ \div_\x \Q_H(\bu^\varepsilon) \}_{\varepsilon>0} \quad \text{is pre-compact in } H^{-1}_\loc.
\end{equation*}

\smallskip
\noindent 4.  Combining the results of the previous steps, we conclude that the solution
sequence $\{\bu^\epsi\}_{\varepsilon>0}$ satisfies
all the conditions in the compactness framework (Theorem \ref{thm:compensated compactness framework intro}).
This implies that
there exists a subsequence (still denoted by) $\{\bu^\epsi\}_{\varepsilon>0}$
that converges to the limit function $\bu$ almost everywhere
and strongly in $L^p_\loc$ for all $p \in [1,\infty)$.
We then apply the Dominated Convergence Theorem to show that $\bu$ is an entropy solution
of \eqref{eq:system in intro}--\eqref{eq:bernoulli general gamma intro} in the sense of Definition \ref{def:ent sol};
that is, $\bu$ satisfies \eqref{eq:system in intro} and \eqref{eq:ent ineq distr} in the sense of distributions.
\end{proof}

\section{Construction and Uniform Estimates of Viscous Approximate Solutions}\label{sec:viscous}
The goal of this section is to present an example to show how a sequence of approximate solutions
$\{\bu^\varepsilon\}_{\varepsilon>0}$ for system \eqref{eq:potential approx}
can be constructed to satisfy the conditions of the compensated compactness framework
established in \S \ref{sec:comp comp framework and red} in a fixed bounded domain $\mathscr{D}$.
To state the main result of this section, we begin by rewriting the potential flow
system \eqref{eq:system in intro}--\eqref{eq:bernoulli general gamma intro}
in the concise divergence-form:
\begin{equation*}
    \begin{cases}
         \div\big(\varrho q(\varrho)\e(\vartheta)\big) = 0, \\
         \div\big(q(\varrho)\e(\vartheta-\frac{\pi}{2})\big)= 0,
    \end{cases}
\end{equation*}
where $q(\varrho) = \sqrt{1-\varrho^2}$ is consistent with the Bernoulli
relation \eqref{eq:bernoulli general gamma intro}, and $\e(\vartheta)=(\cos\vartheta,\sin\vartheta)$.

The problem is posed in a domain $\mathscr{D} \subset \mathbb{H}$, an open connected subset with inward unit
normal $\mathbf{n}$ pointing into $\mathscr{D}$, which is filled with the fluid.
The boundary is written $\partial\mathscr{D} = \partial\mathscr{D}_1 \cup \partial\mathscr{D}_2$, where
$\partial\mathscr{D}_1$ is composed of only the obstacle boundary $\partial\obstacle$,
while $\partial\mathscr{D}_2$ is the remaining part of the rectangular box constituting $\partial\mathscr{D}$;
see Figure 8.1.

\vspace{-20pt}
\begin{figure}[ht]
\begin{center}
\begin{tikzpicture}
   \draw (0, 0) node[inner sep=0]
   {\includegraphics[width=6.5cm]{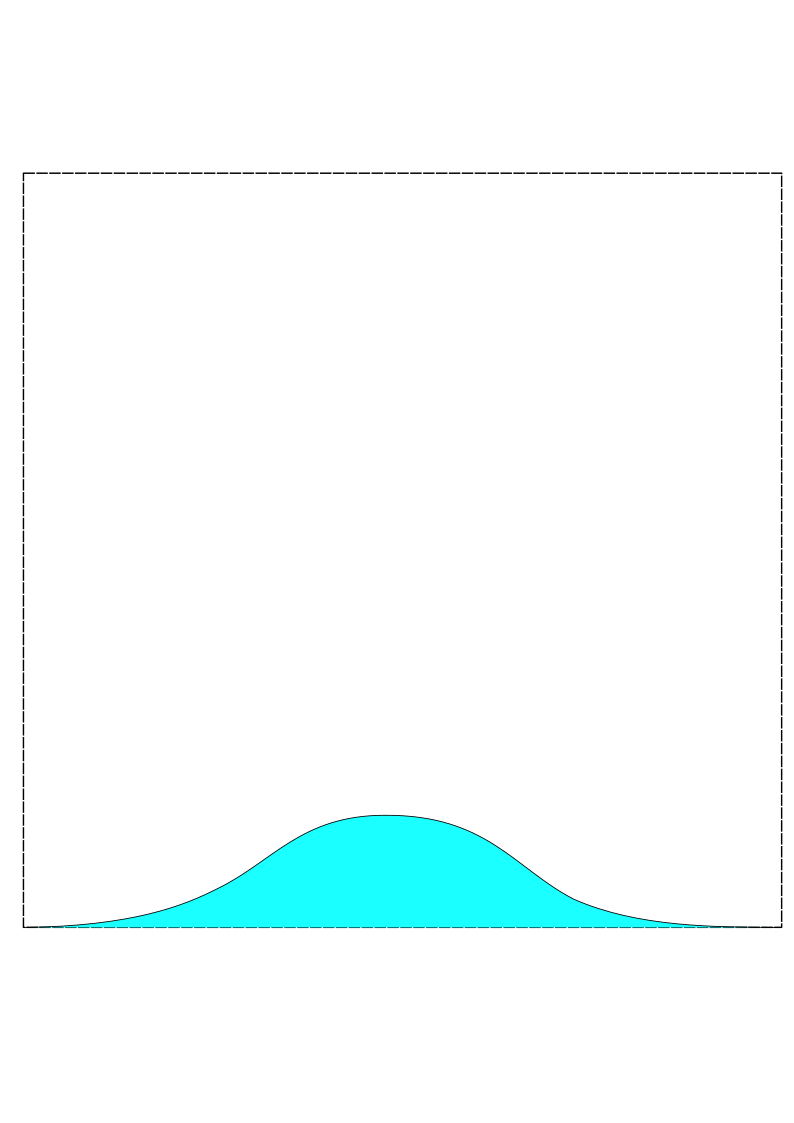}};
   \draw (0,-1.85) node {$\partial\mathscr{D}_1$};
    \draw (0,-2.55) node {$\obstacle$};
    \draw (-1,1.5) node {$\mathscr{D}$};
    \draw (2.65,0.5) node {$\partial\mathscr{D}_2$};
\end{tikzpicture} \hspace{1cm}
\end{center}
\vspace{-45pt}
\caption{\, The posed domain $\mathscr{D}$ for the viscous approximate problems.}\label{fig8.1}
\end{figure}

As per \cite{gq-transonic}, we add the viscosity terms $V_1^\varepsilon$ and $V_2^\varepsilon$
to the right-hand sides of the previous equations with
\begin{equation}\label{eq:choice of viscosities}
    \begin{cases}
        V_1^\varepsilon := \epsi\div\big((1-\displaystyle\frac{c(\varrho^\varepsilon)^2}{q(\varrho^\varepsilon)^2})\nabla \varrho^\varepsilon\big), \\
        V_2^\varepsilon := \epsi\Delta\vartheta^\varepsilon,
    \end{cases}
\end{equation}
for $\varepsilon \in (0,1)$ throughout this section.
In turn, the approximate problems for system \eqref{eq:potential approx} may be rewritten
as the quasilinear elliptic system:
\begin{equation}\label{eq:approx system start of s6}
 \begin{cases}
 \div\big(\varrho^\varepsilon q(\varrho^\varepsilon)\e(\vartheta^\varepsilon)\big) = V_1^\varepsilon, \\
  \div\big(q(\varrho^\varepsilon)\e(\vartheta^\varepsilon-\frac{\pi}{2})\big)= V_2^\varepsilon,
    \end{cases}
\end{equation}
along with
adequate boundary conditions.

The main result contained herein
will subsequently be employed in conjunction with Theorem \ref{thm:compensated compactness framework intro}
to prove Theorem \ref{thm:main existence theorem},
which will be done
in \S \ref{sec:entropy sol}.
Throughout this section, $\Q^\varepsilon$ is used to denote $\Q(\bu^\varepsilon)$,
where $\bu^\varepsilon$ is uniquely characterized by $(\varrho^\varepsilon,\vartheta^\varepsilon)$.

\begin{theorem}\label{big-thm-viscous}
There exists $\{(\varrho^\epsi,\vartheta^\epsi)\}_{\varepsilon>0} \subset C^2(\mathscr{D}) \cap C^1(\overline{\mathscr{D}})$
that is a uniformly bounded sequence of solutions in $L^\infty(\mathscr{D})$
of the approximate problems \eqref{eq:approx system start of s6} and satisfies that
there exist $\varrho_* \in (0,\varrho_{\cri})$ and $\vartheta_* , \vartheta^* \in \mathbb{R}$,
depending only on the boundary data imposed on $\partial\mathscr{D}_2$, such that
\begin{enumerate}
    \item[(i)] for all $\varepsilon>0$ and all $\x \in \overline{\mathscr{D}}$,
    \begin{equation*}
        0< \varrho^\varepsilon(\x) \leq \varrho_*< \varrho_{\cri}, \quad \vartheta_* < \vartheta^\varepsilon(\x) < \vartheta^*;
    \end{equation*}
    \item[(ii)] For any entropy pair $\Q$ generated by kernels $H^\r$ and $H^\s$ via the Loewner--Morawetz
    relations \eqref{eq:lowener mor},
    \begin{equation*}
        \{\div_\mathbf{x}\mathbf{Q}^\varepsilon\}_{\varepsilon>0} \qquad \text{is pre-compact in } H^{-1}(\mathscr{D}).
    \end{equation*}
\end{enumerate}
\end{theorem}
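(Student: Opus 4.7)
The plan is to prove Theorem \ref{big-thm-viscous} in three stages: construct classical solutions of the viscous system \eqref{eq:approx system start of s6} via a Leray--Schauder fixed-point argument, establish the uniform $L^\infty$-bounds by an invariant-region / maximum-principle argument, and verify the $H^{-1}$-pre-compactness of the entropy dissipation measures by exploiting the refined entropy estimates of Propositions \ref{prop:est for h-1 cpct reg} and \ref{prop:cpctness est sing}.

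\textbf{Existence of classical solutions.} Thanks to the specific viscous structure \eqref{eq:choice of viscosities}, system \eqref{eq:approx system start of s6} is a strictly elliptic quasilinear system for $(\varrho^\varepsilon,\vartheta^\varepsilon)$ whenever $\varrho^\varepsilon$ is bounded away from $\varrho_{\cri}$. I would freeze the nonlinearities at a reference pair $(\bar\varrho,\bar\vartheta)$ ranging over a closed convex subset $X \subset C^{1,\alpha}(\overline{\mathscr{D}})$ reflecting the desired invariant bounds, solve the resulting linear mixed boundary value problem (Dirichlet on $\partial\mathscr{D}_2$ inherited from the uniform far-field, Neumann-type $\bu\cdot\mathbf{n}=0$ on $\partial\mathscr{D}_1$) by Schauder theory, and thereby obtain a compact self-map $T:X\to X$. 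The Leray--Schauder theorem together with elliptic regularity then yields a classical solution in $C^2(\mathscr{D})\cap C^1(\overline{\mathscr{D}})$.

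\textbf{Uniform bounds and invariant regions.} The key to closing the fixed-point argument is showing that $T$ preserves the desired invariant set. I would derive from \eqref{eq:approx system start of s6} a scalar elliptic equation for $\varrho^\varepsilon$ whose lower-order structure, combined with the viscosity factor $1-c^2/q^2$ (which is of a good sign in the supersonic regime $\varrho<\varrho_{\cri}$), permits an application of the strong maximum principle, forcing $\sup\varrho^\varepsilon$ to be attained on $\partial\mathscr{D}_2$ and hence bounded above by a constant $\varrho_*<\varrho_{\cri}$ determined by the boundary data. Positivity $\varrho^\varepsilon>0$ would follow from the Hopf lemma and positive Dirichlet data on $\partial\mathscr{D}_2$, while the angle bounds $\vartheta_*\leq \vartheta^\varepsilon\leq\vartheta^*$ follow from the maximum principle applied to the second equation rewritten as an elliptic equation for $\vartheta^\varepsilon$ with no zero-order term. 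Crucially, no uniform positive lower bound on $\varrho^\varepsilon$ is asserted, which is exactly what permits the framework to accommodate cavitation in the limit $\varepsilon\to 0$.

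\textbf{$H^{-1}$-pre-compactness of entropy dissipation.} Using the dissipation identity \eqref{eq:ent dissipation from polar sys with H} together with \eqref{eq:choice of viscosities} and integration by parts, I would decompose
\begin{equation*}
\div_\x \Q^\varepsilon \,=\, \varepsilon\,\div_\x \mathbf{F}^\varepsilon \,-\, \varepsilon\,\mathbf{G}^\varepsilon(\nabla\varrho^\varepsilon,\nabla\vartheta^\varepsilon),
\end{equation*}
where $\mathbf{F}^\varepsilon$ collects the viscous fluxes weighted by the entropy coefficients $\varrho H_{\nu\vartheta}-H_\vartheta$ and $H_\nu+\frac{1}{\varrho}H_{\vartheta\vartheta}$, and $\mathbf{G}^\varepsilon$ is a bilinear form in the gradients arising from the derivatives of these coefficients. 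The estimates \eqref{eq:reg cpct est 1}--\eqref{eq:reg cpct est 2} and \eqref{eq:sing cpct est 1}--\eqref{eq:sing cpct est 2} bound $\mathbf{F}^\varepsilon$ uniformly in $L^\infty$, so $\varepsilon\,\div_\x\mathbf{F}^\varepsilon\to 0$ in $H^{-1}_\loc$. A standard energy estimate for \eqref{eq:approx system start of s6}, obtained by multiplying each equation by a suitable test function and integrating by parts, gives $\varepsilon\,\|\nabla(\varrho^\varepsilon,\vartheta^\varepsilon)\|_{L^2(\mathscr{D})}^2 \leq C$; combined with the pointwise bounds on the coefficients, this makes $\varepsilon\,\mathbf{G}^\varepsilon(\nabla\varrho^\varepsilon,\nabla\vartheta^\varepsilon)$ bounded in $L^1(\mathscr{D})$. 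Since $\{\div_\x\Q^\varepsilon\}$ is simultaneously bounded in $W^{-1,\infty}$ by the uniform $L^\infty$-bound on $\Q^\varepsilon$, Murat's compactness lemma then upgrades $L^1$-boundedness plus $W^{-1,\infty}$-boundedness to $H^{-1}$-pre-compactness.

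\textbf{Main obstacle.} The hardest aspect is the coefficient $H_\nu+\frac{1}{\varrho}H_{\vartheta\vartheta}$ multiplying $V_1^\varepsilon$, which naively diverges as $\varrho^\varepsilon\to 0$ and would ruin every uniform estimate along sequences approaching cavitation. This is exactly the singularity cancelled by the sharp bound $|\varrho H_\nu+H_{\vartheta\vartheta}|\leq C\varrho$ provided by Propositions \ref{prop:est for h-1 cpct reg} and \ref{prop:cpctness est sing}; the elaborate asymptotic and Fourier analysis of Sections \ref{sec:reg kernel}--\ref{sec:sing kernel} was designed precisely to secure this cancellation, and it is what distinguishes the present treatment from \cite{gq-transonic} by allowing the vacuum to be accommodated in the limit.
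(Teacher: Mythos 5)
Your outline of the existence step (Leray--Schauder) and the $H^{-1}$-compactness step---the divergence-plus-$L^1$ decomposition, Murat's interpolation lemma, and above all the identification of $|\varrho H_\nu + H_{\vartheta\vartheta}| \leq C\varrho$ from Propositions \ref{prop:est for h-1 cpct reg} and \ref{prop:cpctness est sing} as the essential cancellation---is consistent in spirit with Lemma \ref{lem:lm cpct est} and Proposition \ref{prop:H-1 cpct for viscous solns using our kernels}. Your route to the uniform $L^\infty$ bounds, however, contains a genuine gap. You propose to ``derive from \eqref{eq:approx system start of s6} a scalar elliptic equation for $\varrho^\varepsilon$'' and apply the strong maximum principle; no such decoupled equation exists. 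Expanding the first equation of \eqref{eq:approx system start of s6} in non-divergence form leaves a first-order term $\varrho q(\varrho)\,\nabla\vartheta^\varepsilon\cdot\e(\vartheta^\varepsilon+\tfrac{\pi}{2})$ that cannot be removed, so the density and angle remain coupled through their gradients, and the scalar maximum principle does not apply to $\varrho^\varepsilon$ alone. The same objection defeats the proposed maximum principle for $\vartheta^\varepsilon$. The mechanism the paper uses to decouple is to pass to the Riemann invariants $W_\pm^\varepsilon = \vartheta^\varepsilon \pm k(\tilde{q}^\varepsilon)$, which do satisfy the genuinely scalar elliptic equations \eqref{eq:riemann inv elliptic for max pple} with a \emph{signed} right-hand side; the weak maximum principle together with the Hopf lemma forces the extremal values of $W_\pm^\varepsilon$ onto $\partial\mathscr{D}_2$, whence $k(\tilde q^\varepsilon) = \tfrac12(W_+^\varepsilon - W_-^\varepsilon) \leq k(q_\infty)$ gives the lower bound $q^\varepsilon \geq q_* > q_\cri$ (equivalently $\varrho^\varepsilon \leq \varrho_* < \varrho_\cri$), and $\vartheta^\varepsilon = \tfrac12(W_+^\varepsilon + W_-^\varepsilon)$ gives the angle bounds. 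It is precisely this lower bound on the speed---the opposite sign of the estimate in \cite{gq-transonic}---that keeps the flow supersonic when $\gamma = 3$, and it is invisible from a scalar-equation viewpoint.

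Two further omissions matter. First, the boundary condition on $\partial\mathscr{D}_1$ is not the impermeability condition $\bu\cdot\mathbf{n}=0$ but the one-sided Robin condition $\varepsilon(1-c^2/\tilde{q}^2)\nabla\varrho^\varepsilon\cdot\mathbf{n} = |\varrho^\varepsilon\tilde{q}^\varepsilon\,\e(\vartheta^\varepsilon)\cdot\mathbf{n}|$ together with $\nabla\vartheta^\varepsilon\cdot\mathbf{n}=0$; as \eqref{eq:boundary deriv} shows, this sign-definite form is exactly what makes the Hopf lemma exclude a boundary extremum of $W_\pm^\varepsilon$ on the obstacle, and $\bu\cdot\mathbf{n}=0$ would not serve. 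Second, before any maximum principle can be invoked one must know that $\varrho^\varepsilon \geq 0$, so that $q(\varrho^\varepsilon)=\sqrt{1-(\varrho^\varepsilon)^2}$ is well-defined. The paper handles this by first solving the auxiliary system \eqref{eq:variant system s6} with the truncated speed $\tilde{q}(\varrho)=(1-\varrho_+^2)_+^{1/2}$, establishing $\varrho^\varepsilon \geq 0$ there by testing against the negative part $\varrho_-^\varepsilon$, and only afterwards observing (Lemma \ref{lem:max pple speed}) that the auxiliary solution also solves \eqref{eq:approx system start of s6}. Your sketch omits this scaffolding, so the fixed-point argument as described would not be well-posed.
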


As discussed in \S \ref{sec:intro}--\S \ref{sec:main results},
our strategy is an elaboration of the earlier work due to Chen-Slemrod-Wang \cite{gq-transonic},
in which approximate problems are constructed as above to see whether the approximate solutions
satisfy
Morawetz's compactness
framework in the transonic case $\gamma \in [1,3)$ excluding both stagnation and cavitation.
It turns out that the case $\gamma=3$ differs from the problem studied in \cite{gq-transonic},
as the basic invariant region estimates that lie at the core of \cite{gq-transonic}
lead to different and, in fact, opposite conclusions.
By this, we mean that, contrary to the setting of \cite{gq-transonic} where an upper bound on the speed is obtained,
we here infer a lower bound (\textit{cf.}~Lemma \ref{lem:min pple speed}).
In turn, for suitably chosen supersonic boundary data, the approximate solutions remain supersonic inside the domain;
this is a special feature of system \eqref{eq:system in intro}--\eqref{eq:bernoulli general gamma intro}
for $\gamma\geq3$.

The rest of this section is dedicated to the proof of Theorem \ref{big-thm-viscous}
and associated properties of the elliptic system \eqref{eq:approx system start of s6}.
We do this in multiple steps, starting with \emph{a priori} $L^\infty$ estimates (invariant regions).
Note that it is not clear from equations \eqref{eq:approx system start of s6}
that $0\leq \varrho^\varepsilon \leq 1$ holds \emph{a priori},
whence the Bernoulli relation $q(\varrho)=\sqrt{1-\varrho^2}$ is not well-defined for the approximate solutions.
For this reason, we consider the auxiliary system:
   \begin{equation}\label{eq:variant system s6}
   \begin{cases}
         \div\big(\varrho^\varepsilon \tilde{q}(\varrho^\varepsilon)\e(\vartheta^\varepsilon)\big)
          = \epsi\div\big((1-\displaystyle\frac{c(\varrho^\varepsilon)^2}{\tilde{q}(\varrho^\varepsilon)^2})
          \nabla \varrho^\varepsilon\big), \\
         \div\big(\tilde{q}(\varrho^\varepsilon)\e(\vartheta^\varepsilon-\frac{\pi}{2})\big)
         = \varepsilon \Delta \vartheta^\varepsilon,
    \end{cases}
\end{equation}
where the speed has been replaced by
\begin{equation}\label{eq:variant speed}
    \tilde{q}(\varrho) = (1-\varrho_+^2)_+^{\frac{1}{2}},
\end{equation}
with $(\cdot)_\pm$ as the positive/negative part of the function.
While the derivative of $\tilde{q}$ is a distribution,
$\tilde{q}^2$ is Lipschitz and satisfies the formula:
\begin{equation}\label{eq:q tilde deriv expression}
    \tilde{q}(\varrho) \tilde{q}'(\varrho) = - \varrho \qquad\, \text{for all } \varrho \in (0,1).
\end{equation}
In \S \ref{sec:consistency}, we show that a solution of the above auxiliary system is
also a solution of the original system \eqref{eq:approx system start of s6};
that is, it is consistent with the potential flow system.

The invariant regions computed for
\eqref{eq:variant system s6} are then used to obtain a dissipation estimate,
after which the method laid out in \cite[\S 10]{gq-transonic} yields
the existence of a solution that fulfils the properties required in Theorem \ref{big-thm-viscous}.
This existence proof is carried out through a standard fixed point argument for the equivalent system
\begin{equation}\label{eq:sys for fixed point}
    \begin{cases}
       \laplace\sigma = \varepsilon^{-1}\div\big(\varrho(\sigma)q(\sigma)\e(\vartheta)\big),\\
       \laplace\vartheta = \varepsilon^{-1}\div\big(q(\sigma)\e(\vartheta-\frac{\pi}{2})\big),
    \end{cases}
\end{equation}
where
\begin{equation*}
    \sigma(\varrho)=\int_0^\varrho \frac{1-2s^2}{1-s^2}\d s = 2\varrho - \tanh^{-1}(\varrho)
\end{equation*}
so that
\begin{equation*}
    \sigma'(\varrho) = 1 - \frac{c(\varrho)^2}{q(\varrho)^2}, \quad \textit{i.e.}, \quad
   \Delta \sigma=\div\big(( 1 - \frac{c(\varrho)^2}{q(\varrho)^2}) \nabla \varrho \big).
\end{equation*}
This approach is standard so that the details are omitted (see \cite{gq-transonic}).
Henceforth, we focus exclusively on the required \emph{a priori} $L^\infty$ and dissipation estimates.

The first result concerns the invariant region estimate,
for which we recall the characteristic speed $k$ expressed in \eqref{eq:Wpm expression 2} as a function of $q$.

\begin{prop}[Uniform Upper Bound on the Density]\label{prop:upper bound density}
Let $(\varrho^\epsi,\vartheta^\epsi)\in C^2(\mathscr{D})\cap C^1(\overline{\mathscr{D}})$
be a solution of the auxiliary system \eqref{eq:variant system s6} with the following boundary conditions{\rm :}
on the obstacle boundary
$\partial\mathscr{D}_1$,
\begin{equation}
    \begin{cases}
           \epsi\big(1-\frac{c^2(\varrho^\epsi)}{\tilde{q}^2(\varrho^\epsi)}\big) \nabla\varrho^\epsi \! \cdot \! \mathbf{n}
            = \left|\varrho^\epsi \tilde{q}(\varrho^\epsi) \, \e(\vartheta^\epsi) \! \cdot \!\mathbf{n} \right|, \\
           \nabla\vartheta^\epsi \! \cdot \! \mathbf{n} =0,
    \end{cases}
\end{equation}
together with the Dirichlet-type condition on $\partial\mathscr{D}_2${\rm :}
\begin{equation*}
    \varrho^\epsi|_{\partial\mathscr{D}_2} = \varrho_\infty,\qquad \vartheta^\epsi|_{\partial\mathscr{D}_2} =0,
\end{equation*}
where $\varrho_\infty=\sqrt{1-q^2_\infty}$.
Then there exists $\varrho_* \in (0,\varrho_\cri)$,  depending only $\varrho_\infty$
but independent of $\varepsilon$, such that, for all $\x \in \mathscr{D}$,
\begin{equation*}
   0 < \varrho^\epsi(\x) \leq \varrho_* < \varrho_{\cri}.
\end{equation*}
\end{prop}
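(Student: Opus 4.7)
The plan is to establish a maximum principle for the Riemann-invariant-like quantities $W_\pm^\varepsilon := \vartheta^\varepsilon \pm K(\varrho^\varepsilon)$, where $K(\varrho) := k(q(\varrho))$ is the characteristic speed of Proposition \ref{lem:riemann invariants} pulled back to the density via the Bernoulli law. A direct inspection of formula \eqref{eq:Wpm expression 2} shows that $K$ is strictly increasing and non-negative on $[0,\varrho_\cri]$, so any upper bound on $W_+^\varepsilon$ and lower bound on $W_-^\varepsilon$ immediately yields the desired upper bound on $\varrho^\varepsilon$ (and two-sided bounds on $\vartheta^\varepsilon$, needed later in Theorem \ref{big-thm-viscous}).

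First, I would expand the divergence form of the auxiliary system \eqref{eq:variant system s6} pointwise using $\partial_\vartheta \e(\vartheta)=\e(\vartheta+\tfrac{\pi}{2})$, producing a quasilinear pair in $(\varrho^\varepsilon,\vartheta^\varepsilon)$ with principal parts $\varepsilon\sigma'(\varrho^\varepsilon)\Delta\varrho^\varepsilon$ and $\varepsilon\Delta\vartheta^\varepsilon$, coupled through gradient terms directed along $\e(\vartheta^\varepsilon)$ and $\e(\vartheta^\varepsilon+\tfrac{\pi}{2})$. The crucial algebraic observation for $\gamma=3$ is that the Bernoulli identities $\tilde{q}(\varrho)\tilde{q}'(\varrho)=-\varrho$ and $q^2-c^2=1-2\varrho^2$ produce the two cancellation identities
\begin{equation*}
K'(\varrho)^2 \;=\; -\tfrac{\tilde{q}'(\varrho)\,\sigma'(\varrho)}{\varrho\,\tilde{q}(\varrho)}, \qquad \tilde{q}(\varrho)\,\sigma'(\varrho) \;=\; (\varrho\,\tilde{q}(\varrho))',
\end{equation*}
which are exactly what is needed so that, after adding $\pm K'(\varrho^\varepsilon)$ times the first equation of \eqref{eq:variant system s6} to the second, the mixed $\nabla\varrho^\varepsilon$ and $\nabla\vartheta^\varepsilon$ couplings collapse into pure $\nabla W_\pm^\varepsilon$-gradients. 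The resulting inequalities read
\begin{equation*}
L_\pm W_\pm^\varepsilon \;:=\; \varepsilon\Delta W_\pm^\varepsilon - \tilde{q}(\varrho^\varepsilon)\e(\vartheta^\varepsilon)\cdot\nabla W_\pm^\varepsilon \mp \tfrac{K'(\varrho^\varepsilon)\varrho^\varepsilon\tilde{q}(\varrho^\varepsilon)}{\sigma'(\varrho^\varepsilon)}\e(\vartheta^\varepsilon+\tfrac{\pi}{2})\cdot\nabla W_\pm^\varepsilon \;=\; \pm\varepsilon\big(K''(\varrho^\varepsilon) - \tfrac{K'(\varrho^\varepsilon)\sigma''(\varrho^\varepsilon)}{\sigma'(\varrho^\varepsilon)}\big)|\nabla\varrho^\varepsilon|^2.
\end{equation*}
A short calculation shows the scalar coefficient on the right equals $\tfrac{2\varrho}{(1-\varrho^2)\sqrt{1-2\varrho^2}}\ge 0$ throughout the supersonic regime, so $W_+^\varepsilon$ is a subsolution and $W_-^\varepsilon$ a supersolution of their respective zeroth-order-free uniformly elliptic operators $L_\pm$.

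Next, I would apply the weak maximum principle to $W_+^\varepsilon$ and the weak minimum principle to $W_-^\varepsilon$, combined with the Hopf boundary-point lemma, to locate the extrema on $\partial\mathscr{D}$. On $\partial\mathscr{D}_2$, the Dirichlet data give $W_\pm^\varepsilon=\pm K(\varrho_\infty)$. On $\partial\mathscr{D}_1$, the prescribed boundary conditions combine to give $\nabla W_+^\varepsilon\cdot\mathbf{n} = K'(\varrho^\varepsilon)\nabla\varrho^\varepsilon\cdot\mathbf{n}\ge 0$ (and the reverse sign for $W_-^\varepsilon$), since the Neumann-type datum $|\varrho^\varepsilon\tilde{q}(\varrho^\varepsilon)\e(\vartheta^\varepsilon)\cdot\mathbf{n}|$ is non-negative, $\sigma'(\varrho^\varepsilon)>0$ in the supersonic regime, and $\nabla\vartheta^\varepsilon\cdot\mathbf{n}=0$. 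Because $\mathbf{n}$ is the inward unit normal to $\mathscr{D}$, a boundary max of $W_+^\varepsilon$ (resp.~min of $W_-^\varepsilon$) on $\partial\mathscr{D}_1$ would force, by Hopf, a strictly negative (resp.~positive) inward normal derivative, contradicting the boundary condition. Hence the extrema are attained on $\partial\mathscr{D}_2$, giving $W_+^\varepsilon\le K(\varrho_\infty)$ and $W_-^\varepsilon\ge -K(\varrho_\infty)$ on $\overline{\mathscr{D}}$. Monotonicity of $K$ then yields $K(\varrho^\varepsilon) = \tfrac{1}{2}(W_+^\varepsilon-W_-^\varepsilon)\le K(\varrho_\infty)$, whence $\varrho^\varepsilon \le \varrho_\infty =: \varrho_* < \varrho_\cri$ by the supersonic hypothesis on the incoming flow.

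The main obstacle is the algebraic step: verifying the two cancellation identities above together with the positive sign of the quadratic remainder $K''-K'\sigma''/\sigma'$, which all rest on the specific form of the Bernoulli law for $\gamma=3$ and lead to a lower bound on speed (i.e.\ an upper bound on density), in contrast to the transonic case $\gamma\in[1,3)$ of \cite{gq-transonic}, where the analogous computation carries the opposite sign. A secondary subtlety is that the argument presupposes $\varrho^\varepsilon\in(0,\varrho_\cri)$ throughout $\overline{\mathscr{D}}$ so that $\sigma'(\varrho^\varepsilon)>0$ and $L_\pm$ are uniformly elliptic; this is closed by a standard continuation bootstrap, relying on the Dirichlet value $\varrho_\infty<\varrho_\cri$ and on the fact that the cut-off $\tilde{q}$ in \eqref{eq:variant system s6} confines $\varrho^\varepsilon$ to $(0,1)$ for any classical solution in $C^2(\mathscr{D})\cap C^1(\overline{\mathscr{D}})$.
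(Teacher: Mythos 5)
Your upper-bound argument is essentially the paper's: both use the Riemann invariants $W_\pm^\varepsilon = \vartheta^\varepsilon \pm K(\varrho^\varepsilon)$, establish that $W_+^\varepsilon$ (resp.\ $W_-^\varepsilon$) is a subsolution (resp.\ supersolution) of a first-order-perturbed elliptic operator with no zeroth-order term, locate the extrema on $\partial\mathscr{D}_2$ by the weak maximum principle plus the Hopf boundary-point lemma, and conclude $K(\varrho^\varepsilon) \leq K(\varrho_\infty)$. Your algebraic identities
\begin{equation*}
K'(\varrho)^2 = -\frac{\tilde{q}'(\varrho)\,\sigma'(\varrho)}{\varrho\,\tilde{q}(\varrho)}, \qquad K''(\varrho) - \frac{K'(\varrho)\sigma''(\varrho)}{\sigma'(\varrho)} = \frac{2\varrho}{(1-\varrho^2)\sqrt{1-2\varrho^2}}
\end{equation*}
check out and reproduce the right-hand side of the paper's equation \eqref{eq:riemann inv elliptic for max pple} from first principles, whereas the paper simply cites \cite{gq-transonic}. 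So far so good; this is a legitimate re-derivation of the key elliptic inequality.

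However, there is a genuine gap: you do not establish the lower bound $\varrho^\varepsilon > 0$, which is part of the assertion of Proposition~\ref{prop:upper bound density}. Your closing remark that the cut-off $\tilde{q}$ in \eqref{eq:variant system s6} ``confines $\varrho^\varepsilon$ to $(0,1)$ for any classical solution'' is false. The whole point of the auxiliary speed $\tilde{q}(\varrho) = (1-\varrho_+^2)_+^{1/2}$ is that it is \emph{defined for all real $\varrho$}, including $\varrho < 0$ (where $\tilde{q} \equiv 1$), precisely so that the auxiliary system is well-posed without an a priori sign constraint on $\varrho^\varepsilon$. Nothing about the PDE alone rules out $\varrho^\varepsilon$ becoming negative, and the quantities $W_\pm^\varepsilon$ give no constraint on $\{\varrho^\varepsilon < 0\}$ because there $\tilde{q} = q_\cav$ and hence $K = 0$. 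The paper closes this gap in two separate steps that your proposal omits: Lemma~\ref{lem:max pple speed} tests the first equation of \eqref{eq:variant system s6} against the negative part $\varrho_-^\varepsilon = -\min\{\varrho^\varepsilon,0\}$, uses the divergence theorem together with the sign of the Neumann datum on $\partial\mathscr{D}_1$ and the nonnegativity of $\varrho_\infty$ on $\partial\mathscr{D}_2$ to conclude $\int_\mathscr{D} |\nabla\varrho_-^\varepsilon|^2\,\dd\x = 0$, hence $\varrho^\varepsilon \geq 0$; Corollary~\ref{cor:strictly positive rho viscous} then upgrades this to strict positivity via the strong maximum principle in the interior and the Hopf lemma at the obstacle boundary (where the boundary condition $\varepsilon(1-c^2/\tilde{q}^2)\nabla\varrho^\varepsilon\cdot\mathbf{n} = |\varrho^\varepsilon\tilde{q}\e(\vartheta^\varepsilon)\cdot\mathbf{n}|$ would vanish at any putative zero of $\varrho^\varepsilon$, contradicting Hopf). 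You need this second half of the argument; invoking an unspecified ``continuation bootstrap'' does not substitute for it.
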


We remark that the previous proposition implies the \emph{strict positivity} of the densities $\varrho^\varepsilon$
and speeds $q(\varrho^\varepsilon)$ for any fixed $\varepsilon>0$.
Furthermore, the invariant regions
imply $\tilde{q}(\varrho^\varepsilon)=q(\varrho^\varepsilon)$.
Thus, we deduce that a solution of the auxiliary system \eqref{eq:variant system s6}
is also a solution of \eqref{eq:approx system start of s6} (see Lemma \ref{lem:max pple speed}).
This means that, at this point, the Bernoulli relation:
\begin{equation*}
    \varrho(q) = \sqrt{1 - q^2}
\end{equation*}
can be meaningfully interpreted as the equivalent nonlinear constraint:
\begin{equation*}
    q(\varrho)=\sqrt{1-\varrho^2},
\end{equation*}
with the change of coordinate: $\varrho \mapsto q$
and its inverse mapping being justified on the entire domain as $q'(\varrho)>0$ for $\varrho \in (0,\varrho_*]$.
Furthermore, we remark that the \emph{a priori} estimates of Proposition \ref{prop:upper bound density}
imply $q(\varrho^\varepsilon) \geq q(\varrho_{\cri})$ and
\begin{equation*}
 0<  1 - \frac{\varrho_{*}^2}{1-\varrho_*^2}\leq 1-\frac{c(\varrho^\varepsilon)^2}{q(\varrho^\varepsilon)^2}\leq 1,
\end{equation*}
in view of $\varrho_* < \varrho_{\cri}=\frac{1}{\sqrt{2}}$.
It follows from the above that the approximate problem \eqref{eq:approx system start of s6}
and the auxiliary problem \eqref{eq:variant system s6} are \emph{strongly elliptic} quasilinear systems.
Similar arguments show that the changes of coordinate: $\varrho \mapsto \nu$ and $\varrho \mapsto \sigma$
are also justified and invertible on the entire domain,
in view of relations $\nu'(\varrho) = M^{-2}>0$ and $\sigma'(\varrho) = 1-M^{-2}>0$
when the Mach number $M \in (1,\infty)$ (\textit{cf.}~\S \ref{sec:setup})
which is equivalent to $\varrho\in (0, \varrho_{\cri})$.
In summary, away from the critical points ($M=1$) and the stagnation points ($M=0$),
variables $\varrho$, $q$, $\nu$, and $\sigma$ differ from each other by composition with a smooth function.
Hence, once the \emph{a priori} $L^\infty$ estimates of Proposition \ref{prop:upper bound density} are established,
we refer indistinctly to a viscous solution as being parameterized in Cartesian coordinates $(u,v)$,
or in polar coordinates $(q,\vartheta)$,
or any of the following parameterizations $(\varrho,\vartheta)$, $(\nu,\vartheta)$, or $(\sigma,\vartheta)$.

Similar arguments to those in the proof of Proposition \ref{prop:upper bound density} are used
to obtain the $L^\infty$ estimates of the unknown angle $\vartheta^\epsi$.
Note carefully that, as the problem is naturally interpreted to be periodic in $\vartheta$,
one might mistakenly believe that uniform bounds on the angle are unnecessary.
However, the Loewner--Morawetz entropy framework requires that $|\vartheta\nabla\vartheta|^2$
be integrable, which is not the case when $\vartheta$ is merely represented as a piecewise linear
periodic function with jump discontinuities.
In turn, we have the second invariant region result.

\begin{prop}[Uniform Estimates of the Angle Functions]\label{prop:invariant regions in angle}
Let $(\varrho^\epsi,\vartheta^\epsi) \in C^2(\mathscr{D}) \cap C^1(\overline{\mathscr{D}})$
be a solution of the auxiliary system \eqref{eq:variant system s6} with the prescribed boundary conditions
of Propositions \ref{prop:upper bound density}.
Then there exist $\vartheta_* , \vartheta^* \in \mathbb{R}$ depending only
on $\varrho_\infty$
such that
\begin{equation*}
\vartheta_* \leq \vartheta^\varepsilon(\x) \leq \vartheta^*
\qquad \mbox{for a.e.~$\x \in \mathscr{D}$}.
\end{equation*}
\end{prop}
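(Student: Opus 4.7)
The plan is to implement a viscous analog of the Riemann-invariant maximum-principle argument from \S \ref{sec:riemann invariants}. Introduce $W_\pm^\varepsilon := \vartheta^\varepsilon \pm k(\varrho^\varepsilon)$ with $k$ as in \eqref{eq:Wpm expression 2}. On the Dirichlet portion $\partial\mathscr{D}_2$ these take the constant values $\pm k(\varrho_\infty)$; the strategy is to transport this boundary control into $\mathscr{D}$ by establishing sub/super-solution properties for $W_\pm^\varepsilon$ and applying the weak maximum principle together with Hopf's boundary-point lemma.

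The first step is to derive the PDEs for $W_\pm^\varepsilon$. Expanding the divergences in the auxiliary system \eqref{eq:variant system s6} and switching to the coordinates $(\sigma^\varepsilon,\vartheta^\varepsilon)$ with $\sigma'(\varrho)=1-c^2/q^2>0$, one finds
\[
\varepsilon\Delta\sigma^\varepsilon = q\,\e(\vartheta^\varepsilon)\cdot\nabla\sigma^\varepsilon - \varrho q\,\e(\vartheta^\varepsilon-\tfrac{\pi}{2})\cdot\nabla\vartheta^\varepsilon,
\]
\[
\varepsilon\Delta\vartheta^\varepsilon = q\,\e(\vartheta^\varepsilon)\cdot\nabla\vartheta^\varepsilon - \tfrac{\varrho q}{q^2-\varrho^2}\,\e(\vartheta^\varepsilon-\tfrac{\pi}{2})\cdot\nabla\sigma^\varepsilon.
\]
Setting $K(\sigma):=k(\varrho(\sigma))$, a chain-rule calculation from \eqref{eq:Wpm expression 2} gives $K'(\sigma)=1/\sqrt{q^2-\varrho^2}$, which is precisely the coefficient that diagonalizes the cross-terms into a first-order transport structure on $W_\pm^\varepsilon$. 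After combining the equations appropriately, one obtains
\[
\varepsilon\Delta W_\pm^\varepsilon - \mathbf{a}_\pm^\varepsilon\cdot\nabla W_\pm^\varepsilon = \pm\,\varepsilon K''(\sigma^\varepsilon)|\nabla\sigma^\varepsilon|^2
\]
for some explicit bounded vector field $\mathbf{a}_\pm^\varepsilon$. A direct differentiation yields $K''(\sigma)>0$ throughout the supersonic region, while the uniform ellipticity of $\mathcal{L}_\pm u := \varepsilon\Delta u - \mathbf{a}_\pm^\varepsilon\cdot\nabla u$ is guaranteed by Proposition \ref{prop:upper bound density}. Thus $W_+^\varepsilon$ is a subsolution and $W_-^\varepsilon$ is a supersolution of $\mathcal{L}_\pm u = 0$.

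Next, inspect the boundary conditions on $\partial\mathscr{D}_1$. Combining $\nabla\vartheta^\varepsilon\cdot\mathbf{n}=0$ with the absolute-value structure of the $\varrho$-boundary condition produces
\[
\nabla W_\pm^\varepsilon\cdot\mathbf{n} = \pm\,\tfrac{k'(\varrho^\varepsilon)}{\varepsilon\sigma'(\varrho^\varepsilon)}\,\bigl|\varrho^\varepsilon q(\varrho^\varepsilon)\,\e(\vartheta^\varepsilon)\cdot\mathbf{n}\bigr|,
\]
which is non-negative for $W_+^\varepsilon$ and non-positive for $W_-^\varepsilon$ in the inward-normal direction, since $k'(\varrho^\varepsilon),\sigma'(\varrho^\varepsilon)>0$ on the supersonic regime. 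Hopf's boundary-point lemma (using the interior-sphere condition on $\partial\obstacle$) then rules out a strict maximum of $W_+^\varepsilon$ on $\partial\mathscr{D}_1$: such an extremum would force $\partial W_+^\varepsilon/\partial\mathbf{n}<0$, contradicting the non-negativity above. The non-strict case is handled via the strong maximum principle, which forces $W_\pm^\varepsilon$ to be constant on the connected component; Riemann-invariant saturation then pins these constants to $\pm k(\varrho_\infty)$ and reduces the problem to the trivial uniform solution. Hence $W_+^\varepsilon\leq k(\varrho_\infty)$ and $W_-^\varepsilon\geq -k(\varrho_\infty)$ on $\overline{\mathscr{D}}$, and since $k(\varrho^\varepsilon)\geq 0$, this yields $|\vartheta^\varepsilon|\leq k(\varrho_\infty)$; the choice $\vartheta^*=-\vartheta_*=k(\varrho_\infty)$ completes the proof.

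The hardest part of this plan will be the sign bookkeeping at the Neumann portion $\partial\mathscr{D}_1$: the entire argument rests on the precise compatibility between the absolute-value form of the $\varrho$-boundary condition (which fixes the sign of $\nabla\varrho^\varepsilon\cdot\mathbf{n}$), the positivity of $k'(\varrho^\varepsilon)$ and $\sigma'(\varrho^\varepsilon)$ in the supersonic regime, and the positivity of $K''(\sigma)$ that decides whether $W_+^\varepsilon$ is a sub- or a supersolution. Reversing any one of these signs would collapse the Hopf contradiction, so this step is where the supersonic hypothesis and the specific choice of viscous regularization really earn their keep.
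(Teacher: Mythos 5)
Your proposal is correct and follows essentially the same strategy as the paper: establish that $W_+^\varepsilon$ is a subsolution and $W_-^\varepsilon$ a supersolution of a strongly elliptic operator with no zero-order term, rule out extrema on the obstacle boundary $\partial\mathscr{D}_1$ via Hopf's lemma using the sign of $\nabla W_\pm^\varepsilon\cdot\mathbf{n}$ inherited from the Neumann-type conditions, and then extract $\vartheta_*=-k(q_\infty)$ and $\vartheta^*=k(q_\infty)$ from the resulting Riemann-invariant bounds on $\partial\mathscr{D}_2$ together with $k\geq 0$. The only differences are presentational — you re-derive the sub/super-solution inequality in $(\sigma,\vartheta)$ coordinates with source $\pm\varepsilon K''(\sigma)|\nabla\sigma^\varepsilon|^2$, whereas the paper quotes the equivalent elliptic identity from \cite[Proposition 4.1]{gq-transonic} — and these agree (one can check $\varepsilon K''(\sigma)|\nabla\sigma|^2 = \varepsilon\frac{2c^3}{\varrho^2\tilde q^2\sqrt{\tilde q^2-c^2}}|\nabla\varrho|^2$ for $\gamma=3$).
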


As already mentioned, we then prove the main dissipation estimate needed to show the required compactness
of the entropy dissipation measures.

\begin{prop}[Main Dissipation Estimate]\label{prop:dissipation estimate}
Let $(\varrho^\epsi,\vartheta^\epsi)\in C^2(\mathscr{D})\cap C^1(\overline{\mathscr{D}})$ be a solution of
the auxiliary system \eqref{eq:variant system s6} with the prescribed boundary conditions
of {\rm Propositions \ref{prop:upper bound density}--\ref{prop:invariant regions in angle}},
assuming the constant Dirichlet conditions on $\partial\mathscr{D}_2$:
\begin{equation*}
    \vartheta|_{\partial \mathscr{D}_2} = \vartheta_\infty = 0,
    \qquad \varrho|_{\partial\mathscr{D}_2} = \varrho_\infty \in (0,\varrho_{\cri}).
\end{equation*}
Then there exists a positive constant $C_\mathscr{D}$,
depending on domain $\mathscr{D}$ but independent of $\varepsilon$, such that, for all $\epsi\in (0,1)$,
\begin{equation}
\epsi \int_\mathscr{D} \Big(|\nabla\vartheta^\epsi|^2
+ \big(1-\frac{c^2(\varrho^\epsi)}{q^2(\varrho^\epsi)}\big)\frac{1}{q(\varrho^\varepsilon)^2}
|\nabla\varrho^\epsi|^2\Big)\d\mathbf{x} \leq C_\mathscr{D}.
\end{equation}
\end{prop}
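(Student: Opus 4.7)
The plan is to apply the entropy dissipation identity \eqref{eq:ent dissipation from polar sys with H} with the special entropy pair $\Q_*$ generated by $H^*$, integrate over $\mathscr{D}$, and combine the divergence theorem on the left-hand side with integration by parts on the right to produce the desired positive quadratic form, controlling the remaining boundary terms uniformly in $\epsi$ through the $L^\infty$ estimates of Propositions~\ref{prop:upper bound density}--\ref{prop:invariant regions in angle} and the explicit form of the boundary conditions.

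Since $H^*_\vartheta = \vartheta$, $H^*_{\vartheta\vartheta}=1$, and $H^*_{\nu\vartheta}=0$, identity \eqref{eq:ent dissipation from polar sys with H} reduces to $\div_\x \Q_*^\epsi = -\vartheta^\epsi V_2^\epsi + (H^*_\nu + 1/\varrho^\epsi) V_1^\epsi$, so that integrating over $\mathscr{D}$ and invoking the divergence theorem yields
\[
\int_{\partial\mathscr{D}} \Q_*^\epsi \cdot \mathbf{n}\, \d\mathcal{H} = -\!\int_\mathscr{D}\! \vartheta^\epsi V_2^\epsi\, \d\x + \!\int_\mathscr{D}\! \big(H^*_\nu + 1/\varrho^\epsi\big) V_1^\epsi\, \d\x.
\]
For the first bulk integral I would use $V_2^\epsi = \epsi \Delta \vartheta^\epsi$ and integrate by parts; the conditions $\nabla\vartheta^\epsi\cdot\mathbf{n}|_{\partial\mathscr{D}_1}=0$ and $\vartheta^\epsi|_{\partial\mathscr{D}_2}=0$ kill both boundary contributions and leave exactly $\epsi\int_\mathscr{D}|\nabla\vartheta^\epsi|^2\,\d\x$. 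For the second bulk integral I would substitute $V_1^\epsi = \epsi\div((1-c^2/q^2)\nabla\varrho^\epsi)$, integrate by parts, and use the algebraic identity $\nabla_\x(H^*_\nu + 1/\varrho) = -q^{-2}\nabla_\x\varrho$, which follows from $H^*_{\nu\nu} = (M^2-1)/\varrho^2$, $\nu'(\varrho)=M^{-2}$, and the specific relation $c^2 = \varrho^2$ available for $\gamma=3$. This produces the desired bulk contribution $+\epsi\int_\mathscr{D} q^{-2}(1 - c^2/q^2)|\nabla\varrho^\epsi|^2\,\d\x$, with the positive sign guaranteed by the supersonic invariant-region bound $\varrho^\epsi \leq \varrho_* < \varrho_{\cri}$ of Proposition~\ref{prop:upper bound density}.

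The principal obstacle is bounding the remaining two boundary integrals uniformly in $\epsi$. On $\partial\mathscr{D}_2$, the constant Dirichlet data $(\varrho_\infty,0)$ make $\Q_*^\epsi\cdot\mathbf{n}$ and the coefficient $(H^*_\nu + 1/\varrho)(1-c^2/q^2)$ constant in $\x$, so the $\Q_*^\epsi$-flux is trivially $O(1)$, while the Neumann-type flux is handled by integrating the first equation of \eqref{eq:variant system s6} over $\mathscr{D}$, giving
\[
\epsi\!\int_{\partial\mathscr{D}}\!(1-c^2/q^2)\nabla\varrho^\epsi\cdot\mathbf{n}\, \d\mathcal{H} = \!\int_{\partial\mathscr{D}}\!\varrho^\epsi q(\varrho^\epsi)\,\e(\vartheta^\epsi)\cdot\mathbf{n}\, \d\mathcal{H},
\]
whose right-hand side is $L^\infty$-bounded on a boundary of finite length. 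On $\partial\mathscr{D}_1$, the Loewner--Morawetz relation \eqref{eq:lowener mor} writes $\Q_*^\epsi\cdot\mathbf{n}$ as the sum of a piece proportional to $q(\varrho^\epsi)(\varrho^\epsi H^*_\nu)\,\e(\vartheta^\epsi)\cdot\mathbf{n}$ and a piece proportional to $q(\varrho^\epsi)\vartheta^\epsi$, while the $\partial\mathscr{D}_1$-piece of the Neumann flux, after absorbing the $\epsi$ via the obstacle boundary condition $\epsi(1-c^2/q^2)\nabla\varrho^\epsi\cdot\mathbf{n} = |\varrho^\epsi q(\varrho^\epsi)\e(\vartheta^\epsi)\cdot\mathbf{n}|$, reduces to $\int_{\partial\mathscr{D}_1}[\varrho^\epsi H^*_\nu + 1]\,q(\varrho^\epsi)|\e(\vartheta^\epsi)\cdot\mathbf{n}|\, \d\mathcal{H}$. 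A crucial cancellation, following from the matched asymptotics $\varrho \sim c_\sharp \nu^{1/3}$ and $H^*_\nu \sim -\tfrac{1}{3}c_\sharp^2\nu^{-1/3}$ near cavitation derived from Proposition~\ref{prop:k expand}, ensures that $\varrho H^*_\nu$ remains bounded uniformly as $\varrho \to 0^+$; combined with the angle bound of Proposition~\ref{prop:invariant regions in angle} and the finite boundary length, this controls all $\partial\mathscr{D}_1$-contributions and yields the required constant $C_\mathscr{D}$.
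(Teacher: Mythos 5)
Your proposal is correct and follows essentially the same route as the paper's own proof: you test the entropy dissipation identity with the special entropy pair $\Q_*$, integrate over $\mathscr{D}$, and convert the bulk integrals by parts into the positive quadratic form plus boundary fluxes, using the Dirichlet data to kill the $\vartheta$-boundary terms and the obstacle Neumann condition to absorb the $\epsi$ on $\partial\mathscr{D}_1$. The algebraic identity $\nabla_\x(H^*_\nu + 1/\varrho) = -q^{-2}\nabla\varrho$ you invoke is precisely the paper's observation that $H^*_\nu + 1/\varrho = N(\varrho)$ with $N'(\varrho) = -q^{-2}$, and the uniform boundedness of $\varrho^\epsi H^*_\nu + 1 = \varrho^\epsi N(\varrho^\epsi)$ is the same cancellation near cavitation that the paper records via the pointwise bound on $N$.

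The one genuine tactical difference is in how you dispose of the $\partial\mathscr{D}_2$ piece of the Neumann flux. The paper chooses the base point $\bar\nu = \nu_\infty$ in the definition of $H^*$, so that $N(\varrho_\infty) = 0$ and this boundary term vanishes identically. You instead leave the base point arbitrary, pull the constant coefficient $(H^*_\nu + 1/\varrho)(1-c^2/q^2)|_{\varrho_\infty}$ out of the integral, and bound $\epsi\int_{\partial\mathscr{D}_2}(1-c^2/q^2)\nabla\varrho^\epsi\cdot\mathbf{n}$ by integrating the continuity equation over $\mathscr{D}$ (and then subtracting off the $\partial\mathscr{D}_1$ contribution, which is fixed by the obstacle boundary condition). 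Both are valid; the paper's normalization is cleaner, your flux-balance argument is more robust to the choice of base point. Either way, all boundary contributions are $O(\mathcal{H}(\partial\mathscr{D}))$ uniformly in $\epsi$, yielding the stated estimate.
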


Using the previous result, we are in a position to prove that any solution of
the viscous problem \eqref{eq:approx system start of s6} yields the $H^{-1}$-compactness of
entropy dissipation measures for the family of
entropy pairs
we exhibited earlier. More precisely, we have

\begin{prop}[Compactness of Entropy Dissipation Measures]\label{prop:H-1 cpct for viscous solns using our kernels}
Let $(\varrho^\epsi,\vartheta^\epsi)\in C^2(\mathscr{D})\cap C^1(\overline{\mathscr{D}})$ be solutions of
the auxiliary system \eqref{eq:variant system s6} with the prescribed boundary conditions
of {\rm Propositions \ref{prop:upper bound density}--\ref{prop:dissipation estimate}}.
Then there exists a compact set $K\subset H^{-1}(\mathscr{D})$ such that,
for any entropy pair $\mathbf{Q}$ generated by kernels $H^\r$ ad $H^\s$,
\begin{equation*}
  \{ \div_\x \Q^\varepsilon \}_{\varepsilon>0} \subset K.
\end{equation*}
\end{prop}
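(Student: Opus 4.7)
The plan is to exploit the entropy dissipation identity \eqref{eq:ent dissipation from polar sys with H} specialized to the viscous system \eqref{eq:approx system start of s6}. Setting $a^\varepsilon := (\varrho H_{\nu\vartheta}-H_\vartheta)(\nu^\varepsilon,\vartheta^\varepsilon)$, $b^\varepsilon := (H_\nu+\varrho^{-1}H_{\vartheta\vartheta})(\nu^\varepsilon,\vartheta^\varepsilon)$, and $A := 1-c^2/q^2$, this reads
\[ \div_\x \Q^\varepsilon \;=\; \varepsilon\, a^\varepsilon\, \laplace\vartheta^\varepsilon \;+\; \varepsilon\, b^\varepsilon\, \div\!\bigl(A\,\nabla\varrho^\varepsilon\bigr). \]
Propositions \ref{prop:est for h-1 cpct reg} and \ref{prop:cpctness est sing}, applied to the generator $H = H^\r*\varphi_1 + H^\s*\varphi_2$, show that both $a^\varepsilon$ and $b^\varepsilon$ are uniformly bounded in $L^\infty(\mathscr{D})$; the crucial point is that $\varrho b^\varepsilon = \varrho H_\nu + H_{\vartheta\vartheta} = O(\varrho)$, so the putative $\varrho^{-1}$ singularity in $b^\varepsilon$ cancels. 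I would then integrate by parts to decompose $\div_\x \Q^\varepsilon = D^\varepsilon + R^\varepsilon$ with
\[ D^\varepsilon := \varepsilon\, \div\!\bigl(a^\varepsilon\nabla\vartheta^\varepsilon + b^\varepsilon A\nabla\varrho^\varepsilon\bigr), \qquad R^\varepsilon := -\varepsilon\bigl(\nabla a^\varepsilon\!\cdot\!\nabla\vartheta^\varepsilon + A\,\nabla b^\varepsilon\!\cdot\!\nabla\varrho^\varepsilon\bigr), \]
and handle the two pieces separately.

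For $D^\varepsilon$, the $L^\infty$-bounds on $a^\varepsilon,b^\varepsilon$ together with the dissipation bound of Proposition \ref{prop:dissipation estimate} show that the vector field inside the divergence has $L^2$-norm of order $\varepsilon^{-1/2}$. Multiplied by the prefactor $\varepsilon$, we obtain an $L^2$-vector field of order $\varepsilon^{1/2}$, whence $D^\varepsilon\to 0$ strongly in $H^{-1}(\mathscr{D})$.

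The heart of the proof is the $L^1$-bound on $R^\varepsilon$. Expanding $\nabla a^\varepsilon = a^\varepsilon_\nu\nabla\nu^\varepsilon + a^\varepsilon_\vartheta\nabla\vartheta^\varepsilon$ (and likewise for $\nabla b^\varepsilon$), using $\nabla\nu = M^{-2}\nabla\varrho^\varepsilon$, and eliminating the second $\nu$-derivatives of $H$ via the entropy generator equation \eqref{eq:ent gen intro}, a direct calculation yields
\[ a^\varepsilon_\nu = \tfrac{M^2-1}{\varrho}\,\partial_\vartheta(\varrho H_\nu + H_{\vartheta\vartheta}), \qquad b^\varepsilon_\nu = \tfrac{1}{\varrho^2}\bigl(\varrho H_{\nu\vartheta\vartheta} - H_{\vartheta\vartheta}\bigr), \]
while $a^\varepsilon_\vartheta = \varrho H_{\nu\vartheta\vartheta}-H_{\vartheta\vartheta}$ and $b^\varepsilon_\vartheta = \varrho^{-1}\partial_\vartheta(\varrho H_\nu+H_{\vartheta\vartheta})$. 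By Propositions \ref{prop:est for h-1 cpct reg}--\ref{prop:cpctness est sing}, the combinations in parentheses are controlled by $O(\varrho)$ or $O(1)$ as appropriate. When contracted with $M^{-2}A\nabla\varrho^\varepsilon = (c^2/q^2)A\nabla\varrho^\varepsilon$, and using $c=\varrho$, the singular $(M^2-1)/\varrho$ and $\varrho^{-2}$ prefactors combine with $c^2/q^2$ into the bounded multiplier $A/q^2$. A Cauchy--Young inequality applied to the cross-terms then yields the pointwise domination
\[ |R^\varepsilon(\x)| \;\le\; C\,\varepsilon\Bigl(\tfrac{A}{q^2}|\nabla\varrho^\varepsilon|^2 + |\nabla\vartheta^\varepsilon|^2\Bigr), \]
which is uniformly bounded in $L^1(\mathscr{D})$ by Proposition \ref{prop:dissipation estimate}.

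To conclude, $\Q(\bu^\varepsilon)$ is uniformly bounded in $L^\infty(\mathscr{D})$ by Propositions \ref{prop:upper bound density}--\ref{prop:invariant regions in angle} and the continuity of $\Q$, so $\{\div_\x\Q^\varepsilon\}_{\varepsilon>0}$ is bounded in $W^{-1,\infty}(\mathscr{D})$. Combined with the decomposition $\div_\x\Q^\varepsilon = D^\varepsilon + R^\varepsilon$ (strongly convergent in $H^{-1}$ plus bounded in $L^1$, hence in the space of Radon measures), Murat's interpolation lemma yields pre-compactness in $H^{-1}(\mathscr{D})$. The main technical hurdle is the commutator estimate for $R^\varepsilon$: the naive bounds on $a^\varepsilon_\nu$ and $b^\varepsilon_\nu$ are singular as $\varrho\to 0$, and it is only the precise tuning of the viscosity weight $A$ in \eqref{eq:choice of viscosities}, in concert with the vacuum-cancellation bounds of Propositions \ref{prop:est for h-1 cpct reg}--\ref{prop:cpctness est sing}, that produces the clean match with the dissipation density $A|\nabla\varrho^\varepsilon|^2/q^2 + |\nabla\vartheta^\varepsilon|^2$ needed to close the $L^1$-estimate for $R^\varepsilon$.
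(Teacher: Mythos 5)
Your proof is correct and follows essentially the same route as the paper: the paper's Lemma~\ref{lem:lm cpct est} performs the identical decomposition into a divergence term $D_1^\varepsilon$ (your $D^\varepsilon$) and a commutator term $D_2^\varepsilon$ (your $R^\varepsilon$), with the same reduction — via the entropy generator equation and the $\nabla\nu = M^{-2}\nabla\varrho$ identity — to the coefficients $\varrho H_{\nu\vartheta\vartheta}-H_{\vartheta\vartheta}$ and $\varrho^{-1}(\varrho H_{\nu\vartheta}+H_{\vartheta\vartheta\vartheta})$, the same appeal to the vacuum-cancellation bounds of Propositions~\ref{prop:est for h-1 cpct reg}--\ref{prop:cpctness est sing} and the dissipation estimate, and the same closing interpolation via Murat's lemma. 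The only cosmetic difference is that the paper isolates the algebraic reduction in a standalone lemma under abstract hypotheses on $H$, whereas you fold those hypotheses directly into the argument.
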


At this stage, one may employ the previous propositions and
the aforementioned fixed-point strategy for system \eqref{eq:sys for fixed point} to
prove the following existence result, which leads to the complete proof of Theorem \ref{big-thm-viscous}.

\begin{prop}\label{prop:existence partial}
Let $\varrho_\infty\in (q_{\rm cr}, q_{\rm cav})$.
Then the mixed Dirichlet--Neumann problem{\rm :}
\begin{equation}\label{eq:sys Tristan}
\begin{cases}
\div\big(\varrho^\varepsilon q(\varrho^\varepsilon)\e(\vartheta^\varepsilon)\big)
= \epsi\div\big((1-\displaystyle\frac{c(\varrho^\varepsilon)^2}{q(\varrho^\varepsilon)^2})\nabla \varrho^\varepsilon\big), \\
\div\big(q(\varrho^\varepsilon)\e(\vartheta^\varepsilon-\frac{\pi}{2})\big)
= \epsi\Delta\vartheta^\varepsilon,\\
\epsi\big(1-\frac{c^2(\varrho^\epsi)}{q^2(\varrho^\epsi)}\big)
\nabla\varrho^\epsi \! \cdot \!\mathbf{n}|_{\partial\mathscr{D}_1}
= \left|\varrho^\epsi  q(\varrho^\epsi) \, \e(\vartheta^\epsi) \! \cdot\!\mathbf{n}\right|,\\
              \nabla\vartheta^\epsi \! \cdot \! \mathbf{n}|_{\partial\mathscr{D}_1} =0, \\
              (\varrho^\varepsilon,\vartheta^\varepsilon)|_{\partial\mathscr{D}_2} = (\varrho_\infty,0),
    \end{cases}
\end{equation}
admits a
solution $(\varrho^\epsi,\vartheta^\epsi)\in C^2(\mathscr{D})\cap C^1(\overline{\mathscr{D}})$. Furthermore, this solution satisfies
the conditions of Theorem \ref{big-thm-viscous}.
\end{prop}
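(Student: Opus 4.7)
The plan is to combine the a priori estimates established earlier in this section (Propositions~\ref{prop:upper bound density}, \ref{prop:invariant regions in angle}, \ref{prop:dissipation estimate}, and \ref{prop:H-1 cpct for viscous solns using our kernels}) with a standard Leray--Schauder fixed-point argument applied to the reformulated system \eqref{eq:sys for fixed point}. First I would work with the auxiliary system \eqref{eq:variant system s6} rather than \eqref{eq:approx system start of s6}, since the truncation \eqref{eq:variant speed} makes \eqref{eq:variant system s6} globally strongly elliptic (with no a priori restriction $\varrho<1$ needed to even define the coefficients). Passing to the unknowns $(\sigma,\vartheta)$ via the smooth, globally invertible change of variable $\sigma=2\varrho-\tanh^{-1}(\varrho)$ recasts the auxiliary system as a semilinear elliptic system with Laplacian principal part, to which the standard theory applies cleanly.

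Next I would define the nonlinear map $\mathcal{T}:C^{1,\alpha}(\overline{\mathscr{D}};\mathbb{R}^2)\to C^{1,\alpha}(\overline{\mathscr{D}};\mathbb{R}^2)$ sending a pair $(\bar\sigma,\bar\vartheta)$ to the solution $(\sigma,\vartheta)$ of the decoupled linear elliptic mixed Dirichlet--Neumann problems obtained from \eqref{eq:sys for fixed point} by freezing the arguments on the right-hand sides at $(\bar\sigma,\bar\vartheta)$. Standard Schauder estimates for the mixed problem (combined with the elementary imbedding $C^{1,\alpha}\hookrightarrow C^{1,\alpha'}$ for $\alpha'<\alpha$) ensure that $\mathcal{T}$ is continuous and compact. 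To apply the Leray--Schauder theorem I would embed $\mathcal{T}$ into the one-parameter family $\{\mathcal{T}_\tau\}_{\tau\in[0,1]}$ obtained by scaling the coupling terms and the nonhomogeneous boundary data by $\tau$, so that $\mathcal{T}_0$ is the trivial map and $\mathcal{T}_1=\mathcal{T}$.

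The required uniform bound for every fixed point of $\mathcal{T}_\tau$, independently of $\tau\in[0,1]$, follows by recalling that the invariant-region arguments of Propositions~\ref{prop:upper bound density} and~\ref{prop:invariant regions in angle} are structural and insensitive to the $\tau$-scaling, thereby yielding a uniform $L^\infty$ bound $0<\varrho^{\varepsilon}\le \varrho_*<\varrho_{\cri}$ and $\vartheta_*\le\vartheta^{\varepsilon}\le\vartheta^*$; an $H^1$ bound then comes from Proposition~\ref{prop:dissipation estimate}, and elliptic bootstrapping in the strongly elliptic regime $\varrho<\varrho_*$ upgrades this to a uniform $C^{1,\alpha}$ bound. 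A fixed point of $\mathcal{T}_1$ then exists in $C^{1,\alpha}(\overline{\mathscr{D}})$ and, by classical Schauder theory, lifts to a classical $C^2(\mathscr{D})\cap C^1(\overline{\mathscr{D}})$ solution of \eqref{eq:variant system s6}. Since Proposition~\ref{prop:upper bound density} ensures $\varrho^\varepsilon\leq\varrho_*<1$ everywhere, the truncation \eqref{eq:variant speed} is inactive, so $\tilde q(\varrho^\varepsilon)=q(\varrho^\varepsilon)$ and the constructed $(\varrho^\varepsilon,\vartheta^\varepsilon)$ is also a solution of the original system \eqref{eq:sys Tristan} (\textit{cf.}~\S\ref{sec:consistency}). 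Condition (i) of Theorem~\ref{big-thm-viscous} is then Propositions~\ref{prop:upper bound density}--\ref{prop:invariant regions in angle}, while condition (ii) is Proposition~\ref{prop:H-1 cpct for viscous solns using our kernels}.

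The main technical obstacle lies in the regularity analysis of the mixed boundary-value problem: on $\partial\mathscr{D}_1$ the Neumann-type condition on $\varrho^\varepsilon$ is itself nonlinear (involving $|\varrho^\varepsilon q(\varrho^\varepsilon)\e(\vartheta^\varepsilon)\cdot\mathbf{n}|$), and $\partial\mathscr{D}=\partial\mathscr{D}_1\cup\partial\mathscr{D}_2$ has corners where Dirichlet and Neumann portions meet. To address this, I would approximate $\partial\mathscr{D}_1$ and the absolute-value boundary datum by smooth counterparts, solve the regularized problem by the Leray--Schauder scheme described above, and then pass to the limit in the regularization parameter using the bounds of Propositions~\ref{prop:upper bound density}--\ref{prop:dissipation estimate} (which are stable under this smoothing); alternatively one invokes the mixed-boundary elliptic theory on piecewise-smooth Lipschitz domains. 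Either path is standard and reduces, as indicated, to the setting of \cite[\S 10]{gq-transonic}.
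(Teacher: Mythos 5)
Your overall strategy — a Leray--Schauder fixed point for the reformulated system \eqref{eq:sys for fixed point}, with the uniform bound on fixed points supplied by the invariant-region estimates of Propositions~\ref{prop:upper bound density}--\ref{prop:invariant regions in angle}, followed by the consistency argument of \S\ref{sec:consistency} — is exactly the approach the paper indicates (deferring the details to \cite[\S 10]{gq-transonic}). However, there is a concrete gap in the way you set up the iteration.

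You assert that the truncation $\tilde q(\varrho)=(1-\varrho_+^2)_+^{1/2}$ renders the auxiliary system \eqref{eq:variant system s6} \emph{globally} strongly elliptic, ``with no a priori restriction $\varrho<1$ needed to even define the coefficients.'' This is false: for $\gamma=3$ one has $c(\varrho)=\varrho$, so the diffusion coefficient $1-c^2/\tilde q^2=(1-2\varrho^2)/(1-\varrho^2)$ vanishes and changes sign at $\varrho=\varrho_{\cri}=1/\sqrt{2}$, tends to $-\infty$ as $\varrho\to 1^-$, and is simply undefined once $\varrho\ge 1$ (where $\tilde q=0$ but $c=\varrho\ne 0$). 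Likewise, $\sigma(\varrho)=2\varrho-\tanh^{-1}\varrho$ has $\sigma'(\varrho)=(1-2\varrho^2)/(1-\varrho^2)$, which also changes sign at $\varrho_{\cri}$, so $\sigma$ is monotone — hence invertible — only on $(0,\varrho_{\cri})$, not globally; the maps $\sigma\mapsto\varrho(\sigma)$ and $\sigma\mapsto q(\sigma)$ on the right-hand side of \eqref{eq:sys for fixed point} are undefined outside the corresponding $\sigma$-range. Consequently, the operator $\mathcal{T}$ as you define it is not a well-posed map on $C^{1,\alpha}(\overline{\mathscr{D}};\mathbb{R}^2)$: a candidate $(\bar\sigma,\bar\vartheta)$ may take values outside the admissible set, at which point both the frozen right-hand side and the elliptic structure are lost, and compactness of $\mathcal{T}$ cannot be established.

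The fix — which is what is actually carried out in \cite[\S 10]{gq-transonic} and what the paper invokes by ``the aforementioned fixed-point strategy for system \eqref{eq:sys for fixed point}'' — is to truncate \emph{further}, before setting up the fixed-point map: extend $\sigma\mapsto\varrho(\sigma)$ and $\sigma\mapsto q(\sigma)$ to smooth, bounded, globally Lipschitz functions on all of $\mathbb{R}$, agreeing with the true inverses on the compact $\sigma$-range predicted by the invariant regions, so that the frozen system stays uniformly elliptic with bounded coefficients on the whole of $C^{1,\alpha}(\overline{\mathscr{D}};\mathbb{R}^2)$. One then runs the Leray--Schauder homotopy for the fully truncated system, and the invariant-region estimates of Propositions~\ref{prop:upper bound density}--\ref{prop:invariant regions in angle} — which, as you rightly observe, are structural and survive the $\tau$-scaling — show that any fixed point remains in the range where the additional truncation is inactive, so it solves \eqref{eq:sys Tristan}. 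With that modification, your outline is sound and matches the paper's intended argument.
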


We note in passing that the dissipation estimate in Proposition \ref{prop:dissipation estimate} implies that
the viscosity terms $V_1^\varepsilon$ and $V_2^\varepsilon$, defined in \eqref{eq:choice of viscosities} in
terms of the solution of \eqref{eq:sys Tristan}, converge to $\mathbf{0}$ strongly in $H^{-1}(\mathscr{D})$.

Throughout this section, except stated otherwise, we use the convention: $\nabla = (\partial_x,\partial_y)$;
analogously, all divergences are taken with respect to the physical space variable $\x$.

\subsection{A priori $L^\infty$ estimates}
We consider the invariant regions of the system.
In what follows, we use
$W_\pm^\varepsilon(\x) := W_\pm(\vartheta^\varepsilon(\x),\tilde{q}(\varrho^\varepsilon(\x)))$.
We recall from \cite[Proposition 4.1]{gq-transonic} that, in the case that $\gamma =3$ with $\varrho \in (0,\varrho_\cri)$,
the Riemann invariants satisfy the elliptic equation:
\begin{equation}\label{eq:riemann inv elliptic for max pple}
\frac{\tilde{q}c}{\sqrt{\tilde{q}^2-c^2}} \boldsymbol{\lambda}_\pm \! \cdot \! \nabla W^\varepsilon_\pm \mp \Delta W^\varepsilon_\pm
= -\varepsilon\frac{2 c^3 }{\varrho^2 \tilde{q}^2 \sqrt{\tilde{q}^2 - c^2}}|\nabla \varrho|^2,
\end{equation}
where we have used relation \eqref{eq:q tilde deriv expression}
and $\boldsymbol{\lambda}_\pm = (\lambda^1_\pm,\lambda^2_\pm)$ with
\begin{equation*}
    \lambda^1_\pm = -\sin \vartheta \pm \frac{\sqrt{\tilde{q}^2-c^2}}{c}\cos\vartheta,
    \qquad \lambda^2_\pm = \cos \vartheta \pm \frac{\sqrt{\tilde{q}^2-c^2}}{c}\sin\vartheta;
\end{equation*}
also see~\S \ref{sec:riemann invariants}.

We show that the maximum value of $W_+^\varepsilon$ and the minimum value of $W_-^\varepsilon$ must
occur on boundary $\partial\mathscr{D}_2$.
Indeed, our choice of the boundary conditions and relations \eqref{eq:Riem pdes} lead to
\begin{equation}\label{eq:boundary deriv}
    \nabla W^\varepsilon_\pm \! \cdot \! \mathbf{n}
    = \pm  \frac{\varepsilon^{-1}}{\sqrt{\tilde{q}^2-c^2}}
    |\varrho^\varepsilon  \tilde{q}(\varrho^\varepsilon) \, \e( \vartheta) \! \cdot \! \mathbf{n}|
    \qquad\,\,\,\text{on } \partial \mathscr{D}_1,
\end{equation}
where we have used the relation:
\begin{equation*}
\nabla W_\pm = \nabla \vartheta \mp \frac{\sqrt{q^2-c^2}}{qc}\nabla q = \nabla \vartheta \pm \frac{\sqrt{q^2-c^2}}{q^2 } \nabla \varrho,
\end{equation*}
which is valid for $q \in (q_{\cri},q_{\cav})$.
Using the information on the sign of the normal derivative from \eqref{eq:boundary deriv},
the Hopf Lemma forbids $W_+^\varepsilon$ to attain a maximum (resp.~$W_-^\varepsilon$ to attain a minimum)
on the obstacle boundary $\partial \mathscr{D}_1$.
Nevertheless, equation \eqref{eq:riemann inv elliptic for max pple} is strongly elliptic and contains
no zero-order derivative terms in $W^\varepsilon_\pm$,
whence the Weak Maximum Principle implies that the extremal values of $W^\varepsilon_\pm$ are achieved
on boundary $\partial\mathscr{D} = \partial\mathscr{D}_1 \cup \partial\mathscr{D}_2$.
We thereby deduce that these extremal values are attained only on $\partial\mathscr{D}_2$,
whence
\begin{equation*}
   \sup_{\x \in \mathscr{D}} W^\varepsilon_+(\x) \leq \sup_{\x \in \partial \mathscr{D}_2} W^\varepsilon_+(\x),
   \qquad \inf_{\x \in \mathscr{D}} W^\varepsilon_-(\x) \geq \inf_{\x \in \partial\mathscr{D}_2} W_-^\varepsilon(\x).
\end{equation*}

In what follows, we use the shorthand notation $\tilde{q}^\varepsilon = \tilde{q}(\varrho^\varepsilon)$.
From the above, \eqref{eq:Wpm expression 1}, and \eqref{eq:Wpm expression 2}, we obtain that, for all $\x\in \mathscr{D}$,
\begin{equation}\label{eq:min pple on speed}
    k(\tilde{q}^\varepsilon(\x)) = \frac{1}{2}\big(W_+^\varepsilon(\x)-W_-^\varepsilon(\x)\big)
    \leq \frac{1}{2}\big( \sup_{\x \in \partial \mathscr{D}_2} W^\varepsilon_+(\x)
      - \inf_{\x \in \partial \mathscr{D}_2} W^\varepsilon_-(\x) \big).
\end{equation}
We estimate the quantity on the right-hand side of \eqref{eq:min pple on speed}.
Note that the Dirichlet-type condition $(\varrho_\infty, 0)$ is imposed on
boundary $\partial\mathscr{D}_2$, from which it follows that, for all $\x \in \partial\mathscr{D}_2$ and all $\varepsilon>0$,
\begin{equation*}
    W_-^\varepsilon(\x) = - k(q_\infty),
    \qquad W_+^\varepsilon(\x) =  k(q_\infty),
\end{equation*}
where $q_\infty := q(\varrho_\infty)$.
In turn, equation \eqref{eq:min pple on speed} and the monotonicity of $k$ (\textit{cf.}~\S \ref{sec:riemann invariants})
imply the upper bound
\begin{equation}\label{eq:lower bound on W proper}
    k(\tilde{q}^\varepsilon(\x)) \leq  k(q_\infty)  \qquad \text{for all } \x \in \mathscr{D}.
\end{equation}

\begin{lemma}[Minimum Principle for the Speed]\label{lem:min pple speed}
Let $(\varrho^\epsi,\vartheta^\epsi) \in C^2(\mathscr{D}) \cap C^1(\overline{\mathscr{D}})$ be
a solution of the auxiliary system \eqref{eq:variant system s6}, and let
the boundary data satisfy $q_\infty \in (q_\cri,q_\cav)$.
Then there exists $q_* \in (q_\cri, q_\cav)$ depending only on $\varrho_\infty$
such that
\begin{equation*}
    \tilde{q}^\varepsilon(\x) \geq q_* > q_{\cri} \qquad\,\,\mbox{for all $\x \in \mathscr{D}$}.
\end{equation*}
\end{lemma}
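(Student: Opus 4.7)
The plan is to exploit the strict monotonicity of $k$ on the supersonic interval in order to invert the upper bound $k(\tilde q^\varepsilon(\x)) \leq k(q_\infty)$ that has already been established in the discussion immediately preceding the lemma (equation \eqref{eq:lower bound on W proper}). That inequality was obtained by applying the weak maximum principle and the Hopf lemma to the second-order elliptic equation \eqref{eq:riemann inv elliptic for max pple} satisfied by the Riemann invariants $W_\pm^\varepsilon = \vartheta^\varepsilon \pm k(\tilde q^\varepsilon)$, together with the sign information on $\nabla W_\pm^\varepsilon \cdot \mathbf{n}$ from \eqref{eq:boundary deriv}. Those sign conditions force the maximum of $W_+^\varepsilon$ and the minimum of $W_-^\varepsilon$ to be attained on $\partial\mathscr{D}_2$, where the Dirichlet data $(\varrho_\infty,0)$ give $W_\pm^\varepsilon = \pm k(q_\infty)$; using $k(\tilde q^\varepsilon) = \tfrac{1}{2}(W_+^\varepsilon - W_-^\varepsilon)$, one concludes the pointwise bound.

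Given this bound, the lemma is essentially a one-line inversion. I would invoke the explicit expression \eqref{eq:k' of q}, namely $k'(q) = -\tfrac{1}{q}\sqrt{(2q^2-1)/(1-q^2)} < 0$ for $q \in (q_{\cri},q_{\cav})$, which shows that $k$ is a strictly decreasing homeomorphism from $[q_{\cri},q_{\cav}]$ onto $[0,(\sqrt{2}-1)\tfrac{\pi}{2}]$. Applying the (strictly decreasing) inverse $k^{-1}$ to $k(\tilde q^\varepsilon(\x)) \leq k(q_\infty)$ immediately yields $\tilde q^\varepsilon(\x) \geq q_\infty$, so one may take $q_\ast := q_\infty$, which lies in $(q_{\cri},q_{\cav})$ by hypothesis on the boundary data, and depends only on $\varrho_\infty$.

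The only subtle point I anticipate is ensuring that the Riemann invariant computation is legitimate at every $\x \in \mathscr{D}$: the formula $W_\pm = \vartheta \pm k(\tilde q)$ and the elliptic equation \eqref{eq:riemann inv elliptic for max pple} presuppose $\tilde q^\varepsilon > c(\varrho^\varepsilon)$, i.e., $\varrho^\varepsilon \in (0,\varrho_{\cri})$, which is exactly the content of the companion Proposition \ref{prop:upper bound density}. Thus the minimum principle for $\tilde q^\varepsilon$ and the maximum principle for $\varrho^\varepsilon$ are two faces of the same statement, related by the Bernoulli identity $\tilde q(\varrho) = \sqrt{1-\varrho^2}$; the former is read off the Riemann invariants, the latter is then recovered by the strict monotonicity of $\varrho \mapsto \tilde q(\varrho)$. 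Once this consistency is noted, no further estimate is required and the proof closes.
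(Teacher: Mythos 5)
Your core argument is correct and coincides with the paper's: both invert the a priori bound $k(\tilde q^\varepsilon(\x)) \leq k(q_\infty)$ from \eqref{eq:lower bound on W proper} using the strict monotonicity of $k$ to conclude $\tilde q^\varepsilon(\x) \geq q_\infty$, and take $q_* := q_\infty$. Your treatment is in fact marginally cleaner than the paper's: by observing that $k$ extends to a strictly decreasing homeomorphism on the closed interval $[q_\cri, q_\cav]$ (with $k(q_\cav)=0$), you dispense with the separate case $\tilde q^\varepsilon(\x) = q_\cav$ that the paper handles by hand before inverting $k$ on the open interval $(q_\cri, q_\cav)$.

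One caveat about your final paragraph: you appeal to Proposition \ref{prop:upper bound density} to legitimize the Riemann-invariant computation on all of $\mathscr{D}$, but that proposition is \emph{proved from} this lemma (via Lemma \ref{lem:max pple speed} and Corollary \ref{cor:strictly positive rho viscous}), so as stated the citation is circular. The paper faces the same delicacy and resolves it tacitly by the usual invariant-region bootstrap (the elliptic equation \eqref{eq:riemann inv elliptic for max pple} is recalled ``in the case $\varrho\in(0,\varrho_\cri)$''), so this is more a matter of how the block of a priori estimates is packaged than a genuine gap; still, it would be cleaner to phrase the legitimacy as part of that bootstrap rather than as a consequence of a downstream proposition.
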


\begin{proof}
First, by definition, $\tilde{q}^\varepsilon$ never exceeds $q_{\cav}$.
Suppose that $\tilde{q}^\varepsilon(\x) = q_{\cav}$ at some point $\x \in \mathscr{D}$.
Then $\inf_{\partial\mathscr{D}_2} q_\infty < q_\cav$ due to the choice of
the boundary data so that we directly see
that $\tilde{q}^\varepsilon(\x) \geq \inf_{\partial\mathscr{D}_2}q_\infty > q_\cri$
at this specific point.

Furthermore, suppose that $\x \in \mathscr{D}$ is such that $\tilde{q}^\varepsilon(\x) < q_{\cav}$.
Define the strictly positive number
$\delta_* := k(q_\infty)$,
and observe from estimate \eqref{eq:lower bound on W proper} and our assumptions that
\begin{equation*}
    k(\tilde{q}^\varepsilon(\x)) \leq \delta_* < (\sqrt{2}-1)\frac{\pi}{2} = k(q_\cri).
\end{equation*}
Recall from \S \ref{sec:riemann invariants} that $k$ is invertible and strictly decreasing in $q$
on the restriction of its domain to $(q_\cri,q_\cav)$, whence
\begin{equation*}
    \tilde{q}^\varepsilon(\x) \geq k^{-1}(\delta_*) > q_\cri
\end{equation*}
at all such points $\x$. In turn, we define $q_* := \min\{k^{-1}(\delta_*) , q_\infty \}$,
and the conclusion of the lemma is manifestly verified.
\end{proof}

Note that the bound: $\sup_{\mathscr{D}}\tilde{q}^\varepsilon \leq q_{\cav}$ follows directly from
$\tilde{q}(\varrho) = (1-\varrho^2_+)_+^{\frac{1}{2}}$.
We are thus in a position to provide the required uniform bound of
sequence $\{\vartheta^\varepsilon\}_{\varepsilon>0}$.

\begin{lemma}[Uniform Estimate of the Angle Functions]\label{lem:unif est angle}
Suppose that $(\varrho^\epsi,\vartheta^\epsi) \in C^2(\mathscr{D}) \cap C^1(\overline{\mathscr{D}})$
is a solution of the auxiliary system \eqref{eq:variant system s6} with the boundary data
$q_\infty \in (q_\cri,q_\cav)$.
Then there exist $\vartheta_*, \vartheta^*\in\mathbb{R}$ depending only on $\varrho_\infty$
such that
\begin{equation*}
    \vartheta_* \leq \vartheta^\varepsilon(\x) \leq \vartheta^*
    \qquad\,\,\mbox{for all $\x \in \overline{\mathscr{D}}$}.
\end{equation*}
\end{lemma}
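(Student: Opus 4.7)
The plan is to deduce the uniform bound on $\vartheta^\varepsilon$ as an elementary consequence of the Riemann invariant analysis already developed in this section, together with the lower bound on the speed from Lemma \ref{lem:min pple speed}. The starting point is the identity
\begin{equation*}
    \vartheta^\varepsilon(\x) = \tfrac{1}{2}\bigl(W_+^\varepsilon(\x) + W_-^\varepsilon(\x)\bigr),
\end{equation*}
so that any two-sided bound on $W_\pm^\varepsilon$ translates directly into a two-sided bound on $\vartheta^\varepsilon$.

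First, I would invoke the one-sided maximum-principle estimates on $W_\pm^\varepsilon$ already established just before Lemma \ref{lem:min pple speed}: the elliptic equation \eqref{eq:riemann inv elliptic for max pple} combined with the sign of the boundary flux in \eqref{eq:boundary deriv} and the Hopf lemma excludes extrema on $\partial\mathscr{D}_1$, and the Dirichlet data $(\varrho_\infty,0)$ on $\partial\mathscr{D}_2$ force
\begin{equation*}
    W_+^\varepsilon(\x) \leq \sup_{\partial\mathscr{D}_2} W_+^\varepsilon = k(q_\infty), \qquad W_-^\varepsilon(\x) \geq \inf_{\partial\mathscr{D}_2} W_-^\varepsilon = -k(q_\infty).
\end{equation*}
This gives the ``outer'' half of the required two-sided control on each invariant.

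Second, to close the argument, I would use Lemma \ref{lem:min pple speed} together with the trivial upper bound $\tilde{q}^\varepsilon \leq q_{\cav}$ coming from \eqref{eq:variant speed}. Since $k$ is strictly decreasing on $(q_{\cri},q_{\cav})$ with $k(q_{\cav})=0$ (see \S\ref{sec:riemann invariants}), one obtains
\begin{equation*}
    0 \leq k(\tilde{q}^\varepsilon(\x)) = \tfrac{1}{2}\bigl(W_+^\varepsilon(\x) - W_-^\varepsilon(\x)\bigr) \leq k(q_\infty) \qquad \text{for all }\x \in \mathscr{D}.
\end{equation*}
In particular $W_-^\varepsilon \leq W_+^\varepsilon$ pointwise, which supplies the missing ``inner'' bounds: $W_+^\varepsilon \geq W_-^\varepsilon \geq -k(q_\infty)$ and $W_-^\varepsilon \leq W_+^\varepsilon \leq k(q_\infty)$.

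Combining these two ingredients yields $-k(q_\infty) \leq \vartheta^\varepsilon(\x) \leq k(q_\infty)$, so the conclusion holds with $\vartheta^* = -\vartheta_* = k(q_\infty)$, a quantity depending only on $\varrho_\infty$. There is no serious obstacle here: the entire content of the lemma is packaged in the one-sided maximum principle for $W_\pm^\varepsilon$ (already proved) and the sign $k(\tilde{q}^\varepsilon)\geq 0$ (which uses the upper bound on $\tilde{q}^\varepsilon$); the only subtle point worth spelling out is that the Hopf lemma step in \eqref{eq:boundary deriv} requires $\varrho^\varepsilon(\x) > 0$ and $\tilde{q}^\varepsilon(\x) > c(\varrho^\varepsilon(\x))$ at the relevant boundary points of $\partial\mathscr{D}_1$, both of which are guaranteed by Proposition \ref{prop:upper bound density} and Lemma \ref{lem:min pple speed}, respectively.
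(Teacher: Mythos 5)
Your proof is correct and follows essentially the same route as the paper: both hinge on the decomposition $\vartheta^\varepsilon=\tfrac12(W_+^\varepsilon+W_-^\varepsilon)$, the one-sided maximum-principle bounds $W_+^\varepsilon\le k(q_\infty)$ and $W_-^\varepsilon\ge -k(q_\infty)$ established before Lemma~\ref{lem:min pple speed}, and the sign $k(\tilde q^\varepsilon)\ge 0$. The only (cosmetic) difference is that the paper substitutes $W_-^\varepsilon=\vartheta^\varepsilon-k(\tilde q^\varepsilon)$ into the upper bound on $W_+^\varepsilon$ and solves for $\vartheta^\varepsilon$, whereas you first upgrade the one-sided bounds to two-sided bounds on each $W_\pm^\varepsilon$ via $W_+^\varepsilon-W_-^\varepsilon=2k(\tilde q^\varepsilon)\ge 0$ and then average; both yield $\vartheta_*=-k(q_\infty)$ and $\vartheta^*=k(q_\infty)$.
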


\begin{proof}
We obtain directly from the expressions in  \eqref{eq:Wpm expression 1} and \eqref{eq:Wpm expression 2}
for the Riemann invariants that, for all $\x \in \mathscr{D}$,
\begin{equation*}
    \begin{aligned}
        \vartheta^\varepsilon(\x) = \frac{1}{2}\big(W_-^\varepsilon(\x) + W_+^\varepsilon(\x)\big)
        \leq&\, \frac{1}{2}\big( W^\varepsilon_-(\x) + \sup_{\x \in \partial \mathscr{D}_2} W^\varepsilon_+(\x) \big) \\
        \leq & \, \frac{1}{2}\big( \vartheta^\epsi(\x) - k(\tilde{q}^\epsi(\x))
        + k(q_\infty)\big),
    \end{aligned}
\end{equation*}
whence, using also that $k$ is nonnegative and decreasing in $q$ (\textit{cf.}~\S \ref{sec:riemann invariants}), we obtain the bound
\begin{align}
        \vartheta^\varepsilon(\x) \leq& \, -k(\tilde{q}^\epsi(\x))
        + k(q_\infty)
        \leq
        k(q_\infty) =:\, \vartheta^*. \label{eq:max pple on angle}
\end{align}
Similarly,
\begin{equation*}
    \begin{aligned}
        \vartheta^\varepsilon(\x) = \frac{1}{2}\big(W_+^\varepsilon(\x) + W_-^\varepsilon(\x)\big)
        \geq&\, \frac{1}{2}\big( W^\varepsilon_+(\x) + \inf_{\x \in \partial \mathscr{D}_2} W^\varepsilon_-(\x) \big) \\
        \geq &\, \frac{1}{2}\big( \vartheta^\epsi(\x) + k(\tilde{q}^\epsi(\x))
        - k(q_\infty)\big),
    \end{aligned}
\end{equation*}
which implies
\begin{align}
        \vartheta^\varepsilon(\x) \geq&\, k(q^\epsi(\x))
        -k(q_\infty)
        \geq
        - k(q_\infty) =: \,\vartheta_*, \label{eq:min pple on angle}
\end{align}
and the result follows.
\end{proof}

It follows that, to prove Propositions \ref{prop:upper bound density}--\ref{prop:invariant regions in angle},
it
suffices to verify that the solutions of the auxiliary system satisfy
the stronger estimate: $\varrho^\varepsilon(\x)>0$ for all $\x\in\overline{\mathscr{D}}$.
This is the focus of the next section.

\subsection{Consistency of the auxiliary system and strict positivity of $\varrho^\varepsilon$}\label{sec:consistency}

\begin{lemma}[Consistency of the Auxiliary Approximate Problem]\label{lem:max pple speed}
Suppose that $(\varrho^\epsi,\vartheta^\epsi) \in C^2(\mathscr{D}) \cap C^1(\overline{\mathscr{D}})$
is a solution of the auxiliary system \eqref{eq:variant system s6}.
Then $(\varrho^\epsi,\vartheta^\epsi)$ is a solution of system \eqref{eq:sys Tristan}
such that, for all $\varepsilon>0$ and all $\x \in \overline{\mathscr{D}}$,
\begin{equation}\label{eq:unif approx est}
    q_{\cri} < q_* \leq q^\varepsilon(\x) \leq q_{\cav}, \quad \vartheta_* \leq \vartheta^\varepsilon(\x) \leq \vartheta^*.
\end{equation}
\end{lemma}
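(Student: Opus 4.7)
The plan is to combine the invariant region estimates already proved for the auxiliary system with a strict positivity argument for $\varrho^\varepsilon$. Once we know $\varrho^\varepsilon(\x)\in (0,\varrho_{\cri})$ pointwise, the definition $\tilde{q}(\varrho)=(1-\varrho_+^2)_+^{1/2}$ forces $\tilde{q}(\varrho^\varepsilon)=\sqrt{1-(\varrho^\varepsilon)^2}=q(\varrho^\varepsilon)$, which identifies \eqref{eq:variant system s6} (with its boundary conditions) with \eqref{eq:sys Tristan}, and the two-sided bounds in \eqref{eq:unif approx est} fall out by translation through the Bernoulli relation.

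First I would record what is already in hand. By Lemma \ref{lem:min pple speed}, $\tilde{q}^\varepsilon(\x)\geq q_*>q_{\cri}$ on $\overline{\mathscr{D}}$, and the inequality $(1-(\varrho_+^\varepsilon)^2)_+\geq q_*^2>0$ yields the one-sided bound $\varrho^\varepsilon(\x)\leq \varrho_+^\varepsilon(\x)\leq \sqrt{1-q_*^2}=:\varrho_*<\varrho_{\cri}$. The angle bounds $\vartheta_*\leq \vartheta^\varepsilon\leq \vartheta^*$ are furnished by Lemma \ref{lem:unif est angle}. Finally, the inequality $q^\varepsilon=\sqrt{1-(\varrho^\varepsilon)^2}\leq 1=q_{\cav}$ is automatic once $|\varrho^\varepsilon|\leq 1$.

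The decisive step is to establish the strict positivity $\varrho^\varepsilon(\x)>0$ on $\overline{\mathscr{D}}$. I would argue by contradiction: suppose $m:=\min_{\overline{\mathscr{D}}}\varrho^\varepsilon\leq 0$, attained at some $\x_0$. Since $\varrho^\varepsilon|_{\partial\mathscr{D}_2}=\varrho_\infty>0$, the minimizer $\x_0$ lies in $\mathscr{D}\cup\partial\mathscr{D}_1$. On the set $\{\varrho^\varepsilon\leq 0\}$ one has $\tilde{q}=1$ and $c^2/\tilde{q}^2=(\varrho^\varepsilon)^2$, so the first equation of \eqref{eq:variant system s6} collapses to the strictly elliptic quasilinear equation
\begin{equation*}
\varepsilon\,\div\!\bigl((1-(\varrho^\varepsilon)^2)\nabla\varrho^\varepsilon\bigr)=\div\!\bigl(\varrho^\varepsilon\e(\vartheta^\varepsilon)\bigr),
\end{equation*}
whose principal coefficient satisfies $1-(\varrho^\varepsilon)^2\geq 1-\varrho_*^2>0$. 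Writing this in nondivergence form with the (bounded) drift coming from $\nabla\vartheta^\varepsilon$ and applying the Strong Maximum Principle rules out an interior minimum with $\varrho^\varepsilon(\x_0)<0$; at a boundary minimizer $\x_0\in\partial\mathscr{D}_1$ the Hopf Lemma forces $\nabla\varrho^\varepsilon\cdot\mathbf{n}(\x_0)>0$, which, when combined with the Neumann-type condition
\begin{equation*}
\varepsilon(1-c^2/\tilde{q}^2)\nabla\varrho^\varepsilon\cdot\mathbf{n}=|\varrho^\varepsilon\tilde{q}(\varrho^\varepsilon)\e(\vartheta^\varepsilon)\cdot\mathbf{n}|,
\end{equation*}
is incompatible with $\varrho^\varepsilon(\x_0)=0$ (which would make the right-hand side vanish). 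The remaining possibility $\varrho^\varepsilon(\x_0)<0$ on $\partial\mathscr{D}_1$ is excluded by a Stampacchia-type energy identity: testing the divergence-form equation against $(\varrho^\varepsilon)_-\in H^1(\mathscr{D})$ (which vanishes on $\partial\mathscr{D}_2$) and using the sign of the boundary contribution
\begin{equation*}
\int_{\partial\mathscr{D}_1}(\varrho^\varepsilon)_-\bigl(b(\varrho^\varepsilon)\e(\vartheta^\varepsilon)\cdot\mathbf{n}-|b(\varrho^\varepsilon)\e(\vartheta^\varepsilon)\cdot\mathbf{n}|\bigr)\,\dd\mathcal{H}\leq 0
\end{equation*}
together with the strict coercivity of $(1-(\varrho^\varepsilon)^2)$ yields $\nabla(\varrho^\varepsilon)_-\equiv 0$ on $\{\varrho^\varepsilon<0\}$, forcing $(\varrho^\varepsilon)_-\equiv 0$ by the Dirichlet datum on $\partial\mathscr{D}_2$.

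The main obstacle is precisely this last step, because the Neumann condition degenerates on $\{\varrho^\varepsilon=0\}\cap\partial\mathscr{D}_1$, making a direct Hopf argument inconclusive on the zero-level set; the Stampacchia test-function argument must be carried out carefully, controlling the drift term $\varrho^\varepsilon\e(\vartheta^\varepsilon)\cdot\nabla\varrho^\varepsilon$ via a Cauchy--Young inequality against the coercive term $\varepsilon(1-(\varrho^\varepsilon)^2)|\nabla\varrho^\varepsilon|^2$ and absorbing the remainder using the uniform bound $|\varrho^\varepsilon|\leq\varrho_*$. Once strict positivity $\varrho^\varepsilon>0$ is secured, $\tilde{q}(\varrho^\varepsilon)=q(\varrho^\varepsilon)$ pointwise, so the pair $(\varrho^\varepsilon,\vartheta^\varepsilon)$ solves \eqref{eq:sys Tristan} verbatim, and \eqref{eq:unif approx est} is obtained by translating $\varrho^\varepsilon\in(0,\varrho_*]$ through $q=\sqrt{1-\varrho^2}$ into $q_*\leq q^\varepsilon\leq q_{\cav}$ and appending the angle bounds from Lemma \ref{lem:unif est angle}.
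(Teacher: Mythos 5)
Your outline follows the same high-level scaffolding as the paper's proof (invariant-region bounds from Lemmas \ref{lem:min pple speed}--\ref{lem:unif est angle}, then a Stampacchia argument with test function $\varrho^\varepsilon_-$ to get $\varrho^\varepsilon\ge 0$, then identify $\tilde q=q$), but the decisive step is carried out incorrectly. You propose to handle the drift term $\int_\mathscr{D}\varrho^\varepsilon_-\,\e(\vartheta^\varepsilon)\cdot\nabla\varrho^\varepsilon_-\,d\x$ by Cauchy--Young against the coercive term $\varepsilon(1-c^2/\tilde q^2)|\nabla\varrho^\varepsilon_-|^2$. This cannot close: after absorbing you are left with $\tfrac{C}{\varepsilon}\int_\mathscr{D}(\varrho^\varepsilon_-)^2\,d\x$, and Poincar\'e (using $\varrho^\varepsilon_-=0$ on $\partial\mathscr{D}_2$) yields a bound $\tfrac{C C_P}{\varepsilon}\int|\nabla\varrho^\varepsilon_-|^2$ with an $\varepsilon$-independent $C_P$, which overwhelms the $O(\varepsilon)$ coercivity for every sufficiently small $\varepsilon$. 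The paper's argument does not treat the drift as a small perturbation at all: it observes that the auxiliary flux that multiplies $\nabla\varrho^\varepsilon_-$ is designed to vanish identically on $\supp\varrho^\varepsilon_-\subset\{\varrho^\varepsilon\le 0\}$ (this is the point of the ``$\supp\tilde q(\varrho)\cap\supp\varrho_-$'' remark), so the interior drift and the matching $\partial\mathscr{D}_1$ boundary terms drop out \emph{exactly}, leaving $\varepsilon\int(1-c^2/\tilde q^2)|\nabla\varrho^\varepsilon_-|^2=0$ outright. In other words, the mechanism is an algebraic cancellation built into the auxiliary system, not a smallness estimate; your instinct to absorb the drift analytically is the wrong tool here and the argument as written has a genuine gap.

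Two further remarks. First, the lemma only asserts the two-sided bound $q_*\le q^\varepsilon\le q_{\cav}$, i.e.\ $\varrho^\varepsilon\ge 0$; the \emph{strict} positivity $\varrho^\varepsilon>0$ is a separate statement proved afterwards in Corollary \ref{cor:strictly positive rho viscous} (and only once $\varrho^\varepsilon\ge 0$ is known, so that $\tilde q=q$ and the boundary condition of \eqref{eq:sys Tristan} is available). Your ``suppose $\min\varrho^\varepsilon\le 0$'' contradiction conflates these two steps. Second, your claimed uniform ellipticity $1-(\varrho^\varepsilon)^2\ge 1-\varrho_*^2>0$ on $\{\varrho^\varepsilon\le 0\}$ is not justified a priori: Lemma \ref{lem:min pple speed} gives $\tilde q^\varepsilon\ge q_*$, which on $\{\varrho^\varepsilon<0\}$ is automatic (there $\tilde q=1$) and therefore furnishes no lower bound on $\varrho^\varepsilon$ at all; the bound $\varrho^\varepsilon\le\varrho_*$ is one-sided. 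This is one more reason the argument must hinge on the structural cancellation rather than on coercivity constants.
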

\begin{proof}
In this proof, we omit the $\varepsilon$-superscript to simplify notation.

Suppose that $(\varrho^\epsi,\vartheta^\epsi) \in C^2(\mathscr{D}) \cap C^1(\overline{\mathscr{D}})$
solves \eqref{eq:variant system s6}.
Then, testing the first equation in \eqref{eq:variant system s6} with the negative part
$\varrho_-^\varepsilon = -\min\{\varrho^\varepsilon,0\}$ and using the Divergence Theorem,
and omitting the $\varepsilon$-superscript for ease of notation,
we obtain
\begin{equation*}
    \begin{aligned}
        \epsi \int_\mathscr{D} (1-\frac{c^2}{\tilde{q}^2}) |\nabla \varrho_-|^2 \d \x
         = & \int_\mathscr{D} \varrho_- \tilde{q}(\varrho) \, \nabla \varrho_- \! \cdot \! \e(\vartheta)  \d \x
         + \int_{\partial\mathscr{D}} \varrho_-^2 \tilde{q}(\varrho) \e(\vartheta) \cdot \mathbf{n} \d \mathcal{H}(\x) \\
        &-\, \epsi \int_{\partial \mathscr{D}_2} \big(1-\frac{c^2}{\tilde{q}^2}\big) (\varrho_\infty)_-\,
         \nabla \varrho_\infty \! \cdot \! \mathbf{n}  \d \mathcal{H}(\x) \\
        &-\int_{\partial \mathscr{D}_1} |\varrho_-^2 \tilde{q}(\varrho) \, \e( \vartheta) \! \cdot \! \mathbf{n} |  \d \mathcal{H}(\x) \\
        =& \,  0;
    \end{aligned}
\end{equation*}
the third term on the right-hand side vanishes because $\varrho_\infty \geq 0$,
and all the other terms also vanish due to
$\supp \tilde{q}(\varrho) \cap \supp \varrho_- = \{ \x \in \overline{\mathscr{D}}\,:\, \varrho(\x) = 0 \}$.
In view of the lower bound $\tilde{q} \geq q_* > q_{\cri}$ from Lemma \ref{lem:min pple speed},
it follows that there exists $C>0$ such that $1-\frac{c^2}{\tilde{q}^2} \geq C$, whence
\begin{equation*}
    \int_\mathscr{D} |\nabla \varrho_-|^2 \d \x = 0.
\end{equation*}
Since $\varrho_\infty \geq 0$,
it follows from the above that $\varrho_- \equiv 0$ in $\mathscr{D}$, whence $\varrho \geq 0$ in $\mathscr{D}$.

Meanwhile, the lower bound on $\tilde{q}$ of Lemma \ref{lem:min pple speed} implies that
$\varrho \leq \varrho(q_*) < \varrho_{\cri} < 1$.
In turn, it follows that $\tilde{q}(\varrho^\varepsilon) = q(\varrho^\varepsilon) = \sqrt{1-(\varrho^\varepsilon)^2}$,
whence $(\varrho^\varepsilon,\vartheta^\varepsilon)$ solves
problem \eqref{eq:sys Tristan} and satisfies the uniform estimates \eqref{eq:unif approx est}.
\end{proof}

In fact, a stronger result is available, now that the weak lower bound $\varrho^\varepsilon \geq 0$ has been established.

\begin{corollary}\label{cor:strictly positive rho viscous}
For all $\x \in \overline{\mathscr{D}}$, $\varrho^\epsi(\x)>0$ so that $q^\epsi(\x)<q_{\cav}.$
\end{corollary}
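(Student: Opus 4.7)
The plan is to rule out the possibility that $\varrho^\varepsilon$ attains the value $0$ at any point of $\overline{\mathscr{D}}$ by a strong-maximum-principle argument. From Lemma \ref{lem:max pple speed}, we already know $\varrho^\varepsilon \geq 0$ throughout $\overline{\mathscr{D}}$ and $\varrho^\varepsilon = \varrho_\infty > 0$ on $\partial\mathscr{D}_2$, so only the strict inequality on the interior and on $\partial\mathscr{D}_1$ remains to be shown.

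First, I would rewrite the first equation of \eqref{eq:sys Tristan} as a linear second-order elliptic equation for $\varrho^\varepsilon$, treating $\vartheta^\varepsilon$ (and hence $\e(\vartheta^\varepsilon)$) as a known coefficient. Expanding the divergences and using $q(\varrho^\varepsilon)=\sqrt{1-(\varrho^\varepsilon)^2}$, which is well-defined thanks to Lemma \ref{lem:max pple speed}, the equation takes the schematic form
\begin{equation*}
\varepsilon\, a(\varrho^\varepsilon)\,\Delta\varrho^\varepsilon + \mathbf{A}(\varrho^\varepsilon,\vartheta^\varepsilon,\nabla\vartheta^\varepsilon)\cdot\nabla\varrho^\varepsilon - q(\varrho^\varepsilon)\,\div\e(\vartheta^\varepsilon)\,\varrho^\varepsilon = 0,
\end{equation*}
where $a(\varrho) = 1 - c^2/q^2$ is bounded below by a positive constant in view of the uniform bound $q^\varepsilon \geq q_* > q_{\cri}$. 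In particular, the operator is strictly elliptic with bounded coefficients, and the zeroth-order coefficient multiplies $\varrho^\varepsilon$ itself, so no sign hypothesis on this coefficient is needed for the strong maximum principle or Hopf lemma at a point where $\varrho^\varepsilon$ vanishes.

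Next, suppose for contradiction that $\varrho^\varepsilon(\x_0) = 0$ for some $\x_0 \in \overline{\mathscr{D}}$. Three cases arise. If $\x_0$ is interior, the strong maximum principle forces $\varrho^\varepsilon \equiv 0$ on the connected component of $\mathscr{D}$ containing $\x_0$, contradicting the Dirichlet condition $\varrho^\varepsilon = \varrho_\infty > 0$ on $\partial\mathscr{D}_2$. The case $\x_0 \in \partial\mathscr{D}_2$ is immediately ruled out by that same Dirichlet condition. Finally, if $\x_0 \in \partial\mathscr{D}_1$, the Hopf lemma yields the strict inequality $\nabla\varrho^\varepsilon(\x_0)\cdot\mathbf{n} > 0$ since $\mathbf{n}$ points into $\mathscr{D}$; however, the oblique boundary condition on $\partial\mathscr{D}_1$ combined with $\varrho^\varepsilon(\x_0) = 0$ gives
\begin{equation*}
\varepsilon\bigl(1-\tfrac{c^2}{q^2}\bigr)\nabla\varrho^\varepsilon(\x_0)\cdot\mathbf{n} = \bigl|\varrho^\varepsilon(\x_0)\,q(\varrho^\varepsilon(\x_0))\,\e(\vartheta^\varepsilon(\x_0))\cdot\mathbf{n}\bigr| = 0,
\end{equation*}
forcing $\nabla\varrho^\varepsilon(\x_0)\cdot\mathbf{n} = 0$ and contradicting the Hopf conclusion. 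Hence $\varrho^\varepsilon > 0$ everywhere on $\overline{\mathscr{D}}$, and the strict inequality $q^\varepsilon < q_{\cav}$ follows immediately from the Bernoulli relation $q(\varrho) = \sqrt{1-\varrho^2}$.

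The main obstacle is essentially bookkeeping: verifying that the coefficients of the elliptic equation for $\varrho^\varepsilon$ are regular enough (given that $\vartheta^\varepsilon\in C^2(\mathscr{D})\cap C^1(\overline{\mathscr{D}})$) for the classical strong maximum principle and Hopf lemma to apply up to and including the obstacle boundary. Strict ellipticity and the vanishing of the zeroth-order contribution at $\{\varrho^\varepsilon=0\}$ are both consequences of the bounds already established in Lemma \ref{lem:max pple speed}, so no further estimates are required.
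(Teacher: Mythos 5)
Your proposal is correct and follows essentially the same route as the paper: rewrite the viscous density equation in non-divergence form with $\vartheta^\varepsilon$ treated as a known coefficient, observe that strict ellipticity holds thanks to $q^\varepsilon \geq q_* > q_\cri$ and that the zeroth-order term carries a factor of $\varrho^\varepsilon$ so that no sign hypothesis on it is needed, then run the three-case contradiction via the strong maximum principle (interior), the Dirichlet data (on $\partial\mathscr{D}_2$), and the Hopf lemma against the oblique boundary condition (on $\partial\mathscr{D}_1$). This is precisely the argument in the paper.
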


\begin{proof}
Suppose for contradiction that $\varrho^\varepsilon(\x_0)=0$ for some $\x_0 \in \overline{\mathscr{D}}$.
First, our choice of the boundary data on $\partial\mathscr{D}_2$ implies $\x_0 \notin \partial\mathscr{D}_2$.
We now show that the other two possibilities, $\x_0 \in \mathscr{D}$ (an interior point) or $\x_0 \in \partial\mathscr{D}_1$,
also yield contradictions.

\smallskip
\noindent   1. \textit{$\x_0 $ is not an interior point of $\mathscr{D}$}:
Suppose first that $\x_0 \in \mathscr{D}$ is an interior point.
Recall that $\varrho$ solves the original viscous problem \eqref{eq:approx system start of s6},
which is strongly elliptic and reads in the non-divergence form as
    \begin{equation*}
       \big(1- \frac{\varrho}{q(\varrho)}\big) \nabla \varrho \cdot \e(\vartheta) + \varrho q(\varrho) \nabla \vartheta \cdot \e(\vartheta+\frac{\pi}{2})
       = - \frac{2\varrho}{(1-\varrho^2)^2} \varepsilon|\nabla \varrho|^2 + \big(1-\frac{\varrho^2}{1-\varrho^2}\big) \varepsilon\Delta \varrho.
\end{equation*}
Since $\varrho(\x_0) =0= \inf_{\mathscr{D}} \varrho$, the version of the Strong Maximum Principle
that requires no sign condition on the lowest-order term implies that $\varrho \equiv 0$ is constant,
which contradicts the boundary condition on $\partial\mathscr{D}_2$.

\smallskip
\noindent  2. \textit{$\x_0$ does not belong to boundary $\partial\mathscr{D}_1$}:
Suppose that $\x_0 \in \partial\mathscr{D}_1$.
Then $\varrho(\x_0) = 0 = \inf_\mathscr{D} \varrho$ is achieved on boundary $\partial\mathscr{D}_1$,
and thus the version of the Hopf Lemma that requires no sign condition
on the lowest-order term implies either $\nabla \varrho(\x_0) \cdot \mathbf{n}>0$ or $\varrho \equiv 0$ is constant.
This latter conclusion cannot be true because of the boundary condition on $\partial\mathscr{D}_2$,
whence $\nabla \varrho(\x_0) \cdot \mathbf{n}>0$.
However, by evaluating the boundary condition on $\partial\mathscr{D}_1$ at point $\x_0$, we have
\begin{equation*}
    \varepsilon \nabla \varrho(\x_0) \cdot \mathbf{n} = 0,
\end{equation*}
which is again a contradiction.
\end{proof}

Combining the results in Lemmas \ref{lem:min pple speed}--\ref{lem:max pple speed} and
Corollary \ref{cor:strictly positive rho viscous}, and setting $\varrho_* := \varrho(q_*)$,
we complete the proof of Propositions \ref{prop:upper bound density}--\ref{prop:invariant regions in angle}.

\subsection{Dissipation estimate and $H^{-1}$-compactness}\label{section:what estimates}

We begin this section by proving Proposition \ref{prop:dissipation estimate}, which in effect verifies
that the dissipation estimate \cite[Proposition 8.1]{gq-transonic} is still valid for the solutions generated
by the viscous problems \eqref{eq:sys Tristan}. Although we write the proof explicitly only for $\gamma=3$,
the proof generalizes to all $\gamma > 2$; see \cite[Lemma 4.17]{thesis} for further details.

\begin{proof}[Proof of Proposition \ref{prop:dissipation estimate}]

The proof is divided into two steps.

\smallskip
\noindent 1. \textit{Special entropy generator and pointwise estimate}:
For $q_\infty\in (q_\cri, q_{\cav})$, denote $\nu_\infty=\nu(\varrho(q_\infty))$.
We set $\varrho_\infty = \varrho(q_\infty)$ and recall $\varrho_\cri = \varrho(q_\cri)$.
In view of the monotonicity provided by \eqref{eq:nu prime eqn}, we see that
$\nu_\infty > 0$ and $0 <  \varrho_\infty< \varrho_{\cri}$.
We now recall the special entropy generator \eqref{eq:special entropy generator}:
\begin{equation*}
    H^*(\nu,\vartheta) = \frac{\vartheta^2}{2}
   -\frac{\nu}{\varrho_\infty}
   + \int_{\nu_\infty}^\nu \int^\tau_{\nu_\infty} \frac{M(\nu')^2 - 1}{\varrho(\nu')^2} \d \nu' \d \tau,
\end{equation*}
and observe that the double integral is well-defined, since \begin{equation*}
    \int_{\nu_\infty}^\tau \frac{M(\nu')^2 - 1}{\varrho(\nu')^2} \d \nu'
    = \int_{\nu_\infty}^\tau k'(\nu')^2 \d \nu' \leq C\int_{\nu_\infty}^\tau (\nu')^{-\frac{4}{3}} \d \nu'
    \leq C\tau^{-\frac{1}{3}}
\end{equation*}
for some positive constant $C$ depending on $\bar{\nu}$,
and the final right-hand side is integrable. Furthermore, a direct computation via the chain rule and \eqref{eq:nu prime eqn} yields
the equality away from the vacuum:
\begin{equation}\label{eq:Hnu special gen}
    H^*_\nu = -\frac{1}{\varrho} + N(\varrho),
\end{equation}
where we have defined
\begin{equation}\label{8.17a}
    N(\varrho) := - \int_{\varrho_\infty}^\varrho \frac{\d \varrho'}{q(\varrho')^2}.
\end{equation}
Provided $\varrho \in [0,\varrho_{\cri}]$,
where we recall for $\gamma=3$ that $\varrho_{\cri}=\frac{1}{\sqrt{2}}$, we estimate this
term as
\begin{equation*}
    \begin{aligned}
        |N(\varrho)| \leq&   \left| \int_{\varrho}^{\varrho_\infty} \frac{\d \tau}{1- \tau^{2} } \right| \mathds{1}_{\{\varrho < \varrho_\infty\}}
        +   \left|\int^{\varrho}_{\varrho_\infty}
         \frac{\d \tau}{1- \tau^{2} } \right| \mathds{1}_{\{\varrho \geq \varrho_\infty\}}
        \\
        \leq & \, \int_{0}^{\varrho_\infty} \frac{\d \tau}{1- \tau^{2} }
        + \int_{\varrho_\infty}^{\varrho_{\cri}} \frac{\d \tau}{1- \tau^{2} } \\
        \leq & \, \frac{\varrho_{\cri}}{1-\varrho_{\cri}^2}
    \end{aligned}
\end{equation*}
and hence
\begin{equation}\label{eq:N(rho) est}
\sup_{\varrho \in [0,\varrho_{\cri}]}|N(\varrho)| \leq  C,
\end{equation}
for some positive constant $C$ depending only on $\varrho_{\cri}$.
Observe from \eqref{8.17a} that
\begin{equation}\label{eq:N rho infty is null}
    N(\varrho_\infty) =0.
\end{equation}

\smallskip
\noindent 2. \textit{Special entropy pair and dissipation estimate}:
A direct computation via the Loewner--Morawetz relations and \eqref{eq:Hnu special gen}
shows that the special generator $H^*$ generates the entropy pair $\Q_* = (Q_{1*},Q_{2*})$:
\begin{align}
Q_{1*}(\nu,\vartheta) &= -q(\vartheta\sin \vartheta + \cos \vartheta)
   + N(\varrho) \varrho q \cos \vartheta, \nonumber \\
Q_{2*}(\nu,\vartheta) &= q(\vartheta\cos \vartheta - \sin \vartheta)
+ N(\varrho) \varrho q \sin \vartheta. \label{eq:special entropy explicit}
\end{align}
We then consider sequence $\{\Q^\varepsilon_*\}_{\varepsilon>0}$
for $\Q_*^\varepsilon = \Q_*(\bu^\varepsilon)$ and $\bu^\varepsilon$ uniquely
characterized by $(\nu^\varepsilon,\vartheta^\varepsilon)$, and observe that
the uniform boundedness of $N(\varrho^\varepsilon)$
and $\vartheta^\varepsilon$ (\textit{cf.}~Proposition \ref{prop:invariant regions in angle})
implies that $\{\Q^\varepsilon_*\}_{\varepsilon>0}$ is uniformly bounded
in $L^\infty(\mathscr{D})$.
Furthermore, using relation \eqref{eq:ent dissipation from polar sys with H}
computed in \S \ref{sec:entropic struc intro}
(\textit{cf.}~\cite[\S 7]{gq-transonic}), we have
\begin{align}
\div_\x \Q^\varepsilon_*
=& \,  -\varepsilon \vartheta^\varepsilon \Delta \vartheta^\varepsilon
+ \varepsilon N(\varrho^\varepsilon)
\div\big((1-\frac{c(\varrho^\varepsilon)^2}{q(\varrho^\varepsilon)^2})\nabla \varrho^\varepsilon\big)\nonumber \\
= & \, \varepsilon \div \big( -\vartheta^\varepsilon \nabla \vartheta^\varepsilon
   + (1-\frac{c(\varrho^\varepsilon)^2}{q(\varrho^\varepsilon)^2})
   N(\varrho^\varepsilon) \nabla\varrho^\varepsilon \big) \nonumber \\
&\, +\varepsilon \big( |\nabla \vartheta^\varepsilon|^2
  + (1-\frac{c(\varrho^\varepsilon)^2}{q(\varrho^\varepsilon)^2})\frac{1}{q(\varrho^\varepsilon)^2}
  |\nabla \varrho^\varepsilon|^2 \big). \label{eq:dissipation special entropy}
\end{align}

Integrating the entropy dissipation measures over $\mathscr{D}$
and applying the divergence theorem in the plane, and
recalling that $\mathbf{n}$ is the inward pointing unit normal, we obtain
\begin{equation*}
\begin{aligned}
\int_{\partial \mathscr{D}} \Q_*^\varepsilon \!\cdot \!\mathbf{n} \d \mathcal{H}(\x)
=& \,  \varepsilon\int_{\partial \mathscr{D}}
 \Big(- \vartheta^\varepsilon \nabla \vartheta^\varepsilon
  + \big(1-\frac{c(\varrho^\varepsilon)^2}{q(\varrho^\varepsilon)^2}\big)
    N(\varrho^\varepsilon) \nabla \varrho^\varepsilon \Big) \! \cdot \!\mathbf{n} \d \mathcal{H}(\x) \\
&- \varepsilon \int_{\mathscr{D}} \Big( |\nabla \vartheta^\varepsilon|^2
 + \big(1-\frac{c(\varrho^\varepsilon)^2}{q(\varrho^\varepsilon)^2}\big)
 \frac{1}{q(\varrho^\varepsilon)^2}|\nabla \varrho^\varepsilon|^2 \Big) \d \x \\
=: & \,  \, N^\varepsilon_1 + N^\varepsilon_2.
\end{aligned}
\end{equation*}
The uniform boundedness in $L^\infty(\mathscr{D})$ of sequence $\{\Q_*^\varepsilon\}_{\varepsilon>0}$
implies that the left-hand side bounded independently of $\varepsilon$, even though the estimate
depends on the size of $\mathcal{H}(\partial\mathscr{D})$.
Using the boundary conditions, we obtain
\begin{equation*}
\begin{aligned}
    N^\varepsilon_1 = & \, \int_{\partial \mathscr{D}_1} |\varrho^\varepsilon \, \bu^\varepsilon \!\cdot\! \mathbf{n}
    | N(\varrho^\varepsilon) \d \mathcal{H}(\x) \\
    &+ \varepsilon \int_{\partial\mathscr{D}_2}
    \Big( \underbrace{-\vartheta_\infty \nabla \vartheta^\varepsilon
    + \big(1-\frac{c(\varrho_\infty)^2}{q(\varrho_\infty)^2}\big)
      N(\varrho_\infty) \nabla \varrho^\varepsilon}_{=0}\Big) \! \cdot \! \mathbf{n} \d \mathcal{H}(\x),
\end{aligned}
\end{equation*}
where we have used $\vartheta_\infty = N(\varrho_\infty)=0$ from \eqref{eq:N rho infty is null}.
Hence, $N^\varepsilon_1$ is uniformly bounded in $\varepsilon$
due to the uniform bound on $|\bu^\varepsilon|$ and the estimate \eqref{eq:N(rho) est} in Step 1.
We thereby conclude that the second integral $N^\varepsilon_2$ is bounded, independently of $\varepsilon$.
This completes the proof.
\end{proof}

The next lemma is essential to the proof of Proposition \ref{prop:H-1 cpct for viscous solns using our kernels}.
In what follows,
$(\varrho^\varepsilon,\vartheta^\varepsilon) \in C^2(\mathscr{D})\cap C^1(\overline{\mathscr{D}})$
is a solution of the auxiliary system \eqref{eq:variant system s6}
satisfying the boundary conditions of Proposition \ref{prop:upper bound density}.

\begin{lemma}[Compactness Estimates]\label{lem:lm cpct est}
Let $H$ be a smooth entropy generator satisfying that
there exists $C>0$ independent of $\varepsilon$ such that, for $j=0,1,2$,
\begin{equation*}
   \begin{aligned}
       &  \left| \partial_\vartheta^j \big( \varrho(\nu^\varepsilon) H_{\nu }(\nu^\varepsilon,\vartheta^\varepsilon) \big) \big|
       + \big| \partial^j_\vartheta \big(H_{\vartheta}(\nu^\varepsilon,\vartheta^\varepsilon)\big) \right| \leq C, \\
       & \left| \partial^j_\vartheta \big( \varrho(\nu^\varepsilon) H_{\nu}(\nu^\varepsilon,\vartheta^\varepsilon)
        + H_{\vartheta\vartheta}(\nu^\varepsilon,\vartheta^\varepsilon) \big) \right| \leq C\varrho(\nu^\varepsilon).
       \end{aligned}
\end{equation*}
Let $\Q$ the corresponding entropy pair generated by $H$ via the Loewner--Morawetz relations.
Then the sequence of entropy dissipation measures $\{\div_\x \Q^\varepsilon\}_{\varepsilon>0}$
is pre-compact in $H^{-1}(\mathscr{D})$.
\end{lemma}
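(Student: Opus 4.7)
The plan is to decompose $\div_\x\Q^\varepsilon$ as the sum of a piece converging strongly to zero in $H^{-1}(\mathscr{D})$ and a piece bounded in $L^1(\mathscr{D})$, and then to invoke Murat's compactness lemma. The starting point is the entropy dissipation identity \eqref{eq:ent dissipation from polar sys with H}, namely
\begin{equation*}
\div_\x\Q^\varepsilon = \tilde B\, V_2^\varepsilon + \frac{\tilde A}{\varrho^\varepsilon}\, V_1^\varepsilon,
\end{equation*}
with the abbreviations $\tilde B := \varrho H_{\nu\vartheta} - H_\vartheta$ and $\tilde A := \varrho H_\nu + H_{\vartheta\vartheta}$. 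The hypotheses give that $\tilde B$, $\tilde A/\varrho$, and their $\vartheta$-derivatives are uniformly bounded on the phase region traversed by $\{\bu^\varepsilon\}_{\varepsilon>0}$, and also $|\Q^\varepsilon|\leq \varrho^\varepsilon q^\varepsilon|H_\nu|+q^\varepsilon|H_\vartheta|\leq C$, so that $\{\div_\x\Q^\varepsilon\}$ is bounded in $W^{-1,\infty}_\loc(\mathscr{D})$. Using $V_1^\varepsilon = \varepsilon\Delta\sigma^\varepsilon$ with $\nabla\sigma^\varepsilon = (1-c^2/q^2)\nabla\varrho^\varepsilon$, together with $V_2^\varepsilon = \varepsilon\Delta\vartheta^\varepsilon$, a distributional integration by parts yields
\begin{equation*}
\div_\x\Q^\varepsilon = \varepsilon\div\bigl(\tilde B\,\nabla\vartheta^\varepsilon + \tfrac{\tilde A}{\varrho}\nabla\sigma^\varepsilon\bigr) - \varepsilon\bigl(\nabla\tilde B\cdot\nabla\vartheta^\varepsilon + \nabla\bigl(\tfrac{\tilde A}{\varrho}\bigr)\cdot\nabla\sigma^\varepsilon\bigr) =: \div\bF^\varepsilon + g^\varepsilon.
\end{equation*}

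The strong convergence $\div\bF^\varepsilon \to 0$ in $H^{-1}(\mathscr{D})$ is immediate from Proposition \ref{prop:dissipation estimate}: that estimate and the uniform lower bound $q^\varepsilon \geq q_*$ imply that both $\sqrt{\varepsilon}\,\nabla\vartheta^\varepsilon$ and $\sqrt{\varepsilon}\,\nabla\sigma^\varepsilon$ are bounded in $L^2(\mathscr{D})$ (the latter using $(1-c^2/q^2)\leq 1$), whence $\|\bF^\varepsilon\|_{L^2(\mathscr{D})}\leq C\sqrt{\varepsilon}\to 0$.

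The core of the proof is the $L^1$-bound on $g^\varepsilon$, which via Cauchy--Schwarz against the dissipation estimate reduces to the pointwise bounds
\begin{equation*}
|\partial_\varrho\tilde B|\leq C\bigl(1-\tfrac{c^2}{q^2}\bigr),\qquad |\partial_\varrho(\tilde A/\varrho)|\leq C,
\end{equation*}
uniformly in $\varepsilon$, in addition to the $\vartheta$-derivative bounds already supplied by the hypotheses. Both rely decisively on the entropy generator equation $H_{\nu\nu}=\tfrac{M^2-1}{\varrho^2}H_{\vartheta\vartheta}$: applying it produces
$\tilde B_\nu = (M^2-1)H_{\nu\vartheta}+\varrho H_{\nu\nu\vartheta} = \tfrac{M^2-1}{\varrho}(\varrho H_{\nu\vartheta}+H_{\vartheta\vartheta\vartheta}) = \tfrac{M^2-1}{\varrho}\partial_\vartheta\tilde A$,
so that $|\tilde B_\varrho|=|\tilde B_\nu|/M^2\leq C(1-c^2/q^2)$ since $|\partial_\vartheta\tilde A|\leq C\varrho$; a parallel computation yields $\varrho\partial_\varrho\tilde A-\tilde A = -\tfrac{c^2}{q^2}H_{\vartheta\vartheta}+\tfrac{\varrho^3}{q^2}H_{\nu\vartheta\vartheta}$, whose right-hand side is $\bigo(\varrho^2)$ by hypothesis, giving the bound on $\partial_\varrho(\tilde A/\varrho)$. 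With these in hand, $\{g^\varepsilon\}$ is bounded in $L^1(\mathscr{D})$, hence in the space of measures, and the decomposition combined with the $W^{-1,\infty}_\loc$-bound on $\{\div_\x\Q^\varepsilon\}$ allows Murat's lemma to conclude pre-compactness in $H^{-1}(\mathscr{D})$. The main obstacle lies precisely in these PDE-driven cancellations: a naive pointwise estimate of $\nu$-derivatives of $H$ would produce $1/\varrho$-type singularities near cavitation, and it is exactly the second-order structure of the generator equation that converts the dangerous $\nu$-derivatives into $\vartheta$-derivatives controlled by the hypotheses.
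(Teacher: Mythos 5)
Your proposal is correct and follows essentially the same route as the paper: the same divergence-plus-remainder decomposition of $\div_\x\Q^\varepsilon$ (your $\div\bF^\varepsilon + g^\varepsilon$ is the paper's $D_1^\varepsilon + D_2^\varepsilon$), the same use of the generator equation to trade the dangerous $\nu$-derivatives for $\vartheta$-derivatives controlled by the hypotheses, and the same $L^1$ + $W^{-1,\infty}$ interpolation via Murat's lemma. The only cosmetic difference is that you package the key cancellation through pointwise bounds on $\partial_\varrho\tilde B$ and $\partial_\varrho(\tilde A/\varrho)$, whereas the paper gathers the computation into a single simplified expression for $-\varepsilon^{-1}D_2^\varepsilon$; the algebra is identical.
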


\begin{proof}
We omit the $\varepsilon$-superscript in the following computations to simplify notation.
Recalling relation \eqref{eq:ent dissipation from polar sys with H}
from \S \ref{sec:entropic struc intro},
we obtain
\begin{align}
\div_\x \Q^\varepsilon
=&\, \varepsilon\div \Big(\nabla \vartheta( \varrho H_{\nu \vartheta} - H_\vartheta)
 + (1 - \frac{c(\varrho)^2}{q(\varrho)^2}) \nabla \varrho ( H_\nu + \frac{1}{\varrho} H_{\vartheta\vartheta})
 \Big) \nonumber \\
&-\varepsilon \Big( \nabla \vartheta \! \cdot \! \nabla( \varrho H_{\nu \vartheta} - H_\vartheta)
 + (1 - \frac{c(\varrho)^2}{q(\varrho)^2}) \nabla \varrho \! \cdot \! \nabla
 ( H_\nu + \frac{1}{\varrho} H_{\vartheta\vartheta})\Big) \nonumber \\
=: & \, D_1^\varepsilon + D_2^\varepsilon. \label{eq:entropy dissipation measure general expression}
\end{align}

\smallskip
\noindent 1. \textit{Control of $D^\varepsilon_2$}: We focus first on $D^\varepsilon_2$ as it is the more difficult term to bound. Observe that
\begin{equation*}
\begin{aligned}
-\varepsilon^{-1}D_2^\varepsilon
=& \,  \Big(2( 1-\frac{c^2}{q^2}) H_{\nu \vartheta}
+ \frac{c^2}{q^2}  \varrho  H_{\nu \nu \vartheta}
+ ( 1-\frac{c^2}{q^2}) \frac{1}{\varrho} H_{\vartheta\vartheta\vartheta} \Big)
\nabla \vartheta \! \cdot \! \nabla \varrho \\
& + \big(\varrho H_{\nu \vartheta \vartheta} -  H_{\vartheta\vartheta} \big)
|\nabla \vartheta|^2
+ \big(\varrho^2 H_{\nu \nu}  + \varrho H_{\nu \vartheta\vartheta}-\frac{q^2}{c^2} H_{\vartheta\vartheta}\big)
  \big( 1-\frac{c^2}{q^2}\big) \frac{1}{q^2} |\nabla \varrho|^2,
\end{aligned}
\end{equation*}
which, using the entropy equation to rewrite $\partial^j_\vartheta H_{\nu\nu}
= \frac{M^2-1}{\varrho^2}\partial^j_\vartheta H_{\vartheta\vartheta}$ for $j=0,1$, simplifies to
\begin{align}
-\varepsilon^{-1} D_2^\varepsilon
= & \, \big( \varrho H_{\nu \vartheta\vartheta} - H_{\vartheta\vartheta}\big)
\big( |\nabla \vartheta|^2 + ( 1-\frac{c^2}{q^2})\frac{1}{q^2} |\nabla \varrho|^2 \big)\nonumber \\
&+  \big(H_{\vartheta\vartheta\vartheta}+\varrho H_{\nu \vartheta}\big)\frac{2}{\varrho}
\big( 1-\frac{c^2}{q^2}\big) \nabla \vartheta \! \cdot \! \nabla \varrho.
\label{eq:more general entropies}
\end{align}
By the dissipation estimate of Proposition \ref{prop:dissipation estimate},
we know that the first term on the right-hand side is controlled, provided that
there exists $C>0$ independent of $\varepsilon$ such that
\begin{equation*}
    \big| \varrho(\nu^\varepsilon) H_{\nu \vartheta\vartheta}(\nu^\varepsilon,\vartheta^\varepsilon)
    - H_{\vartheta\vartheta}(\nu^\varepsilon,\vartheta^\varepsilon) \big| \leq C,
\end{equation*}
which is guaranteed by the assumptions.
The second term on the right-hand side is controlled similarly. Indeed,
\begin{equation*}
\begin{aligned}
&\Big| \big( H_{\vartheta\vartheta\vartheta}+\varrho H_{\nu \vartheta} \big)
\frac{2}{\varrho} \big( 1-\frac{c^2}{q^2} \big) \nabla \vartheta \! \cdot \! \nabla \varrho \Big| \\
&=\frac{2q}{\varrho}  \sqrt{ 1-\frac{c^2}{q^2} }\,
\big| H_{\vartheta\vartheta\vartheta}+\varrho H_{\nu \vartheta} \big|
\Big| \nabla \vartheta \! \cdot \! \nabla \varrho \frac{1}{q}\sqrt{  1-\frac{c^2}{q^2} }\, \Big| \\
&\leq C \varrho^{-1} \big|H_{\vartheta\vartheta\vartheta}+\varrho H_{\nu \vartheta} \big|
\Big( |\nabla \vartheta|^2 +  \big( 1-\frac{c^2}{q^2}\big ) \frac{1}{q^2}|\nabla \varrho|^2 \Big)
\end{aligned}
\end{equation*}
for some constant $C>0$ depending on $\varrho_*$ but independent of $\varepsilon$,
where we have used the Cauchy--Young inequality and the boundedness of $1-\frac{c^2}{q^2}$.
Then our assumptions imply that
\begin{equation*}
    \left| H_{\vartheta\vartheta\vartheta}(\nu^\varepsilon,\vartheta^\varepsilon)
    +\varrho(\nu^\varepsilon) H_{\nu \vartheta}(\nu^\varepsilon,\vartheta^\varepsilon) \right| \leq C\varrho(\nu^\varepsilon),
\end{equation*}
whence the dissipation estimate of Proposition \ref{prop:dissipation estimate} imply that there exists $C>0$,
independent of $\varepsilon$, such that
\begin{equation}\label{eq:D2eps est}
   \sup_\varepsilon \Vert D^\varepsilon_2 \Vert_{L^1(\mathscr{D})} \leq C,
\end{equation}
and hence sequence $\{D^\varepsilon_2\}_{\varepsilon>0}$ is
pre-compact in $W^{-1,r}(\mathscr{D})$ for all $r \in (1,2)$ by the Rellich Theorem,
as $\partial\mathscr{D}$ is Lipschitz.

\smallskip
\noindent 2. \textit{Control of $D^\varepsilon_1$}: The term inside the divergence of $D^\varepsilon_1$ is bounded as follows:
\begin{align}
&\Big| \nabla\vartheta  \big( \varrho H_{\nu \vartheta} - H_\vartheta \big)
  + \big(1-\frac{c^2}{q^2}\big)  \nabla \varrho \big( H_\nu + \frac{1}{\varrho} H_{\vartheta\vartheta} \big) \Big|  \nonumber \\
&\leq | \varrho H_{\nu \vartheta} - H_\vartheta | |\nabla \vartheta|
  + \big(1-\frac{c^2}{q^2}\big) |\varrho H_\nu + H_{\vartheta\vartheta}| \varrho^{-1}|\nabla \varrho|  \nonumber \\
& \leq |\nabla \vartheta| + \big(1-\frac{c^2}{q^2}\big)  |\nabla \varrho|. \label{eq:control of D1 part i}
\end{align}
Furthermore,
using the boundedness of $q$ and $1-\frac{c^2}{q^2}$,
we have
\begin{equation*}
    1-\frac{c^2}{q^2}\leq Cq^{-1}(1-\frac{c^2}{q^2}) \leq Cq^{-1}\sqrt{1-\frac{c^2}{q^2}},
\end{equation*}
whence it follows from
Proposition \ref{prop:dissipation estimate} and the Minkowski inequality that
\begin{align}
&\varepsilon\Big\Vert \nabla\vartheta  \big( \varrho H_{\nu \vartheta} - H_\vartheta \big)
+ \big(1-\frac{c^2}{q^2}\big)  \nabla \varrho \big( H_\nu + \frac{1}{\varrho} H_{\vartheta\vartheta} \big)  \Big\Vert_{L^2(\mathscr{D})} \nonumber \\
&\leq C \sqrt{\varepsilon} \bigg( \varepsilon \int_\mathscr{D}
\Big( |\nabla \vartheta|^2 + \big(1-\frac{c^2}{q^2}\big)\frac{1}{q^2}|\nabla \varrho|^2 \Big)
\d \x \bigg)^{\frac{1}{2}} \nonumber \\
&\leq C\sqrt{\varepsilon}. \label{eq:control of D1 part ii}
\end{align}
We deduce that there exists $C>0$ independent of $\varepsilon$ such that,
for all $\varepsilon>0$,
\begin{equation}\label{eq:D1eps est}
    \Vert D^\varepsilon_1\Vert_{H^{-1}(\mathscr{D})} \leq C\sqrt{\varepsilon}.
\end{equation}

\smallskip
\noindent 3. \textit{Interpolation compactness}:
Observe from the Loewner--Morawetz relations that
sequence $\{\Q^\varepsilon\}_{\varepsilon>0}$ is uniformly bounded in $L^\infty(\mathscr{D})$.
It follows that there exists $C>0$ independent of $\varepsilon$ such that
\begin{equation}\label{eq:Qeps unif bound cpctness}
    \sup_\varepsilon \Vert D^1_\varepsilon + D^2_\varepsilon \Vert_{W^{-1,\infty}(\mathscr{D})} \leq C.
\end{equation}
Estimates \eqref{eq:D2eps est}--\eqref{eq:Qeps unif bound cpctness} and the interpolation compactness
lemma of Ding-Chen-Luo \cite[\S 4]{dingchenluo1} (\textit{cf.}~Murat's Lemma \cite{muratcone})
imply that sequence $\{ \div_\x \Q^\varepsilon\}_{\varepsilon>0}$ is contained in a compact subset of $H^{-1}(\mathscr{D})$.
\end{proof}

The second estimate of Lemma \ref{lem:lm cpct est} highlights a key cancellation property,
which can be observed formally from the initial conditions of the kernels
recorded in Theorem \ref{thm:kernels}.

Combining the results of Lemma \ref{lem:lm cpct est} with
Propositions \ref{prop:est for h-1 cpct reg} and \ref{prop:cpctness est sing},
we have proved Proposition \ref{prop:H-1 cpct for viscous solns using our kernels}
and hence also Theorem \ref{big-thm-viscous}.

\section{Existence of Global Entropy Solutions}\label{sec:entropy sol}

In this section, we first combine the compensated compactness
framework (Theorem \ref{thm:compensated compactness framework intro})
with the approximate solutions $\mathbf{u}^\varepsilon$ constructed and estimated in
\S\ref{sec:viscous} to show the existence of entropy solutions
on the bounded domains $\mathscr{D}$ as
introduced
in \S \ref{sec:viscous}. Then we employ
the compactness theorem (Theorem \ref{big-thm-3})
to establish Theorem \ref{thm:main existence theorem}
by considering an exhaustion of the whole domain $\Omega$ in the half-plane $\mathbb{H}$.

\begin{proof}[Proof of Theorem \ref{thm:main existence theorem}]
For each $k \in \mathbb{N}$,
let $\mathscr{D}^k$ be a domain of identical shape
to $\mathscr{D}$ (\textit{cf.}~\S \ref{sec:viscous}) with the same boundary data
and same obstacle (both in shape and size)
of increasing size with $k$ such that $\{\mathscr{D}^k\}_{k\in \mathbb{N}}$ is an increasing compact exhaustion
of the half-plane $\mathbb{H}$
with the shaded obstacle denoted by $\obstacle$.
Then $\Omega=\bigcup_k \mathscr{D}^k$, and $\mathscr{D}^k \subset \mathscr{D}^{k+1}$ for all $k$.
We divide the proof into two steps.

\medskip
1. For any fixed domain $\mathscr{D}^k$, we first observe that
the approximate solutions $\{\bu^\varepsilon_k\}_{\varepsilon>0}$ generated
by Theorem \ref{big-thm-viscous} in $\mathscr{D}^k$
satisfy the assumptions of the compensated compactness
framework established in Theorem \ref{thm:compensated compactness framework intro}
in $\mathscr{D}^k$.
It therefore follows that, up to a subsequence that is not relabeled,
there exists $\bu_k \in L^\infty(\mathscr{D}^k)$ such that
\begin{equation*}
    \bu^\varepsilon_k \to \bu_k \qquad \text{a.e.~and strongly in } L^p \text{ for all } p \in [1,\infty).
\end{equation*}
Now we show that the limit function $\bu_k \in L^\infty(\mathscr{D})$ is an entropy solution defined
in $\mathscr{D}^k$.
For simplicity of notation, we use
$\varrho^\varepsilon_k = \varrho(\bu^\varepsilon_k)$ and $\theta^\varepsilon_k = \theta(\bu^\varepsilon_k)$
for the entropy pairs in this step below.

\smallskip
\textit{We first show that $\bu_k$ is a distributional solution in $\mathscr{D}^k$}.
Let $\psi \in C^\infty_c(\mathscr{D}^k)$.
Inserting this function in the weak formulation of the viscous problem \eqref{eq:sys Tristan}
and using the boundary conditions, we have
\begin{equation*}
          \begin{aligned}
               &\int_{\mathscr{D}^k} (\bu^\varepsilon_k)^\perp \! \cdot \! \nabla \psi \d \x
               = -\varepsilon \int_{\mathscr{D}^k}
                \nabla \psi \! \cdot \! \nabla \vartheta^\varepsilon_k \d \x, \\
       & \int_{\mathscr{D}^k} \varrho^\varepsilon \, \bu^\varepsilon_k \! \cdot \! \nabla \psi \d \x
       = \varepsilon \int_{\mathscr{D}^k}
        \big(1-\frac{c(\varrho^\varepsilon_k)^2}{q(\varrho^\varepsilon_k)^2}\big)
       \nabla \psi \! \cdot \! \nabla \varrho^\varepsilon_k \d \x,
    \end{aligned}
\end{equation*}
where we have used $\curl \bu^\varepsilon_k = \div (\bu^\varepsilon_k)^\perp$.
Using the dissipation estimate of Proposition \ref{prop:H-1 cpct for viscous solns using our kernels},
the strictly positive lower bound of $q(\varrho^\varepsilon_k) \geq q(\varrho_*)$
in Proposition \ref{prop:upper bound density}, and the dissipation estimate
in Proposition \ref{prop:dissipation estimate}, we obtain
\begin{align}
               &\Big|\int_{\mathscr{D}^k} (\bu^\varepsilon_k)^\perp \! \cdot \! \nabla  \psi \d \x\Big|
               + \Big| \int_{\mathscr{D}^k}
                 \varrho^\varepsilon_k\, \bu^\varepsilon_k \! \cdot \! \nabla \psi \d \x\Big| \nonumber \\
               &\leq C_\psi \sqrt{\varepsilon} \bigg(\varepsilon \int_{\mathscr{D}^k}
                \Big( |\nabla \vartheta^\varepsilon_k|^2
                +  \big(1-\frac{c(\varrho^\varepsilon_k)^2}{q(\varrho^\varepsilon_k)^2}\big)
                \frac{1}{q(\varrho^\varepsilon_k)^2}|\nabla \varrho^\varepsilon_k|^2 \Big)
                 \d \x \bigg)^{\frac{1}{2}} \nonumber \\
&\leq C_\psi \sqrt{\varepsilon}, \label{eq:final convergence weak form}
\end{align}
and pass to the limit as $\varepsilon\to0$ so that the right-hand side vanishes.
We pass to the limit in the left-hand terms
by applying the Dominated Convergence Theorem, and using
that $\bu^\varepsilon_k \to \bu_k$ almost everywhere and the uniform $L^\infty$ bound.
Hence, the limit function $\bu_k$ satisfies the first condition of Definition \ref{def:ent sol}.

\smallskip
\noindent
\textit{We now show that $\bu_k$ satisfies the entropy inequality}. For the special entropy pair $\Q_*$,
recalling the dissipation equality \eqref{eq:dissipation special entropy}
and testing against any nonnegative function $\psi \in C^\infty_c(\mathscr{D})$, we have
\begin{align}
\int_{\mathscr{D}^k} \Q_*(\bu^\varepsilon_k) \! \cdot \! \nabla \psi \d \x
= & \, \varepsilon \int_{\mathscr{D}^k}
\nabla \psi \cdot \Big( -\vartheta^\varepsilon_k \nabla \vartheta^\varepsilon_k
+ \big(1-\frac{c(\varrho^\varepsilon_k)^2}{q(\varrho^\varepsilon_k)^2}\big)
   N(\varrho^\varepsilon_k) \nabla\varrho^\varepsilon_k \Big) \d \x \nonumber \\
&-\varepsilon \underbrace{\int_{\mathscr{D}^k} \psi
\Big( |\nabla \vartheta^\varepsilon_k|^2
  + \big(1-\frac{c(\varrho^\varepsilon_k)^2}{q(\varrho^\varepsilon_k)^2}\big)
\frac{1}{q(\varrho^\varepsilon_k)^2}|\nabla \varrho^\varepsilon_k|^2 \Big) \d \x}_{\geq 0} \nonumber \\
\leq & \, C_\psi \sqrt{\varepsilon} \bigg(\varepsilon \int_{\mathscr{D}^k}
\Big( |\nabla \vartheta^\varepsilon_k|^2
+  \big(1-\frac{c(\varrho^\varepsilon_k)^2}{q(\varrho^\varepsilon_k)^2}\big)
\frac{1}{q(\varrho^\varepsilon_k)^2}
 |\nabla \varrho^\varepsilon_k|^2 \Big) \d \x \bigg)^{\frac{1}{2}} \nonumber \\
\leq & \, C_\psi \sqrt{\varepsilon}, \label{eq:Qstar computation}
\end{align}
where we have argued as we did in \eqref{eq:final convergence weak form}.
The uniform $L^\infty$ bound of $\bu^\varepsilon_k$ transfers to $\Q_*(\bu^\varepsilon_k)$
via formulas \eqref{eq:special entropy explicit},
whence
the Dominated Convergence Theorem implies that the left-hand side converges
to $\int_{\mathscr{D}^k} \Q_* \!\cdot \nabla \psi \d \x$, which implies
that
$\div_\x \Q_* \geq 0$ in the sense of distributions.

Similarly, let $\Q_H$ be an entropy pair generated by
kernels $H^\r$ and $H^\s$ (\textit{cf.}~Remark \ref{rmk:generation}).
Assume also that $H$ satisfies the convexity conditions in \eqref{eq:generator conditions ent ineq}.
The estimates in Propositions \ref{prop:est for h-1 cpct reg} and \ref{prop:cpctness est sing}
imply that there exists a positive constant $C_H$ independent of $(\nu,\vartheta)$ such that,
for any $j=0,1,2$,
\begin{equation*}
|\partial_\vartheta^j ( \varrho H_\nu + H_{\vartheta \vartheta}) | \leq C_H \varrho,
\qquad |\partial_\vartheta^j (\varrho H_\nu)| + |\partial_\vartheta^j H_\vartheta| \leq C_H,
\end{equation*}
so that $H$ satisfies the assumptions in Lemma \ref{lem:lm cpct est}.
Meanwhile, the convexity conditions in \eqref{eq:generator conditions ent ineq}:
 \begin{equation*}
     \varrho H_{\nu \vartheta \vartheta} - H_{\vartheta\vartheta} \leq 0,
     \qquad |H_{\vartheta\vartheta\vartheta}+\varrho H_{\nu \vartheta}|
     \leq - \varrho (\varrho H_{\nu \vartheta\vartheta} - H_{\vartheta\vartheta}),
 \end{equation*}
yield that  the sign condition $D^\varepsilon_2 \geq 0$ in \eqref{eq:more general entropies}
by using computations \eqref{eq:entropy dissipation measure general expression}--\eqref{eq:more general entropies}
and Young's inequality. Thus, we have
  \begin{align*}
     &\int_{\mathscr{D}^k} \psi \div_\x \Q_H(\bu^\varepsilon_k) \d \x\\
     &\geq
      - \varepsilon\int_{\mathscr{D}^k}
      \nabla \psi \cdot \Big( \nabla \vartheta^\varepsilon_k
       ( \varrho^\varepsilon_k H_{\nu \vartheta} - H_\vartheta )
      + \big(1 - \frac{c(\varrho^\varepsilon_k)^2}{q(\varrho^\varepsilon_k)^2} \big)
      \nabla \varrho^\varepsilon_k ( H_\nu + \frac{1}{\varrho} H_{\vartheta\vartheta} ) \Big) \d \x
  \end{align*}
  for any nonnegative function $\psi \in C^\infty_c(\mathscr{D})$.
  Integrating by parts on the left-hand side, following
  the computations in \eqref{eq:control of D1 part i}--\eqref{eq:control of D1 part ii},
  and arguing as per \eqref{eq:Qstar computation}, we obtain
    \begin{equation*}
      \int_{\mathscr{D}^k} \Q_H(\bu^\varepsilon_k) \! \cdot \! \nabla \psi \d \x \leq C_{\psi,H} \sqrt{\varepsilon}.
  \end{equation*}
Note from the Loewner--Morawetz relations \eqref{eq:lowener mor} that the integrand on the left-hand side
is uniformly bounded in $L^\infty$.
Applying the Dominated Convergence Theorem, we pass to the limit as $\varepsilon\to0$
and deduce the entropy inequality: $\div_\x \Q_H(\bu_k) \geq 0$ in the sense of distributions.
Thus, the limit function  $\bu_k$ is an entropy solution as defined in Definition \ref{def:ent sol}.

\smallskip
\noindent
\textit{We now verify that $\bu_k$ satisfies the weak normal trace condition on
the obstacle boundary $\partial\mathscr{D}_1^k$}.
To this end, we insert a nonnegative test function $\varphi \in C^\infty(\overline{\mathscr{D}})$
with $\supp\varphi \cap \partial\mathscr{D}_2 = \emptyset$ into the weak formulation of the approximate problem
to obtain
\begin{equation}\label{eq:weak trace verify}
\begin{aligned}
\int_{\partial\mathscr{D}_1^k} \varphi \varrho^\varepsilon_k \bu^\varepsilon_k \!\cdot \!\mathbf{n}
 \d \mathcal{H}(\x)
&= \int_{\mathscr{D}^k} \varrho^\varepsilon_k \bu^\varepsilon_k\! \cdot \!\nabla \varphi \d \x
   + \int_{\partial\mathscr{D}_1^k} \varphi |\varrho^\varepsilon_k \bu^\varepsilon_k \!\cdot \!\mathbf{n}| \d \mathcal{H}(\x) \\
&\quad\, - \varepsilon \int_{\mathscr{D}^k}
 \big(1-\frac{c(\varrho^\varepsilon_k)^2}{q(\varrho^\varepsilon_k)^2}\big)
 \nabla \varphi \! \cdot \! \nabla \varrho^\varepsilon_k \d \x.
    \end{aligned}
\end{equation}
Note that the final term on the right-hand side of the previous equality may be controlled as in \eqref{eq:final convergence weak form}, whence
\begin{equation*}
    \varepsilon \Big|\int_{\mathscr{D}^k} \big(1-\frac{c(\varrho^\varepsilon_k)^2}{q(\varrho^\varepsilon_k)^2}\big)
    \nabla \varphi \! \cdot \! \nabla \varrho^\varepsilon_k \d \x\bigg| \leq C_\varphi \sqrt{\varepsilon}
\end{equation*}
vanishes in the limit as $\varepsilon \to 0$.
Thus, by using the convergence: $\bu^\varepsilon_k \to \bu_k$ {\it a.e.}~and \eqref{eq:weak trace verify},
the Dominated Convergence Theorem implies
\begin{equation*}
   \begin{aligned}
       \int_{\mathscr{D}^k} \varrho_k \bu_k \! \cdot\! \nabla \varphi \d \x
       &= \lim_{\varepsilon\to0}\int_{\mathscr{D}_k}
       \varrho^\varepsilon_k \bu^\varepsilon_k\! \cdot \!\nabla \varphi \d \x \\
       &\leq \liminf_{\varepsilon \to 0} \int_{\partial\mathscr{D}_1^k}
        \varphi \big( \underbrace{\varrho^\varepsilon_k \bu^\varepsilon_k\!\cdot\!\mathbf{n}
        - |\varrho^\varepsilon_k \bu^\varepsilon_k \!\cdot\! \mathbf{n}| }_{\leq 0}\big) \d \mathcal{H}(\x) \\
       &\leq 0.
   \end{aligned}
\end{equation*}
The weak normal trace condition follows immediately from the above
by using the non-negativity of $\varrho$ and the Dubois--Reymond Lemma.

\medskip
2. In this setup, we prove Theorem \ref{thm:main existence theorem} regarding the existence of
a globally-defined entropy solution on the entire plane
with the incoming supersonic flow at $x=-\infty$,
as a corollary of our weak continuity result.

First, in Step 1,
we have obtained the entropy solutions $\bu_k \in L^\infty(\mathscr{D}^k)$
satisfying Definition \ref{def:ent sol} on each $\mathscr{D}^k$:
For each $\psi \in C^\infty_c(\mathscr{D}^k)$,
\begin{equation}\label{eq:weak form for global}
    \int_{\mathbb{H}} \bu_k^\perp \!\cdot \!\nabla \psi \d \x = 0,
    \qquad \int_{\mathbb{H}} \varrho(\bu_k)\bu_k \! \cdot\! \nabla \psi \d \x = 0,
\end{equation}
and, for every nonnegative function $\varphi \in C^\infty_c(\mathscr{D}^k)$, the entropy inequality holds:
\begin{equation}\label{eq:ent ineq for global}
    \int_{\mathbb{H}} \Q(\bu_k) \! \cdot \! \nabla \varphi \d \x \leq 0,
\end{equation}
where $\Q \in \{\Q_*,\Q_H\}$ with $\Q_H$ being an entropy pair generated via $H$ generated by
kernels $H^\r$ and $H^\s$ and satisfying the convexity conditions in \eqref{eq:generator conditions ent ineq}.
Furthermore, since the invariant regions in Theorem \ref{big-thm-viscous} depend only
on the boundary data (which is the same for all $k$), then, for all $k \in \mathbb{N}$:
\begin{equation*}
    0 \leq \varrho(\bu_k(\x)) \leq \varrho_*, \quad q_{\cri} < q_* \leq |\bu_k(\x)| \leq q_{\cav}
    \qquad\,\, \text{for a.e.~}\x \in \mathscr{D}^k,
\end{equation*}
where we emphasize that $\varrho_*$ and $q_*$ are independent of $k$.

\smallskip
By extending functions $\bu_k$ by $q_*$ outside of $\mathscr{D}^k$,
then sequence $\{\bu_k\}_k$ is uniformly bounded such that the previous bounds hold almost everywhere
in $\mathbb{H}$.
This implies that there exists a subsequence (still denoted by) $\{\bu_k\}_k$
such that $\bu_k \overset{*}{\rightharpoonup} \bu$ in $L^\infty(\Omega)$.

\smallskip
Fix any $l \in \mathbb{N}$. The previous weak-* convergence implies that
$\bu_k \overset{*}{\rightharpoonup} \bu$ in $L^\infty(\mathscr{D}^l)$.
Theorem \ref{big-thm-3} implies that there exists a further subsequence (still denoted by)
$\{\bu_k\}_{k\in \mathbb{N}}$
such that $\bu_k \to \bu$ almost everywhere and strongly
in $L^p(\mathscr{D}^l)$ for all $p \in [1,\infty)$.
This mode of convergence suffices
to pass to the limit as $k\to\infty$ in \eqref{eq:weak form for global}--\eqref{eq:ent ineq for global},
where we emphasize that the test functions $\psi$ and $\varphi$ belong to $C^\infty_c(\mathscr{D}^l)$ for $l$ fixed.

\smallskip
We have therefore shown that, for any $l \in \mathbb{N}$,
$\psi \in C^\infty_c(\mathscr{D}_l)$, and nonnegative function
$\varphi \in C^\infty_c(\mathscr{D}^l)$,
relations \eqref{eq:weak form for global}--\eqref{eq:ent ineq for global}
are verified by $\bu$.
Since $\{\mathscr{D}^l\}_{l\in\mathbb{N}}$ is an increasing compact exhaustion of the half-plane $\mathbb{H}$,
we deduce that \eqref{eq:weak form for global}--\eqref{eq:ent ineq for global}
are verified by $\bu$ for any test function
$\psi \in C^\infty_c(\Omega)$ and any nonnegative test function $\varphi \in C^\infty_c(\Omega)$.
In addition, the weak normal trace condition follows similarly.

This completes the proof.
\end{proof}

\appendix
\section{Fundamental Solutions of the Tricomi--Keldysh Equation}\label{appendix:Glambda}

In this appendix, we present some properties of distribution $G_\lambda$ that is the fundamental solution
of the Tricomi--Keldysh equation (\textit{cf.}~\cite{gelfandshilov}). We also extend the classical definition
and properties of $G_\lambda$ to the case when $\lambda$ is a negative integer;
this is needed for the construction of the kernels in \S \ref{sec:reg kernel}--\S \ref{sec:sing kernel}.
Throughout this appendix, we use $\mathscr{F}$ to
denote the Fourier transform with respect to variable $s$.

\subsection{Classical definition of $G_\lambda$ and extension as a distribution}

Recall from the classical definition of $G_\lambda$ given in \eqref{eq:Glambda first def} that
\begin{equation*}
  G_0(\nu,s) = \mathds{1}_{[-k(\nu),k(\nu)]}(s), \quad G_1(\nu,s) = [k(\nu)^2 - s^2]_+, \quad  G_2(\nu,s) = [k(\nu)^2-s^2]_+^2.
\end{equation*}
An elementary computation yields
\begin{equation}\label{eq:FT G0 and G1}
    \begin{aligned}
      \hat{G}_0(\nu,\xi) = & \, \frac{2 \sin (\xi k(\nu))}{\xi}
        = \,  \sqrt{\pi} 2^{\frac{1}{2}} k(\nu)^\frac{1}{2} |\xi|^{-\frac{1}{2}}J_{\frac{1}{2}}(|\xi| k(\nu)),   \\
        \hat{G}_1(\nu,\xi) = & \, \frac{4 k(\nu)}{\xi^2} \big( \frac{\sin(\xi k(\nu))}{\xi k(\nu)} - \cos (\xi k(\nu)) \big)
          = \,  \sqrt{\pi} 2^{\frac{3}{2}} k(\nu)^{\frac{3}{2}}|\xi|^{-\frac{3}{2}} J_{\frac{3}{2}}(|\xi|k(\nu)), \\
       \hat{G}_2(\nu,\xi) = & \, \frac{16 k(\nu)^2}{\xi^3}\big( \frac{3 \sin(\xi k(\nu))}{\xi^2 k(\nu)^2}
         - \frac{3 \cos(\xi k(\nu))}{\xi k(\nu)} - \sin(\xi k(\nu)) \big)  \\
    =& \,  \,  2!\,2^{\frac{5}{2}}\sqrt{\pi}k(\nu)^{\frac{5}{2}} |\xi|^{-\frac{5}{2}}J_{\frac{5}{2}}(|\xi|k(\nu)),
    \end{aligned}
\end{equation}
where $\hat{G}_n(\nu,\xi)$ denotes the Fourier transform of $G_n$ with respect to
its second variable for $n=0,1,2$.
More generally (\textit{cf.}~\cite{gelfandshilov}), for $\lambda \in (-1,\infty)$,
\begin{equation}\label{eq:FT Glambda lambda bigger than -1}
    \hat{G}_\lambda(\nu,\xi) = c_{\lambda } k(\nu)^{\lambda+ \frac{1}{2}} |\xi|^{-\lambda -\frac{1}{2}} J_{\lambda +\frac{1}{2}}(|\xi|k(\nu))
\end{equation}
with $c_{\lambda} := \sqrt{\pi} 2^{\lambda+\frac{1}{2}} \Gamma(\lambda+1)$.

Note that all the functions on the right-hand sides
of \eqref{eq:FT G0 and G1}--\eqref{eq:FT Glambda lambda bigger than -1} are smooth
in variable $\xi$ and grow at most polynomially; that is,
they belong to space $\mathcal{S}'(\mathbb{R})$ of tempered distributions.
It is therefore natural to consider the tempered distribution
defined by
$\mathscr{F}^{-1}\{\sqrt{\pi}2^{-\frac{1}{2}}k(\nu)^{-\frac{1}{2}}|\xi|^{\frac{1}{2}}J_{-\frac{1}{2}}(|\xi|k(\nu))\}$
as a candidate for $G_{-1}$.
In fact, due to the recurrence relations for the Bessel functions of the first kind
and their asymptotic behavior, the Paley--Wiener--Schwartz Theorem implies that
this distribution is compactly supported.
Furthermore, recall the recurrence relation:
\begin{equation}
    \partial_z \hat{G}_\lambda(\nu,z) = -\frac{2\lambda+1}{z}\hat{G}_\lambda(\nu,z)
      + \frac{2\lambda}{z}k(\nu)^2 \hat{G}_{\lambda-1}(\nu,z) \qquad \mbox{for any $z \in \mathbb{C}$},
\end{equation}
which holds whenever $\lambda > 0$. We now define $G_{-2}$ from $G_{-1}$ by using the above:

\begin{definition}\label{def:defining G negative integer}
Fix $\nu>0$. We define
\begin{equation}\label{eq:G-1 def}
\begin{aligned}
G_{-1}(\nu,\cdot) :=& \,  \mathscr{F}^{-1}\{ \sqrt{\pi}2^{-\frac{1}{2}}k(\nu)^{-\frac{1}{2}}
  |\xi|^{\frac{1}{2}}J_{-\frac{1}{2}}(|\xi|k(\nu))\}, \\
G_{-2}(\nu,\cdot) :=& \,  -\frac{1}{2}k(\nu)^{-2}\mathscr{F}^{-1} \{\xi \partial_\xi \hat{G}_{-1}(\nu,\xi)
   - \hat{G}_{-1}(\nu,\xi) \}, \\
G_{-3}(\nu,\cdot) :=& \,  -\frac{1}{4}k(\nu)^{-2}\mathscr{F}^{-1} \{\xi \partial_\xi \hat{G}_{-2}(\nu,\xi)
-3 \hat{G}_{-2}(\nu,\xi) \}
\end{aligned}
\end{equation}
in the sense of $\mathcal{S}'(\mathbb{R})$.
\end{definition}

\begin{lemma}\label{lem:FT for G negative}
Fix $\nu>0$. The following identities hold:
\begin{equation}
\begin{aligned}
\hat{G}_{-1}(\nu,\xi) = & \, c_{-1} k(\nu)^{-\frac{1}{2}} |\xi|^{\frac{1}{2}}J_{-\frac{1}{2}}(|\xi| k(\nu))
= \frac{\cos(\xi k(\nu))}{k(\nu)}, \\
\hat{G}_{-2}(\nu,\xi) = & \, c_{-2} k(\nu)^{-\frac{3}{2}} |\xi|^{\frac{3}{2}}J_{-\frac{3}{2}}(|\xi| k(\nu))
=  \frac{1}{2}k(\nu)^{-3} \big( \xi k(\nu) \sin(\xi k(\nu)) + \cos(\xi k(\nu)) \big), \\
\hat{G}_{-3}(\nu,\xi) = & \, c_{-3} k(\nu)^{-\frac{5}{2}} |\xi|^{\frac{5}{2}}J_{-\frac{5}{2}}(|\xi| k(\nu)) \\
=& \,  \frac{1}{8}k(\nu)^{-5}\Big( 3 \cos (\xi k(\nu)) + 3 \xi k(\nu) \sin (\xi k(\nu))
- (\xi k(\nu))^2 \cos (\xi k(\nu)) \Big),
\end{aligned}
\end{equation}
where $c_{-n} = \frac{(-1)^{n-1}}{(n-1)!}\sqrt{\pi}2^{\frac{1}{2}-n}$ for $n=1,2,3$,
and the recurrence relation:
\begin{equation}\label{eq:jumping from -n-1 up to -n}
    \xi\partial_\xi \hat{G}_{-n}(\nu,\xi) = (2 n-1)\hat{G}_{-n}(\nu,\xi) - 2n k(\nu)^2 \hat{G}_{-n-1}(\nu,\xi),
\end{equation}
is understood in the sense of $\mathcal{S}'(\mathbb{R})$ for $n=1,2$.
Furthermore,  distributions $G_{-n}(\nu,\cdot)$, $n=1,2,3$, are in
space $\mathcal{E}'(\mathbb{R})$ of compactly supported distributions.
\end{lemma}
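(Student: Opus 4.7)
The plan is to establish each of the three identities for $\hat{G}_{-n}$, $n=1,2,3$, in sequence, verifying simultaneously the two claimed forms (Bessel and elementary-trigonometric) and the recurrence \eqref{eq:jumping from -n-1 up to -n}, and then to obtain the compact-support statement as a direct consequence of Paley--Wiener--Schwartz.

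First, for $\hat{G}_{-1}$, the Bessel form is built into Definition \ref{def:defining G negative integer}, so only the elementary form needs justification. I would recall the closed expression $J_{-1/2}(z) = \sqrt{2/(\pi z)}\cos z$, substitute into the definition, and simplify the $|\xi|$-dependence using $c_{-1}=\sqrt{\pi}\,2^{-1/2}$ to land on $k(\nu)^{-1}\cos(\xi k(\nu))$. Since this is a smooth, bounded function of $\xi$ with at most exponential growth in $|\mathrm{Im}\,\xi|$ of type $k(\nu)$, it extends to an entire function of exponential type, which will be used later.

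Next, for $\hat{G}_{-2}$, I would start from the recurrence defining $G_{-2}$ in \eqref{eq:G-1 def}, apply $\mathscr{F}$, and compute $\xi\partial_\xi \hat{G}_{-1}(\nu,\xi) - \hat{G}_{-1}(\nu,\xi)$ using the elementary form just obtained; this yields directly the trigonometric expression for $\hat{G}_{-2}$ stated in the lemma, up to the factor $-\tfrac{1}{2}k(\nu)^{-2}$. To match this with $c_{-2}k(\nu)^{-3/2}|\xi|^{3/2}J_{-3/2}(|\xi|k(\nu))$, I would invoke the classical identity $J_{-3/2}(z) = -\sqrt{2/(\pi z)}\bigl(\cos z/z + \sin z\bigr)$ and check that the constants align, using $c_{-2} = -\sqrt{\pi}\,2^{-3/2}$. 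The recurrence \eqref{eq:jumping from -n-1 up to -n} for $n=1$ is then merely a restatement of the definition of $G_{-2}$, once read on the Fourier side. The case $\hat{G}_{-3}$ is handled identically, starting from the definition, applying $\mathscr{F}$, differentiating the trigonometric expression for $\hat{G}_{-2}$, and matching against $J_{-5/2}(z) = \sqrt{2/(\pi z)}\bigl((3/z^2-1)\sin z + (3/z)\cos z\bigr)/(-1)^{\cdot}$ (being careful with the sign convention that produces $c_{-3}=\tfrac{1}{2}\sqrt{\pi}\,2^{-5/2}$).

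Finally, for the compact support statement, each of the elementary trigonometric expressions just derived is entire in $\xi$ and satisfies, for any $\xi \in \mathbb{C}$,
\begin{equation*}
|\hat{G}_{-n}(\nu,\xi)| \leq C_n(\nu)(1+|\xi|)^{2n-1} e^{k(\nu)|\mathrm{Im}\,\xi|} \qquad \mbox{for $n=1,2,3$}.
\end{equation*}
By the Paley--Wiener--Schwartz theorem, this forces $G_{-n}(\nu,\cdot) \in \mathcal{E}'(\mathbb{R})$ with $\supp G_{-n}(\nu,\cdot) \subset [-k(\nu),k(\nu)]$, which is both the compact-support claim and, as used in Proposition \ref{lem:huygens sing}, the sharper inclusion in the light cone $\mathcal{K}$. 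The main delicate point, as opposed to a conceptual obstacle, will be bookkeeping the signs and the powers of $2$ in the constants $c_{-n}$ so that the Bessel and trigonometric forms agree; no deep analytic step is needed beyond the classical Bessel-function closed forms and Paley--Wiener--Schwartz.
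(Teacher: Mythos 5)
Your plan matches the paper's proof in substance: recognize the $G_{-1}$ identity from the closed form $J_{-1/2}(z)=\sqrt{2/(\pi z)}\cos z$, derive the $G_{-2}$ and $G_{-3}$ expressions by computing directly from Definition \ref{def:defining G negative integer} and matching against the closed forms of $J_{-3/2}$ and $J_{-5/2}$, read off the recurrence \eqref{eq:jumping from -n-1 up to -n} as an algebraic restatement of the definitions on the Fourier side, and conclude compact support via Paley--Wiener--Schwartz. The only slip is the garbled closed form you wrote for $J_{-5/2}$ (it should be $J_{-5/2}(z)=\sqrt{2/(\pi z)}\bigl((3/z^2-1)\cos z+(3/z)\sin z\bigr)$, with $\cos$ and $\sin$ the other way round), but as you note yourself this is pure bookkeeping and does not affect the argument.
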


\begin{proof}
The first Fourier transform relation for $G_{-1}$ holds in view of the exact representation for
the Bessel function $J_{-\frac{1}{2}}$.
The final two relations hold by direct computation via
Definition \ref{def:defining G negative integer}
and the trigonometric representation for $\hat{G}_{-1}$,
followed by using the exact representations for the Bessel functions $J_{-\frac{3}{2}}$
and $J_{-\frac{5}{2}}$.
The recurrence relation is direct from Definition \ref{def:defining G negative integer}
and the fact that $G_{-n}$ is a compactly supported distribution is deduced
from the Fourier transform relations and the Paley--Wiener--Schwartz Theorem.
\end{proof}

\subsection{Properties of distribution $f_\lambda$}
\begin{definition}\label{def:little f def}
For $a \in \mathbb{R}\setminus \{0\}$ and $\zeta \in \mathcal{D}'(\mathbb{R})$,
define its rescaling $\zeta(\cdot a)$ by
\begin{equation*}
\langle \zeta(\cdot a),\varphi \rangle := a^{-1}\langle \zeta ,
\varphi(\cdot a^{-1}) \rangle \qquad\,\, \text{for all } \varphi \in \mathcal{D}(\mathbb{R}).
\end{equation*}
For $\nu>0$ and $n=-3,\dots,0,1,2$,
we define $f_{n} \in \mathcal{E}'(\mathbb{R})$ to be the $\nu$-independent rescalings:
\begin{equation*}
    f_{n} := k(\nu)^{-2n} G_{n}(\nu, \cdot k(\nu)),
\end{equation*}
which holds in the sense of distributions.
\end{definition}

Then we have

\begin{lemma}\label{lem:G hat in terms of f hat and f hat precise}
For $\nu>0$ and $n=-3,-2,-1,0,1,2$,
\begin{equation*}
    \hat{G}_{n}(\nu,\xi) = k(\nu)^{1+2n} \hat{f}_{n}(\xi k(\nu)) \qquad \text{in } \mathcal{S}'(\mathbb{R}),
\end{equation*}
and, for $n=1,2,3$,
\begin{equation*}
    \hat{f}_{-n}(\xi) = c_{-n}|\xi|^{n-\frac{1}{2}}J_{ -(n-\frac{1}{2})}(|\xi|)
    \qquad\,\mbox{for all $\xi \in \mathbb{C}$}.
\end{equation*}
In turn, for all $\xi \in \mathbb{R}$,
\begin{equation*}
 \begin{aligned}
 &\hat{f}_0(\xi) = \frac{2\sin \xi}{\xi}, \quad \hat{f}_{-1}(\xi) = \cos \xi, \\
 &\hat{f}_{-2}(\xi) = \frac{1}{2}\big( \xi \sin \xi + \cos \xi \big),
        \quad \hat{f}_{-3}(\xi) = \frac{1}{8}\big( 3 \cos \xi + 3 \xi \sin \xi - \xi^2 \cos \xi \big),\\
&\hat{f}_1(\xi) = \frac{4}{\xi^2}\big(\frac{\sin \xi}{\xi} - \cos \xi\big),
\quad
\hat{f}_2(\xi) = \frac{16 }{\xi^3}\big( \frac{3 \sin(\xi )}{\xi^2 } - \frac{3 \cos(\xi )}{\xi } - \sin(\xi) \big),
\end{aligned}
\end{equation*}
and $\hat{f}_{-2},\hat{f}_{-3} \in L^\infty_\loc(\mathbb{R})$,
while $\hat{f}_\lambda \in L^\infty(\mathbb{R})$ for all $\lambda \geq -1$.
\end{lemma}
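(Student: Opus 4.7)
The plan is to verify each assertion of the lemma by reducing to the explicit Fourier transforms already obtained for $G_n$ in \eqref{eq:FT G0 and G1} (for $n=0,1,2$) and Lemma \ref{lem:FT for G negative} (for $n=-3,-2,-1$), and then exploiting the scaling behavior of the Fourier transform.

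First I would establish the scaling identity $\hat{G}_n(\nu,\xi) = k(\nu)^{1+2n}\hat{f}_n(\xi k(\nu))$. Starting from Definition \ref{def:little f def}, we have $f_n = k(\nu)^{-2n}G_n(\nu,\cdot\, k(\nu))$ in $\mathcal{E}'(\mathbb{R})$. The Fourier transform of a dilation is standard: for any $\zeta \in \mathcal{S}'(\mathbb{R})$ and $a>0$, the rescaling $\zeta(\cdot a)$ as defined in Definition \ref{def:little f def} satisfies $\mathscr{F}\{\zeta(\cdot a)\}(\xi) = a^{-1}\hat{\zeta}(\xi/a)$; this is immediate for Schwartz functions by a change of variables in the Fourier integral, and extends to $\mathcal{S}'(\mathbb{R})$ by duality. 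Applying this with $a = k(\nu) > 0$ (recall $k(\nu)>0$ for $\nu \in (0,\nu_*]$ by Remark \ref{remark:k smooth on open}), one obtains $\hat{f}_n(\xi) = k(\nu)^{-2n}\cdot k(\nu)^{-1}\hat{G}_n(\nu,\xi/k(\nu))$, which upon substituting $\eta = \xi/k(\nu)$ yields the claimed identity.

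Second, I would derive the Bessel representation for $\hat{f}_{-n}$, $n=1,2,3$, by combining the first step with Lemma \ref{lem:FT for G negative}, which gives $\hat{G}_{-n}(\nu,\xi) = c_{-n}k(\nu)^{-(n-1/2)}|\xi|^{n-1/2}J_{-(n-1/2)}(|\xi|k(\nu))$. Substituting $\xi \mapsto \eta/k(\nu)$ and dividing by $k(\nu)^{1-2n}$, the $k(\nu)$ factors telescope exactly and yield $\hat{f}_{-n}(\eta) = c_{-n}|\eta|^{n-1/2}J_{-(n-1/2)}(|\eta|)$. The explicit trigonometric formulas for $\hat{f}_n$ with $n \in \{-3,\dots,2\}$ then follow either by substituting the closed-form expressions of $J_{\pm(n-1/2)}$ (which are elementary combinations of sines and cosines over $\sqrt{|z|}$), or more directly by dividing the explicit expressions of $\hat{G}_n$ in \eqref{eq:FT G0 and G1} and Lemma \ref{lem:FT for G negative} by $k(\nu)^{1+2n}$ and setting $\xi \mapsto \xi/k(\nu)$; the remaining $k(\nu)$ dependence cancels identically, confirming that $\hat{f}_n$ is genuinely $\nu$-independent.

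Finally, for the boundedness claims: for $\lambda \geq -1$, the half-integer Bessel function satisfies $J_{\lambda+1/2}(z) = O(z^{\lambda+1/2})$ as $z \to 0^+$ and $J_{\lambda+1/2}(z) = O(z^{-1/2})$ as $z \to \infty$, whence $|\xi|^{-\lambda-1/2}|J_{\lambda+1/2}(|\xi|)|$ is bounded on $\mathbb{R}$ and $\hat{f}_\lambda \in L^\infty(\mathbb{R})$. For $\hat{f}_{-2}$ and $\hat{f}_{-3}$, the explicit trigonometric expressions involve polynomial factors $\xi\sin\xi$ and $\xi^2\cos\xi$ that are continuous but unbounded at infinity, so these distributions are only locally bounded. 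The main (and essentially only) conceptual point to take care of is ensuring that the scaling identity established in the first step is valid in $\mathcal{S}'(\mathbb{R})$ and not merely pointwise, which is clear from the compactness of support of $G_n(\nu,\cdot)$ (by Lemmas \ref{lem:huygens reg} and \ref{lem:huygens sing}) and the stated duality definition; the rest is routine algebra.
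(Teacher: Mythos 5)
Your proposal is correct and follows essentially the same approach as the paper, which dismisses the verification as ``direct'' from Definition \ref{def:little f def} and Lemma \ref{lem:FT for G negative}; you simply carry out the scaling computation explicitly (the dilation rule for the Fourier transform, the cancellation of $k(\nu)$ powers, and the Bessel asymptotics for the boundedness claims), all of which is accurate.
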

These assertions follow directly from Definition \ref{def:little f def} and Lemma \ref{lem:FT for G negative}.

\smallskip
We recall the following standard result.
\begin{lemma}\label{lemma:flambda relations}
Let $l>-1$. Then
\begin{equation}
    \hat{f}_{l}''(z) + \hat{f}_{l}(z) = \hat{f}_{l+1}(z) = -\frac{2(l+1)}{z}\hat{f}_{l}'(z)
    \qquad\,\,\mbox{for all $z \in \mathbb{C}\setminus\{0\}$}.
\end{equation}
Furthermore, if $l>0$, then
\begin{equation}
    \hat{f}'_{l}(z) = - \frac{2l+1}{z}\hat{f}_{l}(z) + \frac{2l}{z}\hat{f}_{l-1}(z)
    \qquad \mbox{for all $z \in \mathbb{C}$}.
\end{equation}
\end{lemma}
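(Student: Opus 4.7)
The plan is to derive all three identities directly from the Bessel function representation of $\hat{f}_l$ established in Lemma \ref{lem:G hat in terms of f hat and f hat precise}. Specifically, setting $\nu = l+\tfrac{1}{2}$ and $u_\nu(z) := z^{-\nu}J_\nu(z)$, we have $\hat{f}_l(z) = c_l\, u_\nu(z)$ with $c_l = \sqrt\pi\, 2^{l+\frac{1}{2}} \Gamma(l+1)$, so all identities reduce to classical recurrences for the Bessel functions $J_\nu$.

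First, I would establish $\hat{f}_{l+1}(z) = -\frac{2(l+1)}{z}\hat{f}_l'(z)$. Using the standard identity $\frac{d}{dz}[z^{-\nu}J_\nu(z)] = -z^{-\nu}J_{\nu+1}(z)$, one computes $\hat{f}_l'(z) = -c_l\, z^{-\nu}J_{\nu+1}(z)$. Noting that $c_{l+1} = 2(l+1)\, c_l$ (from $\Gamma(l+2)=(l+1)\Gamma(l+1)$), we identify $c_l\, z^{-\nu}J_{\nu+1}(z) = \frac{z}{2(l+1)}\hat{f}_{l+1}(z)$, which yields the stated relation. Next, for the identity $\hat{f}_l'' + \hat{f}_l = \hat{f}_{l+1}$, I would substitute the expression for $\hat{f}_l$ into its second derivative, then eliminate $J_\nu''$ via Bessel's equation $z^2 J_\nu'' + z J_\nu' + (z^2-\nu^2)J_\nu = 0$, and finally apply $J_\nu' = \frac{\nu}{z}J_\nu - J_{\nu+1}$; after cancellation of the $z^{-\nu-2}J_\nu$ terms, one is left with $(2\nu+1)z^{-\nu-1}J_{\nu+1}$, which matches $\hat{f}_{l+1}$ upon multiplying by $c_l$ and using $c_{l+1}=(2\nu+1)c_l = 2(l+1)c_l$.

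For the second display, valid for $l>0$, I would start from $u_\nu'(z) = -\nu z^{-\nu-1}J_\nu(z) + z^{-\nu}J_\nu'(z)$ and eliminate $J_\nu'$ using the recurrence $J_\nu' = J_{\nu-1} - \frac{\nu}{z}J_\nu$. This produces $u_\nu'(z) = -\frac{2\nu}{z}u_\nu(z) + z^{-\nu}J_{\nu-1}(z)$, and rewriting the last term as $z^{-1}\cdot z^{-\nu+1}J_{\nu-1}(z)$ relates it to $\hat{f}_{l-1}(z)/c_{l-1}$. Multiplication by $c_l$ and the ratio $c_l/c_{l-1} = 2l$ produce exactly the claimed formula.

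The only genuine subtlety is the assertion that the second identity holds for all $z\in\mathbb{C}$ (not just $z\neq 0$), since the right-hand side has apparent $1/z$ singularities. I would address this at the end by invoking the entire-function expansion $J_\nu(z)=(z/2)^\nu\sum_{k\geq0}\frac{(-1)^k}{k!\,\Gamma(\nu+k+1)}(z/2)^{2k}$, which shows $\hat{f}_l$ is entire and even, with $\hat{f}_l(0) = \sqrt{\pi}\,\Gamma(l+1)/\Gamma(l+\tfrac{3}{2})$. Using the duplication-type identity $\Gamma(l+\tfrac{3}{2}) = (l+\tfrac{1}{2})\Gamma(l+\tfrac{1}{2})$ together with $\Gamma(l+1)=l\Gamma(l)$, the combination $-(2l+1)\hat{f}_l(0) + 2l\,\hat{f}_{l-1}(0)$ vanishes, so the pole at the origin cancels and the identity extends to $z=0$ by continuity (hence to all of $\mathbb{C}$ by analytic continuation). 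This cancellation at $0$ is the one spot requiring care; everything else is a direct translation of the Bessel recurrences.
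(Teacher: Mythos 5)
Your proof is correct, and it fills a gap the paper leaves open: the paper introduces Lemma \ref{lemma:flambda relations} with the phrase ``We recall the following standard result'' and supplies no argument, implicitly deferring to the classical Bessel-function identities (e.g.~in \cite{gelfandshilov}). Your derivation via the representation $\hat{f}_l(z) = c_l\, z^{-(l+1/2)}J_{l+1/2}(z)$, the recurrence $\frac{d}{dz}[z^{-\nu}J_\nu] = -z^{-\nu}J_{\nu+1}$, Bessel's ODE, and the ratio $c_{l+1}/c_l = 2(l+1)$ is exactly the standard route, and your careful check that $-(2l+1)\hat f_l(0) + 2l\,\hat f_{l-1}(0)=0$ via $\Gamma(l+\tfrac{3}{2})=(l+\tfrac12)\Gamma(l+\tfrac12)$ correctly justifies the ``for all $z\in\mathbb{C}$'' claim in the second display, which is the one step that genuinely needed a verification rather than a citation.
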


We have the following recurrence formulas for $\hat{f}_{-n}(\xi)$,
which mirror those of the previous lemma.

\begin{lemma}\label{lem:f hat relations}
For all $\xi \in \mathbb{C}\setminus \{0\}$, the following relations hold{\rm :}
\begin{equation*}
    \begin{aligned}
        &\hat{f}_{-1}''(\xi) + \hat{f}_{-1}(\xi) = 0, \\
        &\hat{f}_{-j}''(\xi) + \hat{f}_{-j}(\xi) = \hat{f}_{-j+1}(\xi) = \frac{2(j-1)}{\xi}\hat{f}_{-j}'(\xi)
        \qquad \text{for $j=0,2,3$},
    \end{aligned}
\end{equation*}
and
\begin{equation*}
    \begin{aligned}
    &\hat{f}_{-1}'(\xi) = -\frac{\xi}{2}\hat{f}_0(\xi),\\
    &\hat{f}_1'(\xi) = -\frac{3}{\xi}\hat{f}_1(\xi) + \frac{2}{\xi}\hat{f}_0(\xi), \\
            &\hat{f}_0'(\xi) = -\frac{1}{\xi}\hat{f}_0(\xi) + \frac{2}{\xi}\hat{f}_{-1}(\xi), \\
            &\hat{f}_{-j}'(\xi) = \frac{2j-1}{\xi}\hat{f}_{-j}(\xi) - \frac{2j}{\xi}\hat{f}_{-j-1}(\xi)
            \quad \text{for $j=1,2$}.
    \end{aligned}
\end{equation*}
\end{lemma}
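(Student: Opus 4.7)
The plan is to verify each identity by direct computation, using the explicit closed-form expressions for $\hat{f}_{-3}, \hat{f}_{-2}, \hat{f}_{-1}, \hat{f}_0, \hat{f}_1, \hat{f}_2$ recorded in Lemma~\ref{lem:G hat in terms of f hat and f hat precise}. Since each of these is a finite combination of $\cos\xi$, $\sin\xi$, and inverse powers of $\xi$, all of the claimed recurrences will reduce to elementary trigonometric identities on $\mathbb{C}\setminus\{0\}$.

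First I would dispatch the first-order relations, which serve as the backbone of the proof. For $\hat{f}_{-1}(\xi)=\cos\xi$ one immediately has $\hat{f}_{-1}'(\xi)=-\sin\xi$, whereas $-\tfrac{\xi}{2}\hat{f}_0(\xi)=-\tfrac{\xi}{2}\cdot\tfrac{2\sin\xi}{\xi}=-\sin\xi$, yielding $\hat{f}_{-1}'(\xi)=-\tfrac{\xi}{2}\hat{f}_0(\xi)$ at once. The identities for $\hat{f}_0'$, $\hat{f}_1'$, and $\hat{f}_{-j}'$ ($j=1,2$) are verified analogously: differentiate the explicit expression on the left, then read off the right-hand side from the closed-form formulas for $\hat{f}_{-j\pm1}$ and $\hat{f}_{-j}$, matching coefficients.

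Next, for the second-order relations $\hat{f}_{-j}''(\xi)+\hat{f}_{-j}(\xi)=\hat{f}_{-j+1}(\xi)=\tfrac{2(j-1)}{\xi}\hat{f}_{-j}'(\xi)$ for $j=0,2,3$ (and the degenerate case $\hat{f}_{-1}''+\hat{f}_{-1}=0$, which is immediate from $\hat{f}_{-1}=\cos\xi$), the cleanest route is to use the first-order recurrences already proved. Specifically, from $\hat{f}_{-j}'=\tfrac{2j-1}{\xi}\hat{f}_{-j}-\tfrac{2j}{\xi}\hat{f}_{-j-1}$ I solve for $\hat{f}_{-j-1}$ in terms of $\hat{f}_{-j}$ and $\hat{f}_{-j}'$, differentiate once more, and simplify to reach $\hat{f}_{-j}''+\hat{f}_{-j}$; the middle equality $\hat{f}_{-j+1}=\tfrac{2(j-1)}{\xi}\hat{f}_{-j}'$ is then obtained by the same manoeuvre with indices shifted by one. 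Alternatively, recalling from Lemma~\ref{lem:G hat in terms of f hat and f hat precise} that $\hat{f}_{-n}(\xi)=c_{-n}|\xi|^{n-\tfrac{1}{2}}J_{-(n-\tfrac{1}{2})}(|\xi|)$, one can deduce every identity as a specialization of the standard contiguous and differentiation relations for Bessel functions of the first kind of half-integer order, together with Bessel's equation; this mirrors the proof of Lemma~\ref{lemma:flambda relations} and reinforces the symmetry between the $l>-1$ and $l<0$ cases.

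I expect no conceptual obstacle: the argument is entirely computational, amounting to sign and coefficient bookkeeping. The only mildly delicate step will be verifying the relation involving $\hat{f}_{-3}$, whose closed form carries three trigonometric terms and requires careful grouping of powers of $\xi$ after a second differentiation; however, this too collapses to elementary algebra once one organises the computation around the recurrence \eqref{eq:jumping from -n-1 up to -n} established in Lemma~\ref{lem:FT for G negative}.
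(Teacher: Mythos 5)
Your proposal is correct and matches the paper's approach: the paper states only that ``These formulas follow from direct calculations,'' and your plan of differentiating the explicit trigonometric closed forms from Lemma~\ref{lem:G hat in terms of f hat and f hat precise} and matching coefficients is exactly that direct calculation, spelled out. The alternative route you mention via the half-integer Bessel representation and the standard contiguous relations is also sound and parallels the justification of Lemma~\ref{lemma:flambda relations}, but it is not a different argument in substance.
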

These formulas follow from direct calculations.

In turn, we deduce the following recurrence relations for distributions $G_{-n}$.

\begin{lemma}\label{lem:G0 to G negatives}
For $n=1,2$, the following relations hold in the sense of distributions{\rm :}
\begin{equation}
    \begin{aligned}
       &\partial_s  G_{-n}(\nu,s ) = 2n s  G_{-n-1}(\nu,s ), \\
       &\partial_\nu G_{-n}(\nu,s ) = -2n k'(\nu)k(\nu) G_{-n-1}(\nu,s ), \\
       &(k(\nu)^2-s ^2)G_{-n-1}(\nu,s ) =  G_{-n}(\nu,s ),
    \end{aligned}
\end{equation}
while
$\partial_s G_n(\nu,s) = -2s G_{n-1}(\nu,s)$ for $n=0,1$, $\partial_\nu G_0(\nu,s )
 = 2k'(\nu)k(\nu)G_{-1}(\nu,s )$ and $(k(\nu)^2-s ^2)G_{-1}(\nu,s )=0$ in $\mathcal{S}'(\mathbb{R})$.
Finally, the following relations hold in the sense of distributions{\rm :}
\begin{equation}\label{eq:two derivs formulas}
    \begin{aligned}
        &\partial^2_s G_1(\nu,s) =  -6 G_0(\nu,s) + 4k(\nu)^2 G_{-1}(\nu,s), \\
        &\partial_s ^2 G_{0}(\nu,s ) = 2 G_{-1}(\nu,s ) - 4 k(\nu)^2 G_{-2}(\nu,s ), \\
        &\partial_s ^2 G_{-1}(\nu,s ) = - 6 G_{-2}(\nu,s ) + 8 k(\nu)^2 G_{-3}(\nu,s ),
    \end{aligned}
\end{equation}
so that $\supp G_{-n}(\nu,\cdot) \subset [-k(\nu),k(\nu)]$ for $n=1,2,3$, and $\nu>0$.
\end{lemma}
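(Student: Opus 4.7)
The strategy is to convert every identity to Fourier side and verify it against the explicit trigonometric representations of $\hat{G}_{-n}(\nu,\xi)$ for $n=1,2,3$ provided by Lemma \ref{lem:FT for G negative}, together with the elementary rules $\widehat{\partial_s f}(\xi) = i\xi\hat f(\xi)$ and $\widehat{s f}(\xi) = i\partial_\xi \hat f(\xi)$ in $\mathcal{S}'(\mathbb{R})$. The identities for $n \ge 0$, where $G_n(\nu,\cdot)$ is classical, are handled directly: $\partial_s[k^2-s^2]_+^n = -2ns[k^2-s^2]_+^{n-1}$ holds pointwise for $n=1$ and in the distributional sense for $n=0$ since $\partial_s \mathds{1}_{[-k,k]} = \delta_{-k}-\delta_k$, which equals $-2sG_{-1}(\nu,s) = -\frac{s}{k}(\delta_{-k}+\delta_k) = \delta_{-k}-\delta_k$ after using $s\delta_{\pm k}=\pm k\delta_{\pm k}$.

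For the remaining first-order identities I would proceed case-by-case as follows. To check $\partial_s G_{-n}(\nu,s) = 2ns G_{-n-1}(\nu,s)$ for $n=1,2$, take the Fourier transform of both sides: the LHS becomes $i\xi\hat G_{-n}$, the RHS becomes $2ni\,\partial_\xi \hat G_{-n-1}$, and one verifies equality by differentiating the explicit expressions for $\hat G_{-n-1}$ of Lemma \ref{lem:FT for G negative}. The same mechanical computation handles $\partial_\nu G_{-n} = -2nk'(\nu)k(\nu)G_{-n-1}$, now differentiating the Fourier representations in $\nu$ and observing that $\partial_\nu \hat G_{-n} = -2nk'k\,\hat G_{-n-1}$ holds identically as a trigonometric identity. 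For the multiplication identity $(k^2-s^2)G_{-n-1} = G_{-n}$, one computes $\widehat{s^2 G_{-n-1}} = -\partial_\xi^2 \hat G_{-n-1}$ and combines it with $k^2 \hat G_{-n-1}$ to recover $\hat G_{-n}$; the edge case $(k^2-s^2)G_{-1} = 0$ amounts to $-\partial_\xi^2(\cos(\xi k)/k) + k\cos(\xi k) = 0$.

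Once the first-order formulas are in hand, the second-order identities \eqref{eq:two derivs formulas} follow purely algebraically: iterating $\partial_s G_{-1} = 2sG_{-2}$ gives $\partial_s^2 G_{-1} = 2G_{-2} + 2s\partial_s G_{-2} = 2G_{-2} + 8s^2 G_{-3}$, and substituting $s^2G_{-3} = k^2 G_{-3} - G_{-2}$ from the multiplication identity yields $\partial_s^2 G_{-1} = -6G_{-2} + 8k^2 G_{-3}$; the other two formulas are obtained by the same one-line manipulation. Finally, the support assertion $\supp G_{-n}(\nu,\cdot) \subset [-k(\nu),k(\nu)]$ follows from the Paley--Wiener--Schwartz theorem applied to the explicit representations in Lemma \ref{lem:FT for G negative}: each $\hat G_{-n}(\nu,\xi)$ extends to an entire function of $\xi\in\mathbb{C}$ of exponential type exactly $k(\nu)$ with polynomial growth on the real axis, forcing the Fourier inverse to be a compactly supported distribution contained in $[-k(\nu),k(\nu)]$.

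The computations themselves are routine; the only genuine subtlety lies in the $n=0$ transition, where one must reconcile the boundary delta masses appearing in $\partial_s G_0$ and in $G_{-1} = \frac{1}{2k}(\delta_{-k}+\delta_k)$, and verify that the formal multiplication $s\cdot\delta_{\pm k}$ and the Fourier-side derivation agree. Once this boundary case is carefully treated, every remaining identity reduces to differentiating or multiplying by $\xi$ the closed-form trigonometric polynomials appearing in Lemma \ref{lem:FT for G negative}, and so I expect no further obstacle.
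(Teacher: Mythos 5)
Your proposal is correct and follows essentially the same route as the paper: pass to the Fourier side, verify the first-order identities against the explicit trigonometric forms of Lemma \ref{lem:FT for G negative} using $\mathscr{F}\{s\varphi\}=i\partial_\xi\hat\varphi$ and $\mathscr{F}\{s^2\varphi\}=-\partial_\xi^2\hat\varphi$, and deduce the second-order identities \eqref{eq:two derivs formulas} purely algebraically from the first-order recurrences together with $(k^2-s^2)G_{-n-1}=G_{-n}$. The one place you diverge is the final support statement. You appeal to Paley--Wiener--Schwartz, which requires the sharp exponential-type estimate $|\hat G_{-n}(\nu,\xi)|\lesssim(1+|\xi|)^N e^{k(\nu)|\mathrm{Im}\,\xi|}$ for complex $\xi$; this is true for the trigonometric polynomials in Lemma \ref{lem:FT for G negative} but should be verified if you take this path. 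The paper instead bootstraps from \eqref{eq:two derivs formulas}: solving $\partial_s^2 G_1=-6G_0+4k(\nu)^2G_{-1}$ for $G_{-1}$ shows its support lies inside $\supp G_1 \cup \supp G_0 = [-k(\nu),k(\nu)]$, and the remaining two formulas cascade the conclusion down to $G_{-2}$ and $G_{-3}$. The paper's route is slightly more economical because it reuses what has just been established and avoids any complex-analytic input beyond the $\mathcal{E}'(\mathbb{R})$-membership already obtained in Lemma \ref{lem:FT for G negative}; both arguments are valid.
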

\begin{proof}
We use the fact that the Fourier transform $\mathscr{F}:\mathcal{S}'(\mathbb{R}) \to \mathcal{S}'(\mathbb{R})$
is an automorphism to compute the derivatives via the formulas:
\begin{equation*}
    \begin{aligned}
        \partial_s  G_{-n}(\nu,s ) &= \mathscr{F}^{-1}\{ \i \xi \hat{G}_{-n}(\nu,\xi)\} \}, \\
        \partial_\nu G_{-n}(\nu,s ) &= \mathscr{F}^{-1}\{ \partial_\nu \hat{G}_{-n}(\nu,\xi)\} \}.
    \end{aligned}
\end{equation*}
The first and second equalities follow then from Lemma \ref{lem:FT for G negative}
and the fact that $\mathscr{F}\{x \varphi(x)\} = \i \hat{\varphi}'(\xi)$
for all $\varphi \in \mathcal{S}(\mathbb{R})$.
Recall also that $\mathscr{F}\{x^2 \varphi(x)\} = -\hat{\varphi}''(\xi)$
for all $\varphi \in \mathcal{S}(\mathbb{R})$, whence
\begin{equation*}
    \begin{aligned}
        \mathscr{F}\{(k(\nu)^2 - s ^2) G_{-2}(\nu,s ) \} \! = \! k(\nu)^2 \hat{G}_{-2}(\nu,s ) + \partial_\xi^2 \hat{G}_{-2}(\nu,\xi)\! =
        \! \frac{\cos(\xi k(\nu))}{k(\nu)}
        =\hat{G}_{-1}(\nu,\xi).
    \end{aligned}
\end{equation*}
We deduce that $(k(\nu)^2-s ^2)G_{-2}(\nu,s ) = G_{-1}(\nu,s )$ in the sense of distributions,
since the Fourier transform is an automorphism of $\mathcal{S}'(\mathbb{R})$.
An identical strategy shows the results for $G_{-3}$ and $G_{-1}$,
which verifies the third and fourth equalities.
For the final part, we employ the recurrence relations proved earlier to obtain
\begin{equation*}
    \begin{aligned}
        \partial_s ^2 G_{-1}(\nu,s) &= 2 G_{-2}(\nu,s ) + 2 s  \partial_s  G_{-2}(\nu,s ) \\
        &= 2 G_{-2}(\nu,s ) - 8 (k(\nu)^2-s ^2) G_{-3}(\nu,s ) + 8 k(\nu)^2 G_{-3}(\nu,s ).
    \end{aligned}
\end{equation*}
Then the result follows by using the same strategy to obtain analogous relations
for $\partial^2_s G_0$ and $\partial^2_s G_1$, and $\partial_sG_1(\nu,s) = -2 s G_0(\nu,s)$.
The relation regarding the support of the distributions is clear from the formulas
in \eqref{eq:two derivs formulas} and
the fact that $\supp G_1(\nu,\cdot)=\supp G_0(\nu,\cdot) = [-k(\nu),k(\nu)]$.
\end{proof}

\vspace{0.5cm}

\noindent
\textbf{Acknowledgements.}
The research of Gui-Qiang G. Chen was supported in part by the UK Engineering and Physical
Sciences Research Council (EPSRC) Awards EP/L015811/1, EP/V008854, and EP/V051121/1.
The research of Tristan P. Giron and Simon M. Schulz was supported in part
by the EPSRC Grant EP/ L015811/1.
Simon M. Schulz also acknowledges support from Centro di Ricerca Matematica Ennio De Giorgi.

\vspace{0.3cm}

\noindent \textbf{Statements and Declarations.} The authors declare no conflicts of interest.


\begin{thebibliography}{99}

\bibitem{Ball}
\textsc{Ball, J.~M.}, A version of the fundamental theorem for Young measures,
In: {\it PDEs and Continuum Models of Phase Transitions $($Nice, 1988$)$}, 207--215,
\rm Lecture Notes in Phys.~\rm \textbf{344}, Springer: Berlin, 1989.

\bibitem{berg}\textsc{Berg, P.~W.}, The existence of subsonic Helmholtz flows of a compressible fluid, \it Comm.
Pure Appl. Math. \rm \textbf{15} (1962), 289--347.

\bibitem{BersSonic}\textsc{Bers, L,}, \it Mathematical Aspects of Subsonic and Transonic Gas Dynamics,
\rm New York: John Wiley \& Sons: New York, 1958.

\bibitem{chen-lax-fried}
\textsc{Chen, G.-Q.}, Convergence of the Lax--Friedrichs scheme for isentropic gas dynamics (III),
{\it Acta Math. Sci.} {\bf 6B} (1986), 75--120 (in English); {\bf 8A} (1988), 243--276 (in
		Chinese).


\bibitem{chencompcomp}
\textsc{Chen, G.-Q.}, The compensated compactness method and the system of isentropic gas dynamics,
Preprint MSRI-00527-91, \it Math. Sci. Res. Inst., \rm Berkeley, CA, 1990.

\bibitem{C4}
\textsc{Chen, G.-Q.}, Hyperbolic systems of conservation laws with a symmetry,
{\it Comm. Partial Diff. Eqs.} \textbf{16} (1991), 1461--1487.

\bibitem{ChenDafermosSlemrodWang}
\textsc{Chen, G.-Q., Dafermos, C., Slemrod, M., Wang, D.,}
On two-dimensional sonic-subsonic flow,
\it Commun. Math. Phys. \rm \textbf{271} (2007), 635--647.


\bibitem{ChenFeldman}\textsc{Chen, G.-Q., Feldman, M.},
\it The Mathematics of Shock Reflection-Diffraction and von Neumann’s Conjectures,
\rm Annals of Mathematics Studies \textbf{197}, Princeton University Press: Princeton, NJ, 2018.

\bibitem{ChenFrid}
\textsc{Chen, G.-Q., Frid, H.}, Divergence‐measure fields and hyperbolic conservation laws,
\it Arch. Ration. Mech. Anal. \rm \textbf{147} (1999), 89--118.


\bibitem{ChenLeFloch}
\textsc{Chen, G.-Q., LeFloch, P.~G.,}
Compressible Euler equations with general pressure law,
\it Arch. Ration. Mech. Anal. \rm \textbf{153} (2000), 221--259.




\bibitem{gq-transonic}
\textsc{Chen, G.-Q., Slemrod, M., Wang, D.,}
Vanishing viscosity method for transonic flow,
\it Arch. Ration. Mech. Anal. \rm \textbf{189} (2008), 159--188.

\bibitem{gq-isometric}\textsc{Chen, G.-Q., Slemrod, M., Wang, D.,}
Isometric immersions and compensated compactness,
\it Commun. Math. Phys. \rm \textbf{294(2)} (2010), 411--437.





\bibitem{courantfried}
\textsc{Courant, R., Friedrichs, K.~O.}, \it Supersonic Flow and Shock Waves, \rm Springer: New York, 1962.




\bibitem{dafermos-book}
\textsc{Dafermos, C.~M.}, \it Hyperbolic Conservation Laws in Continuum Physics
\rm (4th Edition), Springer--Verlag: New York, 2016.






\bibitem{dingchenluo1}
\textsc{Ding, X., Chen, G.-Q., Luo, P.}, Convergence of the Lax--Friedrichs scheme
for isentropic gas dynamics (I)--(II), \it Acta Math. Sci. \rm \textbf{5} (1985), 483--500 \& 501--540 (in English);
\textbf{7} (1987), 467--480, \& \textbf{8} (1989), 61--94 (in Chinese).

\bibitem{diperna2}
\textsc{DiPerna, R.~J.},
Convergence of approximate solutions to conservation laws,
  \it Arch. Ration. Mech. Anal.
 \rm \textbf{82}
  (1983), 27--70.

\bibitem{diperna}
\textsc{DiPerna, R.~J.},
Convergence of the viscosity method for isentropic gas
              dynamics,
 \it Commun. Math. Phys.
\rm \textbf{91}
 (1983), 1--30.

\bibitem{evans weak conv}
\textsc{Evans, L.~C.}, \it Weak Convergence Methods for Nonlinear Partial Differential Equations,
\rm CBMS Regional Conference Series in Mathematics \textbf{74}, American Mathematical Society: Providence, RI (1990).


\bibitem{frankl}\textsc{Frankl, F.~I.}, On a new boundary-value problem
for the equation $y \frac{\partial^2 z}{\partial x^2}
+ \frac{\partial^2 z}{\partial y^2} = 0$, \it Uch.~Zap.~Mosk.~Univ. \rm \textbf{152} (1951).


\bibitem{gelfandshilov}\textsc{Gel'fand, I.~M., Shilov, G.~E.},
\textit{Generalized Functions}, Vol.~\textbf{1}: \textit{Properties and Operations},
Academic Press: New York, NY (1964).

\bibitem{glimm scheme}
\textsc{Glimm, J.}, Solutions in the large for nonlinear hyperbolic systems of equations,
\it Comm. Pure Appl. Math. \rm \textbf{18} (1965), 697--715.




\bibitem{MorawetzFeimin}\textsc{Huang, F., Wang, T., Wang, Y.},
On multi-dimensional sonic-subsonic flow, \it Acta Math.~Sci.~\rm \textbf{31} (2011), 2131--2140.


\bibitem{landau}
\textsc{Landau, L.~D., Lifshitz, E.~M.},
\it Fluid Mechanics
\rm (translated from the Russian by J.~B.~Sykes and W.~H.~Reid), Addison--Wesley Publishing Co.:
Reading, MA, 1959.


\bibitem{lax overview}
\textsc{Lax, P.~D.}, Hyperbolic systems of conservation laws II,
\it Comm. Pure Appl. Math. \rm \textbf{10} (1957), 537--566.


\bibitem{lax book}
\textsc{Lax, P.~D.}, \it Hyperbolic Systems of Conservation Laws and the Mathematical Theory of Shock Waves,
\rm CBMS Regional Conf. Series \textbf{11}, Society for Industrial and Applied Mathematics, Philadelphia, PA, 1973.

\bibitem{lax}
\textsc{Lax, P.~D.}, Shock waves and entropy, In: {\it Contributions to Nonlinear Functional Analysis},
\rm ed. E.~A.~Zarantonello, pp. 603--634, Academic Press: New York, 1971.


\bibitem{lieberman}
\textsc{Lieberman, G.~M.},
Mixed boundary value problems for elliptic and parabolic differential equations of second order,
\it J. Math. Anal. Appl. \rm \textbf{113} (1986), 422--440.

\bibitem{lions-perthame-souganidis}
{\sc Lions, P.-L., Perthame, B., Souganidis, P.~E.}, {Existence and
  stability of entropy solutions for the hyperbolic systems of isentropic gas
  dynamics in {E}ulerian and {L}agrangian coordinates},
  \it Comm. Pure Appl. Math. \rm \textbf{49} (1996), 599--638.

  \bibitem{lions-perthame-tadmor}
\textsc{Lions, P.-L., Perthame, B., Tadmor, E.}, Kinetic formulation of the isentropic gas dynamics
and $p$-systems, \it Commun. Math. Phys. \rm \textbf{163} (1994), 415--431.


\bibitem{Loewner}
\textsc{Loewner, C.}, Conservation laws in compressible fluid flow and associated mappings,
\it J.~Ration.~Mech.~Anal. \rm \textbf{2} (1953), 537--561.

\bibitem{morawetzAlternative}
\textsc{Morawetz, C.~S.}, An alternative proof of DiPerna’s theorem,
\it Comm. Pure Appl. Math. \rm \textbf{44} (1991), 1081--1090.


\bibitem{morawetz58}
\textsc{Morawetz, C.~S.}, On the non-existence of continuous transonic
flows past profiles III,
\it Comm.~Pure Appl.~Math. \rm \textbf{11} (1958), 129–-144.

\bibitem{morawetz85}
\textsc{Morawetz, C.~S.},
On a weak solution for a transonic flow problem,
 \it Comm. Pure Appl. Math.
 \rm \textbf{38} (1985), 797--817.

\bibitem{morawetz95}
\textsc{Morawetz, C.~S.},
On steady transonic flow by compensated compactness,
\it Methods Appl. Anal.
\rm \textbf{2} (1995), 257--268.

\bibitem{muratsmf}
  \textsc{Murat, F.},
 Compacit\'{e} par compensation,
 \it Analyse Non Convexe $($Proc. Colloq., Pau, 1977$)$,
 \rm Bull. Soc. Math. France M\'{e}m. \textbf{60} (1979), 125--127.

 \bibitem{muratcone}
 \textsc{Murat, F.}, L'injection du c\^{o}ne positif de $H^{-1}$ dans $W^{-1,q}$ est compacte pour tout
$q < 2$, \it J.~Math.~Pures Appl.~\rm \textbf{60} (1981), 309--322.

\bibitem{perthame book}\textsc{Perthame, B.,} {\it Kinetic Formulation of Conservation Laws},  Oxford Lecture
Series in Mathematics and its Applications \textbf{21}, Oxford University Press: Oxford
(2002).

\bibitem{PT}
\textsc{Perthame, B., Tzavaras, A.~E.},
Kinetic formulation for systems of two conservation laws and elastodynamics,
{\it Arch. Ration. Mech. Anal.} \textbf{155} (2000), 1--48.

\bibitem{SS}
\textsc{Schrecker, M., Schulz, S.,}
{Inviscid limit of the compressible Navier-Stokes equations for asymptotically isothermal gases},
{\it J. Differential Equations}, \textbf{269} (2020), 8640--8685.


\bibitem{thesis}\textsc{Schulz, S.~M.,}
\it Compensated compactness methods in the study of compressible fluid flow,
\rm PhD thesis, University of Oxford: Oxford, 2020.

\bibitem{SS0}
\textsc{Serre, D.},
La compacit\`{e} par compensation pour les systemes hyperboliques non lin\'{e}aires
de deux \'{e}quations une dimension d'espace,
{\it J. Math. Pure Appl.} (9), \textbf{65} (1986), 423--468.

\bibitem{SerreBook2}\textsc{Serre, D.}, \it Systems of Conservation Laws 2: Geometric Structures,
Oscillations, and Initial-Boundary Value Problems \rm (translated from the 1996 French original by I.~N.~Sneddon),
Cambridge University Press: Cambridge, 2000.

\bibitem{tartarnotes}
\textsc{Tartar, L.},
Compensated compactness and applications to partial differential equations,
In: {\it Nonlinear Analysis \& Mechanics{\rm :} Heriot--Watt Symposium}, \rm Vol.~\textbf{4},
\rm pp. 136--212, Res.~Notes in Math.~\textbf{39}, Pitman: Boston, MA, 1979.

\bibitem{tartar-consv-laws}
\textsc{Tartar, L.}, The compensated compactness method applied to systems of conservation laws,
In: {\it Systems of Nonlinear Partial Differential Equations}, \rm pp. 263--285,
ed. J. M. Ball, Dordrecht: D. Reidel, 1983.

\bibitem{tartar-nouvelle}
\textsc{Tartar, L.}, Une nouvelle m\'{e}thode de r\'{e}solution d'\'{e}quations aux d\'{e}riv\'{e}es partielles non lin\'{e}aires,
{\it Journ\'{e}es d'Analyse Non Lin\'{e}aire} (Proc. Conf., Besançon, 1977), pp. 228–241,
Lecture Notes in Math. 665, Springer: Berlin, 1978.

\bibitem{young1}
\textsc{Young, L.~C.}, Generalized curves and the existence of an attained absolute minimum in the calculus of variations,
\it C.R. de la Soci\'{e}t\'{e} des Sciences et des Lettres de Varsovie, \rm \textbf{30} (1937),
211--234.

\bibitem{young2}
\textsc{Young, L.~C.}, \it Lectures on the Calculus of Variations and Optimal Control Theory,
\rm Saunders: Philadelphia-London-Toronto, 1969.
\end{thebibliography}
\end{document}